\begin{document}
\newtheorem{definition}{Definition}[section]
\newtheorem{theorem}[definition]{Theorem}
\newtheorem{lemma}[definition]{Lemma}
\newtheorem{proposition}[definition]{Proposition}
\newtheorem{examples}[definition]{Examples}
\newtheorem{corollary}[definition]{Corollary}
\def\square{\Box}
\newtheorem{remark}[definition]{Remark}
\newtheorem{remarks}[definition]{Remarks}
\newtheorem{exercise}[definition]{Exercise}
\newtheorem{example}[definition]{Example}
\newtheorem{observation}[definition]{Observation}
\newtheorem{observations}[definition]{Observations}
\newtheorem{algorithm}[definition]{Algorithm}
\newtheorem{criterion}[definition]{Criterion}
\newtheorem{algcrit}[definition]{Algorithm and criterion}
\newtheorem{lists}[definition]{Lists}
\newtheorem{Table}[definition]{Table}

\newenvironment{prf}[1]{\trivlist
\item[\hskip \labelsep{\it
#1.\hspace*{.3em}}]}{~\hspace{\fill}~$\square$\endtrivlist}
\newenvironment{proof}{\begin{prf}{Proof}}{\end{prf}}

\title{ Mumford curves and Mumford groups in positive characteristic }

\author{Harm H.~Voskuil and Marius van der Put\\
{\small  Bernoulli Institute, University of Groningen, P.O.~Box 407,} \\ 
{\small 9700 AG Groningen, the Netherlands}\\
{\small m.van.der.put@rug.nl}\;\;\;\;{\small harm.voskuil@planet.nl}}


\date{\today}




\maketitle

\begin{abstract} A Mumford group is a discontinuous subgroup $\Gamma$
 of ${\rm PGL}_2(K)$, where $K$ denotes a non archimedean 
valued field, such that the quotient  by $\Gamma$ is a curve of genus 0.  As abstract group $\Gamma$ is an amalgam of a  finite tree of finite groups. For $K$ of
positive characteristic the large collection of amalgams having two or three branch points
is classified. Using these data Mumford curves with a large group of
automorphisms are discovered. A long combinatorial proof, involving
the classification of the finite simple groups, is needed for
establishing an upper bound for the order of the group of
automorphisms of a Mumford curve.  Orbifolds in the category of rigid spaces are
introduced. For the projective line the relations with Mumford groups
and singular stratified bundles are studied. This paper is a sequel to
\cite{P-V}. Part of it clarifies, corrects  and extends  work  of
G.~Cornelissen, F.~Kato and K.~Kontogeorgis. \begin{footnote}{MSC2010,
  14E09, 20E08, 30G06. Keywords: Rigid geometry, Discontinuous groups, Mumford curves,
  Mumford groups, Amalgams, Orbifolds, Stratified bundles} \end{footnote}
 \end{abstract}


\section*{ \rm Introduction}
Let $K$ be a complete non archimedean valued field. For convenience we
will suppose that $K$ is algebraically closed.  A {\it Schottky group}
$\Delta$ is a finitely generated, discontinuous subgroup of ${\rm PGL}_2(K)$ such
that $\Delta$ contains no elements ($\neq 1$) of finite order and
$\Delta\not \cong \{1\}, \mathbb{Z}$.
It turns out that $\Delta$ is a free non-abelian group on $g>1$
generators. Let $\Omega\subset \mathbb{P}^1_K$ denote the rigid open subspace of
ordinary points for $\Delta$. Then $X:=\Omega /\Delta$ is an algebraic curve over $K$ of genus $g$. 
The curves obtained in this way are called {\it Mumford curves}. Let
$\Gamma \subset {\rm PGL}_2(K)$ denote the normalizer of $\Delta$. Then
$\Gamma /\Delta$ acts on $X$ and is in fact the group of the automorphisms
of $X$.   For $K\supset \mathbb{Q}_p$, the theme of automorphisms of Mumford
 curves is of interest for $p$-adic orbifolds and for $p$-adic hypergeometric differential equations. 
 According to F.~Herrlich \cite{He} one has for Mumford curves $X$ of genus $g>1$  the bound
$|{\rm Aut}(X)|\leq 12(g-1)$ if $p>5$. For $p=2,3,5$ there are $p$-adic ``triangle groups'' and the
bounds are $n_p(g-1)$ with $n_2=48,\ n_3=24,\ n_5=30$.\\

 {\it In this paper we investigate the case that $K$ has characteristic $p>0$}. \\
 The order of the automorphism group can be
much larger than $12(g-1)$. Using the Riemann--Hurwitz--Zeuthen
formula one easily shows (see also the proof of Corollary \ref{6.2}):\\

\noindent {\it If $g>1$ and $|{\rm Aut}(X)|> 12(g-1)$, then $X/{\rm Aut}(X)\cong \mathbb{P}^1_K$ and the morphism $X\rightarrow
X/{\rm Aut}(X)$ is branched above {\rm 2} or {\rm 3} points}.\\

There exist Mumford curves $X=\Omega /\Delta$ 
 with  genus $g>1$ and such that   $|{\rm Aut}(X)|> 12(g-1)$. Hence
the normalizer $\Gamma$ of $\Delta \subset {\rm PGL}(2,K)$ satisfies
$\Omega /\Gamma\cong \mathbb{P}^1_K$. This leads to the definition of
a {\it Mumford group}:\\
 This is a finitely generated, discontinuous subgroup $\Gamma$ of
${\rm PGL}_2(K)$ such that $\Omega /\Gamma \cong \mathbb{P}_K^1$,
where $\Omega \subset \mathbb{P}^1_K$ is the rigid open subset of the
ordinary points for the group $\Gamma$. We {\it exclude} the possibilities
that $\Gamma$ is finite and that $\Gamma$ contains a subgroup of
finite index, isomorphic to $\mathbb{Z}$.  
 A point $a\in \mathbb{P}^1_K$ is called 
a {\it branch point}  if a preimage $b\in \Omega$ of $a$ has a non
trivial stabilizer in $\Gamma$. \\

On the other hand, a Mumford group $\Gamma$ contains a normal subgroup $\Delta$, which is
of finite index and has no elements $\neq 1$ of finite order. Thus
$\Delta$ is a Schottky group, $X:=\Omega /\Delta$ is a Mumford
curve. Above we have excluded the cases that the genus of $X$ is 0 or 1.
The group $A:=\Gamma/\Delta$ is a subgroup of ${\rm Aut}(X)$ such that
$X/A\cong \mathbb{P}^1_K$. \\ 

In several papers \cite{C-K-K, C-K , C-K 2, C-K 3, C-K 4, P-V, P-V2003}  the construction and the
classification of Mumford groups over a field $K$ of characteristic $p>0$ are studied. 
Here we continue this study. First we recall that a Mumford group is,
as an abstract group, a finite tree of finite groups $(T,G)$. In the work of
F.~Herrlich \cite{He} and in \cite{P-V} a criterion is proved which decides
whether the `amalgam' $\pi_1(T,G)$ of a finite tree of finite
groups $(T,G)$ is {\it realizable}, i.e., $\pi_1(T,G)$ has an embedding in
${\rm PGL}(2,K)$ as discontinuous group. If there is a realization, then, in
general, there are some
families of realizations. Thus in classifying Mumford groups we
classify in fact the realizable finite trees of finite groups $(T,G)$.
 Still, for the purpose of classification, there are too many 
 realizable $(T,G)$.\\ 

 Since we are interested in
Mumford curves $X$ with $|{\rm Aut}(X)|> 12(g-1)$ and $g>1$, we only 
consider trees of groups $(T,G)$ which produce $2$ or $3$ branch
points. The number of branch points ${\rm br}$ depends only on $(T,G)$ and not on the chosen realization.
A formula for ${\rm br}$, proved in \cite{P-V}, answers in principle
the question of classifying realizable $(T,G)$ with ${\rm br}$=2 or 3.

 However a delicate (especially for $p=2,3$)
combinatorial computation in \S\S 1-2 combined with \cite{P-V} is
needed to obtain the complete lists (\S\S 3-4). For completeness, a not
well known family of realizable amalgams is studied in \S 5. The
amalgams of \S 5 do
not produce Mumford curves with large automorphism groups. The lists
in \S\S 3-4 correct, clarify and extend the data of \cite{C-K-K}.\\
    
In order to find bounds for $|{\rm Aut}(X)|$ in terms of the genus $g$ of
Mumford curves we have to investigate the lists \S\S 3-4 of the realizable $(T,G)$ with two or
three branch points. 
For any normal Schottky group $\Delta \subset \Gamma$ of finite index
and free on $g>1$ generators one has $g-1=\mu(\Gamma) [\Gamma:\Delta]
$ for a rational number $\mu(\Gamma)$ which has a formula in terms of
the data of the tree of groups $(T,G)$. We are only interested in the
case $\mu(\Gamma)<\frac{1}{12}$. This produces lists \ref{6.3} shorter than 
 those of \S\S 3-4. \\ 

Each group $\Gamma$ with $\mu(\Gamma)<\frac{1}{12}$ contains a normal Schottky group $\Delta$
such that the Mumford curve $X=\Omega/\Delta$ has automorphism group
${\rm PGL}_2(\mathbb{F}_q)$ or ${\rm PSL}_2(\mathbb{F}_q)$.
The value of $q$ is uniquely determined by the tree of groups $(T,G)$.
We then search for the lowest genus $g$ such that there exist Mumford curves $X$ of genus $g$
with automorphism group ${\rm Aut}(X)= {\rm PGL}_2(\mathbb{F}_q)$. 
This leads to Theorem \ref{6.8}, the {\it discovery}
of two families of Mumford curves having many automorphisms namely:\\

\noindent {\it
The amalgams ${\rm PGL}_2(\mathbb{F}_q)\ast_{C_{q+1}}D_{q+1}$ and
 $D_{q-1}\ast_{C_{q-1}}B(n,q-1)$ with $q=p^n>2$.}
Here $C_*,\ D_*$ denote cyclic, dihedral groups and
$B(n,q-1)=\{ {a\ b\choose 0\ 1}|\ a\in \mathbb{F}_q^*,\ b\in
\mathbb{F}_q\}$. {\it The above amalgams $\Gamma$ contain a unique normal
Schottky subgroup such that the corresponding Mumford curve $X$ has
the properties ${\rm Aut}(X)={\rm PGL}_2(\mathbb{F}_q)$ and $X$ has genus $\frac{q(q-1)}{2}$.
In \S {\rm \ref{section 7}} equations for these curves are derived }(Corollary
\ref{7.6}, \S \ref{section 7.1.1.0} \S \ref{section 7.1.1}, \S \ref{section 7.2}). \\

Based on these families we claim the following bound (Theorem \ref{6.18}):\\

\noindent {\it For a Mumford curve $X$ with genus $g$ one has
\[ |{\rm Aut}(X)|\leq \max\{ 12(g-1),g(\sqrt{8g+1}+3)\} \]
 with three exceptions for $p=3$ and $g=6$, see Proposition  {\rm \ref{6.22}}.\\
 Equality holds precisely for the cases of  Theorem {\rm\ref{6.8}}
 and the few  cases of Mumford curves with $g\in \{3,4,5,6\}$ and the four amalgams 
$\Gamma$ with $\mu(\Gamma)=\frac{1}{12}$ studied in Proposition {\rm \ref{6.14}}
and Lemma {\rm \ref{6.19}}}.\\

The Hurwitz bound $|{\rm Aut}(X)|\leq 84(g-1)$ does not hold for curves in
positive characteristic. Curves in positive characteristic can have
many more automorphisms.
However, for ordinary curves, i.e., curves such that the $p$-rank of the Jacobian equals the genus $g$,
the number of automorphisms is somewhat restricted compared to curves of lower $p$-rank.
Several results concerning the upper bound of the number of automorphisms of ordinary curves
have been established (see \cite{N,K-M}). {\it Mumford curves are ordinary}.

A sharp upper bound that is actually attained by some infinite family of ordinary curves is as yet unknown.
In fact, as far as the authors know the two families of Mumford curves described in \S \ref{section 7}
have more automorphisms than any other currently known infinite family of ordinary curves if $p>2$.
For $p=2$ a family of smooth plane curves of genus $\frac{q^2-q}{2}$, $q=2^n\geq 4$ 
that are ordinary and have automorphism group 
${\rm PGL}_2(\mathbb{F}_q)$ is known (see \cite{Fu} Theorem 1). 
This known family of ordinary curves contains at least one of the families of Mumford curves
(for the case  $p=2$) as described in Theorem \ref{6.8} (see \S \ref{section 7.2.1}). \\

Now we fix  $\Gamma$ with $\mu(\Gamma)<\frac{1}{12}$. 
To prove the claim (Theorem \ref{6.18}), we have to
search for normal Schottky subgroups $\Delta \subset \Gamma$ of
minimal index $[\Gamma : \Delta]$. Indeed, if the inequality holds for
a normal Schottky subgroup $\Delta$ of minimal index, then it holds for all normal Schottky
subgroups of finite index, contained in $\Delta$.

 It seems impossible, in general, to compute these groups $\Delta$ and the minimal
 values for $[\Gamma :\Delta]$. However determining, what we call a
{\it suitable bound}: $[\Gamma:\Delta]\geq N_0(\Gamma)$ for all $\Delta$, is enough for
proving the claim. Obtaining these lower bounds is a hard problem in
combinatorial group theory. The long rather complicated, delicate
section \S \ref{section 8} is needed for a solution. For the most complicated case,
the amalgam $\Gamma={\rm PGL}_2(\mathbb{F}_q)\ast_{B(n,q-1)}B(2\cdot n,q-1)$,
the long computation of $N_0(\Gamma)$ in \S \ref{section 8.3}     
finally involves the classification of finite simple groups. 
Many of our results {\it differ from those of } \cite{C-K-K} (see Remarks \ref{CKK-1} and \ref{CKK-2}).\\

Linear differential equations on the complex line with prescribed
regular singularities, e.g., hypergeometric differential equations,
can be induced by complex-linear representations of discrete subgroups
of ${\rm PGL}(2,\mathbb{C})$. In \S \ref{section 9} we study an analogue of this namely, the way Mumford groups induce
stratified bundles on $\mathbb{P}^1_K$. Moreover the  relation with
orbifolds on $\mathbb{P}^1_K$ is explained in \S \ref{section 9}.\\

In this paper, $K$ is supposed to have characteristic $p>0$, to be
complete with respect to a non trivial valuation and to be
algebraically closed. Moreover we will use terminology and results
from \cite{P-V}. 

\section{ \rm The finite subgroups of ${\rm PGL}_2(K)$}
For a finite subgroup $G\neq 1$ of ${\rm PGL}_2(K)$ the morphism
$m: \mathbb{P}^1\rightarrow \mathbb{P}^1/G\cong \mathbb{P}^1$ is
ramified (or branched) above at most three points. A {\it branch
  group} is a subgroup $H\neq \{1\}$
of $G$ such that $H$ is the stabilizer $H_a$ of a point $a$. 

If G is not a cyclic group, then $H_a$ is conjugated to $H_b$ if and
only if :\\ 
(1). $m(a) = m(b)$, or\\
(2). the pair $(H_a,G)=(C_2,D_\ell)$ with $p\not=2$, $p\not|\ell$,
$\ell$ odd, or \\
(3). $(H_a,G)=(C_3,A_4)$ with $p\not= 2,3$.\\
In the latter two cases the cyclic group $H_a=H_b$ is the stabilizer of two distinct points $a$ and $b$,
such that $m(a)\not=m(b)$. These cases give rise to the amalgams treated in \S 5. \\

\noindent {\bf The classification of the finite subgroups, up to
  conjugation, is: }
\begin{itemize}
\item[(a).] {\it Borel type} $B(n,m)$ represented by $\{ {a\ b\choose 0\ 1 }|\
  a^m=1, b\in B  \}$ where $p\nmid m$, $B\subset K$ is a vector space
  over $\mathbb{F}_p$ of dimension $n$, and $aB=B$ for every $a$
with $a^m=1$. If $B\neq 0$, then it follows that $m| p^n-1$.  \begin{enumerate}
\item $B(0,m)$ is the cyclic group $C_m$. Two branch points.
\item $B(n,1)$ with $n>0$ is called a $p$-group (i.e., isomorphic to
  $C_p^n$ ). One branch point.
\item $B(n,m)$ with $n>0,m>1$. Two branch points. The branch groups are
$C_m$ and $B(n,m)$.
\end{enumerate} 

\item[(b).] {\it not Borel type, $p\mid \#G$ and two branch points}.
\begin{enumerate}
\item ${\rm PGL}_2(\mathbb{F}_q)$ with $q=p^n$. One branch group is 
$\{{a\ b \choose 0\ 1 }| a\in \mathbb{F}_q^*, b\in \mathbb{F}_q \}$
which is $B(\mathbb{F}_q)=B(n,q-1)$, where $B(\mathbb{F}_q)$ denotes the Borel group.
 The other branch group is $C_{q+1}$.
\item ${\rm PSL}_2(\mathbb{F}_q)$ with $q=p^n$ and $p\neq 2$ (for
  $p=2$ one has ${\rm PSL}_2(\mathbb{F}_q)={\rm PGL}_2(\mathbb{F}_q)$). One branch group is 
$\{{a\ b \choose 0\ a^{-1} }| a\in \mathbb{F}_q^*, b\in \mathbb{F}_q
\}$ (modulo $\pm 1$) which is $B(n,(q-1)/2)$.
 The other branch group is $C_{(q+1)/2}$.
\item $p=2$ and $D_\ell$ with odd $\ell$. The branch groups are $C_2$
  and $C_\ell$. 
\item $p=3$ and $A_5\subset A_6\cong {\rm PSL}_2(\mathbb{F}_9)$.
Branch groups $C_5$ and $B(1,2)\cong S_3$. Note that ${\rm PSL}_2(\mathbb{F}_3)\cong A_4\subset A_5$. 
\end{enumerate} 

\item[(c).]  $p\nmid \# G$,  
{\it  $G\in \{D_n,A_4,S_4,A_5\}$ and three branch
  points}. Cyclic branch groups with orders $(2,2,n), (2,3,3), (2,3,4),
  (2,3,5)$, respectively. 
\end{itemize}

\begin{remarks}\label{1.1}  {\rm  One can show the following. Suppose that the finite
 group $G$  has an embedding in ${\rm PGL}_2(K)$. Then the number of
 branch points and the branch subgroups of $G$ do not depend on the
 choice of the embedding. More in detail, the following holds.

 In general the image of an embedding of a finite group $G$ into
 the group ${\rm PGL}_2(K)$ is unique up to conjugation by an element
 $g\in {\rm PGL}_2(K)$ (See
\cite{B} prop. 4.1 and \cite{Fa} theorem 6.1).
The exceptions are certain groups of Borel type. The situation can be
explained as follows. 

 As an abstract group, a group $G$ of Borel type is the semi-direct product of a cyclic group $C_m$
(with given generator and $p\nmid m$) and an elementary $p$-group
$B$ (i.e., $\cong (\mathbb{Z}/p\mathbb{Z})^n$ for some $n$). Let $\mathbb{F}_q\subset K$ denote the smallest extension of $\mathbb{F}_p$ such that the $m$th
roots of unity belong to $\mathbb{F}_q$. We identify $C_m$ with
$\mu_m(K)$. The action (by conjugation) of $C_m$ on $B$ makes $B$ into
a finite dimensional vector space over $\mathbb{F}_q$. For any embedding
$\phi$ of $G$ into ${\rm PGL}_2(K)$ one has that $\phi(B)$ has a
unique fixed point for its action on $\mathbb{P}^1_K$. We take this fixed point to be $\infty$ and so
$\phi(B)\subset {1\ K\choose 0\ 1 }$. Further, the resulting $\phi'
:B\rightarrow K$ is $\mathbb{F}_q$-linear. The map $\phi'$ is an
arbitrary $\mathbb{F}_q$-linear injection and $\phi$ is determined by
$\phi'$. Conjugation by any ${\lambda\ \mu \choose 0\ 1}$ (with
$\lambda \in K^*,\mu \in K$) changes $\phi'$ into $\lambda \cdot \phi'$.     
One concludes the following:\\
 If $\dim _{\mathbb{F}_q}B=1$, then there is up to conjugation only
 one embedding of $G$ into ${\rm PGL}_2(K)$. However, if  $\dim
 _{\mathbb{F}_q}B>1$, then the set of conjugacy classes of embeddings
 of $G$ is very large. There are embeddings $\phi$ such that
 $\phi'(B)\subset K$ lies in the algebraic closure of $\mathbb{F}_q$
 and there are embeddings $\phi$ such that $\lambda \phi'(B)$ is not
 contained in any local subfield of $K$ for any $\lambda \in K^*$.  }\end{remarks}

 \section{ \rm Realizable $G_1\ast_{G_3}G_2$}  

A finite tree of finite groups $(T,G)$ is given by a finite tree $T$, for
every vertex $v$ a finite group $\Gamma_v$ and for every edge
$e$ a finite group $\Gamma_e$. Further, if $e$ is an edge of $v$,
then an injective homomorphism $\Gamma _e\rightarrow \Gamma_v$ is
given (often regarded as an inclusion). $(T,G)$ is called {\it realizable} if its amalgam
$\Gamma$ has an embedding as a discontinuous group in ${\rm PGL}_2(K)$. 
The problem to classify the realizable $(T,G)$ has been solved in
\cite{P-V} together with a formula for the number of branch points of the
amalgam $\Gamma$ of $(T,G)$. However, due to the complexity of the
groups and trees involved, the classification of the $(T,G)$ with two or
three branch points requires new investigations. We restrict ourselves
to indecomposable trees of groups $(T,G)$, i.e., all $\Gamma_v\neq 1$
and all $\Gamma _e\neq 1$ (because of \cite{P-V}, Prop. 3.2).  According to
\cite{P-V} 3.7, a cyclic group $C_m$ with $p\nmid  m$ will not be a vertex group
$\Gamma _v$. Proposition \ref{2.1} summarizes \cite{P-V}, 3.8--3.11.

\begin{proposition}\label{2.1} $G_1\ast_{G_3}G_2$ is realizable in precisely the
  following cases:\\
{\rm (a)}. $G_1,G_2$ both not Borel type and
\begin{enumerate}
\item $p\nmid \ \#G_3$ and $G_3$ is a branch group of $G_1$ and of
  $G_2$ (and then $G_3=C_m$). 
\item In addition for $p=2$, the groups $D_\ell \ast_{C_2}D_m$ with
  odd $\ell, m$. 
\item In addition for $p=3$, the group ${\rm
    PSL}_2(\mathbb{F}_3)\ast_{C_3}{\rm PSL}_2(\mathbb{F}_3)$.
\end{enumerate}
{\rm (b)}. $G_1=B(N,m)$ with $N>0,\ m\geq 1$.
\begin{enumerate}
\item $m>1$, $G_3=C_m$ and $G_3$ is a branch group of $G_2$. 
 \item $q=p^n, \ m=q-1, \ \ell >1$ and $B(\ell \cdot n,q-1)\ast_{B(\mathbb{F}_q)}{\rm
    PGL}_2(\mathbb{F}_q)$ and $B(\mathbb{F}_q)\cong B(n,q-1)$ is a Borel group of ${\rm PGL}_2(\mathbb{F}_q)$.
\item $p\neq 2$, $q=p^n, \ m=(q-1)/2, \ \ell >1$ and $B(\ell \cdot n,(q-1)/2)\ast_{B(\mathbb{F}_q)}{\rm
    PSL}_2(\mathbb{F}_q)$ and $B(\mathbb{F}_q)\cong B(n,(q-1)/2)$ is a Borel group of ${\rm PSL}_2(\mathbb{F}_q)$.
\item In addition for $p=2$, the groups $B(N,1)\ast_{C_2}D_\ell$ with
odd $\ell$ and $N>1$
\item In addition for $p=3$, the groups $B(N,2)\ast_{D_3}A_5$ with $N>1$.
\end{enumerate}
\end{proposition}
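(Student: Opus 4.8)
The statement is, as the text itself notes, essentially a repackaging of the classification of realizable finite trees of finite groups proved in \cite{P-V} (3.7--3.11), so the plan is to derive it from there together with the list of finite subgroups of $\mathrm{PGL}_2(K)$ and their branch groups recalled in \S 1. Concretely I would: (i) specialize the \cite{P-V} criterion to a tree consisting of a single edge, reading it as a pair of \emph{local} conditions at the two end vertices plus a \emph{matching} condition along the edge; (ii) translate these into geometry on the Bruhat--Tits tree $\mathcal{T}$ of $\mathrm{PGL}_2(K)$; (iii) run through the finitely many possibilities for the pair $(G_1,G_2)$ coming from \S 1 and a common subgroup $G_3\neq 1$, discarding the combinations that fail and quoting a realization from \cite{P-V} for the ones that survive.

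\emph{The geometric dictionary.} A realization of $\Gamma=G_1\ast_{G_3}G_2$ acts on $\mathcal{T}$ with fundamental domain a possibly subdivided segment joining vertices $v,w$ whose stabilizers are conjugate to $G_1,G_2$, the stabilizer of the connecting path being $G_3$. By \S 1 both $G_i$ occur in the classification, by \cite{P-V}, 3.7 no vertex group is a $C_m$ with $p\nmid m$, and $G_3$ must inject into $G_1$ and into $G_2$ and be, up to the subtlety below, a branch group of each. The decisive data is the shape of fixed-point sets in $\mathcal{T}$: a non-trivial unipotent subgroup (equivalently, a non-trivial $p$-subgroup) fixes a bushy half-tree growing toward its unique fixed point $\zeta\in\mathbb{P}^1(K)$, reducing to the identity on the residual $\mathbb{P}^1$ at interior vertices and non-trivially only along the boundary of that half-tree; a cyclic subgroup of order prime to $p$ fixes exactly its axis, a line, and acts non-trivially on the residual $\mathbb{P}^1$ at every vertex it fixes. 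A non-Borel group in the \S 1 list has no non-trivial normal $p$-subgroup, hence reduces faithfully at its fixed vertex and fixes only that vertex; a group of Borel type $B(N,m)$ fixes a point of the residual $\mathbb{P}^1$ all along the ray to its fixed point, and when its unipotent radical is large enough it admits embeddings whose residual action kills part of that radical (the non-coherent embeddings of Remark \ref{1.1}).

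\emph{The casework.} If $G_1,G_2$ are both not of Borel type, then $v,w$ are their unique fixed vertices and $G_3$ is a genuine common branch group. Were $G_3$ \emph{mixed}, with both a non-trivial unipotent part and a non-trivial semisimple part (a Borel subgroup of some $\mathrm{PGL}_2(\mathbb{F}_q)$, or $S_3\cong B(1,2)$ inside $A_5$ when $p=3$), the semisimple part would force $v,w$ onto one line having $\zeta$ as an end while the unipotent part would reduce non-trivially at both $v$ and $w$, i.e.\ require two boundary vertices of a single half-tree on one ray toward $\zeta$, which is impossible. So $G_3$ is pure: either $G_3=C_m$ with $p\nmid m$, and matching such cyclic branch groups across the \S 1 list yields the families of (a)(1) (for instance $C_{q+1}$ shared by two copies of $\mathrm{PGL}_2(\mathbb{F}_q)$, or shared by $\mathrm{PGL}_2(\mathbb{F}_q)$ and $D_{q+1}$); or $G_3=C_p$, which is a branch group of a non-Borel group only for $p=2$ ($C_2\subset D_\ell$) and $p=3$ ($C_3\subset\mathrm{PSL}_2(\mathbb{F}_3)$), giving (a)(2) and (a)(3). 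If instead $G_1=B(N,m)$ with $N>0$, the obstruction on the $G_1$-side is defused by using non-coherent embeddings (subdividing the $G_1$-end into a chain of Borel-type vertices if necessary), so that the unipotent part of $G_3$ acts trivially residually at the vertices it meets and $v$ is interior to the $p$-half-tree; deciding exactly when there is room for this, and matching $G_3$ with a branch group of $G_2$, produces $G_3=C_m$ the torus branch group with $m>1$; $G_3=B(n,q-1)$ a full Borel of $\mathrm{PGL}_2(\mathbb{F}_q)$ resp.\ $\mathrm{PSL}_2(\mathbb{F}_q)$ with $G_1=B(\ell n,q-1)$ and $\ell>1$ (the inequality being precisely the room to absorb); and, for the small primes, $G_3=C_2\subset B(N,1)$ with $N>1$ and $G_2=D_\ell$, and $G_3=S_3=D_3\subset B(N,2)$ with $N>1$ and $G_2=A_5$. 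In each surviving case a realization is given in \cite{P-V}.

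\emph{Expected main obstacle.} Case (a) is clean once the branch groups of \S 1 are in hand. The work is case (b): one must pin down, for each Borel-type vertex group, precisely which non-faithful residual actions are admissible, since this controls whether a mixed $G_3$ is realizable and against which $G_1$; and one must separately eliminate the many near-misses for $p=2$ and $p=3$ that appear because $D_\ell$, $\mathrm{PSL}_2(\mathbb{F}_3)$ and $A_5$ bring branch groups of order divisible by $p$ into play. This is precisely the delicate computation that \cite{P-V} and the present section are organized around.
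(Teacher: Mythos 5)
Your proposal is consistent with the paper, which offers no independent proof of Proposition \ref{2.1} at all: the text simply states that the proposition summarizes \cite{P-V}, 3.8--3.11, so your reduction to the \cite{P-V} realizability criterion combined with the branch-group data of \S 1 is exactly the intended route. The tree-geometric dictionary and the casework you sketch reconstruct the content of those cited propositions rather than a genuinely different argument, and nothing in your outline conflicts with the stated classification.
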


\begin{corollary}\label{2.2} Let $G_1,G_2$ denote groups not of Borel
  type. Suppose that both groups have a cyclic branch group of order $\ell$ 
  and that the order of both groups is divisible by the prime $p$. 
  Then one of the following holds:\\
{\rm (1)}. $p\nmid \ell$ and $G_1\cong G_2$.\\
{\rm (2)}. $p=2, \ \ell=q+1$ and $\{G_1,G_2\}=\{D_{q+1}, {\rm PGL}_2(\mathbb{F}_q)  \}$.\\
{\rm (3)}. $p=2, \ \ell =2$ and $\{G_1,G_2\}=\{ D_n ,D_m\}$ with odd
$m,n$.\\
{\rm (4)}. $p=3, \ \ell =5$ and $\{G_1,G_2\}=\{A_5 , {\rm
  PSL}_2(\mathbb{F}_9)\}$.\\
{\rm (5)}. $p=3,\ \ell =3$ and $G_1\cong G_2 \cong {\rm PSL}_2(\mathbb{F}_3)$.
\end{corollary}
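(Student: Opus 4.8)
The plan is to read off Corollary \ref{2.2} directly from the classification of finite subgroups in \S 1 together with Proposition \ref{2.1}(a). The hypothesis is that $G_1$ and $G_2$ are not of Borel type, both have order divisible by $p$, and both possess a cyclic branch group of the common order $\ell$. So first I would consult the list in part (b) of the \S 1 classification — the groups that are not of Borel type, have $p \mid \#G$, and (together with part (c)) contain a cyclic branch group of a given order. The candidates are: ${\rm PGL}_2(\mathbb{F}_q)$ with branch groups $B(n,q-1)$ (Borel, excluded as it is not cyclic) and $C_{q+1}$; ${\rm PSL}_2(\mathbb{F}_q)$ ($p \neq 2$) with branch groups $B(n,(q-1)/2)$ (again not cyclic) and $C_{(q+1)/2}$; for $p=2$ the dihedral $D_\ell$ with odd $\ell$, branch groups $C_2$ and $C_\ell$; for $p=3$ the group $A_5 \subset {\rm PSL}_2(\mathbb{F}_9)$ with branch groups $C_5$ and $S_3 = B(1,2)$ (not cyclic). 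One should also recall ${\rm PSL}_2(\mathbb{F}_3) \cong A_4$, whose cyclic branch group of order $3$ is $C_3$ (the order-$3$ branch group from the $(2,3,3)$ pattern in part (c)), and that $A_4 \cong {\rm PSL}_2(\mathbb{F}_3)$ has $p = 3 \mid 12$.

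Next I would organize the argument by the prime $p$ and by whether $p \mid \ell$. If $p \nmid \ell$, then the shared cyclic branch group $C_\ell$ makes $G_1 \ast_{C_\ell} G_2$ fall under Proposition \ref{2.1}(a)(1), which forces $G_3 = C_m$ to be a branch group of both $G_1$ and $G_2$ with the realizability condition; combined with the uniqueness-of-embedding statement (Remarks \ref{1.1}) and the rigidity of the branch data, this yields $G_1 \cong G_2$, which is case (1). Here one must be a little careful about the exceptional coincidences flagged in \S 1 — the $(C_2, D_\ell)$ and $(C_3, A_4)$ cases where a cyclic group is the stabilizer of two points with distinct images — but those are exactly the situations \S 5 handles and do not violate $G_1 \cong G_2$. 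If instead $p \mid \ell$, then $\ell$ is a power of $p$ times something, and a cyclic subgroup of ${\rm PGL}_2(K)$ of order divisible by $p$ must have order exactly $p$ (an element of order $p$ cannot commute with one of order coprime to $p$ inside ${\rm PGL}_2(K)$ unless... — in fact the only cyclic branch groups of order divisible by $p$ occurring in the \S 1 list are $C_2$ for $p = 2$ and $C_3$ for $p = 3$). So the case $p \mid \ell$ splits into: $p = 2, \ell = 2$, giving a pair among $\{D_n, D_m\}$ with $n, m$ odd (case (3)), since these are the only non-Borel groups with $p = 2 \mid \#G$ whose cyclic branch group has order $2$; and $p = 3, \ell = 3$, where the only non-Borel group with $3 \mid \#G$ having a cyclic branch group of order $3$ is ${\rm PSL}_2(\mathbb{F}_3) \cong A_4$ (one checks ${\rm PGL}_2(\mathbb{F}_3) = S_4$ and $A_5$ have no cyclic branch group of order exactly $3$: the branch orders are $2,3,4$ and $2,3,5$ respectively, but the order-$3$ branch group of $S_4$ lies in ... — actually $S_4$ has $p=3 \mid 24$, so I need to check whether its order-$3$ cyclic branch group qualifies, and argue it does not pair with another such group; I expect it cannot because the only partner would again be $A_4$ and then $\{G_1,G_2\}=\{S_4,A_4\}$, which would have to be ruled out by the realizability/branch-matching in Proposition \ref{2.1}(a), since $A_4 \not\cong S_4$ and the pair is not among the exceptional lists (a)(2)-(a)(3)), giving case (5).

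Finally, back in the $p \nmid \ell$ regime, the conclusion $G_1 \cong G_2$ from case (1) still leaves the genuinely exceptional pairs that Proposition \ref{2.1}(a)(2)-(3) permit with $G_1 \not\cong G_2$ but where $p \mid \ell$ fails — wait, no: in (a)(2) we have $D_\ell \ast_{C_2} D_m$ with $\ell, m$ odd, so there the amalgamated group is $C_2$ and $p = 2 \mid 2 = \ell$, which is case (3) already; and in (a)(3) we have ${\rm PSL}_2(\mathbb{F}_3) \ast_{C_3} {\rm PSL}_2(\mathbb{F}_3)$, amalgamated over $C_3$ with $p = 3 \mid 3$, which is case (5). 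So the only remaining non-isomorphic pairs must come from $p \nmid \ell$ with both groups sharing $C_\ell$ but $G_1 \not\cong G_2$ — and by the \S 1 list this happens precisely when $\{C_\ell, G_i\}$ is $\{C_{q+1}, {\rm PGL}_2(\mathbb{F}_q)\}$ for $p = 2$ (a dihedral $D_{q+1}$ also has $C_{q+1}$ as branch group), giving case (2), or $\{C_5, A_5\}$ versus $\{C_5, {\rm PSL}_2(\mathbb{F}_9)\}$ for $p = 3$ (here $5 = \ell$ and $3 \nmid 5$), giving case (4). The main obstacle I anticipate is the bookkeeping for $p = 2$ and $p = 3$: carefully matching which non-Borel groups in the \S 1 list share a cyclic branch group of a given order, and in particular handling the fact that ${\rm PGL}_2(\mathbb{F}_q)$ for small $q$ (like $q = 3, 4$) coincides with classical groups ($S_4$, $A_5$-related), so that the "obvious" pairings must be cross-checked against Proposition \ref{2.1} and the rigidity in Remarks \ref{1.1} to confirm no spurious cases are missed or double-counted.
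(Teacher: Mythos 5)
Your proposal is correct in substance and follows the same route as the paper, whose entire proof is the one-line citation of the classification in \S 1, parts (b) and (c): one simply lists, for each prime $p$ and each admissible order $\ell$, which non-Borel groups of order divisible by $p$ possess a cyclic branch group of order $\ell$, and observes that the only orders shared by two non-isomorphic types are $\ell=q+1$ for $p=2$ (shared by $D_{q+1}$ and ${\rm PGL}_2(\mathbb{F}_q)$), $\ell=2$ for $p=2$ (shared by all $D_m$, $m$ odd, including ${\rm PGL}_2(\mathbb{F}_2)\cong D_3$), and $\ell=5$ for $p=3$ (shared by $A_5$ and ${\rm PSL}_2(\mathbb{F}_9)$). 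Your closing enumeration does exactly this and is complete. Two remarks on the route you took to get there: the appeals to Proposition \ref{2.1} and to realizability are red herrings --- the intermediate assertion that Proposition \ref{2.1}(a)(1) ``yields $G_1\cong G_2$'' is false as stated (cases (2) and (4) are counterexamples inside the $p\nmid\ell$ regime, as you yourself note two paragraphs later), and the correct mechanism is simply uniqueness of the isomorphism type carrying a cyclic branch group of the given order, which follows from $q+1=q'+1\Rightarrow q=q'$ and from the impossibility of $q'=2q+1$ with $q,q'$ both powers of the same odd prime. Likewise your worry about $S_4={\rm PGL}_2(\mathbb{F}_3)$ for $p=3$ is settled directly by \S 1(b)(1): its branch groups are $B(1,2)$ and $C_4$, so it has no cyclic branch group of order $3$ at all, and no realizability argument is needed.
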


The proof follows from \S 1, the parts (b) and (c). 

\begin{remarks}\label{2.3} {\rm 
(1). Every amalgam $\Gamma:=G_1\ast_{G_3}G_2$ has infinitely many embeddings into
${\rm PGL}_2(K)$ that are not conjugated by elements of ${\rm PGL}_2(K)$. 
In fact, each amalgam $\Gamma$ has an embedding into ${\rm PGL}_2(K)$ 
such that no conjugate of the embedding  by an element of ${\rm PGL}_2(K)$
can be defined over a local field inside $K$.\\
\noindent (2). In case ${\rm br}(\Gamma)=2,3$, then one can normalize (by conjugation)
any embedding $\Gamma \rightarrow {\rm PGL}_2(K)$ such that the branch points are $0,\infty$
or $0,1,\infty$. It does not follow that the set of ramification points
for $\Gamma$ and/or the set $\Omega$ of ordinary points for $\Gamma$ are defined
over a local field inside $K$.\\
\noindent (3). It can be shown that any realizable amalgam has an
embedding which is defined over a local field inside $K$. }\end{remarks}

\section{\rm Mumford groups with two branch points}

The amalgams $B(n_1,1)\ast B(n_2,1)$ with $n_1,n_2>0$ are the only decomposable
Mumford groups with two branch points. {\it Below we consider only
indecomposable Mumford groups}.\\

We recall from \cite{P-V} that one associates to an indecomposable $\Gamma$, a finite tree of groups
$T^c$. For a vertex $v$ and an edge $e$ one writes $\Gamma_v$ and
$\Gamma_e$ for the corresponding groups. The group $\Gamma$ is the
amalgam of $T^c$. We explain now the ingredients in the formulas for
the number of branch points of $\Gamma$.

The symbol ${\rm br}()$ denotes the number of branch points. ${\rm Max}(j)$ for
$j=2,3$ denotes the set of vertices $v$ with ${\rm br}(\Gamma_v)=j$ and  
${\rm max}(j)=\#{\rm Max}(j)$. The set ${\rm Maxp}$ is only defined for $p=2,3$. This
is due to the special groups and trees $T^c$ occurring for $p=2,3$. ${\rm Maxp}$
consists of the vertices $v$ such that $\Gamma_v$ is a $p$-group
$B(n_0,1)=C_p^{n_0}$ with $n_0\geq 1$ and moreover:\\
(i). there is given a $p$-cyclic subgroup $A\subset \Gamma _v $,\\  
(ii).  there are at least two edges $e=\{v',v\}$,\\
(iii). for every edge $e=\{v',v\}$ the group  $\Gamma_e\cong C_p$ is
identified with $A\subset \Gamma _v$.\\
  Let $d_v\geq 2$ denote the number of the edges of $v\in {\rm Maxp}$.
Define now  ${\rm maxp}:=\sum _{v\in{\rm Maxp}}(d_v-1)$. For $p>3$ one puts ${\rm maxp}=0$.
 According to \cite{P-V}, Thm. 5.3  one has:
\begin{theorem} \label{3.1} Let $\Gamma$ be an indecomposable Mumford group.
 Then:\\
{\rm (1)}. ${\rm br}(\Gamma)=\sum _{v\ \mbox{\tiny vertex of }T^c}{\rm br}(\Gamma_v) -
\sum_{e\ \mbox{\tiny  edge of }T^c}{\rm br }(\Gamma_e)$.\\
{\rm (2)}. ${\rm br}(\Gamma)={\rm max}(3)+{\rm maxp}+2$. If $p\neq 2,3$, then ${\rm maxp}=0$.
\end{theorem}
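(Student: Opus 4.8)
The plan is to derive part (2) from part (1), and to obtain part (1) itself by a local-to-global analysis of the map $\Omega/\Gamma \cong \mathbb{P}^1_K$. First I would recall from \cite{P-V} the structure theory: to an indecomposable Mumford group $\Gamma$ one associates the finite tree of finite groups $T^c$, and the stable reduction of $\mathbb{P}^1_K$ modulo $\Gamma$ is assembled by gluing, for each vertex $v$, a copy of $\mathbb{P}^1_{\tilde K}/\Gamma_v$, identifying two such copies along the point coming from the edge group $\Gamma_e$ whenever $e=\{v,v'\}$. The branch locus of $\Omega/\Gamma \to \mathbb{P}^1_K$ is therefore built out of the branch loci of the finite quotients $\mathbb{P}^1\to \mathbb{P}^1/\Gamma_v$, minus the points that get glued; this is exactly where the correction term $-\sum_e {\rm br}(\Gamma_e)$ comes from. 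To make this precise I would run through the cases of the classification in \S 1: for each $\Gamma_v$ the number ${\rm br}(\Gamma_v)$ and the branch subgroups are known, and for each edge the group $\Gamma_e$ is one of these branch subgroups (by Proposition \ref{2.1} and the realizability constraints), contributing ${\rm br}(\Gamma_e)$ to the count at each of its two endpoints but only being ``used up'' once in the glued curve. Summing the local contributions and subtracting the overcounted glued points yields the formula in (1).

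For part (2) I would evaluate the right-hand side of (1) combinatorially on the tree $T^c$. Since $T^c$ is a tree with, say, $V$ vertices and $V-1$ edges, the key input is that ${\rm br}(\Gamma_v)\in\{1,2,3\}$ for every vertex (from the classification in \S 1: Borel $p$-groups give $1$, most two-branch-point groups give $2$, and the groups in (c) give $3$), and that a generic edge group $\Gamma_e$ is cyclic with ${\rm br}(\Gamma_e)=2$, except for the special $p$-group edge configurations recorded by ${\rm Maxp}$ when $p=2,3$, where $\Gamma_e\cong C_p$ is a $p$-group with ${\rm br}(C_p)=1$. Writing ${\rm br}(\Gamma)=\sum_v {\rm br}(\Gamma_v)-\sum_e {\rm br}(\Gamma_e)$ and bookkeeping: vertices with ${\rm br}(\Gamma_v)=2$ contribute, together with the generic $2$-branch edges, in a way that telescopes over the tree to leave a constant $+2$; each vertex in ${\rm Max}(3)$ contributes an extra $+1$, giving the ${\rm max}(3)$ term; and each vertex $v\in{\rm Maxp}$ of degree $d_v$ has an edge deficit of $d_v-1$ relative to the generic count because its $d_v$ edges are $p$-groups (br $=1$) rather than cyclic of br $2$, which is precisely $-\sum_{v\in{\rm Maxp}}(d_v-1)$ appearing with the sign that turns into $+{\rm maxp}$ after unwinding the convention. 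Care is needed with the vertex $v\in{\rm Maxp}$ itself, whose vertex group is a $p$-group with ${\rm br}=1$, and with the direction of the ${\rm Maxp}$ edges; tracking these signs correctly is the crux.

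The main obstacle I anticipate is the $p=2,3$ bookkeeping around ${\rm Maxp}$. The generic argument (all vertex groups br $2$ or $3$, all edge groups cyclic of br $2$, tree telescopes to $+2 + {\rm max}(3)$) is clean, but the special $p$-group vertices and their $C_p$-edges break the uniformity exactly, and one must check that the ``defect'' introduced is $d_v-1$ per ${\rm Maxp}$-vertex and nothing more — in particular that ${\rm Maxp}$-vertices are not adjacent to each other in a way that double-counts, and that every edge incident to a ${\rm Maxp}$-vertex is of the type described in (i)--(iii) (so that its br is genuinely $1$). I would resolve this by invoking the precise description of $T^c$ for $p=2,3$ from \cite{P-V} and checking the two or three relevant tree shapes directly. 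The rest — matching vertex and edge br-values against the tables of \S 1 and performing the telescoping sum — is routine given Theorem 5.3 of \cite{P-V}, which I am assuming.
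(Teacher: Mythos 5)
Your plan matches the paper's treatment: the paper does not prove Theorem \ref{3.1} itself but imports it as Theorem 5.3 of \cite{P-V}, and the method it attributes to that proof (see Remark \ref{remark-6.4}, the discussion in \S \ref{section 6.1}, and the computation of ramification tuples in \S \ref{section 9}) is exactly your local-to-global argument --- the branch points of $\Gamma$ are the branch points of the vertex groups $\Gamma_v$ that do not correspond to an edge, giving (1) by inclusion--exclusion and (2) by telescoping over the tree. Your ${\rm Maxp}$ bookkeeping also comes out correctly (such a vertex contributes $1$ instead of $2$ and each of its $d_v$ edges subtracts $1$ instead of $2$, for a net $+(d_v-1)$), so the only remaining work is the structural input from \cite{P-V} that you already flag as assumed.
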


\noindent The following arguments lead to the list of (amalgams) trees of
groups $T^c$:\\
 {\rm (i)}. ${\rm max}(3)=0$ implies that  $p| \#\Gamma_v$ for every vertex $v$
 and \S 1 (a) and (b) produce all possibilities.\\
{\rm (ii)}. ${\rm maxp}=0$ means that for $p=2$ the amalgam $D_n\ast_{C_2}C_2^{n_0}\ast_{C_2}D_m$ with $n_0\geq 1$ 
and odd $m,n$ is not present in the tree of groups and that for $p=3$ the amalgam
${\rm PSL}_2(\mathbb{F}_3)\ast _{C_3}C_3^{n_0}\ast_{C_3}{\rm PSL}_2(\mathbb{F}_3)$ with $n_0\geq 1$ is not
present in $T^c$. See Remark \ref{4.2} for more explication. We note that 
these two amalgams have in fact three branch points. \\
{\rm (iii)}. By deleting end part(s) of a $T^c$ in the list one obtains
other trees in the list. Therefore we only write down the maximal trees of
groups in the list.\\
{\rm (iv)}. Part (a) (1) of Prop 2.1 is made explicit by using
Corollary 2.2.\\
{\rm (v)}. Prop. 5.8 and Corollary 5.6 of \cite{P-V}.\\

\begin{proposition}[The Mumford groups $\Gamma$ with two branch points]\label{3.2}
Write $q=p^n$. Then $\Gamma$ is one of the following amalgams:
\begin{small}
\begin{enumerate}
\item[\rm (i).] $B(2n\cdot n_1,q+1)\ast_{C_{q+1}}{\rm PGL}_2(\mathbb{F}_q)\ast_{B(n,q-1)}B(n\cdot n_2,q-1)$ 
          with $n_1\geq 1, n_2\geq 2$. 
\item[\rm (ii).] $B(n\cdot n_1,q-1)\ast_{B(n,q-1)}{\rm PGL}_2(\mathbb{F}_q)
           \ast_{C_{q+1}}{\rm PGL}_2(\mathbb{F}_q)\ast_{B(n,q-1)}B(n\cdot n_2,q-1)$
           with  $n_1,n_2\geq 2$.
\item[\rm (iii).] $p\neq 2$, $B(s\cdot n\cdot
  n_1,(q+1)/2)\ast_{C_{(q+1)/2}}{\rm PSL}_2(\mathbb{F}_q)
            \ast_{B(n,(q-1)/2)}B(n\cdot n_2,(q-1)/2)$
            with $n_1> 0$, $n_2\geq 2$.
            Here $s=2$ if $q>3$ and $s=n=1$ if $p=q=3$.
\item[\rm (iv).]  $p\neq 2$, $B(n\cdot
  n_1,(q-1)/2)\ast_{B(n,(q-1)/2)}{\rm PSL}_2(\mathbb{F}_q)
            \ast_{C_{(q+1)/2}}{\rm PSL}_2(\mathbb{F}_q)\ast_{B(n,(q-1)/2)}B(n\cdot n_2,(q-1)/2)$
            with $n_1,n_2\geq 2$.
\item[\rm (v).] $B(n_1,m)\ast _{C_m} B(n_2,m)$ with $n_1,n_2,m\geq 1$. For $m=1$
  these are all the decomposable groups with two branch points.
\item[\rm (vi).] $p=3$, $B(2\cdot n_1,5)\ast_{C_5}A_5\ast_{B(1,2)}B(n_2,2)$
          with $n_1\geq 1$, $n_2\geq 2$.
\item[\rm (vii).] $p=3$, $B(n_1,2)\ast_{B(1,2)}A_5\ast_{C_5}A_5\ast_{B(1,2)}B(n_2,2)$
            with $n_1,n_2\geq 2$.
\item[\rm (viii).] $p=3$, $B(n_1,2)\ast_{B(1,2)}A_5\ast_{C_5}{\rm PSL}_2(\mathbb{F}_9)\ast_{B(2,4)}B(2\cdot n_2,4)$
            with $n_1, n_2\geq 2$.
\item[\rm (ix).] $p=2$, $2\not|\ell$, $B(n_1,\ell)\ast_{C_\ell}D_\ell\ast_{C_2}B(n_2,1)$
          with $\ell| 2^{n_1}-1$; $n_1,n_2\geq 2$.
\item[\rm (x).] $p=2$, $2\not|\ell$, $B(n_1,1)\ast_{C_2}D_\ell\ast_{C_\ell}D_\ell\ast_{C_2}B(n_2,1)$
            with $n_1,n_2\geq 2$.
\item[\rm (xi).] $p=2$, $q>2$,
  $B(n_1,1)\ast_{C_2}D_{q+1}\ast_{C_{q+1}}{\rm PGL}_2(\mathbb{F}_q)
           \ast_{B(n,q-1)}B(n\cdot n_2,q-1)$
            with $n_1,n_2\geq 2$.
\end{enumerate}
\end{small}
\end{proposition}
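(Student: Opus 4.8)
The plan is to derive Proposition \ref{3.2} as a bookkeeping exercise built on Theorem \ref{3.1} and Proposition \ref{2.1}, restricting attention to the constraint ${\rm br}(\Gamma)=2$. By Theorem \ref{3.1}(2), ${\rm br}(\Gamma)=2$ forces ${\rm max}(3)=0$ and ${\rm maxp}=0$. First I would record what ${\rm max}(3)=0$ means: no vertex group $\Gamma_v$ may be one of the three-branch-point groups $D_n,A_4,S_4,A_5$ of \S 1(c) \emph{in the role that gives three branch points}, so every vertex group must have $p\mid \#\Gamma_v$ and is therefore drawn from the lists \S 1(a) (Borel type $B(n,m)$) and \S 1(b) (the groups ${\rm PGL}_2(\mathbb{F}_q)$, ${\rm PSL}_2(\mathbb{F}_q)$, $D_\ell$ for $p=2$, $A_5$ for $p=3$). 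Then ${\rm maxp}=0$ excludes, for $p=2,3$, a $p$-group vertex $C_p^{n_0}$ glued along a single $p$-cyclic subgroup $A$ to two or more neighbours each identifying $\Gamma_e\cong C_p$ with $A$; concretely this kills the configurations $D_n\ast_{C_2}C_2^{n_0}\ast_{C_2}D_m$ (for $p=2$) and ${\rm PSL}_2(\mathbb{F}_3)\ast_{C_3}C_3^{n_0}\ast_{C_3}{\rm PSL}_2(\mathbb{F}_3)$ (for $p=3$), which per remark (ii) before the statement genuinely carry three branch points.

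Next I would enumerate the possible trees of groups. Every edge group is a nontrivial branch group of both adjacent vertex groups (indecomposability plus the realizability constraints of Proposition \ref{2.1}), and \cite{P-V} 3.7 forbids a cyclic $C_m$ with $p\nmid m$ from being a vertex group — so the cyclic groups $C_m$, $C_{q+1}$, $C_\ell$ can appear only as \emph{edge} groups, never as interior vertices. Combining this with Proposition \ref{2.1}(a)(1) and Corollary \ref{2.2}: whenever two non-Borel vertex groups are joined by an edge $C_\ell$, they must be an allowed pair, i.e.\ either equal, or $\{D_{q+1},{\rm PGL}_2(\mathbb{F}_q)\}$ via $C_{q+1}$ (case $p=2$), or $\{D_n,D_m\}$ via $C_2$ (case $p=2$), or $\{A_5,{\rm PSL}_2(\mathbb{F}_9)\}$ via $C_5$ (case $p=3$), or ${\rm PSL}_2(\mathbb{F}_3)\cong {\rm PSL}_2(\mathbb{F}_3)$ via $C_3$ (case $p=3$, but the latter is excluded by ${\rm maxp}=0$ only when it sits on a $C_3^{n_0}$ — as a bare edge it is allowed but produces case with $A_5$ through \S 1(b)(4)). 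Borel vertices $B(N,m)$ attach by Proposition \ref{2.1}(b): along $C_m$ to a branch group of the neighbour, or along a full Borel $B(\mathbb{F}_q)$ to ${\rm PGL}_2(\mathbb{F}_q)$ or ${\rm PSL}_2(\mathbb{F}_q)$ (forcing the index condition $m=q-1$ resp.\ $(q-1)/2$ and the dimension multiplication $\ell\cdot n$), or along $C_2$/$D_3$ to $D_\ell$/$A_5$ in the $p=2,3$ exceptional families. I would also invoke \cite{P-V} Prop.\ 5.8 and Cor.\ 5.6 (item (v) in the list of arguments) to pin down the exact shape of the admissible maximal trees — in particular that a $C_\ell$-edge between two copies of the same non-Borel group can occur at most once (giving the ``doubled'' cases (ii),(iv),(vii),(x)), and that the $p$-group leaves $B(n_i,1)$ attach only at the outer ends.

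The combinatorial core is then to show that, after pruning, every maximal admissible tree is \emph{linear} (a path), and to list the finitely many shapes. Linearity is where \cite{P-V} 5.6/5.8 does the work: a vertex of valence $\ge 3$ would either force an extra branch point or force a ${\rm Maxp}$-type configuration, both excluded; I would spell this out by a short case analysis on the valence of a would-be branch vertex, using that each incident edge contributes $-{\rm br}(\Gamma_e)$ in Theorem \ref{3.1}(1) while the branching group itself contributes only $1$ or $2$, so a high-valence vertex cannot balance the formula to give total $2$. Once linearity is established, a path of vertices from the catalogue, with edges constrained as above and outermost vertices allowed to be $p$-group leaves $B(n_i,1)$ (item (iii) of the argument list: truncating a maximal tree gives sub-cases, so only maximal paths are listed), yields exactly the eleven families (i)–(xi); matching up the numerical side-conditions ($n_i\ge 2$ except where a cyclic or $p$-group end is absent, $\ell\mid 2^{n_1}-1$, $q=p^n$, the $s$ in (iii), etc.) is a direct transcription of the index and dimension constraints from Proposition \ref{2.1}(b). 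The main obstacle I anticipate is \emph{not} any single deep step but the sheer case management for $p=2$ and $p=3$ — keeping straight which of $D_\ell$, $A_5$, ${\rm PSL}_2(\mathbb{F}_9)$, ${\rm PGL}_2(\mathbb{F}_q)$ can be adjacent to which, along which edge group, and in which position on the path — together with the careful use of Proposition \ref{2.1}(b)(2)–(3) to get the exact exponent bookkeeping ($2n\cdot n_1$ versus $n\cdot n_1$, the factor $s$, etc.) right; I would organize this as a table of ``legal adjacencies'' and then read off the maximal paths, which is tedious but entirely mechanical given Proposition \ref{2.1}, Corollary \ref{2.2}, and Theorem \ref{3.1}.
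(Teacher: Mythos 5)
Your route is essentially the one the paper takes: the paper offers no detailed proof of Proposition 3.2 beyond the five bullet points (i)--(v) preceding it, and your proposal is an expansion of exactly those points (Theorem 3.1(2) forces ${\rm max}(3)={\rm maxp}=0$, hence $p\mid\#\Gamma_v$ for all vertices; Proposition 2.1 and Corollary 2.2 give the legal edges; only maximal trees are listed).

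One sub-step you propose would fail as stated, namely deducing linearity of $T^c$ by ``balancing'' the formula ${\rm br}(\Gamma)=\sum_v{\rm br}(\Gamma_v)-\sum_e{\rm br}(\Gamma_e)$ against a high-valence vertex. The neighbours of that vertex also contribute positively: a star with a central vertex of ${\rm br}=2$, three leaves of ${\rm br}=2$ and three edges with ${\rm br}(\Gamma_e)=2$ gives $4\cdot 2-3\cdot 2=2$, so the formula alone does not rule out valence $\geq 3$. The correct argument, which you also state in the same sentence and should keep as the actual proof, is local and group-theoretic: each edge at $v$ is identified with a branch group of $\Gamma_v$ attached to a distinct branch point of $\Gamma_v$; since ${\rm br}(\Gamma_v)\leq 2$, a third edge forces two edges onto the same branch group, and under $p\mid\#\Gamma_v$ the only vertex groups admitting two edges on one branch group are the $p$-groups of the ${\rm Maxp}$ configurations (the groups $D_\ell$ and $A_4$ of \S 5 that also admit this have $p\nmid\#G$ and are already excluded by ${\rm max}(3)=0$), and these are forbidden by ${\rm maxp}=0$. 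With that repair the enumeration is the mechanical transcription you describe. A smaller point: by the convention recalled in Remark 4.2, the ``bare'' edges $D_\ell\ast_{C_2}D_m$ ($p=2$) and ${\rm PSL}_2(\mathbb{F}_3)\ast_{C_3}{\rm PSL}_2(\mathbb{F}_3)$ ($p=3$) are always rewritten with a middle $C_p^{n_0}$ vertex and hence carry ${\rm maxp}\geq 1$ and three branch points; they are therefore excluded outright here, not ``allowed as a bare edge'' as your parenthetical suggests.
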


\section{ \rm Mumford groups with three branch points}

According to Theorem 3.1, a realizable amalgam
    $\Gamma$ has three branch points if 
one of the following statements holds:
\begin{enumerate}
\item[(i)] The group $\Gamma$ is indecomposable and ${\rm max}(3)=0$ and ${\rm maxp}=1$.
\item[(ii)] The group $\Gamma$ is indecomposable and ${\rm max}(3)=1$ and ${\rm maxp}=0$.
\item[(iii)] The group $\Gamma$ is a free amalgam
  $E\ast\Gamma^\prime$ with $\Gamma^\prime$
         is   a discontinuous group or a finite  group with ${\rm br}(\Gamma^\prime)=2$ and $E$ a finite $p$-group. 
\end{enumerate} 
In the sequel we consider the cases (i) and (ii).

\subsection{\rm The case ${\rm br}(\Gamma)=3$, ${\rm maxp}=1$ and ${\rm max}(3)=0$.}
\begin{remark} \label{4.2} {\rm 
From [P-V] we recall that ${\rm maxp}$ is only defined for $p=2,3$ and
occurs in the description in Theorem 3.14 of the contracted finite, 
indecomposable  tree of groups $(T=T^c,G)$ associated to an indecomposable
discontinuous group $\Gamma$. There can be vertices $v\in T$
such that the vertex group $\Gamma_v$ is a $p$-group $C_p^{n_0}$ with $n_0\geq 1$ 
and has $d_v\geq 2$ edges $e=\{v,\tilde{v}\}$ with edge group $C_p$. For $p=2$, the vertex
group $\Gamma_{\tilde{v}}$ is $D_\ell$ with odd $\ell$. For $p=3$, the vertex
group $\Gamma_{\tilde{v}}$ is ${\rm PSL}_2(\mathbb{F}_3)$.  Now ${\rm maxp}$ is the sum of all  $(d_v-1)$. If
${\rm maxp}=1$, then there is only one such vertex $v$ and $d_v=2$. 

 For $p=2$ this means that $D_\ell \ast_{C_2}C_2^{n_0}\ast_{C_2}D_m$
 with $n_0\geq 1$ and odd $\ell , m$ occurs precisely once in the amalgam for
 $\Gamma$. We note that, for a technical reason,   
$D_\ell \ast_{C_2}D_m$ is not allowed in \cite{P-V} Theorem 3.14 and its
occurrence is replaced by  $D_\ell \ast_{C_2}C_2\ast_{C_2}D_m$. 
We will adhere to the same convention in the propositions below.
In the same way,  ${\rm maxp}=1$ for $p=3$ means that  
${\rm PSL}_2(\mathbb{F}_3)\ast_{C_3}C_3^{n_0}\ast_{C_3}{\rm PSL}_2(\mathbb{F}_3)$ with $n_0\geq 1$ occurs
once  in the amalgam for $\Gamma$. We conclude:\\

\noindent 
 {\it Let ${\rm br}(\Gamma)=3$, ${\rm maxp}=1$ and ${\rm max}(3)=0$.
Then $p| \sharp\Gamma_v$ for all vertices $v\in T^c$ and one of the following two statements holds: 
\begin{enumerate}
\item[\rm (i).] $p=2$ and $D_\ell\ast_{C_2}C_2^{n_0}\ast_{C_2}D_m$, $2\not|\ell,m$
         with $n_0\geq 1$ occurs exactly once in the description
         of $\Gamma$ as an amalgam.
\item[\rm (ii).] $p=3$ and ${\rm
    PSL}_2(\mathbb{F}_3)\ast_{C_3}C_3^{n_0}\ast_{C_3}{\rm PSL}_2(\mathbb{F}_3)$ 
        with $n_0\geq 1$ occurs exactly once in the
        description of $\Gamma$ as an amalgam.
\end{enumerate} }

We recall from the beginning of \S 3, that for $p=2$ the two edge
groups $C_2$ are mapped to the same subgroup of $C_2^{n_0}$. Similarly,
for $p=3$, the two edge groups $C_3$ are mapped to the same subgroup
of $C_3^{n_0}$.\\   

In order to find all the possible amalgams, listed in Propositions \ref{4.3}
and \ref{4.4},  it is sufficient to determine all the finite groups $G$
that are not of Borel type and whose order is divisible by $p$
and that have a branch group different from $ C_p$ and $ C_p^{n_0}$ with $n_0\geq 1$ in common with the group
 $D_\ell\ast_{C_2}C_2^{n_0}\ast_{C_2}D_m$, $2\not|\ell,m$ for $p=2$
 and   ${\rm PSL}_2(\mathbb{F}_3)\ast_{C_3}C_3^{n_0}\ast_{C_3}{\rm PSL}_2(\mathbb{F}_3)$ for $p=3$.
 This branch group is a cyclic group $C_r$ with $r$ not divisible by $p$.
One can now create a realizable amalgam by adding $\ast_{C_r}B(n_1,r)$, $\ast_{C_r}G$ 
or $\ast_{C_r}G\ast_{B_G}B(n_1,s)$ to the already obtained amalgam. 
Here $B_G$ is the branch group of $G$ distinct from $C_r$. 
} \end{remark}

\begin{proposition}\label{4.3}
Let $p=2$ and let $\Gamma$ be such that ${\rm br}(\Gamma)=3$, ${\rm maxp}=1$ and ${\rm max}(3)=0$.
Then $\Gamma$ is one of the following amalgams:
\begin{small}
\begin{enumerate}
\item[\rm (i).] $B(n_1,\ell)\ast_{C_\ell}D_\ell\ast_{C_2}C_2^{n_0}\ast_{C_2}D_m\ast_{C_m}B(n_2,m)$,
          odd $\ell,m$.
\item[\rm (ii).] $B(n_1,\ell)\ast_{C_\ell}D_\ell\ast_{C_2}C_2^{n_0}\ast_{C_2}D_m\ast_{C_m}D_m\ast_{C_2}B(n_2,1)$,
             odd $\ell,m$.
\item[\rm (iii).] $B(n_1,1)\ast_{C_2}D_\ell\ast_{C_\ell}D_\ell\ast_{C_2}C_2^{n_0}\ast_{C_2}D_m\ast_{C_m}D_m\ast_{C_2}B(n_2,1)$,
            odd $\ell,m$.
\item[\rm (iv).]  $q>2$,
  $B(n_1,\ell)\ast_{C_\ell}D_\ell\ast_{C_2}C_2^{n_0}\ast_{C_2}D_{q+1}\ast_{C_{q+1}}{\rm PGL}_2(\mathbb{F}_q)
            \ast_{B(\mathbb{F}_q)}B(n_2,q-1)$, odd $\ell$.
\item[\rm (v).]  $q_1,q_2>2$, $B(n_1,q_1-1)\ast_{B(\mathbb{F}_{q_1})}{\rm PGL}_2(\mathbb{F}_{q_1})\ast_{C_{q_1+1}}D_{q_1+1}
           \ast_{C_2}C_2^{n_0}\ast_{C_2}D_{q_2+1}\ast_{C_{q_2+1}}{\rm PGL}_2(\mathbb{F}_{q_2})
           \ast_{B(\mathbb{F}_{q_2})}B(n_2,q_2-1)$.
\end{enumerate}
\end{small}
       Here $n_0\geq 1$.
       The group $B(\mathbb{F}_q)=B(n,q-1)$ is a Borel subgroup of
       ${\rm PGL}_2(\mathbb{F}_q)$. As in Proposition {\rm  3.2} we have
       written the amalgams of maximal length. By deleting end group(s) one obtains
       the other possibilities. 
  \end{proposition}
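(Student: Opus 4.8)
The plan is to combine Theorem~\ref{3.1} with the structural description in Remark~\ref{4.2} and the realizability criterion of Proposition~\ref{2.1}, in the spirit of the enumeration carried out for Proposition~\ref{3.2}. Since $p=2$, ${\rm max}(3)=0$ forces $p\mid\#\Gamma_v$ for every vertex of $T^c$, so the only finite groups that can appear as vertex groups are those of \S1 parts (a) and (b): the Borel groups $B(n,m)$, the groups ${\rm PGL}_2(\mathbb{F}_q)$, and the dihedral groups $D_\ell$ with odd $\ell$ (note ${\rm PSL}_2={\rm PGL}_2$ for $p=2$, and items (b)(2),(b)(4) of \S1 are void). The hypothesis ${\rm maxp}=1$ means, by Remark~\ref{4.2}, that the subtree $D_\ell\ast_{C_2}C_2^{n_0}\ast_{C_2}D_m$ (odd $\ell,m$, $n_0\geq1$) occurs exactly once, contributing the extra $+1$ to ${\rm br}(\Gamma)={\rm max}(3)+{\rm maxp}+2=3$; all other amalgamations in the tree must therefore keep the running branch-point count from growing, i.e.\ they are the ``two branch point'' gluings catalogued implicitly by Proposition~\ref{3.2}.

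Concretely I would proceed as follows. First, isolate the mandatory core $D_\ell\ast_{C_2}C_2^{n_0}\ast_{C_2}D_m$. The two outer dihedral groups $D_\ell, D_m$ each have, besides the $C_2$ already used, one further branch group, namely $C_\ell$ respectively $C_m$. By Theorem~\ref{3.1}(1), attaching anything along either $C_2$ again would raise ${\rm br}$, and by the analysis of \S1 (which $C_2$-branch groups are conjugate) the only legal continuation at the $C_\ell$ end is to amalgamate along $C_\ell$ with a group having $C_\ell$ as a branch group: by Corollary~\ref{2.2} and Proposition~\ref{2.1}(b)(1) this is either $B(n_1,\ell)$ (terminating the branch, giving possibility (i)'s left tail $B(n_1,\ell)\ast_{C_\ell}D_\ell$), or another $D_\ell$ (since for $p=2$ Proposition~\ref{2.1}(a)(2) allows $D_\ell\ast_{C_2}D_m$ — but here we must glue along $C_\ell$, so it is $D_\ell\ast_{C_\ell}D_\ell$), whose remaining $C_2$ then must be capped by a $p$-group $B(n_1,1)$. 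The same dichotomy applies symmetrically at the $C_m$ end. Second, I would observe that when $\ell=q+1$ (resp. $m=q+1$) a further option opens: $D_{q+1}$ shares its branch group $C_{q+1}$ with ${\rm PGL}_2(\mathbb{F}_q)$ (\S1(b)(1) and Corollary~\ref{2.2}(2)), so one may instead amalgamate $D_{q+1}\ast_{C_{q+1}}{\rm PGL}_2(\mathbb{F}_q)$ and then continue along the Borel branch group $B(\mathbb{F}_q)=B(n,q-1)$ by Proposition~\ref{2.1}(b)(2), capping with $B(n_2,q-1)$. Running through all combinations of the left-tail choice $\{B(n_1,\ell)\ast_{C_\ell}D_\ell;\ B(n_1,1)\ast_{C_2}D_\ell\ast_{C_\ell}D_\ell;\ B(n_1,q-1)\ast_{B(\mathbb{F}_q)}{\rm PGL}_2(\mathbb{F}_q)\ast_{C_{q+1}}D_{q+1}\}$ and the mirror-image right-tail choice, and discarding repetitions under the tree's symmetry, yields exactly the five maximal amalgams (i)--(v); possibility (iii) is the ``double dihedral tail on both sides'' case, (iv) mixes one dihedral tail with one ${\rm PGL}_2$ tail, and (v) is the ``${\rm PGL}_2$ tail on both sides.''

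Finally I would check each listed amalgam is genuinely realizable with three branch points by applying Theorem~\ref{3.1}(2) to its contracted tree (each is indecomposable with ${\rm max}(3)=0$, ${\rm maxp}=1$), and invoke Proposition~\ref{2.1} edge-by-edge to confirm every amalgamation is admissible; the remark about deleting end groups follows from \S3(iii). The main obstacle I anticipate is not any single deduction but the bookkeeping: one must be scrupulous that no \emph{other} continuation of the central subtree is possible — in particular that the $C_2$-edges of the outer $D_\ell, D_m$ cannot be re-used, that a Borel group $B(N,1)$ cannot be inserted in the middle (this is precisely why the $n_0\geq1$ core is forced to appear ``contracted'' rather than split, cf.\ the convention in Remark~\ref{4.2}), and that gluing two non-Borel groups is governed entirely by the rigid list in Corollary~\ref{2.2}. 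Handling the degenerate small cases ($\ell$ or $m$ equal to $3$, or $q=2$ where ${\rm PGL}_2(\mathbb{F}_2)\cong D_3$) requires care so that items (iv),(v) are stated with $q>2$, which is why that restriction is imposed.
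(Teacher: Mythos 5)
Your strategy is the same as the paper's: the proof given there (the closing paragraph of Remark \ref{4.2}, together with the general arguments following Theorem \ref{3.1}) likewise fixes the mandatory core $D_\ell\ast_{C_2}C_2^{n_0}\ast_{C_2}D_m$ and then attaches, along the remaining branch groups $C_\ell$ and $C_m$ of the two outer dihedral vertices, either a Borel cap $B(n_i,r)$, a second non-Borel group $G$ having $C_r$ as branch group, or such a $G$ followed by a Borel cap along its other branch group $B_G$. Your identification of the admissible $G$ for $p=2$ (namely $D_r$, and ${\rm PGL}_2(\mathbb{F}_q)$ when $r=q+1$) via \S 1 and Corollary \ref{2.2}, and your use of Proposition \ref{2.1} edge by edge, is exactly the paper's route.

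The one step that does not hold up as written is the final count. Your scheme offers three tail types at each end — the Borel cap $B(n_1,\ell)\ast_{C_\ell}D_\ell$, the doubled-dihedral tail $B(n_1,1)\ast_{C_2}D_\ell\ast_{C_\ell}D_\ell$, and the ${\rm PGL}_2$ tail $B(n_1,q-1)\ast_{B(\mathbb{F}_q)}{\rm PGL}_2(\mathbb{F}_q)\ast_{C_{q+1}}D_{q+1}$ — chosen independently on the two sides, so up to the left-right symmetry there are six unordered combinations, not five. The missing one pairs a doubled-dihedral tail with a ${\rm PGL}_2$ tail:
\[
B(n_1,1)\ast_{C_2}D_\ell\ast_{C_\ell}D_\ell\ast_{C_2}C_2^{n_0}\ast_{C_2}D_{q+1}\ast_{C_{q+1}}{\rm PGL}_2(\mathbb{F}_q)\ast_{B(\mathbb{F}_q)}B(n_2,q-1).
\]
This amalgam is produced by your own procedure, it is not obtained from (i)--(v) by deleting end groups (except in the degenerate case $\ell=3$ with ${\rm PGL}_2(\mathbb{F}_2)\cong D_3$), and every edge and every vertex star occurring in it already occurs in items (iii) and (iv), so nothing in your argument excludes it. You must either supply the reason this sixth family is ruled out or concede that the assertion ``yields exactly the five maximal amalgams (i)--(v)'' does not follow from the enumeration you describe; this is precisely the bookkeeping you flagged as the main obstacle, and it is the point at which the proof as written is incomplete.
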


\begin{proposition}\label{4.4}
Let $p=3$ and let $\Gamma$ be such that ${\rm br}(\Gamma)=3$, ${\rm maxp}=1$ and ${\rm max}(3)=0$.
Then $\Gamma$ is one of the following amalgams:
\begin{small}
\begin{enumerate}
\item[\rm (i).] $B(n_1,2)\ast_{C_2}{\rm PSL}_2(\mathbb{F}_3)\ast_{C_3}C_3^{n_0}\ast_{C_3}{\rm PSL}_2(\mathbb{F}_3)\ast_{C_2}B(n_2,2)$.
\item[\rm (ii).]
  $B(n_1,2)\ast_{C_2}{\rm PSL}_2(\mathbb{F}_3)\ast_{C_3}C_3^{n_0}\ast_{C_3}{\rm
    PSL}_2(\mathbb{F}_3)\ast_{C_2}{\rm PSL}_2(\mathbb{F}_3)\ast_{C_3}B(n_2,1)$.
\item[\rm (iii).] $B(n_1,1)\ast_{C_3}{\rm
    PSL}_2(\mathbb{F}_3)\ast_{C_2}{\rm
    PSL}_2(\mathbb{F}_3)\ast_{C_3}C_3^{n_0}\ast_{C_3}{\rm
    PSL}_2(\mathbb{F}_3)\ast_{C_2}{\rm PSL}_2(\mathbb{F}_3)\ast_{C_3}B(n_2,1)$.
\end{enumerate}
\end{small}
The same remarks as in \ref{3.2} and \ref{4.3} apply.
 \end{proposition}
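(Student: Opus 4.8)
The approach mirrors the proof of Proposition~\ref{4.3} for $p=2$ and takes as its main input the reduction carried out in Remark~\ref{4.2}. By that remark, if $p=3$, ${\rm br}(\Gamma)=3$, ${\rm maxp}=1$ and ${\rm max}(3)=0$, then $3\mid\#\Gamma_v$ for every vertex $v$ of the contracted tree $T^c$, and the subamalgam ${\rm PSL}_2(\mathbb{F}_3)\ast_{C_3}C_3^{n_0}\ast_{C_3}{\rm PSL}_2(\mathbb{F}_3)$ (with $n_0\geq 1$) occurs exactly once, the central vertex $C_3^{n_0}$ being the unique ${\rm Maxp}$-vertex, with $d_v=2$, so that both of its edges are already present. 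Hence $T^c$ is obtained from this central piece by attaching, at each branch group of the piece not yet carrying an edge, an enlargement of one of the three shapes $\ast_{C_r}B(n_1,r)$, $\ast_{C_r}G$ or $\ast_{C_r}G\ast_{B_G}B(n_1,s)$, where $G$ is finite, not of Borel type, of order divisible by $3$, has $C_r$ (with $r$ prime to $3$) among its branch groups, and $B_G$ is the branch group of $G$ other than $C_r$. So the proof is exactly the enumeration of these enlargements.

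First I pin down $r$ and $G$. Each of the two outer vertices ${\rm PSL}_2(\mathbb{F}_3)\cong A_4$ has, by \S1(b), precisely the branch groups $C_3$ and $C_2$; the $C_3$ is already identified with the subgroup $A\subset C_3^{n_0}$, so the only free branch group on each of the two sides is $C_r=C_2$. Running through \S1(b): for ${\rm PGL}_2(\mathbb{F}_q)$ resp.\ ${\rm PSL}_2(\mathbb{F}_q)$ with $q$ a power of $3$ the two branch groups are a Borel group together with $C_{q+1}$ resp.\ $C_{(q+1)/2}$, and for $A_5\subset A_6\cong{\rm PSL}_2(\mathbb{F}_9)$ they are $C_5$ and $S_3\cong B(1,2)$; the value $C_2$ occurs only as $C_{(q+1)/2}$ with $q=3$, so $G={\rm PSL}_2(\mathbb{F}_3)$ and then $B_G=B(1,1)=C_3=C_p$ (compare Corollary~\ref{2.2}). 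Thus the piece attached to a side is one of: nothing; $\ast_{C_2}B(n_1,2)$; $\ast_{C_2}{\rm PSL}_2(\mathbb{F}_3)$; or $\ast_{C_2}{\rm PSL}_2(\mathbb{F}_3)\ast_{C_3}B(n_1,s)$. In the last shape the edge group at the remaining free branch group $C_3$ of ${\rm PSL}_2(\mathbb{F}_3)$ is $C_p$, so by Proposition~\ref{2.1} the only realizable continuation is with the $p$-group $B(n_1,1)=C_3^{n_1}$, i.e.\ $s=1$; moreover this piece cannot be prolonged, since attaching another ${\rm PSL}_2(\mathbb{F}_3)$ at the $C_3$ would by Theorem~\ref{3.1}(1) raise ${\rm br}(\Gamma)$ by ${\rm br}({\rm PSL}_2(\mathbb{F}_3))-{\rm br}(C_3)=2-1=1$, contradicting ${\rm br}(\Gamma)=3$, while a further $C_3$-edge to a $p$-group would make $C_3^{n_1}$ a second ${\rm Maxp}$-vertex, contradicting ${\rm maxp}=1$. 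Likewise the arm $\ast_{C_2}B(n_1,2)$ terminates, because by Proposition~\ref{2.1} and \S1 its remaining branch group $B(n_1,2)$ is the branch group of no admissible $G$.

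It remains to combine the two sides. Writing arms of maximal length, each side is either $\ast_{C_2}B(n,2)$ (type~A) or $\ast_{C_2}{\rm PSL}_2(\mathbb{F}_3)\ast_{C_3}B(n,1)$ (type~B), all shorter arms being obtained by deleting end groups exactly as in Propositions~\ref{3.2} and~\ref{4.3}; the three maximal combinations $(\text{A},\text{A})$, $(\text{A},\text{B})$, $(\text{B},\text{B})$ are precisely the amalgams (i), (ii), (iii) of the statement. The one step that needs genuine care is the determination of the admissible $G$ together with the two termination claims of the previous paragraph --- that a free $C_3$ forces a $p$-group end and a free $B(n,2)$ forces nothing --- for which Proposition~\ref{2.1}, the branch-group classification of \S1 and the branch-point formula of Theorem~\ref{3.1}(1) are all used; everything else is the bookkeeping already carried out in Proposition~\ref{4.3}.
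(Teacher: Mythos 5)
Your overall route is the paper's own: Proposition \ref{4.4} is proved by the recipe of Remark \ref{4.2} (fix the central piece ${\rm PSL}_2(\mathbb{F}_3)\ast_{C_3}C_3^{n_0}\ast_{C_3}{\rm PSL}_2(\mathbb{F}_3)$, observe that the only free branch group on each side is $C_2$, and that by \S 1(b) and Corollary \ref{2.2}(5) the only admissible non-Borel $G$ with branch group $C_2$ is ${\rm PSL}_2(\mathbb{F}_3)$, whose other branch group is $B(1,1)=C_3$), and your identification of the two arm types and the three maximal combinations is correct. The termination argument for the type-B arm (another ${\rm PSL}_2(\mathbb{F}_3)$ at the end $C_3$ raises ${\rm br}$ by $1$; a further $C_3$-edge creates a second ${\rm Maxp}$-vertex) is also fine.

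The gap is in the step you yourself single out as needing care: the claim that the arm $\ast_{C_2}B(n_1,2)$ terminates ``because its remaining branch group $B(n_1,2)$ is the branch group of no admissible $G$.'' That criterion is not the one the paper uses for edges at a Borel-type vertex: in Proposition \ref{2.1}(b)(2),(3),(5) the edge group is only required to be a branch group of the \emph{non}-Borel side and a suitable proper subgroup of the Borel group. In particular $B(n_1,2)\ast_{B(1,2)}A_5$ (for $p=3$, $n_1>1$) and $B(n_1,2)\ast_{B(1,2)}{\rm PGL}_2(\mathbb{F}_3)$ are realizable, and grafting either onto the end vertex $B(n_1,2)$ would leave ${\rm br}=3$, ${\rm maxp}=1$ and ${\rm max}(3)=0$ unchanged, since ${\rm br}(A_5)={\rm br}({\rm PGL}_2(\mathbb{F}_3))={\rm br}(B(1,2))=2$ when $p=3$; so your test does not exclude these candidates (and, read literally, it would also wrongly forbid the amalgams of \ref{2.1}(b)(2),(5)). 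What actually kills them is that a Borel-type vertex $B(N,m)$ with $m>1$ can carry only \emph{one} edge: any admissible edge group at such a vertex (be it $C_m$, $B(n',m)$ or $B(1,2)$) contains an element $z\mapsto \zeta z+b$ with $\zeta\neq 1$, which fixes both the point $\infty$ fixed by the unipotent part and a point of the $C_m$-orbit, so the single edge already accounts for both branch points of $B(N,m)$ and the vertex is forced to be extremal --- which is why Borel-type vertices with $m>1$ are end vertices in every list of \S\S 3--4. Your enumeration and the resulting list are correct, but this exclusion needs that argument (or the corresponding realizability criterion of \cite{P-V}) rather than the one you give.
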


\subsection{ \rm The case ${\rm br}(\Gamma)=3$, ${\rm maxp}=0$ and ${\rm max}(3)=1$.}
There exists exactly one vertex $v_0\in T^c$ such that $\ \ \ p\not| \#\Gamma_{v_0}$.
For all other vertices $v\in T^c$ one has $p| \#\Gamma_v$. 
The group $\Gamma_{v_0}$ equals one of the groups $D_\ell, A_4, S_4, A_5$, 
where $p$ does not divide the order of the group.

The group $\Gamma_{v_0}$ has three branch points 
corresponding to the maximal cyclic subgroups of $\Gamma_{v_0}$.
The triples consisting of the orders of these cyclic groups are
 $(2,2,\ell), (2,3,3), (2,3,4)$ and $(2,3,5)$ for the groups
$D_\ell, A_4, S_4$ and $A_5$, respectively. 
The maximal cyclic subgroups of order 2 (resp. 3) in the group $D_\ell$ 
with $\ell$ odd (resp. $A_4$) are conjugated.
Hence the stabilisers of the branch points are the same.
If $\ell$ is even, then the branch points of the group $D_\ell$ 
correspond to groups that are in different conjugacy classes.\\ 

In order to find all the possible amalgams, i.e., the lists of
propositions \ref{4.5}--\ref{4.8}, it is sufficient to determine all the finite groups $G$
that are not of Borel type and whose orders are divisible by $p$
and  have a branch group  in common with the group
 $\Gamma_{v_0}$.\\
 
In the following propositions we write (as before) the list of amalgams of maximal
length. By deleting end group(s) one obtains all possibilities. 

\begin{proposition}\label{4.5} For $\Gamma_{v_0}\cong A_5$ and $p>5$, the group
  $\Gamma$ is equal to\\
 the amalgam of $A_5$ along its maximal cyclic subgroups $C_2$, $C_3$ and $C_5$
  to groups $B(n_1,2)$, $B(n_2,3)$ and $B(n_3,5)$, respectively.
\end{proposition}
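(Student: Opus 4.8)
The plan is to pin down the contracted tree of groups $(T^c,G)$ directly, reading off the neighbors of the exceptional vertex $v_0$ from Proposition \ref{2.1} and the classification of \S 1, with the hypothesis $p>5$ (so $p\ge 7$ and $p\nmid|A_5|=2^2\cdot 3\cdot 5$) doing most of the work. First I would record what the standing hypotheses of \S 4.2 give: $\Gamma_{v_0}\cong A_5$ is the unique vertex group of order prime to $p$, every other vertex $v\in T^c$ has $p\mid\#\Gamma_v$, and by \S 1(c) the three branch points of $A_5$ are carried by its maximal cyclic subgroups $C_2,C_3,C_5$. Since none of the exceptional pairs $(C_2,D_\ell)$, $(C_3,A_4)$ of \S 1 occurs for $A_5$, each of $C_2,C_3,C_5$ is a single conjugacy class of subgroups of $A_5$, so $T^c$ has at most one edge-direction at $v_0$ for each of the three classes.

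Next I would show that every neighbor of $v_0$ is of Borel type. If $v$ is adjacent to $v_0$, the edge group is a branch group of $A_5$, hence $\cong C_2$, $C_3$ or $C_5$, and by Proposition \ref{2.1} it is also a branch group of $\Gamma_v$, while $p\mid\#\Gamma_v$. Going through \S 1: the items special to $p=2,3$ are vacuous; for $\mathrm{PGL}_2(\mathbb{F}_q)$ and $\mathrm{PSL}_2(\mathbb{F}_q)$ with $q=p^n\ge 7$ the cyclic branch groups have orders $q+1\ge 8$, resp.\ $(q+1)/2\ge 4$ (equal to $5$ only when $q=9$, impossible for $p\ne 3$), while the remaining branch group is non-cyclic, of order $q(q-1)$ resp.\ $q(q-1)/2$. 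So no non-Borel group of order divisible by $p$ shares a branch group with $A_5$, and $\Gamma_v$ must be a Borel group $B(N,m)$ with $m\in\{2,3,5\}$, the edge group being $C_m$ --- a pair as in Proposition \ref{2.1}(b)(1); here $N\ge 1$, as a cyclic group is never a vertex group. Thus the candidate neighbors of $v_0$ are $B(n_1,2)$, $B(n_2,3)$, $B(n_3,5)$, one along each class.

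Finally I would show that nothing hangs off these Borel vertices and that there are no further vertices. Suppose a Borel neighbor $v$, $\Gamma_v=B(N,m)$ with $m\in\{2,3,5\}$, had a second neighbor $w\ne v_0$; then $p\mid\#\Gamma_w$, and $\Gamma_{\{v,w\}}$, being a branch group of $B(N,m)$, is $C_m$ or $B(N,m)$. If it is $B(N,m)$, then $B(N,m)$ is a Borel branch group of the non-Borel $\Gamma_w$, forcing $\Gamma_w\in\{\mathrm{PGL}_2(\mathbb{F}_q),\mathrm{PSL}_2(\mathbb{F}_q)\}$ with $B(N,m)=B(\mathbb{F}_q)$; then the edge is a whole vertex group, contraction of $T^c$ absorbs $v$, and $A_5$ is joined to $\mathrm{PGL}_2(\mathbb{F}_q)$ (or $\mathrm{PSL}_2(\mathbb{F}_q)$) along $C_m$, which is not a branch group of the latter --- contradicting Proposition \ref{2.1}(a). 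If $\Gamma_{\{v,w\}}=C_m$, then $\Gamma_w$ is again of Borel type, say $B(N',m)$, and I would realize $A_5\ast_{C_m}B(N,m)\ast_{C_m}B(N',m)$ in $\mathrm{PGL}_2(K)$ with $C_m$ normalized to $\langle\mathrm{diag}(\zeta,1)\rangle$, fixing $0$ and $\infty$; the elementary abelian $p$-subgroups $P\subset B(N,m)$ and $P'\subset B(N',m)$ then consist of unipotent matrices each fixing a point of $\{0,\infty\}$, so $\langle P,P'\rangle$ lies in $\mathrm{PSL}_2$ of the finite subfield of $K$ generated by the finitely many entries of $P\cup P'$, hence is finite --- but it is not conjugate into any vertex group, contradicting that the Mumford group $\Gamma$ is the amalgam of $(T^c,G)$. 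The same argument forbids attaching two Borel leaves to $v_0$ along one class, and $\mathrm{max}(3)=1$ forbids a second vertex with three branch points (e.g.\ another $A_5$). Hence $T^c$ is exactly $v_0=A_5$ with the leaves $B(n_1,2),B(n_2,3),B(n_3,5)$, and, realizability being supplied by \cite{P-V}, the proposition follows; smaller trees arise by deleting leaves, as usual.

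The step I expect to be the real obstacle is the last one --- ruling out a chain of Borel groups dangling off $A_5$. The crux is the elementary observation that a subgroup of $\mathrm{PGL}_2(K)$ generated by two finite unipotent subgroups whose matrices have entries in a common finite subfield is itself finite, which produces a finite subgroup of $\Gamma$ not conjugate into any vertex stabilizer, incompatible with the Bass--Serre structure of an amalgam. Getting \cite{P-V}'s contraction conventions right, so that the ``edge group $=$ vertex group'' configurations are correctly absorbed, is the other point that needs care.
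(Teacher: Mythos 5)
Your first two paragraphs are sound and follow the paper's (largely implicit) reasoning: since for $p>5$ the cyclic branch groups $C_{q+1}$, $C_{(q+1)/2}$ of the non-Borel groups of order divisible by $p$ are never $C_2,C_3,C_5$, the only groups that can be amalgamated directly to $A_5$ are Borel groups $B(n_i,m)$, $m\in\{2,3,5\}$. The gap is in your third paragraph, in two places. First, you assert that an edge $\{v,w\}$ leaving a Borel neighbour $\Gamma_v=B(N,m)$ must have edge group equal to a \emph{branch group} of $B(N,m)$, hence $C_m$ or the full $B(N,m)$. That is not what Proposition \ref{2.1} says: in cases (b)(2)--(3) the edge group is $B(\mathbb{F}_q)=B(n',m)$, the Borel branch group of the \emph{other} vertex, and this is a \emph{proper} subgroup of $B(N,m)$ when $N=\ell n'$, $\ell>1$. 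Since $(q-1)/2=3$ for $q=7$ and $(q-1)/2=5$ for $q=11$, the candidates $A_5\ast_{C_3}B(\ell,3)\ast_{B(1,3)}{\rm PSL}_2(\mathbb{F}_7)$ ($p=7$) and $A_5\ast_{C_5}B(\ell,5)\ast_{B(1,5)}{\rm PSL}_2(\mathbb{F}_{11})$ ($p=11$) pass every filter in your proof, and Theorem \ref{3.1} gives them three branch points; they must be excluded by the realizability criterion of \cite{P-V}. The point (visible in the geometric realization of \ref{2.1}(b)(3)) is that the edge to ${\rm PSL}_2(\mathbb{F}_q)$ and the edge to $A_5$ would both have to sit over the \emph{tame} branch orbit of $B(\ell,m)$, which forces the two neighbouring vertices to be nested in the tree rather than independent; your proof never addresses this, and it is exactly why the paper's recipe (the paragraph preceding Propositions \ref{4.3}--\ref{4.8}, and Remark \ref{4.2}) only ever attaches $\ast_{C_r}B(n_1,r)$ as a terminal leaf.

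Second, your ``crux'' for killing $A_5\ast_{C_m}B(N,m)\ast_{C_m}B(N',m)$ is wrong. The unipotent parts $P,P'$ of a discontinuously realized Borel group have matrix entries in an $\mathbb{F}_p$-subspace of $K$ that is \emph{not} contained in a finite subfield (and cannot be, or the group generated would fail to be discrete); finitely many elements of $K$ generate a finite subfield only if they are algebraic over $\mathbb{F}_p$. If your argument were valid it would equally show that $B(N,m)\ast_{C_m}B(N',m)$ itself is never discontinuous, contradicting Proposition \ref{2.1}(b)(1) and Proposition \ref{3.2}(v), where precisely such amalgams are realized with $P$ fixing $\infty$ and $P'$ fixing $0$ and $\langle P,P'\rangle$ infinite. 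The correct obstruction is again that two edges at the Borel vertex would have to use the same branch point, which \cite{P-V} permits only for the vertex groups $D_\ell$, $A_4$ and $p$-groups (compare \S 5 and the definition of ${\rm maxp}$), never for $B(N,m)$ with $m>1$. So the statement is true, and your overall strategy is the right one, but the step you yourself identify as the real obstacle --- ruling out anything dangling off the Borel leaves --- is exactly where the proof as written fails.
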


\begin{proposition}\label{4.6}
For $\Gamma_{v_0}\cong S_4$ and $p>3$, $\Gamma$ is one of the following:
\begin{enumerate}
\item[\rm (i).] The amalgam of $S_4$ along its maximal cyclic subgroups $C_2$, $C_3$ and $C_4$
          to groups $B(n_1,2)$, $B(n_2,3)$ and $B(n_3,4)$, respectively.
\item[\rm (ii).] If $p=5$, then one can replace the group  $B(n_2,3)$
  in item {\rm (i)} by the group ${\rm PSL}_2(\mathbb{F}_5)\ast_{B(\mathbb{F}_5)}B(n_4,2)$ with $n_4\geq 1$. 
\item[\rm (iii).]  If $p=7$, then one can replace the group
  $B(n_3,4)$ in item {\rm (i)}
           by the group ${\rm PSL}_2(\mathbb{F}_7)\ast_{B(\mathbb{F}_7)}B(n_5,3)$ with $n_5\geq 1$.
\end{enumerate}
\end{proposition}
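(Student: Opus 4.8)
The plan is to reduce the statement to a finite, checkable combinatorial search, using the structural setup of Remark \ref{4.2} adapted to the case ${\rm max}(3)=1$, ${\rm maxp}=0$. Since ${\rm br}(\Gamma)=3$ with a single vertex $v_0$ carrying $p\nmid\#\Gamma_{v_0}$ and here $\Gamma_{v_0}\cong S_4$, the prime $p$ must be one of $5,7,11,13,23$ (the primes with $p\nmid 24$ and for which there is any hope of a finite group with order divisible by $p$ sharing a branch group with $S_4$) — but in fact only $p=5$ and $p=7$ will survive, plus the generic case $p>3$ with no extra group attached. First I would record that, by the discussion preceding the proposition, every other vertex group $\Gamma_v$ has $p\mid\#\Gamma_v$, so each is one of the groups listed in \S1 parts (a),(b), and each edge group is the cyclic intersection of adjacent vertex groups. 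Since $S_4$ has branch groups $C_2,C_3,C_4$ (pairwise in distinct conjugacy classes, as $S_4$ is not cyclic and none of the exceptional coincidences (2),(3) of \S1 applies), the tree $T^c$ is $S_4$ with at most three "arms" hanging off, one attached along each of $C_2$, $C_3$, $C_4$.

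Next I would analyze each arm separately. An arm attached along a cyclic group $C_r$ ($r\in\{2,3,4\}$) is itself a contracted tree of groups all of whose vertex groups have order divisible by $p$; by Proposition \ref{2.1}(b) and the two-branch-point classification (Proposition \ref{3.2}), such an arm, having only the one "free" branch point $C_r$ available to glue, must be a chain of the shape $C_r = G_3 \hookrightarrow$ (a Borel-type group, or ${\rm PGL}_2/{\rm PSL}_2$, then possibly continuing). The key point is that a non-Borel group $G$ with $p\mid\#G$ and a cyclic branch group $C_r$ only exists for very small $r$: from \S1(b), the non-Borel possibilities are ${\rm PGL}_2(\mathbb{F}_q)$ (branch groups $B(n,q-1)$ and $C_{q+1}$), ${\rm PSL}_2(\mathbb{F}_q)$ ($p\neq2$; branch groups $B(n,(q-1)/2)$ and $C_{(q+1)/2}$), and for $p=3$ also $A_5$. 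For the arm on $C_2$: a non-Borel $G$ with branch group $C_2$ forces $q+1=2$ or $(q+1)/2=2$, i.e. $q=3$ — impossible since $p>3$. So the $C_2$-arm is purely of Borel type, hence just $B(n_1,2)$ (longer Borel chains collapse, since $B(n,2)$ has branch groups $C_2$ and $B(n,2)$, and $B(n,2)$ is not a branch group of anything non-Borel here). For the arm on $C_3$: need $q+1=3$ (so $q=2$, excluded as $p>3$... unless ${\rm PGL}_2(\mathbb{F}_2)\cong S_3$, which is Borel-type $B(1,2)$, not relevant) or $(q+1)/2=3$, i.e. $q=5$; the latter occurs exactly when $p=5$, giving ${\rm PSL}_2(\mathbb{F}_5)$ with other branch group $B(2,2)=B(\mathbb{F}_5)$, which may then continue by a Borel chain $\ast_{B(\mathbb{F}_5)}B(n_4,2)$ — this is item (ii). For the arm on $C_4$: need $q+1=4$ ($q=3$, excluded) or $(q+1)/2=4$, i.e. $q=7$; the latter occurs exactly when $p=7$, giving ${\rm PSL}_2(\mathbb{F}_7)$ with other branch group $B(3,3)=B(\mathbb{F}_7)$, continuing by $\ast_{B(\mathbb{F}_7)}B(n_5,3)$ — this is item (iii). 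When none of these coincidences happen (i.e. $p>3$ and $p\notin\{5,7\}$, or the arm is chosen Borel), all three arms are Borel and we get item (i).

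I would then assemble these arm classifications: the full amalgam is $S_4$ glued along $C_2$, $C_3$, $C_4$ to the respective arms, and since the arms are independent, the list of maximal-length amalgams is exactly (i) with at most one of the two replacements (ii) or (iii) applied (they involve different primes $p=5$ versus $p=7$, so at most one applies for a given $K$); shorter trees arise by deleting end groups, as stated. Finally one must verify realizability of each assembled amalgam: this follows directly from Proposition \ref{2.1}, checking at each edge that the gluing is along a common branch group — which is precisely how the arms were built — together with Theorem \ref{3.1}(2), which gives ${\rm br}(\Gamma)={\rm max}(3)+{\rm maxp}+2=1+0+2=3$ as required, confirming we have not accidentally changed the branch count.

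The main obstacle, I expect, is not the group theory of $S_4$ (which is completely elementary) but rather being exhaustive about the arms: one must be careful that a Borel chain such as $B(n_1,4)\ast_{C_4}\cdots$ cannot be prolonged in some unexpected way, and that no exotic non-Borel group with a cyclic branch group of order $2,3,4$ has been overlooked — this is exactly where one leans on the completeness of the \S1 classification and on Proposition \ref{2.1}(b), especially the constraint $m\mid p^n-1$ forcing $q=3,5,7$ in the three cases. A secondary subtlety is the coincidence ${\rm PSL}_2(\mathbb{F}_3)\cong A_4$ and ${\rm PGL}_2(\mathbb{F}_2)\cong S_3\cong D_3$: one must confirm these do not sneak an extra $p=2$ or $p=3$ case into a proposition whose hypothesis is $p>3$, which they do not since those primes are excluded outright.
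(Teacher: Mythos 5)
Your argument is correct and follows essentially the same route as the paper, whose entire proof consists of the observation that the extra non-Borel groups attachable along $C_3$ and $C_4$ exist precisely because $(p+1)/2=3,4$ for $p=5,7$, the rest being the general recipe stated before Proposition \ref{4.5} (attach to each maximal cyclic subgroup of $\Gamma_{v_0}$ either a Borel-type group, or a non-Borel group $G$ with $p\mid\#G$ sharing that branch group, optionally prolonged by one Borel group along $B_G$). Two blemishes worth fixing: the Borel branch groups of ${\rm PSL}_2(\mathbb{F}_5)$ and ${\rm PSL}_2(\mathbb{F}_7)$ are $B(1,2)$ and $B(1,3)$, not $B(2,2)$ and $B(3,3)$; and your parenthetical reason for not prolonging a Borel arm by a non-Borel group (``$B(n,2)$ is not a branch group of anything non-Borel here'') is false for $p=5$, where $B(1,2)$ \emph{is} the Borel branch group of ${\rm PSL}_2(\mathbb{F}_5)$ and Proposition \ref{2.1}(b)(3) does realize $B(n_1,2)\ast_{B(1,2)}{\rm PSL}_2(\mathbb{F}_5)$ --- the configuration $S_4\ast_{C_2}B(n_1,2)\ast_{B(1,2)}{\rm PSL}_2(\mathbb{F}_5)$ must instead be excluded because it contains the sub-amalgam $S_4\ast_{C_2}{\rm PSL}_2(\mathbb{F}_5)$ in which $C_2$ sits as a torus, not a branch group, of ${\rm PSL}_2(\mathbb{F}_5)$, so it is not realizable by Proposition \ref{2.1}(a) (and similarly for the analogous prolongations when $p=7$).
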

\begin{proof}
The extra groups in (ii) and (iii) for $p=5,7$ are possible because $(p+1)/2=3,4$, respectively.
\end{proof}

\begin{proposition}\label{4.7}
For $\Gamma_{v_0}\cong A_4$ and $p>3$, $\Gamma$ is one of the following:
\begin{enumerate}
\item[\rm (i).] The amalgam of $A_4$ along its maximal cyclic subgroups $C_2$, $C_3$ and $C_3$
          to groups $B(n_1,2)$, $B(n_2,3)$ and $B(n_3,3)$, respectively.
\item[\rm (ii).] If $p=5$, then one can replace one or both of the
  groups $B(n_2,3)$ and $B(n_3,3)$ in item 
{\rm (i)} by the group ${\rm PSL}_2(\mathbb{F}_5)\ast_{B(\mathbb{F}_5)}B(n_4,2)$ with $n_4\geq 1$. 
\end{enumerate}
\end{proposition}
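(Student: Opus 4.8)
The plan is to mimic the proof strategy already used for the preceding propositions in this subsection, specialising to the case $\Gamma_{v_0}\cong A_4$. By Remark \ref{4.2} and the discussion opening \S 4.2, the amalgam is built by attaching, along each maximal cyclic subgroup $C_r$ of $A_4$, either a Borel group $B(n_i,r)$ or a chain of the form $\ast_{C_r}G\ast_{B_G}B(n_i,s)$ where $G$ is not of Borel type, $p\mid \#G$, and $G$ has a cyclic branch group $C_r$ in common with $A_4$. So the whole question reduces to enumerating, for each of the branch orders $r\in\{2,3,3\}$ of $A_4$, which non-Borel groups $G$ from the classification in \S 1 have a cyclic branch group of order exactly $r$ and have order divisible by a prime $p>3$.

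First I would recall the branch data of $A_4$: its maximal cyclic subgroups have orders $(2,3,3)$, and the two subgroups of order $3$ are conjugate (stated in \S 4.2), so the branch point attached to a $C_3$ may be taken up to conjugacy, while the two $C_3$-attachments are independent choices. Next I would run through the list in \S 1(b): for $r=2$, a non-Borel $G$ with $p\nmid\#G$-free... wait, with $p\mid\#G$ and a branch group $C_2$ forces (via Corollary \ref{2.2} or directly \S 1(b)) $p=2$, which is excluded here since $p>3$; hence no non-Borel group can be attached along the $C_2$, and that branch must carry $B(n_1,2)$. For $r=3$: a non-Borel $G$ with $p\mid\#G$ and cyclic branch group $C_3$ must, by the classification, have either $q+1=3$ or $(q+1)/2=3$ (the $C_{q+1}$ resp.\ $C_{(q+1)/2}$ branch group of ${\rm PGL}_2(\mathbb{F}_q)$ resp.\ ${\rm PSL}_2(\mathbb{F}_q)$), or be one of the sporadic small cases. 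Since $p>3$, the only solution is $q=p=5$, $(q+1)/2=3$, giving $G={\rm PSL}_2(\mathbb{F}_5)$ with its other branch group $B(\mathbb{F}_5)=B(1,2)$, so the admissible replacement at a $C_3$ is exactly $\ast_{C_3}{\rm PSL}_2(\mathbb{F}_5)\ast_{B(\mathbb{F}_5)}B(n_4,2)$ with $n_4\geq 1$. For $q+1=3$, i.e.\ $q=2$, we would need $p=2$, again excluded. Combining: for $p>5$ only item (i) survives, and for $p=5$ one may independently substitute this chain at one or both of the two $C_3$ slots, which is precisely item (ii). Realizability of each resulting amalgam follows from Proposition \ref{2.1}(a)(1) (for the $C_3$-attachments giving $A_4\ast_{C_3}{\rm PSL}_2(\mathbb{F}_5)$, noting both share the branch group $C_3$) together with Proposition \ref{2.1}(b)(1) and (b)(3) for the Borel attachments, and Theorem \ref{3.1}(2) confirms ${\rm br}(\Gamma)=3$ since ${\rm max}(3)=1$ (the vertex $v_0$) and ${\rm maxp}=0$.

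The main obstacle I anticipate is not the group-theoretic enumeration, which is short, but making sure the exceptional behaviour of conjugacy classes of $C_3$ inside $A_4$ does not secretly produce extra inequivalent amalgams or, conversely, collapse the "one or both" freedom in (ii). Concretely one must check that attaching ${\rm PSL}_2(\mathbb{F}_5)$ along one conjugacy class of $C_3\subset A_4$ versus the other gives the same amalgam up to isomorphism — this is exactly the remark in \S 4.2 that the $C_3$'s in $A_4$ are conjugated so the branch groups coincide — while attaching distinct copies of the chain at the two $C_3$-edges of the tree $T^c$ genuinely yields a longer tree with two such vertices; the symmetry of $A_4$ permuting the two $C_3$'s then identifies the two "one copy" configurations with each other, which is why (ii) reads "one or both" rather than listing them separately. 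A secondary point requiring care is ruling out attachments of further non-Borel groups further down the chain (beyond the single ${\rm PSL}_2(\mathbb{F}_5)$): since the only remaining branch group of ${\rm PSL}_2(\mathbb{F}_5)$ is the Borel $B(1,2)$, and chains through Borel groups terminate by Proposition \ref{2.1}(b), no iteration is possible, so the lists are exactly (i) and (ii).
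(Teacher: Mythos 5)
Your proposal is correct and follows essentially the same route as the paper: the paper's own justification is the one-line observation that $\frac{p+1}{2}=3$ for $p=5$, with the completeness of the enumeration resting on the general principle stated before Proposition \ref{4.5} (list all non-Borel groups of order divisible by $p$ sharing a cyclic branch group with $\Gamma_{v_0}$), which is exactly the case analysis you carry out explicitly. Your additional remarks on the conjugacy of the two $C_3$'s and on why the chains terminate after one ${\rm PSL}_2(\mathbb{F}_5)$ are consistent with the paper's conventions and do not change the argument.
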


\begin{proof}
The extra groups in (ii) for $p=5$ are possible because  $\frac{p+1}{2}=3$.\end{proof}

\begin{proposition}\label{4.8}
$\Gamma_{v_0}\cong D_\ell$, $p\not|\ell$ and $p>2$. 
Then $\Gamma$ is one of the following:
\begin{enumerate}
\item[\rm (i).] The amalgam of $D_\ell$ along its maximal cyclic subgroups $C_2$, $C_2$ and $C_\ell$
  to groups $B(n_1,2)$, $B(n_2,2)$ and $B(n_3,\ell)$, respectively.
 The cyclic groups $C_2$ are  identical in the group $D_\ell$ only if $\ell$ is odd.
\item[\rm (ii).] If $\ell=q+1$, then one can replace the group
  $B(n_3,\ell)$ in item {\rm (i)}
           by the group ${\rm
             PGL}_2(\mathbb{F}_q)\ast_{B(\mathbb{F}_q)}B(n_4,q-1)$
           with $n_4\geq 1$.
  \item[\rm (iii).] If $\ell=(q+1)/2$, then one can replace the group
  $B(n_3,\ell)$ in item {\rm (i)}
           by the group ${\rm PSL}_2(\mathbb{F}_q)\ast_{B(\mathbb{F}_q)}B(n_5,(q-1)/2)$ with 
           $n_5\geq 1$. 
\item[\rm (iv).] If $p=3$ and $\ell=5$, then one can replace the group
  $B(n_3,\ell)=B(n_3,5)$ in item {\rm (i)}
           by the group $A_5\ast_{B(1,2)}B(n_6,2)$ with $n_6\geq 1$. 
\item[\rm (v).] If $p=3$, then one can replace one or both of the groups $B(n_1,2)$ and $B(n_2,2)$ 
           in items {\rm (i)-(iv)}
           by a group ${\rm PSL}_2(\mathbb{F}_3)\ast_{C_3}B(n_7,1)$ with $n_7\geq 1$. 
\end{enumerate}
\end{proposition}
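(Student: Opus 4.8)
The plan is to carry out the case analysis indicated just before the statement, reading off the contracted tree of groups $T^c$ from \S 1, Proposition \ref{2.1} and Theorem \ref{3.1}. By hypothesis $T^c$ has a unique vertex $v_0$ with $p\nmid\#\Gamma_{v_0}$, here $\Gamma_{v_0}\cong D_\ell$ with $p\nmid 2\ell$, every other vertex group having order divisible by $p$. Since $D_\ell$ has exactly three branch points, with cyclic branch groups $C_2,C_2,C_\ell$, the vertex $v_0$ carries at most three edges, the edge group of each being one of these cyclic groups. The first step is to list, for an edge at $v_0$ with edge group $C_r$ (so $r\in\{2,\ell\}$), the admissible neighbour groups $\Gamma_{v_1}$: the finite subgroups of ${\rm PGL}_2(K)$ with $p\mid\#\Gamma_{v_1}$ having $C_r$ as a branch group. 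By \S 1 these are the Borel groups $B(n_i,r)$ with $n_i\geq 1$ and, among the non-Borel groups, exactly ${\rm PGL}_2(\mathbb{F}_q)$ when $r=q+1$, ${\rm PSL}_2(\mathbb{F}_q)$ when $r=(q+1)/2$, and — only for $p=3$ — $A_5\subset{\rm PSL}_2(\mathbb{F}_9)$ when $r=5$. As $r\in\{2,\ell\}$, a non-Borel neighbour occurs along a $C_2$-edge only when $(q+1)/2=2$, i.e. $q=3$, $p=3$ (the group ${\rm PSL}_2(\mathbb{F}_3)$), and along the $C_\ell$-edge exactly in the cases $\ell=q+1$, $\ell=(q+1)/2$ and (for $p=3$) $\ell=5$.

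Next, whenever a non-Borel group $G$ is glued to $v_0$, one continues that leg along the second branch group $B_G$ of $G$ — namely $B(\mathbb{F}_q)$ for ${\rm PGL}_2(\mathbb{F}_q)$ and ${\rm PSL}_2(\mathbb{F}_q)$, the subgroup $C_3=B(1,1)$ for ${\rm PSL}_2(\mathbb{F}_3)$, and $S_3=B(1,2)$ for $A_5$ — and I would show that the next vertex group must again be a Borel group, namely $B(n_4,q-1)$, $B(n_5,(q-1)/2)$, $B(n_7,1)$ and $B(n_6,2)$, respectively. Indeed, gluing another non-Borel group $G'$ to $G$ along $B_G$ would place $G\ast_{B_G}G'$ under Proposition \ref{2.1}(a); but $p\mid\#B_G$ in each of the four cases, which excludes \ref{2.1}(a)(1), and of the exceptional amalgams in \ref{2.1}(a) only ${\rm PSL}_2(\mathbb{F}_3)\ast_{C_3}{\rm PSL}_2(\mathbb{F}_3)$ is of this shape — and by the convention of Remark \ref{4.2} it must be written with an intermediate $C_3$, hence contributes to ${\rm maxp}$ and is excluded by ${\rm maxp}=0$. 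A Borel group glued directly to $v_0$ along $C_\ell$ or $C_2$ likewise admits no realizable prolongation. Therefore each of the (at most three) legs of $v_0$ is empty, a single Borel vertex, or a non-Borel vertex followed by a single Borel vertex; retaining only the legs of maximal length gives exactly (i)--(v), the shorter realizable trees arising by deleting end groups. The parenthetical remark in (i) is the conjugacy fact recalled just above: the two maximal cyclic subgroups of order $2$ in $D_\ell$ are conjugate — so may be taken equal — precisely when $\ell$ is odd. Conversely, realizability of each of these trees with exactly three branch points follows by combining Proposition \ref{2.1} on its two-vertex sub-amalgams with the realizability criterion of \cite{P-V}, and from Theorem \ref{3.1}(2), since each listed tree has ${\rm max}(3)=1$ (the vertex $v_0$) and ${\rm maxp}=0$, so ${\rm br}(\Gamma)=3$.

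The step I expect to be the main obstacle is the completeness of this enumeration: being certain that at the Borel-type branch point $B_G$ of a non-Borel neighbour, and inside the $p=3$ sub-legs coming from ${\rm PSL}_2(\mathbb{F}_3)$ and $A_5$, no further vertex survives the realizability conditions of Proposition \ref{2.1} and of \cite{P-V} together with the constraints ${\rm maxp}=0$, ${\rm max}(3)=1$, and that no longer or more ramified leg is admissible. This is where the bookkeeping with the dimension hypotheses in Proposition \ref{2.1}(b)(2),(3) and with the characteristic $2$ and $3$ exceptions of \S 1 must be carried out with care.
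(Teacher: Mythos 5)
Your proof follows the same route as the paper, whose entire justification for Propositions \ref{4.5}--\ref{4.8} is the recipe you reconstruct: for each branch group $C_r$ of $D_\ell$ ($r\in\{2,\ell\}$), enumerate via \S 1 the non-Borel groups of order divisible by $p$ having $C_r$ as a branch group (only ${\rm PSL}_2(\mathbb{F}_3)$ for $r=2$, and ${\rm PGL}_2(\mathbb{F}_q)$, ${\rm PSL}_2(\mathbb{F}_q)$, $A_5$ for $r=\ell=q+1$, $(q+1)/2$, $5$ respectively), and attach $\ast_{C_r}B(n,r)$, $\ast_{C_r}G$ or $\ast_{C_r}G\ast_{B_G}B(n,s)$. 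The one step you assert without argument --- that a Borel leg admits no realizable prolongation, even though Proposition \ref{2.1}(b)(2)--(3) would permit, say, $D_{q-1}\ast_{C_{q-1}}B(\ell'\cdot n,q-1)\ast_{B(n,q-1)}{\rm PGL}_2(\mathbb{F}_q)$ at the level of two-vertex sub-amalgams --- is equally left implicit in the paper; the exclusion holds because in ${\rm PGL}_2(\mathbb{F}_q)\ast_{B(n,q-1)}B(\ell'\cdot n,q-1)$ the $C_{q-1}$-branch point of the Borel vertex is consumed by the edge (cf.\ Remark \ref{remark-6.4}), so $C_{q-1}$ is no longer a branch group and the further gluing to $D_{q-1}$ is not realizable.
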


\begin{remarks}\label{CKK-1} \label{4.9} {\it Comparison with the
    results of }{\rm \cite{C-K-K,K 2005}}. \\
 {\rm (1). The groups $\Gamma$ with two branch point that are missing 
from proposition 4.6 of \cite{C-K-K} are the following:
\begin{enumerate}
\item[i)] $p=3$, $B(n_1,2)\ast_{C_{2}}{\rm PSL}_2(\mathbb{F}_3)
            \ast_{C_3}B(n_2,1)$
            with $n_1,n_2\geq 1$ and $n_1$ odd.
\item[ii)] $p=3$, $B(n_1,2)\ast_{B(1,2)}A_5\ast_{C_5}{\rm PSL}_2(\mathbb{F}_9)\ast_{B(2,4)}B(2\cdot n_2,4)$
            with $n_1, n_2\geq 1$.
\end{enumerate}
(2). The groups $\Gamma$ with three branch point that are missing 
from proposition 4.7 of \cite{C-K-K} are the following:
\begin{enumerate}
\item[i)] The groups $\Gamma$  with ${\rm maxp}=1$.
\item[ii)] For $p=3$ the amalgam of $D_\ell$ along its maximal cyclic subgroups $C_2$, $C_2$ and $C_\ell$
          to groups ${\rm PSL}_2(\mathbb{F}_3)\ast_{C_3}B(n_1,1)$,
          $B(n_2,2)$ and $B(n_3,\ell)$, respectively.
          Here $\ell| p^{n_3}-1$ and $n_1,n_2, n_3\geq 0$.
          The group  $B(n_2,2)$ may be replaced by a group
          ${\rm PSL}_2(\mathbb{F}_3)\ast_{C_3}B(n_4,1)$ with $n_4\geq 1$.
\end{enumerate}
\noindent (3). Our findings also conflict with the statement in remark 8.8 in \cite{K 2005}
that the groups in proposition 4.7 (D) and (E) in \cite{C-K-K} are not realizable.\\ 


 } \end{remarks}

\section{\rm Realizable amalgams $\ast_H\{ G_i\mid i=1,\ldots,m\}$}

In \S 2 we recalled the realizable amalgams $G_1\ast_{G_3}G_2$ from
\cite{P-V}. 
{\it In this section we study, for completeness, amalgams of three or
  more finite non-cyclic groups $G_i$ along a single subgroup $H$. The
results of this section will not be used in }\S\S 6-9.

A tree $T^c$ associated to such a group is such that every edge $e\in T^c$ is stabilized by the group $H$.
If $T^c$ contains three or more  vertices, then there is at least one vertex $v\in T^c$
such that two edges $e\ni v$ are stabilized by {\it the same group}
$H$.  Thus, if $\cdots \Gamma _u\ast_H\Gamma_v\ast_H\Gamma _w\cdots $
is part of the tree of groups $T^c$, then we mean that the two
(injective) homomorphisms $H\rightarrow \Gamma_v$ are the same (or equivalently,
$H$ is identified in only one way as a subgroup of $\Gamma_v$).

It follows that the group $\Gamma_v$ (stabiliser of the vertex $v$) either has two distinct branch points
that are stabilized by the same group $H$ or the group $\Gamma_v$ is a $p$-group and $p$ equals $2$ or $3$.
The cases where $\Gamma_v$ is a $p$-group are exactly those with ${\rm maxp}>0$.
They have been described in \cite{P-V} remarks 3.15 and theorem 4.11.

The only finite non-cyclic groups that have two distinct branch points that
are stabilized by the same subgroup are the groups
$D_\ell$, $p\not=2$, $p\not| \ell$ with $\ell$ odd and the group $A_4$ with $p\not=2, 3$.
The branch groups are maximal cyclic subgroups of order 2 for the group $D_\ell$ 
with $\ell$ odd and of order $3$ for the group $A_4$.
Hence $H\cong C_2$ and $p\not=2$ or $H\cong C_3$ and $p\not=2,3$ must hold.
Moreover, if $H\cong C_2$, then every vertex $v\in T^c$ 
that is not an extremal vertex must have as its stabilizer
a group $D_\ell$, $p\not\mid \ell$ with $\ell$ odd.
If $H\cong C_3$, then every vertex $v\in T^c$ 
that is not an extremal vertex must have as its stabilizer
a group $A_4$. 

The amalgam $\Gamma$ itself does not depend on the order of the groups $G_i$,
but the realizability of $\Gamma$ as a discontinuous group does
depend on the order of the groups $G_i$  (see also \cite{P-V} remark 3.13 (3)).  

We summarise the results in the following proposition:

\begin{proposition}\label{5.1}
Let $\Gamma:=\ast_H\{ G_i\mid i=1,\ldots,m\} $ with $m\geq 3$
be the amalgam of the finite non-cyclic groups $G_i$ along a single subgroup $H$.
Then the amalgam $\Gamma$ is realizable as a discontinuous group if and only if
one of the following statements holds:
\begin{enumerate}
\item[{\rm (i)}] $p\not=2$, $H\cong C_2$, $G_i\cong D_{\ell_i}$, 
         $p\not| \ell_i$, $\ell_i$ odd for $i=2,\dots,m-1$
          and $G_1,G_m$ are groups that have a branch group $C_2$.
\item[{\rm (ii)}] $p\not=2, 3$, $H\cong C_3$, $G_i\cong A_4$ for $i=2,\dots,m-1$
          and $G_1,G_m$ are groups that have a branch group $C_3$.
\item[{\rm (iii)}] $p=2$, $H\cong C_2$, $G_i\cong D_{\ell_i}$, $\ell_i$ odd for $i=2,\dots,m$
             and $G_1\cong D_{\ell_1}$, $\ell_1$ odd or $G_1\cong C_2^{n_0}$, $n_0\geq 2$.
\item[{\rm (iv)}] $p=3$, $H\cong C_3$, $G_i\cong A_4\cong {\rm PSL}_2(\mathbb{F}_3)$ for $i=2,\dots,m$
             and $G_1\cong A_4$ or $G_1\cong C_3^{n_0}$, $n_0\geq 2$.
\end{enumerate}

In the cases {\rm (i)} and {\rm (ii)} the tree $T^c$ is a line segment consisting of $m$ vertices $v_i$
with extremal vertices $v_1$ and $v_m$ and stabilizers $\Gamma_{v_i}=G_i$.

In the cases {\rm (iii)} and {\rm (iv)} the tree $T^c$ is a star with all edges containing the central vertex $v_0$,
where $v_0=v_1$ if $G_1$ is a $p$-group.
The tree consists of $m$ vertices $v_i$ with $\Gamma_{v_i}= G_i$, $i=1,\ldots,m$ if $G_1$ is a $p$-group
and of $m+1$ vertices $v_i$ with $\Gamma_{v_i}= G_i$, $i=1,\ldots,m$ 
and $\Gamma_{v_0}\cong C_p$ if $G_1$ is not a $p$-group. 
\end{proposition}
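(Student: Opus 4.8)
The plan is to extract the necessary conditions from the structural discussion that immediately precedes the statement, to determine the shape of the contracted tree of groups $T^c$ in each case, and then to obtain realizability by reducing to the two-vertex amalgams of Proposition~\ref{2.1} and to the ${\rm maxp}>0$ configurations already settled in \cite{P-V}. I argue necessity first. If $\Gamma=\ast_H\{G_i\mid i=1,\dots,m\}$ is realizable as a discontinuous group, it is the amalgam of an indecomposable finite tree of groups $(T^c,G)$ in which every edge group is a conjugate of $H$ and every $G_i$ occurs as a vertex group; since $m\geq 3$ the tree $T^c$ has a vertex of degree $\geq 2$. By the paragraph before the statement, for every non-extremal vertex $v$ either $\Gamma_v$ has two distinct branch points sharing the stabilizer $H$, or $\Gamma_v$ is a $p$-group (so $p\in\{2,3\}$).

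In the first situation \S 1 leaves only $\Gamma_v\cong D_\ell$ with $\ell$ odd, $p\nmid\ell$, $p\neq 2$ and $H\cong C_2$, or $\Gamma_v\cong A_4$ with $p\neq 2,3$ and $H\cong C_3$; in each case the remaining branch group of $\Gamma_v$ ($C_\ell$, resp.\ $C_2$) is not isomorphic to $H$, so $v$ has degree exactly $2$ and $T^c$ is a line segment on $m$ vertices whose interior vertices are the stated $D_{\ell_i}$ (resp.\ $A_4$), the extremal vertices being arbitrary finite subgroups of ${\rm PGL}_2(K)$ with a branch group $\cong H$. This gives (i) and (ii) together with the claimed form of $T^c$. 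In the second situation one is in a ${\rm maxp}>0$ configuration; by \cite{P-V} (remarks 3.15 and theorem 4.11) $T^c$ is a star about a single $p$-group vertex $C_p^{n_0}$ whose other vertices are leaves with vertex group $D_{\ell_i}$ (for $p=2$), resp.\ ${\rm PSL}_2(\mathbb{F}_3)$ (for $p=3$); when none of the $G_i$ is a $p$-group the hub is an extra vertex with group $C_p$, and when $G_1$ is the $p$-group it is the hub. This gives (iii) and (iv) together with the claimed form of $T^c$.

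For sufficiency one verifies in each of (i)--(iv) that the tree $(T^c,G)$ just described satisfies the realizability criterion of \cite{P-V}. In (i) and (ii) the consecutive two-vertex pieces $G_1\ast_{C_2}D_{\ell_2}$, $D_{\ell_i}\ast_{C_2}D_{\ell_{i+1}}$ and $D_{\ell_{m-1}}\ast_{C_2}G_m$ (and the $C_3$, $A_4$ analogues) are each realizable by Proposition~\ref{2.1}; because a reflection of $D_\ell$ (for $p\neq 2$, $\ell$ odd) and an order-$3$ element of $A_4$ each fix a single pair of points, the gluings along the line are forced and mutually compatible, and \cite{P-V} then yields realizability of the whole chain. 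In (iii) and (iv) the realizability of the star with $C_p$- (resp.\ $C_p^{n_0}$-) hub and $D_{\ell_i}$- (resp.\ ${\rm PSL}_2(\mathbb{F}_3)$-) leaves is precisely the content of \cite{P-V}, remarks 3.15 and theorem 4.11.

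The step I expect to be the main obstacle is the passage in (i)--(ii) from realizability of each two-vertex subamalgam to realizability of the entire path: this is not formal and must be read off from the tree-of-groups criterion of \cite{P-V}, using the rigidity of the relevant gluings noted above. The star cases (iii)--(iv) require nothing beyond invoking \cite{P-V}.
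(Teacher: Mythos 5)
Your argument follows the paper's own route: the necessity half is precisely the structural discussion that precedes the proposition (a non-extremal vertex must either carry two distinct branch points stabilized by the same subgroup $H$, which by \S 1 forces $D_\ell$ with $\ell$ odd, $p\nmid\ell$, $p\neq 2$ and $H\cong C_2$, or $A_4$ with $p\neq 2,3$ and $H\cong C_3$, or else be a $p$-group in a ${\rm maxp}>0$ configuration), and the sufficiency half is, exactly as in the paper, delegated to the realizability criterion of \cite{P-V} together with its remarks 3.15 and theorem 4.11 for the star cases. The only difference is that you do not use the paper's Remark \ref{5.2}, which gives an alternative constructive verification of realizability by exhibiting these amalgams as normal subgroups of $S_4\ast_{C_3}S_4$, $D_{2\ell}\ast_{C_2}D_{2\ell}$, and the infinite amalgams $D_\ell\ast_{C_2}B_2$ and ${\rm PSL}_2(\mathbb{F}_3)\ast_{C_3}B_3$; your direct appeal to the criterion is what the paper itself implicitly relies on.
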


We sketch an alternative proof of Proposition 5.1 in the following  remark.
\begin{remark}\label{5.2} {\rm
The group $D_{2\ell}$ normalizes the group $D_\ell$ if $p\not=2$
and the group $S_4$ normalizes the group $A_4$ if $p\not=3$.
These normalizers permute the two branch points stabilized by the cyclic groups
$C_2$ and $C_3$ in the groups $D_\ell$ and $A_4$, respectively.
This allows us to construct examples of groups of type (i) and (ii)
as subgroups of a realizable amalgam of two finite groups.\\

Let $\Gamma$ be a discontinuous group, isomorphic to $S_4\ast_{C_3}S_4$.
Let us fix a subgroup $H\cong C_3$ in $S_4\ast_{C_3}S_4$.
Define $\Delta :=<G\subset S_4\ast_{C_3} S_4\mid G\cong A_4, H\subset G>$.
Then $\Delta$ is a well-defined amalgam $\ast_H\{ G\subset S_4\ast_{C_3} S_4\mid G\cong A_4, H\subset G\}$.
It is an amalgam of infinitely many groups $A_4$ along a single group $C_3$.
The group $\Delta$ contains all the subgroups of $S_4\ast_{C_3}S_4$ 
that are isomorphic to the group $A_4$.
The subgroup $\Delta\subset \Gamma$ is normal.
The group $\Delta$ has infinite index in the group $S_4\ast_{C_3}S_4$ and is not finitely generated.
One has $\Gamma/\Delta \cong (S_4/A_4)\ast (S_4/A_4)\cong C_2\ast C_2$.
Examples of amalgams $\ast_{C_3}\{ G_i\mid G_i\cong A_4, i=1,\ldots, m\}$ 
for any value of $m$ are contained in
the realizable amalgam $S_4\ast_{C_3}S_4$.\\

A similar construction using the group $D_{2\ell}\ast_{C_2}D_{2\ell}$ gives examples of realizable amalgams
$\ast_{C_2}\{ G_i\mid G_i\cong D_{\ell_i}, \ell_i|\ell, i=1,\ldots, m \}$.
Let us fix a subgroup $H\cong C_2$ in $D_{2\ell}\ast_{C_2}D_{2\ell}$.
The subgroup $< G\subset D_{2\ell}\ast_{C_2}D_{2\ell}\mid
G\cong D_\ell, H\subset G>$ is an amalgam
$\ast_H\{ G\subset D_{2\ell}\ast_{C_2}D_{2\ell}\mid
G\cong D_\ell, H\subset G\}$.
The amalgam is well-defined, normal and of infinite index
in the group $D_{2\ell}\ast_{C_2}D_{2\ell}$.
It follows that the realizable amalgam $D_{2\ell}\ast_{C_2}D_{2\ell}$
contains subgroups that are amalgams of the form
$\ast_{C_2}\{ G_i\mid G_i\cong D_{\ell_i}, \ell_i|\ell, i=1,\ldots, m \}$
for any  $m$.\\

For the groups of type (iii) and (iv) this method does not work.
However, one can embed these in a realizable amalgam of two groups that are not finitely generated.
Let $A\subset K$ be the subgroup generated by the elements $\pi^{-n}$, $n\in \mathbb{Z}_{\geq 0}$ 
for fixed $\pi\in K$ with $0<|\pi|<1$. Then $A$ is an infinite dimensional $\mathbb{F}_p$ vector space.
The group $B_p:={1\ A\choose 0\ 1 }\subset {\rm PGL}_2(K)$ is
discontinuous, but is not finitely generated.
The amalgams $D_\ell\ast_{C_2}B_2$ ($p=2$) 
and ${\rm PSL}_2(\mathbb{F}_3)\ast_{C_3}B_3$ ($p=3$) are realizable. 

For $p=2$ one  considers the subgroup  generated by all the subgroups $D_\ell$ 
and for $p=3$ the subgroup generated by all the subgroups ${\rm PSL}_2(\mathbb{F}_3)$. 
Both are normal subgroups of infinite index in the amalgams.
These subgroups are amalgams along a single group $C_p$
of infinitely many groups $D_\ell$ if $p=2$ and infinitely many groups
${\rm PSL}_2(\mathbb{F}_3)$ if $p=3$.

Indeed, in case $p=2$ one fixes  a subgroup $H\cong C_2$ of a group $G\cong D_\ell$
that is contained in the amalgam $D_\ell\ast_{C_2}B_2$.
Then the infinite amalgam $\ast_H\{ G\in D_\ell\ast_{C_2}B_2\mid G\cong D_\ell, H\subset G\}$
is well-defined and equals the group $< G\subset D_\ell\ast_{C_2}B_2\mid G\cong D_\ell>$.

For $p=3$, fix $H\cong C_3$ as subgroup of a subgroup $G\cong {\rm PSL}_2(\mathbb{F}_3)$
of ${\rm PSL}_2(\mathbb{F}_3)\ast_{C_3}B_3$. Then
$\ast_H\{ G\in {\rm PSL}_2(\mathbb{F}_3)\ast_{C_3}B_3\mid G\cong {\rm PSL}_2(\mathbb{F}_3), H\subset G \}$
is well-defined and equals the group $<G\subset {\rm PSL}_2(\mathbb{F}_3)\ast_{C_3}B_3\mid
G\cong {\rm PSL}_2(\mathbb{F}_3)>$.
}\end{remark}

\section{\rm Automorphisms of Mumford curves}  \label{section 6}

\subsection{\rm The function $\mu$} \label{section 6.1}
As explained in the introduction we want to investigate Mumford
curves $X$ of genus $g>1$ such that $|{\rm Aut}(X)|>12(g-1)$. Mumford
curves of this type are produced by choosing a realization $\Gamma
\subset {\rm PGL}(2,K)$ of an amalgam with 2 or 3 branch points (thus
from the lists in \S\S 3-4) and choosing a normal Schottky subgroup of
finite index $\Delta \subset \Gamma$. 

 Let $\Omega \subset \mathbb{P}^1_K$
denote the rigid open set of ordinary points for $\Gamma$. Then
$X=\Omega /\Delta$ and $\Gamma/\Delta$ is a subgroup of ${\rm Aut}(X)$.
{\it We note that $\Gamma/\Delta$ can be a proper subgroup of ${\rm Aut}(X)$}.\\

One defines $\mu(\Gamma)$ by the formula $g-1=\mu(\Gamma) \cdot
[\Gamma :\Delta]$, where $g$ is the number of free generators of $\Delta$.
 If one replaces $\Delta$ by a normal subgroup of
index $d$, then both sides of the equality are multiplied by
$d$. Hence $\mu(\Gamma)$ does not depend on the choice of $\Delta$.\\

We introduce the notation $\mu(G)=-\frac{1}{|G|}$ for any finite group
$G$. As before, the amalgam $\Gamma$ corresponds to a canonical 
finite tree of finite groups $T^c$. By a combinatorial 
analysis, see  \cite{K-P-S}, one obtains the formula
\[ \mu(\Gamma)=\sum_{v\in T^c}\mu(\Gamma_v)-\sum_{e\in T^c}\mu(\Gamma_e)
             =\sum_{e\in T^c}1/|\Gamma_e|-\sum_{v\in T^c}1/|\Gamma_v|.\]

\noindent This formula can also be derived from the usual Riemann--Hurwitz--Zeuthen formula using
the methods of the proof of theorem 5.3 in \cite{P-V}.
Instead of only counting the branch points, one can extend the proof 
and keep track of the contribution of each branch group to the Riemann--Hurwitz--Zeuthen formula.
              
We note that this formula makes also sense for a finite group (which
is the case $g=0$) and for decomposable amalgams. In particular, suppose that the
amalgam $\Gamma$ is the free product of amalgams $\Gamma_1$ and
$\Gamma_2$. Then one has $\mu(\Gamma)=1+\mu(\Gamma_1)+\mu(\Gamma_2)$.

Consider the example $\Gamma=C_2\ast C_3=<a,b|\  a^2=b^3=1>$. Then 
$\mu(\Gamma)=1-\frac{1}{2}-\frac{1}{3}$ and 
$\Delta=<abab, baba>$ is a Schottky group of rank $2$ and $\Gamma
/\Delta =D_3$.  Thus $X=\Omega /\Delta$ has genus 2 and the formula
$g-1=\mu(\Gamma)\cdot |\Gamma/\Delta|$ holds for this example.\\

In the sequel we will exclude the case $g=0$ and the case $g=1$, which
corresponds to the amalgams $D_\ell *_{C_\ell}D_\ell$ for $(\ell,
p)=1$ and $\mu(\Gamma)=0$. Moreover we will suppose that $\Gamma$ is
indecomposable (since otherwise $\mu (\Gamma) \geq \frac{1}{6}$).

 For a choice of a Schottky group $\Delta \subset
\Gamma$ (normal and of finite index) the group ${\rm Aut}(X)$ equals
$\Gamma'/\Delta$, where $\Gamma '\supset \Gamma$ is the normaliser of
$\Delta$ in ${\rm PGL}_2(K)$. One can verify that $\Gamma'$ is again
indecomposable. By the above formulas one has $\mu(\Gamma ')\cdot
[\Gamma':\Gamma]= \mu(\Gamma)$. See \S 6.3 for examples.\\

{\it The strategy for  the sections {\rm \S\S 6-8} is as  follows}. \\
In \S \ref{section 6.2} we compute the lists \ref{6.3} of
realizable $\Gamma$ with two or three branch points and with
$\mu(\Gamma)<\frac{1}{12}$. The phenomenon of inclusions 
$\Gamma \subset \Gamma '$ in the lists \ref{6.3} is clarified 
in \S \ref{section 6.3} by introducing a determinant.

 In \S \ref{section 6.4} we produce normal Schottky subgroups $\Delta$ of realizable $\Gamma$
(from the lists in \S \ref{section 6.2}) of minimal index. Theorem \ref{6.8} describes two
extreme families found in this way.  The Mumford curves corresponding to these extreme families 
are studied in \S \ref{section 7.1} and \S \ref{section 7.2}.

 Based on
these extreme families a precise bound for the order of the group of
automorphisms is proposed. Finally, the long section \S \ref{section
  8} provides a proof of this bound.

\subsection{\rm Amalgams $\Gamma$ with $\mu(\Gamma) <\frac{1}{12}$
}\label{section 6.2}

The first step is to show that for many amalgams $\mu(\Gamma)\geq \frac{1}{12}$
holds. A useful formula for computations is:\\

Suppose that the tree $T^c$ of $\Gamma$ has an edge $e$ with
vertices $v_1,v_2$ and $T^c\setminus \{e\}$ has two connected
components $\Gamma_1$ and $\Gamma_2$. Then one has  
 \[\mu(\Gamma)=\mu(\Gamma_1)+1/|\Gamma_{v_1}|+\mu(\Gamma_{v_1}\ast_{\Gamma_{e}}\Gamma_{v_2})
+1/|\Gamma_{v_2}|+\mu(\Gamma_2)\geq \mu(\Gamma_{v_1}\ast_{\Gamma_{e}}\Gamma_{v_2}).\]
The inequality is strict if $\Gamma \neq  \Gamma_{v_1}\ast_{\Gamma_{e}}\Gamma_{v_2}$.

\begin{lemma}\label{6.1}
Let $\Gamma$ be an indecomposable realizable amalgam ($\neq
D_\ell*_{C_\ell}D_\ell$) such that its canonical tree $T^c$ contains an edge $e$ 
stabilised by a cyclic group of order $m\leq 5$. Then $\mu(\Gamma)\geq 1/12$.
\end{lemma}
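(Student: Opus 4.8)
The statement is a lower bound $\mu(\Gamma)\ge 1/12$ for any indecomposable realizable amalgam (other than the excluded $D_\ell *_{C_\ell} D_\ell$) whose canonical tree $T^c$ has an edge stabilised by a cyclic group $C_m$ with $m\le 5$. The plan is to use the additivity formula for $\mu$ recalled just above, namely that if $e=\{v_1,v_2\}$ is an edge of $T^c$ and removing $e$ splits $T^c$ into pieces giving amalgams $\Gamma_1,\Gamma_2$, then
\[
\mu(\Gamma)=\mu(\Gamma_1)+\tfrac1{|\Gamma_{v_1}|}+\mu(\Gamma_{v_1}\ast_{\Gamma_e}\Gamma_{v_2})+\tfrac1{|\Gamma_{v_2}|}+\mu(\Gamma_2)\ \ge\ \mu(\Gamma_{v_1}\ast_{\Gamma_e}\Gamma_{v_2}),
\]
with strict inequality unless $\Gamma=\Gamma_{v_1}\ast_{\Gamma_e}\Gamma_{v_2}$. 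Hence it suffices to prove the bound for a single edge amalgam $G_1\ast_{C_m}G_2$ with $m\le 5$, and then argue that the only case where equality $\mu=1/12$ could be approached or violated is excluded. So the first reduction is: \emph{reduce to the two-vertex case} $\Gamma=G_1\ast_{C_m}G_2$.

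Next I would \emph{enumerate the realizable two-vertex amalgams} $G_1\ast_{C_m}G_2$ with edge group cyclic of order $m\le 5$, using Proposition \ref{2.1} and Corollary \ref{2.2} together with the classification of finite subgroups of ${\rm PGL}_2(K)$ in \S1. The possibilities for an edge group $C_m$ with $m\in\{1,2,3,4,5\}$ are quite constrained: either $C_m$ is a branch group of a non-Borel $G_i$ (so by Corollary \ref{2.2} we are in one of its listed cases, e.g.\ $D_{q+1}\ast_{C_{q+1}}{\rm PGL}_2(\mathbb F_q)$ forces $q+1\le 5$, so $q\in\{2,3,4\}$; the dihedral–dihedral case with $m=2$; the $p=3$ cases with $C_3$ or $C_5$), or one of $G_1,G_2$ is of Borel type $B(N,m)$ with $m\le 5$ and the other has $C_m$ as a branch group. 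For each surviving candidate I would compute $\mu(\Gamma)=\frac1{m}-\frac1{|G_1|}-\frac1{|G_2|}$ directly. The key arithmetic observation that makes everything work is that $\frac1{m}-\frac1{|G_1|}-\frac1{|G_2|}\ge \frac1{12}$ as soon as the orders $|G_1|,|G_2|$ are large enough relative to $m$; since each $G_i$ is either Borel of order at least $m p^{N_i}$ (and when $N_i>0$ one has $m\mid p^{N_i}-1$, giving a definite gap) or one of the "large" groups ${\rm PGL}_2,{\rm PSL}_2,D_\ell,A_5$, the quantity $\frac1{|G_1|}+\frac1{|G_2|}$ is small. The delicate cases are the small ones: $m=2$ with $G_i$ a small dihedral group or $B(n,1)$; $m=3,5$ with the exceptional $p=3$ groups; and I expect these to be exactly where the excluded amalgam $D_\ell *_{C_\ell} D_\ell$ (with $\mu=0$, or more precisely the relevant $\ell\le 5$ incarnations) and the borderline $\mu=1/12$ examples live.

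The \emph{main obstacle} is therefore the careful bookkeeping for the small edge groups $m=2$ and (for $p=2,3$) the exceptional amalgams, where $\frac1m$ is largest and the vertex groups can be small: one must check that every such realizable $\Gamma$ other than the exception either fails to be indecomposable, or is one of the finitely many borderline configurations, or genuinely has $\mu\ge 1/12$. Here I would use the indecomposability hypothesis (all vertex and edge groups $\ne 1$, and in particular the constraint from \S2 that a cyclic $C_m$ with $p\nmid m$ is never a vertex group) to rule out the problematic small trees. Once the two-vertex base case is settled, the additivity inequality upgrades it to arbitrary indecomposable $\Gamma$ containing such an edge, with strictness handling any would-be equality except in the excluded case; this completes the proof.
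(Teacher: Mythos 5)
Your proposal follows essentially the same route as the paper's proof: use the additivity formula to reduce to the single-edge case $G_1\ast_{C_m}G_2$, then run through the realizable possibilities from Proposition \ref{2.1} (and Corollary \ref{2.2}) and check $\mu=\frac{1}{m}-\frac{1}{|G_1|}-\frac{1}{|G_2|}\geq\frac{1}{12}$ case by case. The paper compresses the enumeration into ``one easily verifies''; your plan just makes that bookkeeping (and the role of the excluded $D_\ell\ast_{C_\ell}D_\ell$, which has $\mu=0$) more explicit, so there is nothing substantively different to report.
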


\begin{proof} Using the above formula,
we may assume that the realizable $\Gamma$ equals $G_1*_{C_m}G_2$ with $G_1,G_2$ finite groups. 
 Consider first the case where $p\not| m$. This corresponds to the cases  (a) part 1,  (b)
part 1 of Proposition \ref{2.1}. One easily verifies that always $\mu(\Gamma)\geq \frac{1}{12}$. The case
$p|m$ corresponds to the cases (a) part 2, 3; (b)
part 4 of Proposition 2.1. Again one finds $\mu(\Gamma)\geq \frac{1}{12}$.    
\end{proof}

\begin{corollary}\label{6.2} Suppose that $\mu(\Gamma)<\frac{1}{12}$. Then the
  vertex groups $\Gamma_v$ belong to $\{D_\ell, {\rm
    PGL}_2(\mathbb{F}_q), {\rm PSL}_2(\mathbb{F}_q)$, $B(n,\ell)\}$.
Moreover, ${\rm maxp}=0$ and $\Gamma$ has at most three branch points.
\end{corollary}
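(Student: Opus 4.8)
The plan is to run everything off Lemma \ref{6.1}, used contrapositively: if $\mu(\Gamma)<\tfrac1{12}$, then no edge of the canonical tree $T^c$ is stabilised by a cyclic group of order $\le 5$, so every edge group of $T^c$ is either non-cyclic or cyclic of order $\ge 6$; in particular no edge is stabilised by $C_2$ or $C_3$. I keep the standing conventions: $\Gamma$ is realizable and indecomposable with $g>1$, so that $\Gamma\ne D_\ell\ast_{C_\ell}D_\ell$, $\Gamma$ is infinite, $T^c$ has at least two vertices, and every vertex of $T^c$ is incident to at least one edge.

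First, ${\rm maxp}=0$: the vertices counted by ${\rm Maxp}$ exist only for $p=2,3$ and always carry an edge with stabiliser $C_2$, respectively $C_3$, which is now excluded. By Theorem \ref{3.1}(2) this already gives ${\rm br}(\Gamma)={\rm max}(3)+2$. Next, the classification of finite subgroups of ${\rm PGL}_2(K)$ in \S 1 leaves, as possible vertex groups, only $D_\ell$, ${\rm PGL}_2(\mathbb{F}_q)$, ${\rm PSL}_2(\mathbb{F}_q)$, $B(n,\ell)$ and the exotic groups $A_4,S_4,A_5$ (with $p\nmid\#G$) together with $A_5$ for $p=3$. The groups $A_4,S_4,A_5$ have only cyclic branch groups, all of order $\le 5$, so every incident edge would be forbidden and they cannot occur. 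For $A_5$ with $p=3$ the branch groups are $C_5$ and $B(1,2)\cong S_3$; the $C_5$-edges are forbidden, so every incident edge has stabiliser $B(1,2)$, and by Proposition \ref{2.1}(b)(5) the neighbouring vertex group is some $B(N,2)$ with $N\ge 2$. Then, by the inequality preceding Lemma \ref{6.1}, $\mu(\Gamma)\ge\mu\big(B(N,2)\ast_{B(1,2)}A_5\big)=\tfrac16-\tfrac1{2\cdot 3^{N}}-\tfrac1{60}\ge\tfrac{17}{180}>\tfrac1{12}$, a contradiction. Hence the vertex groups are exactly those listed.

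For the bound on branch points, observe that among the surviving vertex groups only $D_\ell$ with $p\ne 2$ has ${\rm br}(\Gamma_v)=3$ (for $p=2$, $D_\ell$ has two branch points), so ${\rm max}(3)$ counts the dihedral vertices. Since $C_2$-edges are excluded, such a $D_\ell$-vertex carries only its $C_\ell$-edge, hence is a leaf of $T^c$ with $\ell\ge 6$; in particular no dihedral vertex is inner on any path. Suppose ${\rm max}(3)\ge 2$, i.e.\ ${\rm br}(\Gamma)\ge 4$, and pick two dihedral leaves $v_1,v_2$. The path of $T^c$ joining them, with $v_1,v_2$, forms a sub-amalgam $P$ of the shape
\[ D_{\ell_1}\ast_{C_{\ell_1}}\Gamma_{w_1}\ast\cdots\ast\Gamma_{w_r}\ast_{C_{\ell_2}}D_{\ell_2},\qquad r\ge 1,\ \ \ell_1,\ell_2\ge 6,\]
with inner vertices $\Gamma_{w_i}\in\{{\rm PGL}_2(\mathbb{F}_q),{\rm PSL}_2(\mathbb{F}_q),B(n,\ell)\}$, and $\mu(\Gamma)\ge\mu(P)$ (each connected component of $T^c$ hanging off this path contributes a positive amount to $\mu$, since its connecting edge group is a proper subgroup of the vertex it attaches to). So it remains to show that no realizable amalgam of the shape of $P$ has $\mu<\tfrac1{12}$.

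This last step is where the real difficulty lies, and I expect it to be the main obstacle. The edgewise criteria of Proposition \ref{2.1} do not suffice: such a path may be realizable edge by edge but not as a discontinuous group — for instance $D_{q+1}\ast_{C_{q+1}}{\rm PGL}_2(\mathbb{F}_q)\ast_{B(\mathbb{F}_q)}B(N,q-1)\ast_{C_{q-1}}D_{q-1}$ satisfies all the conditions of Proposition \ref{2.1}, yet a short computation shows its $\mu$ sinks below $\tfrac1{12}$ once $q$ is large, so it cannot in fact be realizable (this is exactly the sort of configuration the criterion of \cite{P-V} excludes). The resolution is to invoke the full realizability criterion of \cite{P-V} to determine exactly which paths of the shape of $P$ occur — the key structural fact being that when a larger Borel $B(N,q-1)$ is glued to ${\rm PGL}_2(\mathbb{F}_q)$ along $B(\mathbb{F}_q)$ its second branch point is absorbed, so a terminal Borel segment cannot afterwards be capped by a dihedral group — and then to carry out, on the finitely many surviving shapes, a $\mu$-estimate in the spirit of the proof of Lemma \ref{6.1}, yielding $\mu\ge\tfrac1{12}$ in each case. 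This contradiction gives ${\rm max}(3)\le 1$, i.e.\ ${\rm br}(\Gamma)\le 3$.
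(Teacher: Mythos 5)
Your treatment of the first two assertions is correct and essentially identical to the paper's: ${\rm maxp}=0$ and the exclusion of $A_4,S_4$ and of $A_5$ for $p\nmid 60$ follow from Lemma \ref{6.1} exactly as you say, and your computation $\mu\bigl(A_5\ast_{B(1,2)}B(N,2)\bigr)\geq \frac{17}{180}>\frac{1}{12}$ for $p=3$ is the same one the paper uses. The problem is the last assertion, ${\rm br}(\Gamma)\leq 3$, where your argument has a genuine gap that you yourself flag: you reduce to showing that every realizable path $D_{\ell_1}\ast_{C_{\ell_1}}\cdots\ast_{C_{\ell_2}}D_{\ell_2}$ joining two dihedral leaves has $\mu\geq\frac{1}{12}$, and then only sketch a plan (invoke the full realizability criterion of \cite{P-V} to kill most shapes, then estimate $\mu$ on the survivors) without carrying it out. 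That plan is not routine — as your own example $D_{q+1}\ast_{C_{q+1}}{\rm PGL}_2(\mathbb{F}_q)\ast_{B(\mathbb{F}_q)}B(N,q-1)\ast_{C_{q-1}}D_{q-1}$ shows, the edgewise conditions of Proposition \ref{2.1} do not decide realizability, and arguing ``its $\mu$ is too small, so it cannot be realizable'' is circular while the corollary is still being proved. As written, the proof is incomplete.

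The paper closes this step much more cheaply, with the Riemann--Hurwitz--Zeuthen formula rather than any realizability analysis. By Theorem \ref{3.1}(2) with ${\rm maxp}=0$ one has ${\rm br}(\Gamma)={\rm max}(3)+2$, and (as you correctly observe) the only surviving vertex groups with three branch points are the groups $D_\ell$ with $p\nmid 2\ell$; each such vertex carries two branch points with branch group $C_2$ which cannot be absorbed by edges, since $C_2$-edges are excluded by Lemma \ref{6.1}. Hence ${\rm br}(\Gamma)\geq 2\,{\rm max}(3)$, which combined with ${\rm br}(\Gamma)={\rm max}(3)+2$ already gives ${\rm max}(3)\leq 2$; and if ${\rm max}(3)=2$ then all four branch points of $\Gamma$ are tame with ramification index $2$, so the Riemann--Hurwitz--Zeuthen formula yields $2g-2=-2|\Gamma/\Delta|+4\cdot\frac{|\Gamma/\Delta|}{2}=0$, i.e.\ $g=1$, contradicting the standing assumption $g>1$ (equivalently $\mu(\Gamma)>0$). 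So ${\rm max}(3)\leq 1$ and ${\rm br}(\Gamma)\leq 3$, with no need to classify or exclude the paths $P$. The moral is that the contradiction in the four-branch-point case comes from $\mu\leq 0$, not from $\mu\geq\frac{1}{12}$, so the hard lower bound you were aiming for is never needed.
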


\begin{proof}
By Lemma \ref{6.1}, the groups $A_4,S_4$ do not occur as vertex groups for
$\Gamma$. The group $A_5$ does not occur. Indeed, for $p=3$ one has for case  (b)
part 5 of Proposition \ref{2.1} that:
   \[\mu(A_5\ast_{B(1,2)}B(m,2))\geq\mu(A_5\ast_{B(1,2)}B(2,2)) 
   =\frac{17}{180} >\frac{1}{12}.\]
Further, ${\rm maxp}=1$ implies that an edge has stabilizer $C_2$ or $C_3$.

The Riemann--Hurwitz--Zeuthen formula for $X:=\Omega /\Delta\rightarrow
\mathbb{P}^1=\Omega /\Gamma$
reads $2g-2=(-2)|\Gamma/\Delta |+\sum
_{i=1}^m\frac{|\Gamma/\Delta|}{e_ip^{d_i}}((e_i+1)p^{d_i}-2)$, where
the branch points are  $a_1,\dots ,a_m\in \mathbb{P}^1$ with
ramification indices $e_ip^{d_i}, \ i=1,\dots ,m$ and all $(p,e_i)=1$.
Then $\mu(\Gamma)<\frac{1}{12}$, the data from \ref{6.1} and the earlier
part of \ref{6.2} imply that $m\leq 3$ must hold. 
\end{proof}

\begin{lists}\label{lists}\label{6.3}{\rm
 Using \ref{6.1} and \ref{6.2} one finds that the amalgams with $\mu
<\frac{1}{12}$ belong to special cases of Proposition \ref{3.2}
and Proposition \ref{4.8}. For each of these cases we compute (some)
values of $p,q,n_*$ with $\mu <\frac{1}{12}$. We keep the numbering of
the groups in \ref{3.2} and \ref{4.8} and we denote  the groups by
$A(i),\ A(ii),\dots, B(i),\dots$.\\
{\bf (A)  two branch points and Proposition \ref{3.2}}:\\
$A(i)$.  {\small $B(2n\cdot n_1,q+1)\ast_{C_{q+1}}{\rm
  PGL}_2(\mathbb{F}_q)\ast_{B(n,q-1)}B(n\cdot n_2,q-1)$}  with $n_1\geq
1$, $n_2\geq 2$. For $n_1=1,\ n_2=2$ one  has
$\mu=\frac{q^2-2}{q(q^2-1)}$
and this is $<\frac{1}{12}$ for $q\geq 13$. \\

\noindent $A(ii)$. {\small $B(n\cdot n_1,q-1)\ast_{B(n,q-1)}{\rm PGL}_2(\mathbb{F}_q)
           \ast_{C_{q+1}}{\rm PGL}_2(\mathbb{F}_q)\ast_{B(n,q-1)}B(n\cdot n_2,q-1)$}
           with  $n_1,n_2\geq 2$. For $n_1=n_2=2$ one has
           $\mu=\frac{q^2-2}{q^2(q-1)}$ and this is $<\frac{1}{12}$
           for $q\geq 13$.\\

\noindent $A(iii)$. $p\neq 2$, {\small $B(2\cdot n\cdot
  n_1,(q+1)/2)\ast_{C_{(q+1)/2}}{\rm PSL}_2(\mathbb{F}_q)
            \ast_{B(n,(q-1)/2)}B(n\cdot n_2,(q-1)/2)$}
            with $n_1\geq 1$, $n_2\geq 2$ and $q\geq 11$. For $n_1=1,\
            n_2=2$ one has $\mu=\frac{2(q^2-2)}{q(q^2-1)}$ and this is
            $<\frac{1}{12}$ for $q\geq 25$.\\

\noindent $A(iv)$. {\small $p\neq 2$, $B(n\cdot
  n_1,(q-1)/2)\ast_{B(n,(q-1)/2)}{\rm PSL}_2(\mathbb{F}_q)
            \ast_{C_{(q+1)/2}}{\rm PSL}_2(\mathbb{F}_q)\ast_{B(n,(q-1)/2)}B(n\cdot n_2,(q-1)/2)$}
            with $n_1,n_2\geq 2$. For $n_1=n_2=2$ one has
            $\mu=\frac{2(q^2-2)}{q^2(q-1)}$ and this is
            $<\frac{1}{12}$ for $q\geq 25$.\\

\noindent $A(v)$. {\small $B(n_1,m)\ast _{C_m} B(n_2,m)$} with $1\leq n_1\leq n_2, m\geq
6$. Then $\mu =\frac{p^{n_2}-p^{n_2-n_1}-1}{p^{n_2}m}$. This is
$<\frac{1}{12}$ for $m\geq 12$ and some cases with smaller $m$.\\

\noindent $A(viii)$. $p=3$, {\small ${\rm PSL}_2(\mathbb{F}_9)\ast_{B(2,4)}B(2\cdot n_2,4)$}
            with $n_2\geq 2$  has $\mu =\frac{1}{40}-\frac{1}{4.3^{2n_2}}$.\\

\noindent $A(ix)$. $p=2$, {\small $2\not|\ell$, $B(n_1,\ell)\ast_{C_\ell}D_\ell$}
          with $\ell| 2^{n_1}-1$; $n_1\geq 2$ has
          $\mu=\frac{2^{n_1-1}-1}{2^{n_1}\ell}$ and this is
 $<\frac{1}{12}$ for $\ell\geq 7$.\\

\noindent $A(xi)$. $p=2$, $q>2$, {\small
  $D_{q+1}\ast_{C_{q+1}}{\rm PGL}_2(\mathbb{F}_q)
           \ast_{B(n,q-1)}B(n\cdot n_2,q-1)$}
            with $n_2\geq 2$ has $\mu
            =\frac{q^{n_2}-q-1}{q^{n_2}(q^2-1)}$ and
this is $<\frac{1}{12}$ for $q>8$.\\

\noindent {\bf (B) three branch points and Proposition \ref{4.8}}, $p>2$, $(p,\ell)=1$: \\
$B(i)$. {\small $D_\ell \ast_{C_\ell}B(n_3,\ell)$} with  $\ell >5$ has $\mu=
\frac{p^{n_3}-2}{2\ell p^{n_3}}<\frac{1}{12}$.\\

\noindent $B(ii)$. {\small $D_{q+1}\ast_{C_{q+1}}{\rm
  PGL}_2(\mathbb{F}_q)\ast_{B(\mathbb{F}_q)}B(n\cdot m,q-1)$} has
$\mu =\frac{q^m-2}{2(q-1)q^m}$. This is $<\frac{1}{12}$ for $q\geq 7$.\\

\noindent $B(iii)$. {\small  $D_{(q+1)/2}\ast_{C_{(q+1)/2}}{\rm
  PSL}_2(\mathbb{F}_q)\ast_{B(\mathbb{F}_q)}B(n\cdot m,(q-1)/2)$} has
 $\mu=\frac{q^m-2}{q^m(q-1)}$. This is 
$<\frac{1}{12}$ for $q\geq 13$.\\

}\end{lists}

\begin{remark}\label{remark-6.4}{\bf The branch groups for $\Gamma$
    with $\mu(\Gamma)<\frac{1}{12}$. }  {\rm \\
The methods used in the proof of theorem 5.3 in \cite{P-V} allow one to determine the branch points
and hence the branch groups of $\Gamma$. They correspond to the branch points of the $\Gamma_v$, $v\in T^c$
that do not correspond to an edge $e\ni v$.

If $v\in T^c$ is not extremal, then $p\mid \#\Gamma_v$, since $\mu(\Gamma)<\frac{1}{12}$.
Therefore $\Gamma_v$ has two branch points, since ${\rm maxp}=0$.
The corresponding branch groups stabilize the edges $e\ni v$ and
therefore the associated ramification points are not contained in $\Omega$.
In particular, the branch groups of $\Gamma_v$ do not contribute to the branch groups of $\Gamma$
if the vertex $v$ is not extremal.  
The branch groups of an amalgam $\Gamma$ with $\mu(\Gamma)<\frac{1}{12}$ are those branch groups
of the two groups $\Gamma_v$, with $v\in T^c$ an extremal vertex,
that do not stabilize an edge $e\ni v$.\\

Let $B=B(s,\ell)$ and $B^\prime=B(s^\prime,\ell^\prime)$ denote groups of Borel type 
with $s,s^\prime\geq 1$ and let integers $\ell$ denote cyclic groups of order $\ell$.
The only cyclic groups $C_\ell$ with $p\nmid \ell$,
that occur are those with $\ell=2, q+1$ and $\frac{q+1}{2}$.

By inspecting the lists \ref{lists} above, one verifies that
the ramification indices and ramification groups for amalgams $\Gamma$ with
$\mu(\Gamma)<\frac{1}{12}$ are as follows:
\begin{small}
\begin{itemize}
\item  $(2,2,|B|)$ for $D_\ell \ast_{C_\ell}B(n,\ell)$, $D_{q+1}\ast_{C_{q+1}}{\rm PGL}_2(\mathbb{F}_q)$,\\
 $ D_{q+1}\ast_{C_{q+1}}{\rm PGL}_2(\mathbb{F}_q)\ast_{B(\mathbb{F}_q)}B(n\cdot m,q-1)$,
 $D_{(q+1)/2}\ast_{C_{(q+1)/2}}{\rm PSL}_2(\mathbb{F}_q)$,\\
 $D_{(q+1)/2}\ast_{C_{(q+1)/2}}{\rm PSL}_2(\mathbb{F}_q)\ast_{B(\mathbb{F}_q)}B(n\cdot m,(q-1)/2)$
  with $p>2$.
\item  $(2,|B|)$ for $D_\ell \ast_{C_\ell}B(n,\ell)$,
$ D_{q+1}\ast_{C_{q+1}}{\rm PGL}_2(\mathbb{F}_q)$,\\
 $ D_{q+1}\ast_{C_{q+1}}{\rm PGL}_2(\mathbb{F}_q)\ast_{B(\mathbb{F}_q)}B(n\cdot m,q-1)$
  with $p=2$.
\item  $(q+1,|B|)$ for ${\rm PGL}_2(\mathbb{F}_q)\ast_{B(n,q-1)}B(n\cdot m,q-1)$.
\item $(\frac{q+1}{2},B)$ for ${\rm PSL}_2(\mathbb{F}_q)\ast_{B(n,\frac{q-1}{2})}
                           B(n\cdot m,\frac{q-1}{2})$ with $p>2$.
 \item $(|B|,|B^\prime|)$ for the remaining groups $\Gamma$ with
$\mu(\Gamma)<\frac{1}{12}$.
\end{itemize}
 \end{small}
}
\end{remark}

\subsection{\rm The determinant of an amalgam}\label{section 6.3}
The vertex groups $\Gamma_v$ of a realizable amalgam
$\Gamma$ with $\mu(\Gamma)<\frac{1}{12}$ belong to 
$\{ {\rm PGL_2}({\mathbb F}_q), {\rm PSL_2}({\mathbb F}_q), D_{\ell}, B(n\cdot n_1, \ell)|\;
\ell|q\pm 1 \}$.
The amalgam $\Gamma$ admits a determinant map 
$\det: \Gamma\longrightarrow \mathbb{F}_q^\ast
/(\mathbb{F}_q^\ast)^2$. The unit element of this group of two
elements is written as $(\mathbb{F}_q^\ast )^2$.

The restriction of the determinant map to a group 
$\Gamma_v\subset  {\rm PGL_2}({\mathbb F}_q)$ is the usual determinant map.
If $\ell |\frac{q\pm 1}{2}$, then the group $D_\ell$ has an embedding into ${\rm PGL}_2(\mathbb{F}_q)$
such that $\det(D_\ell)= (\mathbb{F}^\ast_q)^2$ and another
embedding where this does not hold.
We have defined the determinant on the amalgam in such a way that 
$\det(D_\ell)= (\mathbb{F}^\ast_q)^2$ holds for $\ell|\;\frac{q\pm 1}{2}$. 
Note that the group $B(n\cdot n_1, q\pm 1)$ with $n_1>1$ cannot be embedded into
the group ${\rm PGL}_2(\mathbb{F}_q)$,
even though the determinant map with values in $\mathbb{F}_q^\ast$ (and
in $\mathbb{F}_q^\ast/(\mathbb{F}_q^\ast)^2$) is well defined.

If the kernel  $\Gamma^\prime$ of the determinant map is different from $\Gamma$, 
then $\Gamma^\prime\subset\Gamma$ has index two
and is again a realizable amalgam.
The vertex groups $\Gamma_v^\prime$ for $v\in T^c$ are contained in the set 
$\{ {\rm PSL_2}({\mathbb F}_q), D_{\ell}, B(n\cdot n_1, \ell)|\; \ell|\frac{q\pm 1}{2}\}$.\\

Let us now assume that $\Gamma^\prime\not=\Gamma$.
Since $\Gamma^\prime\subset \Gamma$ has finite index,
realizations of both groups act discontinuously on the same space of ordinary points $\Omega$.
Let $\Delta\subset \Gamma^\prime$ be a Schottky group, normal and of
finite index. Suppose that $\Delta$ is also normal in $\Gamma$. Then
the automorphism group ${\rm Aut}(X)$ of $X:=\Omega /\Delta$ is at least
$\Gamma/\Delta$ and thus larger than $\Gamma^\prime/\Delta$. If
${\rm Aut}(X)$ is larger than $\Gamma/\Delta$, then $\Gamma$ must be a
proper subgroup of finite index in another amalgam with $\mu
<\frac{1}{12}$. The Lists \ref{lists} can be used to verify this.\\

The above explains the examples in Lists \ref{lists}, namely:
$A(i)$ and $A(iii)$, $A(ii)$ and $A(iv)$, $A(v)$ with itself (and
different parameters), $B(i)$ with itself (and different parameters),
$B(ii)$ and $B(iii)$.\\

Further the amalgams $A(i)$, $A(ii)$, $B(i)$ with suitable parameters
and $B(ii)$, produce the full group ${\rm Aut}(X)$. For the other cases
$A(iii)$, $A(iv)$ et cetera, there are $X$ such that these groups do
not produce the full group ${\rm Aut}(X)$. This happens when, for instance,
$\Gamma' \subset \Gamma$ has index 2. Let $g$ be an element in
$\Gamma \setminus \Gamma'$. If $\Delta$ is any normal Schottky subgroup
of $\Gamma'$ of finite index, then $\Delta ':=\Delta \cap g\Delta g^{-1}$ is a
normal Schottky subgroup of finite index for both $\Gamma'$ and
$\Gamma$. This produces the required example.
 
We note, in passing, that it is  likely that $\Gamma'$ has a normal Schottky
subgroup of finite index $\Delta$ which is not normal in $\Gamma$ (and
so $\Delta \neq \Delta'$).

\subsection{\rm Constructing normal Schottky subgroups of finite
  index}\label{section 6.4}
Let $\varphi :\Gamma \rightarrow H$ be a homomorphism of a realizable
amalgam to a finite group $H$. Suppose that the restriction of $\varphi$
to each vertex group $\Gamma_v\subset \Gamma$ is injective. Then
$\ker(\varphi)$ is a normal Schottky group of finite index. Indeed, let
$a\in \ker(\varphi)$ have finite order. Then $a$ is conjugated to an
element in some $\Gamma_v$. Then $a=1$ since the restriction of $\varphi$
to each $\Gamma_v$ is injective. 

  Conversely, if $\Delta\subset \Gamma$ is a normal Schottky group of
  finite index, then the restriction of the canonical homomorphism
 $\varphi :\Gamma \rightarrow  H:=\Gamma/\Delta$ to each vertex group $\Gamma_v$ is injective.

\begin{lemma}\label{6.4}
Let $\Gamma$ be a realizable amalgam.
Let $m$ be the smallest integer such that all groups $\Gamma_v$, $v\in T^c$
can be embedded into the group ${\rm PGL}_2(\mathbb{F}_{p^m})$.
Then there exists a group homomorphism $\varphi: \Gamma\longrightarrow {\rm PGL_2}({\mathbb F}_{p^m})$
such that the kernel $ker(\varphi)$ contains no elements of finite order.
\end{lemma}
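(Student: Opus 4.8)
The plan is to build $\varphi$ vertex-group by vertex-group, using the combinatorial structure of the canonical tree $T^c$ and the earlier criterion (\S \ref{section 6.4}) that a homomorphism to a finite group is the quotient by a normal Schottky subgroup precisely when its restriction to each vertex group is injective. So it suffices to produce a homomorphism $\varphi:\Gamma\to{\rm PGL}_2(\mathbb{F}_{p^m})$ that is injective on every $\Gamma_v$, and then $\ker(\varphi)$ automatically has no elements of finite order. Recall that by Corollary \ref{6.2} (or more generally from the classification in \S 1), each vertex group $\Gamma_v$ is one of ${\rm PGL}_2(\mathbb{F}_q)$, ${\rm PSL}_2(\mathbb{F}_q)$, $D_\ell$, or a Borel type group $B(s,\ell)$, and the edge groups $\Gamma_e$ are cyclic. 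The first step is to choose $m$ as in the statement and fix, once and for all, compatible embeddings: for each vertex group $\Gamma_v$ a chosen injection $\iota_v:\Gamma_v\hookrightarrow{\rm PGL}_2(\mathbb{F}_{p^m})$.

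Next I would traverse $T^c$, which is a tree, and glue. Root the tree at some vertex $v_0$ and define $\varphi$ on $\Gamma_{v_0}$ to be $\iota_{v_0}$. Proceeding outward, when we reach an edge $e=\{v,w\}$ with $v$ already treated, we must extend $\varphi$ to $\Gamma_w$ so that it agrees on the edge group $\Gamma_e\cong C_r$; by the amalgam structure, $\varphi$ on $\Gamma_w$ is then determined on the subgroup generated by the images of all previously treated branches through $w$, but since $\Gamma_e$ is the same cyclic group for each such edge (this is built into the definition of $T^c$: two edges at a non-extremal vertex carry the same subgroup), what is required is a single embedding $\Gamma_w\hookrightarrow{\rm PGL}_2(\mathbb{F}_{p^m})$ whose restriction to $\Gamma_e$ is ${\rm PGL}_2(\mathbb{F}_{p^m})$-conjugate to the already-fixed image $\varphi(\Gamma_e)$. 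After conjugating $\iota_w$ by a suitable element of ${\rm PGL}_2(\mathbb{F}_{p^m})$ we may assume $\varphi|_{\Gamma_e}$ matches. The universal property of the amalgam (the fact that $\Gamma=\pi_1(T^c,G)$ is the colimit of the vertex and edge groups) then assembles these into a well-defined homomorphism $\varphi:\Gamma\to{\rm PGL}_2(\mathbb{F}_{p^m})$, injective on each vertex group by construction.

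The main obstacle — and the only place where the specific value of $m$ and the classification genuinely enter — is the conjugation step: given two embeddings of a cyclic group $C_r$ into ${\rm PGL}_2(\mathbb{F}_{p^m})$ (one as $\varphi(\Gamma_e)$, one as $\iota_w(\Gamma_e)$), are they conjugate inside ${\rm PGL}_2(\mathbb{F}_{p^m})$? Over the algebraically closed $K$, cyclic subgroups of ${\rm PGL}_2(K)$ of a fixed order are conjugate, but over the finite field $\mathbb{F}_{p^m}$ one must check that both $C_r$'s are diagonalizable (resp. realized by the standard torus forms) in ${\rm PGL}_2(\mathbb{F}_{p^m})$; this is exactly guaranteed by the minimality of $m$, which forces $\mathbb{F}_{p^m}$ to contain the relevant roots of unity ($\ell\mid p^m-1$ for split tori, or the analogous condition when $\Gamma_e$ is the cyclic group $C_{q+1}$ sitting inside ${\rm PGL}_2(\mathbb{F}_q)$ with $q\mid p^m$). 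A small case check over the list from Corollary \ref{6.2} — which pairs $(\Gamma_v,\Gamma_e)$ actually occur as edge-incidences in a realizable $\Gamma$, recorded in Propositions \ref{3.2} and \ref{4.8} — confirms that in every case $m$ is large enough for the two embeddings of $\Gamma_e$ to be ${\rm PGL}_2(\mathbb{F}_{p^m})$-conjugate, so the gluing goes through and the proof is complete.
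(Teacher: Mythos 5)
Your proposal is correct and follows essentially the same route as the paper: reduce to producing a homomorphism injective on every vertex group, handle a single edge $G_1\ast_{G_3}G_2$ by adjusting the embedding of the second vertex group (via conjugation in ${\rm PGL}_2(\mathbb{F}_{p^m})$) so that the two restrictions to the cyclic edge group agree, verify this matching case by case against the classification of realizable amalgams, and then induct over the tree $T^c$. The only point to watch is that you need the two restrictions to $\Gamma_e$ to coincide as homomorphisms after conjugation, not merely to have conjugate images — but this is exactly the content of the case check that both you and the paper defer to Proposition \ref{2.1}.
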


\begin{proof} Consider the case $\Gamma=G_1\ast_{G_3}G_2$. One starts
  with an injective homomorphism $\varphi _1:G_1\rightarrow {\rm
    PGL}_2(\mathbb{F}_{p^m})$. By assumption there is also an
  embedding  $\varphi_2: G_2\rightarrow  {\rm
    PGL}_2(\mathbb{F}_{p^m})$. One has to show that $\varphi_2$ can be
  chosen such that the restriction of $\varphi_2$ to $G_3$ coincides with
  the restriction of $\varphi_1$. By studying the cases presented in
  Proposition 2.1 one concludes that $\varphi_2$ exists.  The general
  case of the lemma is done by induction on the number of vertex
  groups $\Gamma_v,\ v\in T^c$ of $\Gamma$.
\end{proof}

\begin{proposition}\label{6.5}
Let $\Gamma^\prime\subset\Gamma$ be realizable amalgams 
and let $T^c$ be the tree of groups belonging to $\Gamma$.
We assume that $\Gamma^\prime\subset\Gamma$ is normal and of finite index.
We moreover assume that for $v\in T^c$ the intersection $\Gamma_v\cap\Gamma^\prime$ is non-cyclic
whenever $\Gamma_v$ is non-cyclic.
Let $m$ be the smallest integer such that all vertex groups $\Gamma_v$, $v\in T^c$
can be embedded into ${\rm PGL}_2(\mathbb{F}_{p^m})$. 

Let $\varphi^\prime:\; \Gamma^\prime\longrightarrow {\rm PGL}_2(\mathbb{F}_{p^m})$ be a
group homomorphism
and such that the kernel $\Delta^\prime$ contains no elements of finite order.
Then there exists a group homomorphism
$\varphi:\; \Gamma\longrightarrow {\rm PGL}_2(\mathbb{F}_{p^m})$ 
such that $\varphi|_{\Gamma^\prime}=\varphi^\prime$ 
whose kernel $ker(\varphi)=\Delta$ 
contains no elements of finite order.
Then $\Delta^\prime\subset\Delta$ and
moreover, $\Delta=\Delta^\prime$ if and only if
$[im(\varphi):im(\varphi^\prime)]=[\Gamma:\Gamma^\prime]$.
\end{proposition}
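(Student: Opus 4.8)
First I would observe that the statement really has two layers: the genuinely new content is the construction of an extension $\varphi$ of $\varphi'$ whose kernel is torsion--free, and once such a $\varphi$ is in hand the two assertions about $\Delta$ and $\Delta'$ are purely formal. So the plan is to dispose of the formal layer first and then concentrate on the construction.

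\emph{The formal layer.} Assume $\varphi:\Gamma\to{\rm PGL}_2(\mathbb{F}_{p^m})$ has been produced with $\varphi|_{\Gamma'}=\varphi'$ and $\Delta:=\ker\varphi$ torsion--free. Then $\Delta'=\ker\varphi'=\Gamma'\cap\ker\varphi=\Gamma'\cap\Delta$, so in particular $\Delta'\subseteq\Delta$, and $\mathrm{im}(\varphi')=\varphi(\Gamma')\subseteq\varphi(\Gamma)=\mathrm{im}(\varphi)$, so $[\mathrm{im}(\varphi):\mathrm{im}(\varphi')]$ is finite. Comparing the two chains $\Delta'\subseteq\Delta\subseteq\Gamma$ and $\Delta'\subseteq\Gamma'\subseteq\Gamma$ gives $[\Gamma:\Delta]\,[\Delta:\Delta']=[\Gamma:\Delta']=[\Gamma:\Gamma']\,[\Gamma':\Delta']$; since $[\Gamma:\Delta]=|\mathrm{im}(\varphi)|$ and $[\Gamma':\Delta']=|\mathrm{im}(\varphi')|$, this reads $[\Delta:\Delta']\cdot[\mathrm{im}(\varphi):\mathrm{im}(\varphi')]=[\Gamma:\Gamma']$. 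As $[\Delta:\Delta']$ is a positive integer, $\Delta=\Delta'$ if and only if $[\mathrm{im}(\varphi):\mathrm{im}(\varphi')]=[\Gamma:\Gamma']$.

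\emph{Constructing $\varphi$.} I would imitate the proof of Lemma \ref{6.4}, arguing by induction on the number of vertices of $T^c$, the one new feature being that the homomorphism chosen on a vertex group $\Gamma_v$ is now constrained to agree with $\varphi'$ on $\Gamma_v\cap\Gamma'$. Recall that a homomorphism out of the tree of groups $\Gamma=\pi_1(T^c,G)$ is the same as a family $(\varphi_v)_{v\in T^c}$ of homomorphisms $\varphi_v:\Gamma_v\to{\rm PGL}_2(\mathbb{F}_{p^m})$ agreeing on the edge groups, and that, by the criterion at the start of \S\ref{section 6.4}, $\ker\varphi$ is torsion--free as soon as every $\varphi_v$ is injective. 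For the inductive step one removes an extremal vertex $w$ with its edge $e=\{w,w'\}$, writes $\Gamma=\overline\Gamma\ast_{\Gamma_e}\Gamma_w$ with $\overline\Gamma$ the (still realizable) sub-amalgam on $T^c\setminus\{w\}$, notes that the pair $(\overline\Gamma,\,\overline\Gamma\cap\Gamma')$ again satisfies the hypotheses — normality, finiteness of the index and non-cyclicity of the relevant intersections all descend, and one may keep working over $\mathbb{F}_{p^m}$ — and applies the inductive hypothesis to obtain $\overline\varphi$ on $\overline\Gamma$. It then remains to extend $\overline\varphi$ across $w$: to find an injective $\varphi_w:\Gamma_w\to{\rm PGL}_2(\mathbb{F}_{p^m})$ with $\varphi_w|_{\Gamma_e}=\overline\varphi|_{\Gamma_e}$ and $\varphi_w|_{\Gamma_w\cap\Gamma'}=\varphi'|_{\Gamma_w\cap\Gamma'}$. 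Since $\Gamma_e$ is a branch group of $\Gamma_w$ and the pair $(\Gamma_e,\Gamma_w)$ occurs in one of the realizable $\Gamma_{w'}\ast_{\Gamma_e}\Gamma_w$ of Proposition \ref{2.1}, while $\Gamma_w\cap\Gamma'$ is non-cyclic whenever $\Gamma_w$ is, the essential uniqueness of embeddings of non-cyclic finite subgroups of ${\rm PGL}_2$ recorded in Remarks \ref{1.1} — and, in the Borel case with $\dim_{\mathbb{F}_q}B>1$, the explicit description of embeddings of Borel groups given there — lets one produce such a $\varphi_w$ by inspection of the cases.

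\emph{The main obstacle.} The delicate point is not the existence of $\varphi_w$ on a single vertex group but \emph{global} coherence: one must guarantee that the assembled $\varphi$ restricts to $\varphi'$ on all of $\Gamma'$, not merely on the subgroups $\Gamma_v\cap\Gamma'$. Because $\Gamma'$ is normal it is generated by the $\Gamma$-conjugates $g(\Gamma_v\cap\Gamma')g^{-1}$, so what must be verified is that $\varphi(g)\,\varphi'(h)\,\varphi(g)^{-1}=\varphi'(ghg^{-1})$ for all $g\in\Gamma$ and $h\in\Gamma_v\cap\Gamma'$; equivalently, that the conjugations in ${\rm PGL}_2(\mathbb{F}_{p^m})$ used to match the edge groups do not destroy agreement with $\varphi'$ elsewhere. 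This is exactly where the non-cyclic hypothesis bites: a non-cyclic image is rigid enough (Remarks \ref{1.1}) that the conjugating elements available at each step are confined to normalizers which already act compatibly with $\varphi'$, so no inconsistency can arise; a cyclic intersection, by contrast, would leave precisely the kind of freedom that can obstruct the extension. I expect this verification, carried out — as in Lemma \ref{6.4} — as a finite check over the building blocks of Proposition \ref{2.1}, to be the bulk of the work. Once $\varphi$ has been constructed, the statements about $\Delta$ and $\Delta'$ follow from the index identity established above.
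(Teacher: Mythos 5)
Your proposal follows essentially the same route as the paper: the index identity $[\Delta:\Delta']\cdot[\mathrm{im}(\varphi):\mathrm{im}(\varphi')]=[\Gamma:\Gamma']$ (which the paper merely asserts and you derive correctly), plus an inductive, vertex-by-vertex extension of $\varphi'$ over $T^c$ using the rigidity of embeddings of the non-cyclic vertex groups. The one substantive divergence is in the step you yourself flag as ``the bulk of the work.''

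You frame the coherence problem as verifying $\varphi(g)\varphi'(h)\varphi(g)^{-1}=\varphi'(ghg^{-1})$ for all $g\in\Gamma$ and $h\in\Gamma_v\cap\Gamma'$, on the grounds that $\Gamma'$ is generated by the $\Gamma$-conjugates of the $\Gamma_v\cap\Gamma'$. That is a condition over infinitely many $g$, and a ``finite check over the building blocks of Proposition \ref{2.1}'' would not obviously discharge it. The paper avoids this entirely by first observing that the non-cyclicity hypothesis forces the contracted tree of groups of $\Gamma'$ to be the \emph{same underlying tree} $T^c$, with vertex groups $\Gamma_v\cap\Gamma'$ (if some intersection degenerated to a cyclic or trivial group, the quotient graph for $\Gamma'$ would acquire extra vertices or loops, and conjugates and stable letters would indeed enter). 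Granting this, $\Gamma'$ is generated by the subgroups $\Gamma_v\cap\Gamma'$ themselves, so a homomorphism on $\Gamma'$ is determined by its restrictions to them; since the construction arranges $\varphi|_{\Gamma_v\cap\Gamma'}=\varphi'|_{\Gamma_v\cap\Gamma'}$ for every $v$ (the paper embeds each $\Gamma_v$ into the normalizer of $\varphi'(\Gamma_v\cap\Gamma')$, exploiting that $\Gamma_v\cap\Gamma'$ is normal in $\Gamma_v$), the equality $\varphi|_{\Gamma'}=\varphi'$ follows at once, with no conjugation to track. Your instinct that the non-cyclic hypothesis is what makes the argument go through is right, but the mechanism is this identification of trees rather than rigidity of the conjugating elements; without that observation your coherence step remains a genuine gap in the write-up.
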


\begin{proof}
Let $T^c$ denote the tree of groups for $\Gamma$ and $(T')^c$ denote the one for $\Gamma'$.
Since $\Gamma^\prime\subset\Gamma$ is normal,
the subgroup $\Gamma_v\cap\Gamma^\prime\subset \Gamma_v$ 
is a normal subgroup for each vertex $v\in T^c$.
Since $\Gamma_v$ normalizes $\Gamma_v\cap\Gamma^\prime$
the group $\Gamma_v$ can be embedded into the normalizer
of $\varphi^\prime(\Gamma_v\cap\Gamma^\prime)$ in ${\rm PGL}_2(\mathbb{F}_{p^m})$.
Moreover, it follows from the condition that
$\Gamma_v\cap\Gamma^\prime$ is non-cyclic if $\Gamma_v$ is non-cyclic,
that the tree of groups obtained from $T^c$ by replacing the groups $\Gamma_v$
by the intersections $\Gamma_v\cap\Gamma^\prime$ equals the tree $(T')^c$.
If $v,v^\prime\in T^c$ are vertices that form an edge $e$,
then we can embed both $\Gamma_v$ and $\Gamma_{v^\prime}$
in such a way that the embedding of $\Gamma_e$ 
contains $\varphi^\prime(\Gamma_e\cap \Gamma^\prime)$.
By induction we can extend this to all vertices $v\in T^c$.
This gives a well-defined group homomorphism $\varphi$
that extends the homomorphism $\varphi^\prime$.

By construction the kernel $\Delta$ of $\varphi$ contains no elements of finite order.
Since its restriction to $\Gamma^\prime$ equals $\varphi^\prime$
its kernel contains $\Delta^\prime$.
The equality $[\Delta :\Delta^\prime]\cdot [im(\varphi): im(\varphi^\prime)]=[\Gamma : \Gamma^\prime]$
implies that $\Delta=\Delta^\prime$ 
if and only if $[im(\varphi): im(\varphi^\prime)]=[\Gamma : \Gamma^\prime]$.
\end{proof}

\begin{proposition}\label{6.6}
Let $\Gamma$ be a realizable amalgam with $\mu(\Gamma)<\frac{1}{12}$.
Let $m$ be the smallest integer such that all vertex groups $\Gamma_v$, $v\in T^c$
can be embedded into ${\rm PGL}_2(\mathbb{F}_{p^m})$. Suppose that the
kernel of the group homomorphism \\ 
$\varphi:\; \Gamma\longrightarrow {\rm PGL}_2(\mathbb{F}_{p^m})$
contains no elements of finite order.\\
{\em Suppose $p>2$}.
 If $\det(\Gamma)= (\mathbb{F}_{p^m}^\ast)^2$ and $\Gamma_v\not= D_\ell$, $\ell|\frac{q\pm 1}{2}$
for all $v\in T^c$, then $im(\varphi)={\rm PSL}_2(\mathbb{F}_{p^m})$. 
 If  $\det(\Gamma)= (\mathbb{F}_{p^m}^\ast)^2$ and $\Gamma_v= D_\ell$, $\ell|\frac{q\pm 1}{2}$
for some vertex $v\in T^c$, then both possibilities  ${\rm PGL}_2(\mathbb{F}_{p^m})$
or ${\rm PSL}_2(\mathbb{F}_{p^m})$ for $im(\varphi)$ occur.\\  In the other cases  $im(\varphi)={\rm PGL}_2(\mathbb{F}_{p^m})$.\\
{\em Suppose $p=2$}, then $im(\varphi)={\rm PGL}_2(\mathbb{F}_{p^m})$.
\end{proposition}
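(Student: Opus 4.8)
The plan is to analyze the image $\mathrm{im}(\varphi)\subset {\rm PGL}_2(\mathbb{F}_{p^m})$ using two constraints: first, that $\varphi$ restricts to an embedding on each vertex group $\Gamma_v$ (this is the hypothesis that $\ker\varphi$ has no elements of finite order, together with the fact that any torsion element of $\Gamma$ is conjugate into some $\Gamma_v$), and second, the determinant condition. Since $\mu(\Gamma)<\frac{1}{12}$, Corollary~\ref{6.2} tells us every vertex group is one of ${\rm PGL}_2(\mathbb{F}_q)$, ${\rm PSL}_2(\mathbb{F}_q)$, $D_\ell$, or $B(n\cdot n_1,\ell)$ with $\ell\mid q\pm 1$, and ${\rm maxp}=0$ with at most three branch points; moreover the lists \ref{6.3} show there is always at least one vertex group that is ${\rm PGL}_2(\mathbb{F}_q)$ or ${\rm PSL}_2(\mathbb{F}_q)$ (equivalently ${\rm PGL}_2(\mathbb{F}_{p^m})$ or ${\rm PSL}_2(\mathbb{F}_{p^m})$, since $m$ is chosen minimal and these groups force $p^m=q$). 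So $\mathrm{im}(\varphi)$ contains a copy of ${\rm PSL}_2(\mathbb{F}_{p^m})$ as a subgroup, hence $\mathrm{im}(\varphi)$ is either ${\rm PSL}_2(\mathbb{F}_{p^m})$ or ${\rm PGL}_2(\mathbb{F}_{p^m})$ — these are the only subgroups of ${\rm PGL}_2(\mathbb{F}_{p^m})$ containing ${\rm PSL}_2(\mathbb{F}_{p^m})$, by the standard subgroup classification (Dickson). This immediately disposes of the $p=2$ case, since there ${\rm PGL}_2=\rm{PSL}_2$ and there is nothing to prove; it also handles the generic $p>2$ assertion once we pin down when the image drops to ${\rm PSL}_2$.

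Next I would bring in the determinant. The index-two subgroup ${\rm PSL}_2(\mathbb{F}_{p^m})\subset {\rm PGL}_2(\mathbb{F}_{p^m})$ is exactly the kernel of the composite ${\rm PGL}_2(\mathbb{F}_{p^m})\xrightarrow{\det}\mathbb{F}_{p^m}^\ast/(\mathbb{F}_{p^m}^\ast)^2$, and by construction the determinant on $\Gamma$ is compatible with $\varphi$: $\det\circ\varphi$ agrees with $\det$ on each vertex group and hence on all of $\Gamma$. Therefore $\mathrm{im}(\varphi)\subseteq {\rm PSL}_2(\mathbb{F}_{p^m})$ if and only if $\det(\Gamma)=(\mathbb{F}_{p^m}^\ast)^2$. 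If $\det(\Gamma)\neq (\mathbb{F}_{p^m}^\ast)^2$ there is a vertex group $\Gamma_v$ — necessarily one of ${\rm PGL}_2(\mathbb{F}_{p^m})$, or a $D_\ell$ with $\ell\mid q\pm1$ embedded with non-square determinant, or a Borel $B(n\cdot n_1,\ell)$ — on which $\det$ is surjective onto the two-element group, so $\varphi(\Gamma_v)\not\subset {\rm PSL}_2$ and thus $\mathrm{im}(\varphi)={\rm PGL}_2(\mathbb{F}_{p^m})$. This gives the last clause ("in the other cases $\mathrm{im}(\varphi)={\rm PGL}_2$").

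It remains to separate the two sub-cases of $\det(\Gamma)=(\mathbb{F}_{p^m}^\ast)^2$. If no vertex group is $D_\ell$ with $\ell\mid\frac{q\pm1}{2}$, then I claim every vertex group, when its determinant lands in squares, automatically has image inside ${\rm PSL}_2(\mathbb{F}_{p^m})$ under any embedding into ${\rm PGL}_2(\mathbb{F}_{p^m})$: for ${\rm PSL}_2(\mathbb{F}_q)$ this is its definition; for a Borel group $B(n\cdot n_1,\ell)$ with $\ell\mid\frac{q\pm1}{2}$ the cyclic part has order dividing $\frac{q-1}{2}$, whose lift to diagonal matrices has square determinant; and ${\rm PGL}_2(\mathbb{F}_q)$ and $D_\ell$ with $\ell\mid\frac{q\pm1}{2}$ admitting non-square embeddings are precisely the excluded cases. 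Hence $\varphi$ factors through ${\rm PSL}_2(\mathbb{F}_{p^m})$, and combined with the fact that some vertex group is already all of ${\rm PSL}_2(\mathbb{F}_{p^m})$, we get $\mathrm{im}(\varphi)={\rm PSL}_2(\mathbb{F}_{p^m})$. On the other hand, if some $\Gamma_v=D_\ell$ with $\ell\mid\frac{q\pm1}{2}$ occurs, then the convention fixed in \S\ref{section 6.3} embeds $D_\ell$ with square determinant, but one is free in constructing $\varphi$ (as in Lemma~\ref{6.4}) to choose the other embedding of that particular $D_\ell$ into ${\rm PGL}_2(\mathbb{F}_{p^m})$ — the one with non-square determinant — which is still compatible with the adjacent vertex/edge groups since the edge group is the cyclic $C_\ell$ and both embeddings restrict to conjugate copies of $C_\ell$. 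That choice yields $\mathrm{im}(\varphi)={\rm PGL}_2(\mathbb{F}_{p^m})$, while the square embedding yields ${\rm PSL}_2(\mathbb{F}_{p^m})$; hence both occur. I expect the main obstacle to be this last point: verifying that the alternative (non-square) embedding of the offending $D_\ell$ really extends to a homomorphism on all of $\Gamma$ with torsion-free kernel, which requires checking compatibility along the edges exactly as in the proof of Lemma~\ref{6.4} and using that the only edge groups meeting such a $D_\ell$ are cyclic of order $\ell$ or $2$, both of which have a unique embedding up to conjugacy. The Dickson subgroup classification step and the determinant bookkeeping are routine by comparison.
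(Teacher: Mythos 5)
There is a genuine gap at the first step. You assert that the Lists \ref{6.3} ``show there is always at least one vertex group that is ${\rm PGL}_2(\mathbb{F}_q)$ or ${\rm PSL}_2(\mathbb{F}_q)$,'' and that minimality of $m$ forces $p^m=q$, so that $im(\varphi)$ visibly contains ${\rm PSL}_2(\mathbb{F}_{p^m})$. Both halves of this are false. The amalgams $A(v)=B(n_1,\ell)\ast_{C_\ell}B(n_2,\ell)$, $A(ix)=B(n_1,\ell)\ast_{C_\ell}D_\ell$ and $B(i)=D_\ell\ast_{C_\ell}B(n_3,\ell)$ have no ${\rm PGL}_2/{\rm PSL}_2$ vertex group at all; and in, say, $A(i)$ the vertex $B(2n\cdot n_1,q+1)$ forces $m=2n\cdot n_1>n$, so the ${\rm PGL}_2(\mathbb{F}_q)$ vertex lands in the proper subgroup ${\rm PGL}_2(\mathbb{F}_{p^n})\subsetneq{\rm PGL}_2(\mathbb{F}_{p^m})$. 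In all these cases your shortcut (``the only subgroups containing ${\rm PSL}_2(\mathbb{F}_{p^m})$ are ${\rm PSL}_2$ and ${\rm PGL}_2$'') has nothing to bite on, and the hardest case of the proposition --- showing that for $B(n_1,\ell)\ast_{C_\ell}B(n_2,\ell)$ the image is not swallowed by a single Borel subgroup --- is exactly the one you skip. The paper's proof handles it by observing that the generator $t$ of $C_\ell$ acts as $t$ on the unipotent part of one vertex group and as $t^{-1}$ on the other, which is incompatible with both images lying in one Borel unless $t=t^{-1}$, i.e.\ $\ell=2$, excluded by $\mu(\Gamma)<\frac{1}{12}$. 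Having established that $im(\varphi)$ has order divisible by $p$ and is not of Borel type, the paper then invokes Dickson's classification and uses the \emph{minimality of $m$} to exclude ${\rm PSL}_2(\mathbb{F}_{p^s})$ and ${\rm PGL}_2(\mathbb{F}_{p^s})$ with $s<m$ (plus ad hoc exclusions of $A_5$ for $p=3$ and $D_\ell$ for $p=2$); note the minimality is used to rule out proper subfields of the image, not to identify a vertex group with the whole of ${\rm PSL}_2(\mathbb{F}_{p^m})$.

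The remainder of your argument --- the determinant bookkeeping, the identification of ${\rm PSL}_2$ as the kernel of $\det$ modulo squares, and the observation that a $D_\ell$ with $\ell\mid\frac{q\pm1}{2}$ admits two embeddings (square and non-square determinant) both compatible with the cyclic edge groups, yielding the ``both possibilities occur'' clause --- matches the paper and is fine. But without a correct proof that $im(\varphi)$ is not contained in a Borel subgroup (nor in ${\rm PSL}_2(\mathbb{F}_{p^s})$, ${\rm PGL}_2(\mathbb{F}_{p^s})$ for $s<m$), the argument does not go through for a substantial portion of the amalgams in the Lists \ref{6.3}.
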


\begin{proof}
Using the Lists \ref{lists} one has that $\mu(\Gamma)<\frac{1}{12}$ implies that
the order of at least one of the groups $\Gamma_v$, $v\in T^c$
is divisible by $p$. Therefore the order of $im(\varphi)$ is divisible by $p$.

Let us first show that the group $im(\varphi)$ is not contained in a group of Borel type.
If $\Gamma_v\cong D_\ell$ or $\Gamma_v\cong {\rm PGL}_2(\mathbb{F}_{q})$ for a vertex $v\in T^c$,
then $im(\varphi)$ contains a subgroup isomorphic to $\Gamma_v$.
In particular, the image $im(\varphi)$ is not contained in a group of Borel type.

By Lists \ref{lists}, the remaining case is $\Gamma=B(n_1,\ell)\ast_{C_\ell}B(n_2,\ell)$.
In this case a generating element $t\in C_\ell$ acts 
as $t$ on the $p$-part of one of the groups $\Gamma_v$
and as $t^{-1}$ on the $p$-part of the other.
Since $\varphi$ is a homomorphism of groups, 
it follows that the image of both groups is not embedded in a single group of Borel type if $t\not=-1$.
Hence the image $im(\varphi)$ is not contained in a group of Borel type if $t\not= -1$.
If $t=-1$, then $\ell=2$ and $\mu(\Gamma)\geq \frac{1}{6}$.
Hence the case $t=-1$ does not occur.

If $p\not= 2,3$, then the only subgroups of ${\rm PGL}_2(\mathbb{F}_{p^m})$
of order divisible by $p$ are the groups ${\rm PSL}_2(\mathbb{F}_{p^s})$,
${\rm PGL}_2(\mathbb{F}_{p^s})$ with $s|m$ and groups of Borel type.
Since $m$ is the smallest integer such that all groups $\Gamma_v$ can be embedded into
${\rm PGL}_2(\mathbb{F}_{p^m})$, the image $im(\varphi)$ cannot be a group 
${\rm PSL}_2(\mathbb{F}_{p^s})$ or ${\rm PGL}_2(\mathbb{F}_{p^s})$ with $s<m$.
Now the proposition for $p\not=2,3$ follows from the fact that the group homomorphism
$\varphi$ preserves the determinant except for the groups $D_\ell$ with $\ell|\frac{q\pm 1}{2}$.

Using Lists \ref{lists}, we  exclude for $p=3$ the possibility  $im(\varphi)=A_5$ and we 
exclude for $p=2$ the possibility  $im(\varphi)=D_\ell$ with odd
$\ell$. Hence $im(\varphi)$ is also ${\rm PSL}_2(\mathbb{F}_{p^m})$ or ${\rm PGL}_2(\mathbb{F}_{p^m})$
for $p=2,3$. 
\end{proof}

\begin{example}\label{6.7} {\rm
Let $\Gamma$ be a realizable amalgam with
$\mu(\Gamma)<\frac{1}{12}$. By \ref{6.4} and \ref{6.6} there exists a
surjective homomorphism $\varphi:\Gamma\rightarrow H$ with $H$ either
${\rm PGL_2}(\mathbb{F}_q)$ or ${\rm PSL_2}(\mathbb{F}_q)$ and $\ker(\varphi)$ a Schottky group.
{\it Here we give  a series of examples of other constructions of normal Schottky subgroups 
$\Delta \subset \Gamma$ of finite index}.\\

\noindent (1). Let $\Gamma:=\Gamma_{v_1}\ast_{\Gamma_e}\Gamma_{v_2}:=
{\rm PGL}_2(\mathbb{F}_{q})\ast_{B(n,q-1)}B(n\cdot d,q-1)$
and let $H:={\rm PGL}_2(\mathbb{F}_{q})^d$ with $d>1$.
There exists a group homomorphism $\varphi:\;\Gamma\longrightarrow H$,
such that the kernel $ker(\varphi)$ is a Schottky group. Indeed, define
$\varphi$ for $g\in \Gamma_{v_1}$ by $\varphi(g)=(g,g,\dots,g)\in {\rm PGL}_2(\mathbb{F}_q)^d$. 
The $p$-part $B(n\cdot d,1)$ of the group $B(n\cdot d,q-1)$ is written
as $\{{1\ v\choose 0\ 1 }|v=(a_1,\dots,a_d)\in \mathbb{F}_q^d\}$ such
that the $p$-part of $\Gamma_e$ is $\{{1\ v\choose 0\ 1 }|v=(a_1,0,\dots,0)\in \mathbb{F}_q^d\}$.
Finally define $\varphi$ by the formula $\varphi ({1\ v\choose 0\ 1 })=({1\ a_1\choose 0\ 1 },{1\
  a_1+a_2\choose 0\ 1},\dots ,{1\ a_1+a_d\choose 0\ 1})$.
 One easily verifies that $im(\varphi)$ is\\
 $\{ g=(g_1,\ldots,g_d)\in H={\rm PGL}_2(\mathbb{F}_{q})^d
|\; \det(g_i)=\det(g_j), 1\leq i<j\leq d \} $.\\

\noindent (2). Since $B(d\cdot n, 1)=\prod_{i=1}^s B(d_i\cdot n,1)$ with $\sum_{i=1}^s d_i=d$, 
this construction can be generalised by replacing the group $H$ by
the group \\ $H^\prime:=\prod_{i=1}^s  {\rm PGL}_2(\mathbb{F}_{q^{d_i}})$ with $\sum_{i=1}^s d_i=d$.\\

\noindent (3). A similar $\varphi$ exists for 
$\Gamma:=\Gamma_{v_1}\ast_{\Gamma_e}\Gamma_{v_2}:=
{\rm PGL}_2(\mathbb{F}_{q})\ast_{C_{q+1}}B(2n\cdot d,q+1)$.
Let $H$ be the group $H={\rm PGL}_2(\mathbb{F}_{q^2})^d$.
The $p$-part of the group $\Gamma_{v_2}$ equals
$B(2n\cdot d,1)\cong B(2n,1)^d$.
The group homomorphism $\varphi$ embeds the group 
$\Gamma_{v_1}={\rm PGL}_2(\mathbb{F}_{q})$ diagonally in the group $H$.
Moreover, $\varphi(\Gamma_{v_2})$ embeds the $p$-part as a group $B(2n,1)^d$
that is normalised by $\varphi(\Gamma_e)=C_{q+1}$.
The image of $\varphi$ equals $\varphi(\Gamma)={\rm PSL}_2(\mathbb{F}_{q^2})^d$.\\

\noindent (4). For $p=2$ and odd $\ell$ there exist  surjective group homomorphisms 
$\varphi:\; D_\ell\ast_{C_2} C_2^n\longrightarrow D_\ell\times
C_2^{n-1}$ such that the kernel is a Schottky group.\\

\noindent (5). 
 For $p=3$ there exists a surjective group homorphisms $\varphi:\; A_4\ast_{C_3} C_3^n\longrightarrow A_4\times C_3^{n-1}$
such that the kernel is a Schottky group.}
\end{example}

\section{\rm  Mumford curves with many automorphisms} \label{section 7}

In \S \ref{section 6} we have shown that there exist many amalgams $\Gamma$
with $\mu(\Gamma)<\frac{1}{12}$ that give rise to Mumford curves $X$
such that ${\rm Aut}(X)={\rm PGL}_2(\mathbb{F}_q)$.
In the theorem below we determine the minimal genus for Mumford curves
with automorphism group ${\rm PGL}_2(\mathbb{F}_q)$.

In \S \ref{section 7.1} and \S \ref{section 7.2} we describe the Schottky groups and the Mumford curves
obtained in some detail.

\begin{theorem}\label{6.8} \label{7.1}
Let $X=\Omega/\Delta$ be a Mumford curve with  ${\rm Aut}(X)=\Gamma/\Delta
={\rm PGL_2(\mathbb{F}_{q})}$, where $\Gamma$ is the normalizer of
$\Delta$ in ${\rm PGL_2(K)}$. Then the genus $g$ of $X$ satisfies $g\geq \frac{q(q-1)}{2}$. Equality
holds in precisely the following cases:\\ 

\indent $q=4$ and $\Gamma={\rm PGL}_2(\mathbb{F}_2)\ast_{C_2}B(2,1)$,\\
\indent $q=p^n>2$ and $\Gamma={\rm PGL}_2(\mathbb{F}_q)\ast _{C_{q+1}}D_{q+1}$,\\
\indent $q=p^n>2$ and  $\Gamma=D_{q-1}\ast_{C_{q-1}}B(n,q-1)$.\\

The branch groups are
$C_3$ and $B(2,1)$ for $\Gamma={\rm PGL}_2(\mathbb{F}_2)\ast_{C_2}B(2,1)$ 
and for the other amalgams the branch groups are
$C_2$, $C_2$ and $B(n,q-1)$ if $p>2$ and $C_2$ and $B(n,q-1)$ if $p=2$.
\end{theorem}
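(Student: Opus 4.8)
The plan is to translate the genus bound into a statement about the invariant $\mu(\Gamma)$ and then extract, from the Lists \ref{lists} together with Proposition \ref{6.6}, exactly which amalgams realize equality. Recall that for a normal Schottky subgroup $\Delta\subset\Gamma$ of finite index one has $g-1=\mu(\Gamma)\cdot[\Gamma:\Delta]$, and that ${\rm Aut}(X)=\Gamma/\Delta$ forces $[\Gamma:\Delta]=|{\rm PGL}_2(\mathbb{F}_q)|=q(q^2-1)$. Hence $g=1+q(q^2-1)\mu(\Gamma)$, and the desired inequality $g\geq\frac{q(q-1)}{2}$ is equivalent to $\mu(\Gamma)\geq\frac{q(q-1)/2-1}{q(q^2-1)}=\frac{q^2-q-2}{2q(q^2-1)}=\frac{(q-2)(q+1)}{2q(q^2-1)}=\frac{q-2}{2q(q-1)}$. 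So the first step is: \emph{among all realizable amalgams $\Gamma$ admitting a normal Schottky $\Delta$ with $\Gamma/\Delta\cong{\rm PGL}_2(\mathbb{F}_q)$, show $\mu(\Gamma)\geq\frac{q-2}{2q(q-1)}$, with equality characterized.}

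The second step identifies the candidate amalgams. Since $q(q^2-1)\mu(\Gamma)=g-1$ is a positive integer $\geq 1$ and, for $q$ not too small, $\frac{q-2}{2q(q-1)}<\frac{1}{12}$, any equality case with large $q$ must lie in Lists \ref{lists}; the small values $q\in\{2,3,4,5,\dots\}$ can be treated by hand (for instance $q=2$ is excluded since then $g\geq 1$ gives genus $\leq 1$, outside our range; $q=4$ gives the listed sporadic amalgam ${\rm PGL}_2(\mathbb{F}_2)\ast_{C_2}B(2,1)$). For the generic case one goes through the entries $A(i)$--$A(xi)$ and $B(i)$--$B(iii)$: by Proposition \ref{6.6} the amalgam must have $\det(\Gamma)\neq(\mathbb{F}_q^\ast)^2$ (or contain a suitable $D_\ell$) so that $im(\varphi)={\rm PGL}_2(\mathbb{F}_q)$ rather than ${\rm PSL}_2$, and $m=n$ must be minimal, i.e.\ all vertex groups genuinely embed in ${\rm PGL}_2(\mathbb{F}_q)$ and not a proper subfield. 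This rules out the longer amalgams (they have $\mu$ strictly larger because of the inequality $\mu(\Gamma)\geq\mu(\Gamma_{v_1}\ast_{\Gamma_e}\Gamma_{v_2})$ from \S\ref{section 6.2}, strict when $\Gamma$ is bigger), leaving precisely the two-vertex amalgams ${\rm PGL}_2(\mathbb{F}_q)\ast_{C_{q+1}}D_{q+1}$ (from $A(xi)$/$B(ii)$ with trivial Borel tail) and $D_{q-1}\ast_{C_{q-1}}B(n,q-1)$ (from $A(v)$ with $\ell=q-1$, or $B(i)$). A direct computation gives for both $\mu=\frac{1}{q-1}+\frac{1}{2}-\frac{1}{q+1}-\frac{1}{q(q^2-1)}$, respectively $\mu=\frac{1}{q-1}-\frac{1}{2(q-1)}-\frac{1}{q(q-1)}$, and one checks each equals $\frac{q-2}{2q(q-1)}$, so $g=\frac{q(q-1)}{2}$ exactly.

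The third step is the \emph{existence} half: for each of these three amalgams one must actually produce a normal Schottky $\Delta$ with $\Gamma/\Delta={\rm PGL}_2(\mathbb{F}_q)$ and verify that $\Gamma$ is the full normalizer of $\Delta$ (so ${\rm Aut}(X)$ is not larger). Existence of the homomorphism $\varphi:\Gamma\to{\rm PGL}_2(\mathbb{F}_q)$ with torsion-free kernel follows from Lemma \ref{6.4} (here $m=n$), and surjectivity with image exactly ${\rm PGL}_2(\mathbb{F}_q)$ from Proposition \ref{6.6} after arranging the determinant condition; one then reads off $[\Gamma:\Delta]=q(q^2-1)$ and $g=\frac{q(q-1)}{2}$. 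That $\Gamma$ is the \emph{whole} normalizer of $\Delta$ uses the determinant/inclusion analysis of \S\ref{section 6.3}: if the normalizer $\Gamma'\supsetneq\Gamma$ then $\mu(\Gamma')=\mu(\Gamma)/[\Gamma':\Gamma]<\mu(\Gamma)$ would be strictly smaller, forcing $\Gamma'$ into the Lists with even smaller $\mu$; inspection shows no such proper over-amalgam exists for these two families (for $q=4$ one checks the sporadic case separately). The branch groups are then read off from Remark \ref{remark-6.4}: for ${\rm PGL}_2(\mathbb{F}_q)\ast_{C_{q+1}}D_{q+1}$ and $D_{q-1}\ast_{C_{q-1}}B(n,q-1)$ the extremal vertices contribute the non-edge branch groups, giving $C_2,C_2,B(n,q-1)$ for $p>2$ and $C_2,B(n,q-1)$ for $p=2$ (the two $C_2$'s of $D_{q\pm1}$ fuse when $p=2$ since then $q\pm1$ is odd), and $C_3,B(2,1)$ in the $q=4$ case.

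\textbf{Main obstacle.} The delicate point is the case analysis in step two: one must be sure that \emph{no} longer amalgam from Lists \ref{lists}, and no amalgam realizing ${\rm PGL}_2(\mathbb{F}_q)$ only after passing to a proper subfield index, sneaks in with $\mu<\frac{q-2}{2q(q-1)}$ — this requires carefully combining the minimality of $m$, the determinant constraint of Proposition \ref{6.6}, and the strict monotonicity $\mu(\Gamma)>\mu(\Gamma_{v_1}\ast_{\Gamma_e}\Gamma_{v_2})$, plus separate low-$q$ bookkeeping. Everything else is a bounded computation.
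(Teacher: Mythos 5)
Your overall strategy is the same as the paper's: use $g-1=\mu(\Gamma)\cdot|{\rm PGL}_2(\mathbb{F}_q)|$ to convert the genus bound into the inequality $\mu(\Gamma)\geq\frac{q-2}{2q(q-1)}$ and then minimize $\mu$ over the admissible amalgams. The paper organizes the minimization not by walking through the entries of Lists \ref{lists} but via the branch-group data of Remark \ref{remark-6.4}, writing $\mu(\Gamma)=-1+\frac{1}{2}\sum_i c_{G_i}$ with $c_{G_i}=\frac{(\ell_i+1)p^{n_i}-2}{\ell_i p^{n_i}}$ and minimizing each contribution subject to the constraint that the branch groups embed in ${\rm PGL}_2(\mathbb{F}_{p^m})$; this is only a difference of bookkeeping, not of substance. (Incidentally, your displayed value of $\mu$ for ${\rm PGL}_2(\mathbb{F}_q)\ast_{C_{q+1}}D_{q+1}$ is garbled — it should be $\frac{1}{q+1}-\frac{1}{2(q+1)}-\frac{1}{q^3-q}$ — though the final value $\frac{q-2}{2q(q-1)}$ you state is correct.)

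The genuine gap is exactly the point you defer to your ``main obstacle'' paragraph, and it is not a routine piece of bookkeeping: the two-vertex amalgams $\Gamma_0={\rm PGL}_2(\mathbb{F}_{q_0})\ast_{B(n_0,q_0-1)}B(2n_0,q_0-1)$ are \emph{not} eliminated by the monotonicity $\mu(\Gamma)>\mu(\Gamma_{v_1}\ast_{\Gamma_e}\Gamma_{v_2})$ (they already are simple amalgams), and by Proposition \ref{6.6} their quotient is ${\rm PGL}_2(\mathbb{F}_{q_0^2})$, so they compete for every square $q=q_0^2$, not just for ``small $q$''. The paper closes this case with the explicit computation $\mu(\Gamma_0)=\frac{q_0^2-q_0-1}{q_0^2(q_0^2-1)}$ and the elementary inequality $q_0^2-q_0-1\geq\frac{q_0^2-2}{2}$, with equality if and only if $q_0=2$; this is precisely where the sporadic case $q=4$, $\Gamma={\rm PGL}_2(\mathbb{F}_2)\ast_{C_2}B(2,1)$ enters and where all $q_0>2$ are shown to give strictly larger genus (and similarly for the ${\rm PSL}_2$ variant and for the case of two Borel branch groups). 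Without carrying out this comparison your argument neither produces the third amalgam in the statement nor proves that the list of equality cases is complete, so as written the proof is incomplete at its decisive step.
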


\begin{proof}
For a fixed $q$ we have to determine the realizable amalgams $\Gamma$ admitting a
surjective homomorphism $\varphi: \Gamma \rightarrow {\rm
  PGL}_2(\mathbb{F}_q)$ such that the kernel is a Schottky group (use \ref{6.6}) and
minimal $\mu (\Gamma)<\frac{1}{12}$ (which corresponds to a minimal genus
$g$ for $X$). 
The value $\mu(\Gamma)$ is entirely determined by the branch groups of the amalgam $\Gamma$.
The rather short list of the branch groups that occur (see remark \ref{remark-6.4})
allows us to find the minimal value $\mu(\Gamma)$.
We only need to determine the combinations of branch groups that give the minimal value for $\mu(\Gamma)$. 

\bigskip

Assume ${\rm Aut}(X)={\rm PGL}_2(\mathbb{F}_{p^m})$ and $\mu(\Gamma)<\frac{1}{12}$.
Then $\Gamma$ has two or three branch points and  at least one of the branch groups is of Borel type
(see \ref{remark-6.4}).
 
As before (proof of \ref{6.2}), we  use the Riemann--Hurwitz--Zeuthen formula 
for $X:=\Omega /\Delta\rightarrow \mathbb{P}^1=\Omega /\Gamma$ and
$g-1=[\Gamma :\Delta]\cdot \mu(\Gamma)$ to calculate the (minimum) value of the function $\mu$.
Now $\mu(\Gamma)=-1+(\sum_i c_{G_i})/2$.
Here $c_{G_i}:=\frac{(\ell_i+1)p^{n_i}-2}{\ell_ip^{n_i}}$
is the contribution of the branch group $G_i$ of order $\ell_ip^{n_i}$ with
$(\ell_i,p)=1$ to the value of $\mu(\Gamma)$.

The contribution $c_B$ of a single group of Borel type $B=B(s,\ell)$
to the value of $\mu(\Gamma)$ equals
$c_B=\frac{(1+\ell)p^s-2}{\ell p^s}
=1+\frac{p^s-2}{\ell p^s}$.
This contribution $c_B$ is minimal when 
either $p^s=2$ or $p^s>2$ and the values of $\ell$ and $p^s$ are both maximal.
If $p^s=2$, then $B=B(1,1)$ and $c_B=1$.
The only infinite group that has only branch groups $B=B(1,1)$ for $p=2$
is the amalgam $\Gamma={\rm PGL}_2(\mathbb{F}_2)\ast_{C_3}{\rm PGL}_2(\mathbb{F}_2)
\cong D_3\ast_{C_3}D_3$. Indeed, the amalgams with two branch groups $C_2$ are 
$D_\ell \ast_{C_\ell}D_\ell$ with $2\nmid \ell$. Only $\ell=3$ can occur in view of the lists 6.3.
This amalgam produces a (Tate) curve of genus $g=1$.
Therefore the case with only branch groups $B=B(1,1)$ with $p=2$ is excluded.\\

The next case to consider are the groups $\Gamma$ for $p=2$ 
with a single branch group $B(1,1)$ and a branch group different from $B(1,1)$.
Such $\Gamma$ satisfy $\mu(\Gamma)\geq \frac{1}{12}$ or contain a dihedral group.
Those containing a dihedral group will be treated later in this proof.\\
  
It follows that we have only to consider the case
of amalgams $\Gamma$ with a branch group $B(s,\ell)\subset{\rm PGL}_2(\mathbb{F}_{p^m})$, 
where $\ell$ and $p^s>2$ are both maximal for the particular types of ramification indices. \\

If $\Gamma$ has three branch points then $p>2$ and the branch groups are 
$C_2$, $C_2$ and $B(s,\ell)$ with $s<m$ and $\ell|p^m-1$.
Therefore $B=B(m,p^m-1)$ and $c_B=1+\frac{p^m-2}{p^m(p^m-1)}$ gives the minimum value of $\mu(\Gamma)$
for this type of amalgam.
Then the minimal value of $\mu(\Gamma)$ 
for such a group equals \\
$\mu(\Gamma)=-1+(2c_{C_2}+c_B)/2=-1+(1+1+\frac{p^m-2}{p^m(p^m-1)})/2 =\frac{p^m-2}{2p^m(p^m-1)}$.

The amalgams for $p=2$, $p^m>2$ with branch groups $C_2$ and $B(m,p^m-1)$
give exactly the same minimum value of $\mu(\Gamma)$. \\

If $\Gamma$ has two branch points then either both branch groups are of Borel type
or one branch group equals a cyclic group $C_{q+1}$ or $C_{\frac{q+1}{2}}$
and the other equals a group of Borel type.
Let us first consider the case where both branch groups are of Borel type.
If $p>2$, then we only need to consider the case 
where both groups of Borel type are maximal groups $B=B(m,p^m-1)$.
Then $\mu(\Gamma)=-1+c_B=\frac{p^m-2}{p^m(p^m-1)}$.
This is a factor two larger than the previous case.
Thus  amalgams of this type do not obtain the minimal value of $\mu(\Gamma)$.
If $p=2$, then the minimal value is obtained by taking $B=C_2$
and $B^\prime=B(m,p^m-1)$. This situation has already been considered above.\\

Consider the case with two branch points, where one branch group equals $C_{q+1}$ and the other is a group of Borel type.
These branch groups correspond to amalgams $\Gamma$ of the form
$\Gamma={\rm PGL}_2(\mathbb{F}_{q})\ast_{B(n,q-1)}B(n_1\cdot n,q-1)$.
The case with minimal contribution $c_B$ to the value of $\mu(\Gamma)$
occurs when $n_1=2$. Then $p^m=q^2=p^{2n}$ and $B=(2n,p^n-1)$.
Then $c_B=1+\frac{p^{2n}-2}{p^{2n}(p^{n}-1)}$
and $c_{C_{q+1}}=\frac{q}{q+1}=\frac{p^{n}}{p^{n}+1}$.
We obtain
$\mu(\Gamma)
=-1+(\frac{p^{n}}{p^{n}+1}+1+\frac{p^{2n}-2}{p^{2n}(p^{n}-1)})/2
=\frac{p^{2n}-p^{n}-1}{p^{2n}(p^{2n}-1)}$.
Since $p^{2n}-p^{n}-1\geq\frac{p^{2n}-2}{2}$,
the inequality 
$\mu(\Gamma)=\frac{p^{2n}-p^{n}-1}{p^{2n}(p^{2n}-1)}\geq\frac{p^{2n}-2}{2p^{2n}(p^{2n}-1)}$ holds.
Equality holds if and only if $p^n=2$.
Then the amalgam equals $\Gamma={\rm PGL}_2(\mathbb{F}_2)\ast_{C_2}B(2,1)$
and has branch groups $C_3$ and $B(2,1)$.

The case where one branch group is the cyclic group $C_{\frac{q+1}{2}}$ and the other branch group 
is of Borel type is similar. It does not result in an amalgam 
$\Gamma$ with $\mu(\Gamma)\leq \frac{p^m-2}{2p^m(p^m-1)}$.

We have now treated all the amalgams $\Gamma$ with $\mu(\Gamma)<\frac{1}{12}$.
The minimal value of $\mu(\Gamma)$ that occurs for a Mumford curve 
$X=\Omega/\Delta$ with automorphism group
$\Gamma/\Delta={\rm PGL}_2(\mathbb{F}_{p^m})$ is
$\mu(\Gamma)=\frac{p^m-2}{2p^m(p^m-1)}$.
Then $\Gamma$ has branch groups $C_2$, $C_2$, $B(m,p^m-1)$ if $p>2$,
branch groups $C_2$ and $B(m,p^m-1)$ if $p=2$, $p^m>2$
or branch groups $C_3$ and $B(2,1)$ with $p^m=4$.
The amalgams with branch groups $C_2$, $C_2$ and $B(m,p^m-1)$ with $p>2$
and branch groups $C_2$ and $B(m,p^m-1)$ with $p=2$ and $p^m>2$
are $\Gamma=D_{q-1}\ast_{C_{q-1}}B(n,q-1)$
and $\Gamma=D_{q+1}\ast_{C_{q+1}}{\rm PGL}_2(\mathbb{F}_q)$ with $q=p^m>2$.
For $q=p^m=4$ we also have the amalgam
$\Gamma={\rm PGL}_2(\mathbb{F}_2)\ast_{C_2}B(2,1)$
with branch groups $C_3$ and $B(2,1)$.
\end{proof}

\begin{remark}\label{7.2} {\rm $\ $ \\
The extreme amalgams ${\rm PGL}_2(\mathbb{F}_q)\ast _{C_{q+1}}D_{q+1}$
and $D_{q-1}\ast_{C_{q-1}}B(n,q-1)$ are families in two
ways. First of all $q=p^m$ varies and secondly for a fixed amalgam the
embeddings as a discontinuous group in ${\rm PGL}_2(K)$ are
parametrised by a punctured open disk $\{\lambda \in K|\
0<|\lambda|<1\}$ (see \S \ref{section 7.1} and \S \ref{section 7.2}).  
In \S \ref{section 8} we will show that the above two extreme families are
the only ones that have a maximal number of automorphisms. \\ 
}\end{remark}

\subsection{\rm The amalgam ${\rm PGL}_2(\mathbb{F}_q)\ast
  _{C_{q+1}}D_{q+1}$.}\label{section 7.1}

In this section the Schottky group and the equation for the extreme
family of Mumford curves  belonging to the amalgam  ${\rm PGL}_2(\mathbb{F}_q)\ast_{C_{q+1}}D_{q+1}$ 
are made explicit.

\begin{lemma}\label{lemma-6.10} \label{7.3} Suppose $q=p^n>2$. Then $\Gamma:={\rm
    PGL}_2(\mathbb{F}_q)\ast _{C_{q+1}}D_{q+1}$ has a unique normal
 subgroup $\Delta$, free of rank $\frac{q(q-1)}{2}$, such that 
$\Gamma/\Delta ={\rm PGL}_2(\mathbb{F}_q)$. The embeddings of $\Gamma$
as discontinuous subgroup of ${\rm PGL}_2(K)$ are parametrised by the
punctured disk $\{\lambda \in K|\ 0<|\lambda |<1\}$.  The Mumford curve $X_\lambda$
defined by the embedding and $\Delta$ has genus $\frac{q(q-1)}{2}$ and group of automorphisms  
${\rm PGL}_2(\mathbb{F}_q)$. 
\end{lemma}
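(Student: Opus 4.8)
The plan is to establish the three assertions of Lemma \ref{7.3} — uniqueness of $\Delta$, the parametrisation of embeddings, and the genus and automorphism group of $X_\lambda$ — by combining the general machinery of \S\S 6.1--6.4 with an explicit description of the amalgam $\Gamma = {\rm PGL}_2(\mathbb{F}_q) \ast_{C_{q+1}} D_{q+1}$ as a discontinuous group. First I would recall from Proposition \ref{6.6} and Lemma \ref{6.4} that any group homomorphism $\varphi : \Gamma \to {\rm PGL}_2(\mathbb{F}_q)$ whose kernel is torsion-free must be surjective onto ${\rm PGL}_2(\mathbb{F}_q)$ (note $m = n$ here, since ${\rm PGL}_2(\mathbb{F}_q)$ is a vertex group and $D_{q+1}$ embeds in ${\rm PGL}_2(\mathbb{F}_q)$, while $\det(\Gamma) = \mathbb{F}_q^\ast/(\mathbb{F}_q^\ast)^2$ because ${\rm PGL}_2(\mathbb{F}_q)$ has full determinant). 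The key point for uniqueness is that such a $\varphi$ is essentially forced: its restriction to the vertex group ${\rm PGL}_2(\mathbb{F}_q)$ is an automorphism, so after composing with an inner automorphism of ${\rm PGL}_2(\mathbb{F}_q)$ we may assume $\varphi|_{{\rm PGL}_2(\mathbb{F}_q)} = {\rm id}$ up to a field automorphism; then $\varphi|_{C_{q+1}}$ is determined, and since $C_{q+1}$ is a maximal cyclic (non-split torus) in ${\rm PGL}_2(\mathbb{F}_q)$, the dihedral group $D_{q+1}$ must map to the normalizer $N(C_{q+1})$, which is itself a $D_{q+1}$ inside ${\rm PGL}_2(\mathbb{F}_q)$. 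Thus $\varphi$ is unique up to ${\rm Aut}({\rm PGL}_2(\mathbb{F}_q))$, hence $\Delta = \ker\varphi$ is the unique such normal subgroup, and $g-1 = \mu(\Gamma)\cdot|{\rm PGL}_2(\mathbb{F}_q)| = \frac{q-2}{2q(q-1)}\cdot q(q^2-1) = \frac{q(q-1)}{2} - 1$, giving rank $\frac{q(q-1)}{2}$ as in Theorem \ref{6.8}.

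For the parametrisation, I would write down an explicit realization. Place the fixed point structure so that ${\rm PGL}_2(\mathbb{F}_q)$ sits inside ${\rm PGL}_2(K)$ in the standard way (acting on $\mathbb{P}^1(\mathbb{F}_q) \subset \mathbb{P}^1(K)$) and let $C_{q+1}$ be the non-split torus; its two fixed points are a conjugate pair of points in $\mathbb{P}^1(\mathbb{F}_{q^2}) \setminus \mathbb{P}^1(\mathbb{F}_q)$. The extra generator of $D_{q+1}$ over $C_{q+1}$ is an involution swapping these two fixed points; the one free parameter is the choice of this involution's conjugacy-class-representative within the relevant pencil, which by a Bruhat--Tits / reduction argument (exactly as in \cite{P-V}) amounts to a single scalar $\lambda$ with $0 < |\lambda| < 1$ measuring how far the new reflection axis is pushed into the open disk. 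One checks that every value of $\lambda$ in the punctured disk gives a discontinuous group (the realizability is already guaranteed by Proposition \ref{2.1}(a)) and that distinct $\lambda$ give non-conjugate realizations by comparing the invariant of the reduction. The genus and rank of $\Delta$ do not depend on $\lambda$ since $\mu(\Gamma)$ and $[\Gamma:\Delta]$ do not.

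It remains to verify ${\rm Aut}(X_\lambda) = {\rm PGL}_2(\mathbb{F}_q)$ exactly, i.e. that the normalizer $\Gamma'$ of $\Delta$ in ${\rm PGL}_2(K)$ equals $\Gamma$ and not something strictly larger. Here I would invoke the determinant discussion of \S \ref{section 6.3}: by Proposition \ref{6.6}, since $\det(\Gamma)$ is full and $\Gamma$ is not of the exceptional type $D_\ell \ast \cdots$ with $\ell \mid \frac{q\pm1}{2}$ that could be squeezed, $\Gamma$ is not a proper finite-index subgroup of any other realizable amalgam with $\mu < \frac{1}{12}$ having the same $\Delta$ as a normal subgroup — one checks against Lists \ref{lists} that no amalgam in the list properly contains ${\rm PGL}_2(\mathbb{F}_q)\ast_{C_{q+1}}D_{q+1}$ with index $>1$ while still admitting $\Delta$ as a normal Schottky subgroup. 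Since $\Gamma' \supseteq \Gamma$ is indecomposable, realizable, normalizes $\Delta$, and $\mu(\Gamma')[\Gamma':\Gamma] = \mu(\Gamma)$ forces $[\Gamma':\Gamma]=1$, we get $\Gamma' = \Gamma$ and hence ${\rm Aut}(X_\lambda) = \Gamma/\Delta = {\rm PGL}_2(\mathbb{F}_q)$.

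The main obstacle I expect is the second claim — rigorously pinning down that the set of ${\rm PGL}_2(K)$-conjugacy classes of discontinuous embeddings of $\Gamma$ is exactly a punctured open disk, no more and no less. Counting the parameters from the finite-group embedding data (which vertex groups are rigid, which edge identifications are free) and matching them to the reduction-graph invariant of \cite{P-V} requires care; the upper bound "at most one parameter" comes from rigidity of ${\rm PGL}_2(\mathbb{F}_q)$ and $C_{q+1}$ inside ${\rm PGL}_2(K)$, and the lower bound "at least one parameter, and it fills the punctured disk" comes from explicitly exhibiting the family and showing the boundary value $|\lambda|=1$ (resp.\ $\lambda = 0$) degenerates the action so it is no longer discontinuous (resp.\ no longer faithful on the tree). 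This is where I would lean most heavily on the terminology and structural results imported from \cite{P-V}.
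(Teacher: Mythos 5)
Your overall architecture is sound, and in two places it genuinely diverges from the paper's proof. The paper pins $\Delta$ down concretely: writing $\Gamma=\langle {\rm PGL}_2(\mathbb{F}_q),Z\rangle$ with $Z^2=1$, $Za=a^{-1}Z$, it sets $\tau:=zZ$ (where $\langle a,z\rangle\cong D_{q+1}$ sits inside ${\rm PGL}_2(\mathbb{F}_q)$), identifies $\Delta$ as the normal closure of $\tau$, and proves freeness of rank $\frac{q(q-1)}{2}$ by exhibiting the explicit basis $\{w\tau w^{-1}\mid w\in W\}$, with $W$ a transversal of ${\rm PGL}_2(\mathbb{F}_q)/\langle a,z\rangle$, using the normal form for reduced words in the amalgam. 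You instead extract the rank from $g-1=\mu(\Gamma)\cdot[\Gamma:\Delta]$, which is legitimate and shorter, but you lose the explicit generators that the paper reuses in \S \ref{section 7.1.1.0} to compute the stable reduction. Conversely, your argument that the normalizer $\Gamma'$ of $\Delta$ equals $\Gamma$ (via the determinant discussion, the Lists, and $\mu(\Gamma')[\Gamma':\Gamma]=\mu(\Gamma)$) is more detailed than the paper's, which simply asserts ${\rm Aut}(X_\lambda)={\rm PGL}_2(\mathbb{F}_q)$. The description of the embeddings (the extra involution swaps the two fixed points of the non-split torus, leaving one multiplier $\lambda$ with $0<|\lambda|<1$ attached to the hyperbolic element $\tau$) matches the paper's.

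The step I cannot accept as written is the uniqueness of $\Delta$. After normalizing $\varphi|_{{\rm PGL}_2(\mathbb{F}_q)}={\rm id}$ (harmless, since post-composing a surjection with an automorphism of the target does not change its kernel), you conclude from ``$D_{q+1}$ must land in $N(C_{q+1})\cong D_{q+1}$'' that $\varphi$ is unique up to ${\rm Aut}({\rm PGL}_2(\mathbb{F}_q))$, hence that the kernel is unique. That inference does not follow: $\varphi(Z)$ may be any of the $q+1$ reflections $za^{i}$ in $N(\langle a\rangle)$, each choice defines a homomorphism $\varphi_i$ with torsion-free kernel (one checks $(za^{i})^{2}=1$ and $(za^{i})a(za^{i})^{-1}=a^{-1}$, so the restriction to the vertex group $D_{q+1}$ is still injective), and these choices are not related by automorphisms of the target once the restriction to the vertex group ${\rm PGL}_2(\mathbb{F}_q)$ has been fixed. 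Indeed $\tau=zZ$ lies in $\ker\varphi_0$ while $\varphi_i(zZ)=a^{i}$; the various $\varphi_i$ differ instead by the (non-inner) automorphisms $Z\mapsto A^{i}Z$ of $\Gamma$. So your argument, as stated, does not single out one normal subgroup; it only shows uniqueness up to ${\rm Aut}(\Gamma)$. To be fair, the paper's own proof is equally brief at this exact point (``$\varphi$ is unique up to conjugation and thus $\Delta$ is unique''), but since you are presenting an independent justification, this is the concrete gap you need to close: either show that the candidate kernels coincide, or weaken the uniqueness to uniqueness modulo automorphisms of $\Gamma$ and check that this suffices for the rest of the statement (the genus, the parametrisation, and ${\rm Aut}(X_\lambda)$ are insensitive to the choice).
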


\begin{proof}
We choose in ${\rm PGL}_2(\mathbb{F}_q)$ an element $a$ of order $q+1$
and an element $z$ with $z^2=1,\ za=a^{-1}z$. Let $<a,z>$
denote the subgroup generated by $a$ and $z$ (we note that $<a,z>\cong D_{q+1}$).  
 In  $D_{q+1}$ we choose
an element $A$ of order $q+1$ and an element $Z$ with $Z^2=1,\
ZA=A^{-1}Z$. Then the amalgam $\Gamma$ is (since $A$ and $a$ are
identified) generated by  ${\rm  PGL}_2(\mathbb{F}_q)$ and $Z$  and the relations are $Z^2=1, \
Za=a^{-1}Z$. Choose a set of representatives $W$ of  the cosets ${\rm PGL}_2(\mathbb{F}_q)/<a,z>$.
We may assume that $1\in W$. We note that $\# W=\frac{q(q-1)}{2}$.\\ 

Every element in $\Gamma$ can be written in a unique way as a
``reduced word'' $w_1z^{\epsilon_1}Zw_2z^{\epsilon_2}Z\cdots
Zw_sz^{\epsilon_s}Zm$ with $w_1,\dots ,w_s\in W$, $\epsilon_1,\dots
\epsilon_s\in \{0,1\}$, $m\in {\rm PGL}_2(\mathbb{F}_q)$ and
$w_iz^{\epsilon_i}\neq 1$ for $i=2,\dots , s$. \\

 One defines the homomorphism $\varphi:\Gamma \rightarrow  {\rm
  PGL}_2(\mathbb{F}_q)$ by  $\varphi$ is the identity on  ${\rm
  PGL}_2(\mathbb{F}_q)$ and $\varphi(Z)=z$. The kernel $\Delta$ of $\varphi$ is the
smallest normal subgroup containing $\tau:=zZ$.  We note that $(w\tau w^{-1})^{-1}=w\tau^{-1}w^{-1}$,
 $a\tau
a^{-1}=\tau$, $z\tau z^{-1}=\tau^{-1}$ and $Z\tau Z^{-1}=\tau^{-1}$.

 One considers the set
$S:=\{w\tau w^{-1}|\ w\in W\}\cup \{w\tau^{-1}w^{-1}|\ w\in W\}$. This
set contains all conjugates $m\tau m^{-1}$ with $m\in {\rm
  PGL}_2(\mathbb{F}_q)\cup \{Z\}$. It follows that the kernel $\Delta$
is generated as a subgroup by the $\{ w\tau w^{-1}| w\in W\}$. Using
the unique representation of the elements of $\Gamma$ by ``reduced
words'' one finds that there are no relations among the above
generators of $\Delta$. Thus $\Delta$ is a free non-commutative 
 group on $\frac{q(q-1)}{2}$ generators. We note that $\varphi$ is
 unique up to conjugation and thus $\Delta$ is unique.\\

Every embedding $em: \Gamma \rightarrow {\rm PGL}_2(K)$ as
discontinuous group is (up to conjugation) given by $em$ is the
`identity' on ${\rm PGL}_2(\mathbb{F}_q)$ and $em(Z)$ is an element of
order two which permutes the two fixed points of $a$ and such that
$em(\tau)$ is a hyperbolic element with fixed points the fixed points
of $a$. Thus the embeddings form a family parametrised by
$\{\lambda \in K|\ 0<|\lambda |<1\}$.\\

Let an embedding $\Gamma \subset {\rm PGL}_2(K)$ be chosen and let 
$\Omega\subset \mathbb{P}^1(K)$ be the subspace of the ordinary points
for $\Gamma$. Then $X_\lambda :=\Omega /\Delta$ is a Mumford curve of genus 
$\frac{q(q-1)}{2}$ and its group of automorphisms is ${\rm PGL}_2(\mathbb{F}_q)$.
\end{proof}

\subsubsection {\rm  An algebraic description of the family of curves $X_\lambda$ for
   $p>2$} \label{section 7.1.1.0}
 {\it The first step} is a computation of the stable reduction of
$X_\lambda$.\\
 Using the tree $\mathcal{T}$, corresponding to the
 amalgam, one makes an analytic reduction, denoted by $\overline{\Omega}$, of $\Omega$. On this 
tree of projective lines over the residue field, the amalgam acts. 
Then $\overline{X}_\lambda:=\overline{\Omega}/\Delta$ is an analytic reduction of the curve
$X_\lambda$ (independent of $\lambda$). Using the description of
$\Delta$ one finds that one component $L$ of   $\overline{X}_\lambda$ is a
$\mathbb{P}^1$ with stabiliser ${\rm PGL}_2(\mathbb{F}_q)$ and the
other components $\{L_i\},\ i=1,\dots, \frac{q(q-1)}{2}$ are projective lines with
stabilisers $\cong D_{q+1}$. Each $L_i\cap L$ consists of two points
of $L(\mathbb{F}_{q^2})$ conjugated under the Frobenius $Fr_q$. The
stable reduction $R$ of $X_\lambda$ is obtained by contracting 
all lines $L_i$. Therefore $R$ is the projective line ``with
$\frac{q(q-1)}{2}$ ordinary nodes'', i.e., it is
$\mathbb{P}^1_{\mathbb{F}_q}$ where the pairs of points $\{\{a,a^q\}| a\in
\mathbb{F}_{q^2}\setminus \mathbb{F}_q\}$ are identified.
The curve $R$ is known as the Ballico-Hefez curve (See \cite{F-H-K} and \cite{H-S}).
Below (\ref{7.4} for $p\neq 2$ and \S \ref{section 7.1.1} for $p=2$)
we give an equation for  this curve (see also \cite{H-S} prop. 1.4).

\begin{lemma} \label{7.4} Suppose $p\neq 2$. Consider the stable curve $R$ over $\mathbb{F}_q$, defined by
  identifying on the projective line over $\mathbb{F}_q$ the points
  $a$ and $a^q$ for all $a\in \mathbb{F}_{q^2}$.  The
  homogeneous polynomial $F_0\in \mathbb{F}_q[x_1,x_2,x_3]$ of degree
  $q+1$ is defined by:\\
 $F_0=(x_1^2+x_2^2-wx_3^2)^{(q+1)/2}-(x_1^{q+1}+x_2^{q+1}-wx_3^{q+1})$ with
 $w\in \mathbb{F}_q^*\setminus (\mathbb{F}_q^*)^2$.

Then $F_0=0$ is an embedding of $R$ into the projective plane over
$\mathbb{F}_q$. Moreover, the group $G\subset {\rm PGL}_3(\mathbb{F}_q)$ of the automorphisms of the
projective plane having $F_0=0$ as invariant curve maps isomorphically
to the group ${\rm PGL}_2(\mathbb{F}_q)$ of automorphisms of $R$.
\end{lemma}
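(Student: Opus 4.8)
The plan is to verify directly that $F_0 = 0$ defines the claimed stable curve and then to identify the automorphism group. Write $q = p^n$, $p \neq 2$. First I would analyze the plane curve $C : F_0 = 0$ over $\mathbb{F}_q$. Set $Q(x) = x_1^2 + x_2^2 - w x_3^2$ and $N(x) = x_1^{q+1} + x_2^{q+1} - w x_3^{q+1}$, so $F_0 = Q^{(q+1)/2} - N$. The key observation is that $N$ is (up to a twist) a norm form: if $\phi : \mathbb{P}^1_{\mathbb{F}_q} \to \mathbb{P}^2_{\mathbb{F}_q}$ is the map whose image is the conic $\{Q = 0\}$ pushed off to a rational normal parametrization, then pulling $F_0$ back along the natural degree-$(q+1)$ parametrization of the line one should get a polynomial that factors in terms of $\prod_{a \in \mathbb{F}_{q^2}}(\,\cdot - a)$ type expressions. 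Concretely, I would parametrize: the conic $Q = 0$ over $\overline{\mathbb{F}_q}$ is rational, and the Frobenius-twisted structure makes $Q$ essentially $y_1 y_2$ in suitable coordinates $y_1, y_2$ where $\mathrm{Fr}_q$ swaps $y_1 \leftrightarrow y_2$; then $Q^{(q+1)/2} = (y_1 y_2)^{(q+1)/2}$ and $N$ becomes $y_1^{q+1} + \text{(conjugate terms)}$, and the equation $F_0 = 0$ reads, after parametrizing the line $\mathbb{P}^1$ by $t$, as $\prod (t - a) = \prod(t - a^q)$ ranging over an appropriate $\mathbb{F}_{q^2}$-orbit, which is exactly the gluing condition $t \sim t^q$. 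This is the computational heart; I expect it to require care with the quadratic twist by $w$ (this is precisely why $w \in \mathbb{F}_q^* \setminus (\mathbb{F}_q^*)^2$ is needed: it controls whether the conic $Q=0$ has $\mathbb{F}_q$-points and forces the relevant $\mathbb{F}_{q^2}/\mathbb{F}_q$-structure).

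Second I would check that $F_0 = 0$ is genuinely an embedding of $R$, i.e.\ that the map is a closed immersion onto its image with the correct singularities: $\frac{q(q-1)}{2}$ ordinary nodes and nothing else. The arithmetic genus of a plane curve of degree $q+1$ is $\binom{q}{2} = \frac{q(q-1)}{2}$; since the normalization is $\mathbb{P}^1$ of genus $0$, the total $\delta$-invariant of the singularities must be exactly $\frac{q(q-1)}{2}$, so it suffices to exhibit $\frac{q(q-1)}{2}$ distinct nodes (one for each unordered pair $\{a, a^q\}$ with $a \in \mathbb{F}_{q^2} \setminus \mathbb{F}_q$, counting $\frac{q^2-q}{2}$ such pairs) and conclude by the genus count that there are no others and each is an ordinary double point. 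The identification of the node points with the pairs $\{a, a^q\}$ comes out of the parametrization in the previous step; I would verify that at each such pair the two branches meet transversally by a local computation on $F_0$ and its partials, or more cleanly by noting that the normalization map $\mathbb{P}^1 \to C$ is injective away from these pairs and two-to-one (with distinct tangent directions) exactly on them. This matches the description of $R$ given in the text just before the lemma, so $C \cong R$ as stable curves over $\mathbb{F}_q$.

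Third, for the automorphism statement: by construction $R$ carries a faithful action of $\mathrm{Aut}(R) = {\rm PGL}_2(\mathbb{F}_q)$ (acting through the normalization $\mathbb{P}^1$, permuting the glued pairs, as established in \S\ref{section 7.1.1.0}). I would show this action is induced by linear automorphisms of $\mathbb{P}^2$. The idea: the space $H^0(R, \omega_R)$ or rather the linear system cutting out the degree-$(q+1)$ embedding is canonically attached to $R$, hence ${\rm PGL}_2(\mathbb{F}_q)$-stable; equivalently, one checks directly that for $g \in {\rm PGL}_2(\mathbb{F}_q)$ acting on the parametrizing $\mathbb{P}^1$, the three polynomials $x_1, x_2, x_3$ restricted to $C$ transform by a linear substitution — because $Q$ and $N$ are, up to scalar, the only ${\rm PGL}_2(\mathbb{F}_q)$-``semiinvariants'' of the relevant degrees built from the standard representation, so ${\rm PGL}_2(\mathbb{F}_q)$ must permute/scale a three-dimensional space spanned by suitable forms, giving a homomorphism ${\rm PGL}_2(\mathbb{F}_q) \to {\rm PGL}_3(\mathbb{F}_q)$ landing in the stabilizer $G$ of $C$. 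Conversely any element of $G$ induces an automorphism of $C \cong R$, i.e.\ an element of $\mathrm{Aut}(R) = {\rm PGL}_2(\mathbb{F}_q)$, and since a linear automorphism of $\mathbb{P}^2$ fixing $C$ pointwise (as a set of $>2$ points in general position) must be the identity, the map $G \to \mathrm{Aut}(R)$ is injective; combined with the first homomorphism being injective (as ${\rm PGL}_2(\mathbb{F}_q)$ is almost simple and the action is nontrivial) and the orders matching, we get $G \cong {\rm PGL}_2(\mathbb{F}_q)$ isomorphically.

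The main obstacle I anticipate is the first step: cleanly exhibiting the factorization of $F_0$ after parametrization so that the gluing $a \leftrightarrow a^q$ becomes transparent, while keeping track of the twist $w$. One needs the right coordinates on the conic $Q = 0$ — these live over $\mathbb{F}_{q^2}$ and are swapped by $\mathrm{Fr}_q$ — and the right normalization parametrization $\mathbb{P}^1_{\mathbb{F}_q} \to C$; getting the scalar $w$ to match up so that $Q^{(q+1)/2} - N$ vanishes exactly on the glued locus (rather than, say, a related but wrong identification, or with extra spurious components) is the delicate part. Once that algebra is in place, the genus-count argument for the singularities and the representation-theoretic argument for the automorphisms are comparatively routine; the case $p = 2$ is deferred to \S\ref{section 7.1.1} precisely because the quadratic form $Q$ degenerates there and a separate equation is needed.
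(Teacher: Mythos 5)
Your overall architecture matches the paper's in its second half: both proofs realize ${\rm PGL}_2(\mathbb{F}_q)$ as the orthogonal group of $Q$ (which, having entries in $\mathbb{F}_q$, automatically preserves the Hermitian form $N$ and hence $F_0$), and both close the loop via $ {\rm PGL}_2(\mathbb{F}_q)\subseteq G\hookrightarrow {\rm Aut}(R)={\rm PGL}_2(\mathbb{F}_q)$. For the geometric half you take a genuinely different route: an explicit degree-$(q+1)$ rational parametrization of $F_0=0$ followed by a $\delta$-invariant count against the arithmetic genus $\binom{q}{2}$. The paper avoids the global parametrization entirely: it checks that $(1:0:0)$ lies on $F_0=0$, expands locally to find tangent cone $\tfrac{1}{2}\bigl((x_2/x_1)^2-w(x_3/x_1)^2\bigr)$ plus higher-order terms, so this point is a node whose two branches are Frobenius-conjugate; its stabilizer in ${\rm PO}(Q)\cong{\rm PGL}_2(\mathbb{F}_q)$ is $D_{q+1}$, so its orbit has $\tfrac{q(q-1)}{2}$ elements, and the Pl\"ucker/genus formula then forces these to be all the singularities and the curve to be irreducible and rational.

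The genuine problem with your proposal is twofold. First, the step you yourself call the ``computational heart'' --- producing the parametrization, verifying that $F_0$ pulls back to zero, and identifying the fibres of size two with the pairs $\{a,a^q\}$ --- is not carried out, and everything downstream depends on it: the $\delta$-count presupposes that $F_0=0$ is irreducible with normalization $\mathbb{P}^1$, and the identification of the curve with $R$ presupposes knowing where the nodes sit. Second, and more seriously, your stated reason for the hypothesis $w\in\mathbb{F}_q^*\setminus(\mathbb{F}_q^*)^2$ is wrong: every nondegenerate conic over a finite field has rational points (Chevalley--Warning), so $w$ does not control whether $Q=0$ has $\mathbb{F}_q$-points. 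What the nonsquareness of $w$ actually controls is the irreducibility of the tangent cone $x_2^2-wx_3^2$ at the rational singular points, equivalently the irreducibility of $F_0$ itself: if $w$ is replaced by a square $\alpha^2$, the very same expression factors into $q+1$ linear forms (this is Remark \ref{7.5} of the paper), so your planned parametrization would necessarily fail in that case, and without locating where the hypothesis enters you cannot see what keeps the ``delicate'' computation from producing spurious components. The plan is repairable, but as written the core computation is missing and the one place the key hypothesis must be invoked is misattributed.
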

\begin{proof} {\it Case} (1), $q\equiv 3 \mod 4$ and we take $w=-1$. The group $G\subset {\rm
    PGL}_3(\mathbb{F}_q)$ of the automorphism preserving the quadratic form
  $x_1^2+x_2^2+x_3^3$ also preserves the unitary form
  $x_1^{q+1}+x_2^{q+1}+x_3^{q+1}$ and preserves $F_0=0$. This group is
  isomorphic to ${\rm PGL}_2(\mathbb{F}_q)$ since the quadratic form
  $x_1^2+x_2^2+x_3^3$ is equivalent to $xy-z^2$, the quadratic form
  for the 2-tuple embedding of the projective line into $\mathbb{P}^2$.

  The singular point $(1:0:0)$ of $F_0=0$ has a local equation of the form
  $(\frac{x_2}{x_1})^2+(\frac{x_3}{x_1})^2+$ higher order terms in
  $\mathbb{F}_q[[\frac{x_2}{x_1},\frac{x_3}{x_1}]]$. Thus the
  singularity is a node and the two tangent lines are not rational
  over $\mathbb{F}_q$. The stabilizer in $G$ of this node is seen to
  be $D_{q+1}$. The $G$-orbit of this node consists of
  $\frac{q(q-1)}{2}$ points. One verifies that there are no
  more singular points. It follows from the Pl\"ucker formulas that
 $F_0=0$ is an irreducible curve of geometric genus 0. The nodes of
 $F_0=0$ are identical to those of $R$ and the two groups of
 automorphisms coincide.\\
{\it Case} (2), $q\equiv 1\mod 4$. After changing the quadratic form
$x_1^2+x_2^2+x_3^2$ and the corresponding unitary form
$x_1^{q+1}+x_2^{q+1}+x_3^{q+1}$ into $x_1^2+x_2^2+wx_3^2$ (note that
$-1$ is a square) and  $x_1^{q+1}+x_2^{q+1}+wx_3^{q+1}$, the proof of Case (1) carries over.
\end{proof}

\begin{remark}\label{7.5} {\rm
We observe that, for any $\alpha \in \mathbb{F}_q^*$, the homogeneous equation  
$(x_1^2+x_2^2-\alpha^2x_3^2)^{(q+1)/2}-(x_1^{q+1}+x_2^{q+1}-\alpha^2x_3^{q+1})=0$ 
defines a set of $q+1$ projective lines intersecting normally. 
One can of course always take $\alpha=1$ and for $q\equiv 1\mod 4$ one can choose an $\alpha$
such that $\alpha^2=-1$.  
The stabilizer in $G\cong {\rm PGL}_2(\mathbb{F}_{q})$ of one of these lines is a subgroup isomorphic to
$B(n,q-1)$.
The lines are the ${\rm PGL}_2(\mathbb{F}_{q})$-orbit of the line through
the two points $a=(0,\alpha,1)$ and $b=(1,0,0)$. 
This situation will be considered in \S \ref{section 7.2}.
}
\end{remark}
 
\begin{corollary}\label{7.6}
 The family of Mumford curves of \ref{lemma-6.10} can be identified with the
 family of curves in $\mathbb{P}^2_K$ defined by $F=F_0+\lambda (x_1^2+x_2^2+\epsilon
 x_3^2)^{(q+1)/2}$ where $F_0$ is the polynomial of Lemma {\rm \ref{7.4}},\ 
 $\epsilon =1$ for $q\equiv 3\mod 4$ and $\epsilon =w $ for $q\equiv 1\mod 4$.
 Further $\lambda$ lies in the punctured open disk $\{a\in K|\ 0<|a|<1\}$.   
\end{corollary}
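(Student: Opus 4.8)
The plan is to write down the plane family explicitly, recognise its members as the ${\rm PGL}_2(\mathbb{F}_q)$-equivariant smoothings of the stable curve $R\subset\mathbb{P}^2$ of Lemma \ref{7.4}, and then match this family with the family $X_\lambda$ of Lemma \ref{lemma-6.10} by means of the stable-reduction description of \S\ref{section 7.1.1.0}. Put $Q=x_1^2+x_2^2+\epsilon x_3^2$ and, for $\lambda\in K$ with $0<|\lambda|<1$, set $F_\lambda:=F_0+\lambda Q^{(q+1)/2}$ and $C_\lambda:=\{F_\lambda=0\}\subset\mathbb{P}^2_K$. (When $q\equiv 1\bmod 4$ one first conjugates by the diagonal substitution $x_3\mapsto\sqrt{-1}\,x_3$, which brings $F_0$ into the coordinates in which the group $G$ of Lemma \ref{7.4} is the projective orthogonal group of $Q$; after that the argument is the same for both congruence classes, and I suppress this point below.)

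First I would show that $C_\lambda$ is a smooth plane curve of degree $q+1$, hence of genus $\binom{q}{2}=\tfrac{q(q-1)}{2}$, and that $G\cong{\rm PGL}_2(\mathbb{F}_q)$ acts on it. Since $|\lambda|<1$, reducing the coefficients of $F_\lambda$ gives $\bar F_0$, whose zero set is the curve $R$ of Lemma \ref{7.4}; by the Pl\"ucker count there, the only singularities of $R$ are the $\tfrac{q(q-1)}{2}$ nodes forming the $G$-orbit of $(1:0:0)$. Because $Q(1,0,0)=1\neq 0$, the term $Q^{(q+1)/2}$ is a unit at each of these nodes, so in local coordinates $u,v$ at a node one has $F_\lambda=\tfrac{q+1}{2}(u^2-wv^2)+\lambda\cdot(\text{unit})+(\text{higher order})$ with $u^2-wv^2$ a nondegenerate quadratic form; hence $C_\lambda$ is smooth over $K$ near the node — the node ``opens up'' with parameter $\sim\lambda$ — while away from the nodes smoothness of the $K^\circ$-model of $R$ gives smoothness of $C_\lambda$ directly. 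Finally $G$ fixes $F_0$ (Lemma \ref{7.4}) and, being the projective orthogonal group of $Q$, multiplies $Q^{(q+1)/2}$ by a scalar; so $G$ preserves $F_\lambda$ and acts faithfully on $C_\lambda$ for every $\lambda$.

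Next I would identify $\{C_\lambda\}$ with $\{X_\lambda\}$. The $K^\circ$-model $\{F_\lambda=0\}$ has special fibre $R$, a totally split stable curve all of whose components are projective lines and whose dual graph has first Betti number $\tfrac{q(q-1)}{2}=g$; hence $C_\lambda$ is a Mumford curve of genus $g$ carrying a ${\rm PGL}_2(\mathbb{F}_q)$-action with quotient $\mathbb{P}^1$. Reconstructing, exactly as in \S\ref{section 7.1.1.0}, the contracted tree of groups attached to $C_\lambda$ and this group action from its action on $R$ — one central component with stabiliser ${\rm PGL}_2(\mathbb{F}_q)$, the $g$ mutually conjugate peripheral components with stabiliser $D_{q+1}$, and edge groups $C_{q+1}$ — one obtains precisely $\Gamma={\rm PGL}_2(\mathbb{F}_q)\ast_{C_{q+1}}D_{q+1}$ together with a normal Schottky subgroup of rank $g$. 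By Lemma \ref{lemma-6.10} this Schottky subgroup is the unique $\Delta$, the embedding of $\Gamma$ is one member of the punctured-disk family there, and ${\rm Aut}(C_\lambda)={\rm PGL}_2(\mathbb{F}_q)$; thus $C_\lambda=X_{\psi(\lambda)}$ for a well-defined $\psi(\lambda)$ with $0<|\psi(\lambda)|<1$. As $\lambda\to 0$ both $C_\lambda$ and $X_{\psi(\lambda)}$ acquire $R$ as reduction, so $\psi$ extends analytically to the puncture with $\psi(0)=0$ and is a proper analytic self-map of the open disk; comparing the smoothing parameter of a node — $\sim\lambda$ from the local equation above, and $\sim\psi(\lambda)$ from the multiplier of the hyperbolic generator $em(\tau)$ in the proof of Lemma \ref{lemma-6.10} — forces $\psi(\lambda)=\lambda\cdot(\text{unit})$, so $\psi$ is an analytic isomorphism of punctured disks and the two families coincide, as claimed.

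The step I expect to be the main obstacle is the reduction analysis in the second paragraph: one must be certain that $\{\bar F_0=0\}$ has exactly the $\tfrac{q(q-1)}{2}$ nodes and no other singularity — this is where the Pl\"ucker argument inside Lemma \ref{7.4} does real work — that the perturbation $\lambda Q^{(q+1)/2}$ is invertible precisely at those nodes, and that together these force both the smoothness of $C_\lambda$ over $K$ and the identification of the $K^\circ$-special fibre with the stable curve $R$ (rather than some partial degeneration). Beyond this, the surjectivity half of the identification — that the pencil $F_0+\lambda Q^{(q+1)/2}$ sweeps out \emph{all} of the $X_\lambda$, not merely a subfamily — rests on the parameter comparison $\psi(\lambda)=\lambda\cdot(\text{unit})$ just sketched, or alternatively on checking that the space of $G$-invariant ternary forms of degree $q+1$ is the pencil spanned by $(x_1^2+x_2^2-wx_3^2)^{(q+1)/2}$ and $x_1^{q+1}+x_2^{q+1}-wx_3^{q+1}$; either route is a short but genuine computation. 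The diagonal normalisation needed when $q\equiv 1\bmod 4$ is a minor bookkeeping point. Once the reduction is under control, the identification itself is soft, using only \S\ref{section 7.1.1.0}, the uniqueness in Lemma \ref{lemma-6.10}, and Theorem \ref{7.1}.
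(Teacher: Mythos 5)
Your proposal is correct and follows essentially the same route as the paper's proof: establish $G$-invariance of $F_\lambda$, verify smoothness of $F_\lambda=0$ by observing that the perturbation $\lambda Q^{(q+1)/2}$ is a unit at each node of the reduction $F_0=0$, and then use the form of the reduction together with the uniqueness statement of Lemma \ref{lemma-6.10} to identify $F_\lambda=0$ with a curve $X_{\psi(\lambda)}$. The only difference is that you supply considerably more detail than the paper does — in particular the argument that the map $\lambda\mapsto\psi(\lambda)$ is an isomorphism of punctured disks, so that the plane pencil exhausts the whole family of Lemma \ref{lemma-6.10}; the paper leaves this surjectivity implicit.
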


\begin{proof} $F$ is invariant under the action of the group $G\cong
  {\rm PGL}_2(\mathbb{F}_q)$ of the automorphisms of the quadratic
  form $(x_1^2+x_2^2+\epsilon x_3^2)$ over
  $\mathbb{F}_q$. One verifies that $F=0$ has no singularities. The
  obvious reduction of $F=0$ is the curve $F_0=0$ over $\mathbb{F}_q$.
Hence $F=0$ defines a Mumford curve with automorphism group   ${\rm
  PGL}_2(\mathbb{F}_q)$ and genus $\frac{q(q-1)}{2}$. The form of the
reduction shows that $F=0$ is an equation for a curve of Lemma \ref{lemma-6.10}. \end{proof}

\subsubsection{\rm The family of Mumford curves for 
${\rm PGL}_2(\mathbb{F}_q)\ast _{C_{q+1}}D_{q+1}$,
$p=2$.} \label{section 7.1.1}

We briefly recall the equation for the Ballico-Hefez curve for $q=2^n$, $n>1$, from \cite{H-S} prop. 1.4.
This curve describes the reduction of the Mumford curves belonging to the amalgam
${\rm PGL}_2(\mathbb{F}_q)\ast _{C_{q+1}}D_{q+1}$ with $q=2^n$, $n>1$.

Let the group ${\rm PGL}_2(\mathbb{F}_q)$, $q=2^n\geq 4$, act on the projective plane 
$\mathbb{P}^2_{\mathbb{F}_q}$ and preserve the quadric $x_0x_2-x_1^2=0$.
Then the group ${\rm PGL}_2(\mathbb{F}_q)$ preserves the hermitian curve $h(x)=0$ given by
$x_0x_2^q+x_2x_0^q+x_1^{q+1}=0$ and also preserves the curve $t(x)=0$ defined by
$x_1^{q+1}{\rm Tr}(\frac{x_0x_2}{x_1^2})
=x_1^{q+1}\sum_{i=0}^{n-1} \frac{x_0^{2^i}x_2^{2^i}}{x_1^{2^{i+1}}}=0$.
The restriction of the polynomial ${\rm Tr}$ to $\mathbb{F}_q$ is the trace map 
${\rm Tr}_{\mathbb{F}_q/\mathbb{F}_2}:\; \mathbb{F}_q\longrightarrow \mathbb{F}_2$.
Note that ${\rm Tr}(\frac{x_0x_2-x_1^2}{x_1^2})
={\rm Tr}(\frac{x_0x_2}{x_1^2}-1)
={\rm Tr}(\frac{x_0x_2}{x_1^2})-n$.

The Ballico-Hefez curve can now be defined as $h(x)-t(x)=0$.
This also defines the curve $F_0$ that forms the reduction of the
Mumford curves. The family of curves $Y_\lambda$ defined by the equations
$F_0+\lambda\cdot h(x)=0$ with 
$\lambda\in K$, $0<|\lambda|<1$, consists of the Mumford curves that
have the curve $F_0$ as their reduction.

\subsection{\rm The amalgam
  $\Gamma:=D_{q-1}\ast_{C_{q-1}}B(n,q-1)$.} \label{section 7.2}
  
As in the proof of Lemma \ref{lemma-6.10}, one obtains an essentially unique surjective homomorphism 
$\varphi:\Gamma \rightarrow {\rm PGL}_2(\mathbb{F}_q)$ with $\Delta:=\ker
(\varphi )$ a free group of rank $\frac{q(q-1)}{2}$. Further the embeddings
$\Gamma\rightarrow {\rm PGL}_2(K)$ as discontinuous group are parametrised by a punctured disk.

 We will now produce equations for these families of Mumford
 curves. As in \S \ref{section 7.1.1.0}, one can compute the stable reduction $R$ of
 $X_\lambda$. It consist of $q+1$ projective lines such that each line
 intersects each other line in one point. The stabiliser of each line
 is a group $\cong B(n,q-1)$ and the stabiliser of an intersection of two
 lines is a group $\cong D_{q-1}$.  

 Again, one makes the guess that the curve can be embedded (as smooth
 curve) in $\mathbb{P}^2_K$ defined by a homogeneous polynomial  $F$
 of degree $q+1$. Using the notation of \S \ref{section 7.1.1.0}  we define 
$F_0:= z_0\cdot \prod _{a\in \mathbb{F}_q} (a^2z_0-az_1+z_2)$.

 One easily verifies that $F_0=0$ is the union of $q+1$ lines having only simple
intersections. Further the stabilizer of $z_0=0$ is $B(n,q-1)$ and  
by definition $F_0$ is the ${\rm PGL}_2(\mathbb{F}_q)$-orbit of the
line $z_0$ and therefore ${\rm PGL}_2(\mathbb{F}_q)$-invariant.

 Assume now that $p\neq 2$. In analogy with \ref{7.6} one considers
 the following. The expression $z_1^2-z_0z_2$ is invariant under ${\rm
  PGL}_2(\mathbb{F}_q)$. Then the family of curves $Y_\lambda$ defined by
$F=F_0+\lambda (z_1^2-z_0z_2)^{\frac{q+1}{2}}$ with $\lambda\in K,\ 0<|\lambda|<1$
has the required properties.

 Indeed, it can be shown that $Y_\lambda$ is smooth. Its genus is $\frac{q(q-1)}{2}$
and the automorphism group is ${\rm PGL}_2(\mathbb{F}_q)$. The obvious
reduction of $F$ is equal to the reduction of $F_0$ and defines a stable
curve such that all irreducible components are projective lines. Hence
$Y_\lambda$ is a Mumford curve and corresponds to one of the two amalgams of Theorem \ref{6.8}.
It must be  $\Gamma:=D_{q-1}\ast_{C_{q-1}}B(n,q-1)$ because of the
structure of its reduction.

\subsubsection{\rm The family of Mumford curves for
  $D_{q-1}\ast_{C_{q-1}}B(n,q-1)$ and  $p=2$.}\label{section 7.2.1}

Let the group  ${\rm PGL}_2(\mathbb{F}_{q})$, $q=2^n$ act on the projective plane
$\mathbb{P}^2_{\mathbb{F}_q}$ preserving the conic $x_0x_2-x_1^2=0$.
This conic has $q+1$ points that are defined over $\mathbb{F}_q$.
They are the points $(1,t,t^2)$, $t\in \mathbb{F}_q$ and the point $(0,0,1)$.
Each of these points is stabilized by a group $\cong B(n,q-1)\subset {\rm PGL}_2(\mathbb{F}_{q})$.
The same holds true for the tangents of the conic at these points.
Since $p=2$ all the tangents intersect in the single point $(0,1,0)$
and cannot be used to describe the reduction of the Mumford curve!

Let $\mathbb{P}^\vee$ be the dual projective plane, where
the duality is given by the equation $x_0y_0+x_1y_1+x_2y_2=0$. 
The $\mathbb{F}_q$-rational points of the conic define a set
$\mathcal{L}$ of projective lines in the dual plane $\mathbb{P}^\vee$.
The set $\mathcal{L}$ consists of the lines $y_0+ty_1+t^2y_2=0$, $t\in\mathbb{F}_q$ combined with
the line $y_2=0$.
Each of these lines are again stabilized by a subgroup $\cong B(n,q-1)\subset {\rm PGL}_2(\mathbb{F}_{q})$.
Their intersection points now are the duals of the lines in $\mathbb{P}^2_q$ that intersect
the conic in two $\mathbb{F}_q$-rational points. 
There are $\frac{q(q+1)}{2}$ such lines and each such line is stabilized by a dihedral subgroup
$\cong D_{q-1}\subset {\rm PGL}_2(\mathbb{F}_{q})$.
Hence the set $\mathcal{L}$ of lines in the dual plane $\mathbb{P}^\vee$ consists of $q+1$ lines,
each stabilized by a subgroup $B(n,q-1)\subset {\rm PGL}_2(\mathbb{F}_{q})$.
These lines intersect in $\frac{q(q+1)}{2}$ points that are each stabilized by a dihedral group 
$D_{q-1}\subset {\rm PGL}_2(\mathbb{F}_{q})$.

In particular, the set $\mathcal{L}$ in $\mathbb{P}^\vee$ describes the reduction of our Mumford curve. 
Let $F_0$ be the equation of degree $q+1$ whose zeroes are the set of lines $\mathcal{L}$.
The group ${\rm PGL}_2(\mathbb{F}_{q})$ also preserves 
a hermitian form on the dual projective plane $\mathbb{P}^\vee$.
Therefore  ${\rm PGL}_2(\mathbb{F}_{q})$ also preserves the corresponding hermitian curve $h(y)=0$
that has again degree $q+1$.
The family of curves $Y_\lambda=F_0+\lambda\cdot h(y)$ with $\lambda\in K$, $0<|\lambda|<1$,
consists of the Mumford curves that have the curve $F_0$ as their reduction. 

\begin{remark}\label{7.7} {\rm
For $p=2$ a family of smooth plane curves of genus $\frac{q^2-q}{2}$, $q=2^n\geq 4$
with automorphism group ${\rm PGL}_2(\mathbb{F}_q)$ is known
(\cite{Fu} Theorem 1).
These curves are ordinary (see \cite{Fu} remark 1).
The curves are defined by the equation $z\prod_{t\in\mathbb{F}_q} (x+ty+t^2z)+\lambda\cdot y^{q+1}=0$
with $\lambda\in K,\;\lambda\neq 0,1$ 
in the projective plane $\mathbb{P}_{\mathbb{F}_q}^2$ with coordinates $x,y,z$. 

The subset of the curves with $0<|\lambda|<1$ consists of Mumford curves.
The reduction of these Mumford curves consists of $q+1$ projective lines  $\mathbb{P}_{\mathbb{F}_q}^1$.
Therefore this subset consists of the Mumford curves belonging to the amalgam 
$D_{q-1}\ast_{C_{q-1}}B(n,q-1)$ with $p=2$. 

}
\end{remark}

\section{\rm  An upper bound for the automorphism group} \label{section 8}

\subsection{\rm Establishing the upper bound}

The problem is to determine for any genus $g\geq 2$, the maximum,  call it
${\rm Max}(g)$,  of the order of ${\rm Aut}(X)$, where $X$ is a Mumford curve of genus $g$. For
small $g$, one can deduce from our paper that \\

 {\it ${\rm Max}(g)=12(g-1)$ for $g=2,3,4,5$ and all $p$. Further\\
 \indent  ${\rm Max}(6)=60$ for $p\neq 3$ and ${\rm Max}(6)=72$ for $p=3$}. \\

For genus $g>6$ it seems hardly possible to compute ${\rm Max}(g)$, except
for some special values of $g$. We will show that the two families  of Theorem \ref{6.8} with
automorphism group ${\rm PGL}_2(\mathbb{F}_q)$ and genus
$g=\frac{q(q-1)}{2}$ attain the value ${\rm Max}(g)$.\\

For the convenience of the reader we enumerate in the proposition below
the amalgams $\Gamma$ with $\mu(\Gamma)=\frac{1}{12}$.
This corrects some minor errors in Prop. 1.2 and the Theorem of \cite{C-K 2}.

\begin{proposition}\label{6.14} \label{8.1}
There are three amalgams $\Gamma$ with $\mu(\Gamma)=\frac{1}{12}$ for $p=2,3,5$
and four such amalgams for $p>5$, namely
\begin{footnotesize}
\begin{enumerate} 
\item[]$D_3\ast_{C_2}D_2\cong {\rm PGL}_2(\mathbb{F}_{2})\ast_{C_2}B(2,1)\cong B(1,2)\ast_{C_2}D_2$
{\rm (}for $p\geq 5,\ p=2,\ p=3${\rm )},  
\item[]$S_4\ast_{C_4}D_4\cong {\rm
    PGL}_2(\mathbb{F}_{3})\ast_{C_4}D_4$ {\rm (for }$p\geq 5, p=3${\rm )}, 
\item[]$A_4\ast_{C_3}D_3\cong B(2,3)\ast_{C_3}{\rm
    PGL}_2(\mathbb{F}_{2})$ {\rm (for }$p\geq 5, p=2${\rm )},
\item[]$A_5\ast_{C_5}D_5\cong {\rm
    PGL}_2(\mathbb{F}_{4})\ast_{C_5}D_5$ {\rm (for }$p>5, p=3,\ p=2${\rm )}.
\end{enumerate}
\end{footnotesize}
The branch groups are $C_2$, $C_2$, $C_2$ and $C_3$ if $p>3$ and $C_2$, $C_2$ and $B(1,2)$ if $p=3$.
If $p=2$, then the branch groups are $C_3$ and $B(2,1)$ for $\Gamma=D_3\ast_{C_2}D_2$
and the branch groups are $C_2$ and $B(2,3)$ for the two remaining groups.
\end{proposition}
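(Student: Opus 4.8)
The plan is to enumerate all amalgams $\Gamma = G_1 \ast_{G_3} G_2$ with exactly $\mu(\Gamma) = \frac{1}{12}$ by running through the realizability classification of Proposition~\ref{2.1} together with the $\mu$-formula $\mu(\Gamma) = 1/|G_3| - 1/|G_1| - 1/|G_2|$. First I would reduce to the two-vertex case: by the formula displayed just before Lemma~\ref{6.1}, any indecomposable realizable $\Gamma$ with $T^c$ having more than one edge satisfies $\mu(\Gamma) > \mu(\Gamma_{v_1}\ast_{\Gamma_e}\Gamma_{v_2})$ for the relevant subamalgam, so a value of exactly $\tfrac1{12}$ can only be attained by a single amalgam $G_1 \ast_{G_3} G_2$ (and one should separately note that decomposable amalgams have $\mu \geq \tfrac16$, and $D_\ell\ast_{C_\ell}D_\ell$ gives $\mu = 0$, neither of which is relevant). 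So the problem becomes: classify the realizable $G_1 \ast_{C_m} G_2$ with $1/m - 1/|G_1| - 1/|G_2| = \tfrac1{12}$.

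Next I would split according to whether $p \mid m$ or $p \nmid m$, exactly as in the proof of Lemma~\ref{6.1}. In the case $p \nmid m$ (cases (a)(1) and (b)(1) of Proposition~\ref{2.1}), $G_3 = C_m$ is a cyclic branch group of both $G_1$ and $G_2$, and the list of finite subgroups of $\mathrm{PGL}_2(K)$ from \S1 together with Corollary~\ref{2.2} gives a finite, explicit set of candidate pairs $(G_1,G_2)$; one then solves $1/m - 1/|G_1| - 1/|G_2| = \tfrac1{12}$ by hand. This is where the groups $D_\ell$, $A_4$, $S_4$, $A_5$ enter: e.g.\ $\mu(A_5 \ast_{C_5} D_5) = \tfrac15 - \tfrac1{60} - \tfrac1{10} = \tfrac1{12}$, $\mu(S_4 \ast_{C_4} D_4) = \tfrac14 - \tfrac1{24} - \tfrac18 = \tfrac1{12}$, $\mu(A_4 \ast_{C_3} D_3) = \tfrac13 - \tfrac1{12} - \tfrac16 = \tfrac1{12}$, $\mu(D_3 \ast_{C_2} D_2) = \tfrac12 - \tfrac16 - \tfrac14 = \tfrac1{12}$. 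One checks these are the only solutions: larger groups make $-1/|G_i|$ too small to reach $\tfrac1{12}$, and the divisibility constraints ($C_m$ must be a branch group of both factors, of the correct order) pin down which pairs are admissible. In the case $p \mid m$ (cases (a)(2),(3) and (b)(4),(5) of Proposition~\ref{2.1}, occurring only for $p=2,3$), one runs the same arithmetic over the short list there and finds no new solutions beyond the reinterpretations of the four amalgams above. Then, using the isomorphisms ${\rm PGL}_2(\mathbb F_2)\cong D_3$, ${\rm PGL}_2(\mathbb F_3)\cong S_4$, $A_4 \cong B(2,3)$ (for $p=2$), ${\rm PSL}_2(\mathbb F_3)\cong A_4$, ${\rm PGL}_2(\mathbb F_4)\cong A_5$, $B(1,2)\cong S_3\cong D_3$, $D_2 \cong C_2\times C_2 = B(2,1)$ (for $p=2$), one matches each abstract amalgam with its incarnations valid in the various characteristics, yielding the list as stated (noting that realizability in a given $p$ requires $p$ not to divide the relevant orders, which is exactly the restriction "$p\geq 5$", "$p=3$", "$p=2$", "$p>5$" attached to each line).

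Finally I would read off the branch groups from Theorem~\ref{3.1}(1) applied to each two-vertex tree: ${\rm br}(\Gamma) = {\rm br}(G_1) + {\rm br}(G_2) - {\rm br}(C_m)$, and the branch groups of $\Gamma$ are those branch groups of $G_1$, $G_2$ not absorbed into the edge $C_m$, using the \S1 description of the branch groups of $D_\ell, A_4, S_4, A_5$ and of the Borel groups. For $p>3$ this gives $C_2, C_2, C_2, C_3$ (the three order-$2$ branch groups coming from $D_2, D_4, D_5$ or the $C_2$ of $D_3$, and the $C_3$ from $A_4$/$B(2,3)$); for $p=3$ the $A_5$-amalgam drops out (since $3\mid 60$) and one is left with $C_2,C_2,B(1,2)$; for $p=2$ only $D_3\ast_{C_2}D_2$ and $A_4\ast_{C_3}D_3$ survive, with branch groups $C_3, B(2,1)$ and $C_2, B(2,3)$ respectively, matching the \S1 data. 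The main obstacle I anticipate is not any single step but the bookkeeping: one must be careful that each isomorphism of abstract amalgams is genuinely realized in the claimed characteristic (the subtlety being that, e.g., $B(2,3)$ only exists as a subgroup of $\mathrm{PGL}_2$ when $3 \mid p^2 - 1$, i.e.\ $p=2$), and that no realizable amalgam has been overlooked in the $p=2,3$ special cases of Proposition~\ref{2.1}; the comparison with the erroneous list in \cite{C-K 2} is presumably where the "minor errors" being corrected show up.
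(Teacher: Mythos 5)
The paper gives no proof of this proposition (it is stated as a corrected enumeration, with the supporting machinery being Proposition \ref{2.1}, the $\mu$-formula and Lemma \ref{6.1}), and your overall strategy --- reduce to a single edge, solve $1/m-1/|G_1|-1/|G_2|=\tfrac1{12}$ over the realizable pairs, then match incarnations characteristic by characteristic --- is the right one; your four $\mu$-computations are correct. But your final paragraph gets the characteristic-by-characteristic bookkeeping wrong in two places, and this is exactly the part of the statement that needs care. For $p=3$ you claim ``the $A_5$-amalgam drops out (since $3\mid 60$)''. That is not the right criterion and not the right amalgam: $p$ dividing $|G_i|$ does not obstruct realizability (e.g.\ ${\rm PGL}_2(\mathbb F_3)=S_4$ with $3\mid 24$ is realizable for $p=3$); what matters is whether the edge group is a branch group of both factors in their characteristic-$p$ incarnations. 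For $p=3$ one has $A_5\subset{\rm PSL}_2(\mathbb F_9)$ with branch groups $C_5$ and $B(1,2)$, and $D_5$ (with $3\nmid 10$) has branch groups $C_2,C_2,C_5$, so $A_5\ast_{C_5}D_5$ \emph{is} realizable for $p=3$ (branch groups $C_2,C_2,B(1,2)$, as the proposition states). The amalgam that drops out for $p=3$ is $A_4\ast_{C_3}D_3$, because $D_3\cong B(1,2)$ in characteristic $3$ has branch groups $C_2$ and $B(1,2)$, not $C_3$. For $p=2$ you claim only two amalgams survive; in fact $A_5\ast_{C_5}D_5\cong{\rm PGL}_2(\mathbb F_4)\ast_{C_5}D_5$ is also realizable for $p=2$ (case (a)(1) of Proposition \ref{2.1}, with $C_5$ a branch group of both ${\rm PGL}_2(\mathbb F_4)$ and $D_5$), giving branch groups $C_2$ and $B(2,3)$. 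Your count of two contradicts the proposition's own assertion of three amalgams for each of $p=2,3,5$ and also contradicts your own (correct) realizability labels two sentences earlier.

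A secondary, lesser issue: your reduction to single-edge amalgams is under-justified. The displayed inequality before Lemma \ref{6.1} gives $\mu(\Gamma)>\mu(\Gamma_{v_1}\ast_{\Gamma_e}\Gamma_{v_2})$ for a multi-edge tree, which is compatible with $\mu(\Gamma)=\tfrac1{12}$ as long as every edge sub-amalgam has $\mu<\tfrac1{12}$. To close this you must additionally check that no multi-edge sub-tree of the amalgams of Propositions \ref{3.2} and \ref{4.8} whose edge sub-amalgams all lie in Lists \ref{lists} hits exactly $\tfrac1{12}$ (e.g.\ for $A(i)$ with $n_1=1,n_2=2$ the equation $\mu=\tfrac1{12}$ reads $q^3-12q^2-q+24=0$, which has no prime-power root). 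This is a routine but genuinely required finite-family verification that your write-up skips.
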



\begin{corollary} \label{8.2bis}There are infinitely many integers $g\geq 2$ for which there is no Mumford curve
$X$ with  genus $g$ and $|{\rm Aut}(X)|=12(g-1)$.
\end{corollary}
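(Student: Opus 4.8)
The plan is to show that for every sufficiently large prime $r$ the genus $g=r+1$ carries no Mumford curve $X$ with $|{\rm Aut}(X)|=12(g-1)=12r$; as there are infinitely many primes, this yields the Corollary. The starting point is the reduction recalled in the Introduction and used in the proof of Corollary~\ref{6.2}: if $X$ has genus $g>1$ and $|{\rm Aut}(X)|=12(g-1)$, then the Riemann--Hurwitz--Zeuthen formula forces $X/{\rm Aut}(X)\cong\mathbb P^1_K$ with two or three branch points, so ${\rm Aut}(X)=\Gamma/\Delta$ where $\Gamma$ is the normaliser of $\Delta$ in ${\rm PGL}_2(K)$, indecomposable and with two or three branch points, and $g-1=\mu(\Gamma)[\Gamma:\Delta]=\mu(\Gamma)\cdot 12(g-1)$, whence $\mu(\Gamma)=\tfrac1{12}$. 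Thus $\Gamma$ is one of the finitely many amalgams of Proposition~\ref{6.14}, and it suffices to prove: for $r$ a large prime, none of those amalgams admits a normal finite-index Schottky subgroup $\Delta$ with $[\Gamma:\Delta]=12r$.

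So suppose $\Gamma$ is in the list of Proposition~\ref{6.14} and $H:=\Gamma/\Delta$ has order $12r$, with $\Delta$ Schottky and $r$ prime, $r\geq 13$. Every vertex group of $\Gamma$ has order dividing $2^3\cdot 3\cdot 5$ (they are among $D_3,\,C_2\times C_2,\,D_4,\,S_4,\,A_4,\,A_5,\,D_5$), so $r\nmid|\Gamma_v|$ for all $v$; since $\Gamma\to H$ is injective on each $\Gamma_v$, so is the composite $\Gamma\to H\to\bar H:=H/P$, where $P$ is the Sylow $r$-subgroup of $H$, which is cyclic of order $r$ and normal (as $n_r\mid 12<r$). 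Now $\bar H$ has order $12$ and is generated by the images of the two vertex groups of $\Gamma$; checking the five groups of order $12$ against the vertex-group pairs $\{S_4,D_4\},\{A_4,D_3\},\{A_5,D_5\},\{D_3,C_2\times C_2\}$ shows that this can only happen for $\Gamma=D_3\ast_{C_2}D_2$, and then $\bar H\cong S_3\times C_2$ (the vertex groups $S_3$ and $C_2\times C_2$ sharing their common $C_2$).

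We are reduced to $\Gamma=D_3\ast_{C_2}D_2$. Put $\Delta_1:=\ker(\Gamma\to\bar H)$, a normal Schottky subgroup of index $12$, hence free of rank $1+\tfrac1{12}\cdot 12=2$; let $Y:=\Omega/\Delta_1$, a genus-$2$ Mumford curve with $S_3\times C_2\subseteq{\rm Aut}(Y)$. The stable reduction graph of $Y$ has first Betti number $2$, all vertices of genus $0$, and an automorphism group of order at least $12$, because ${\rm Aut}(Y)=\Gamma/\Delta_1$ acts faithfully on it (a non-trivial automorphism moves a vertex of the Mumford tree with trivial stabiliser modulo $\Delta_1$); among the three such graphs this rules out the figure-eight and dumbbell graphs (automorphism group of order $8$), leaving the theta graph $\Theta$. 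Hence $\bar H$ acts on $\Delta_1^{\rm ab}=H_1(\Theta;\mathbb Z)\cong\mathbb Z^2$ with the factor $S_3$ permuting the three edges, i.e. by its standard representation, and the central $C_2$ by $-1$. Finally $\Delta\subset\Delta_1$ is normal in $\Gamma$ of index $r$, so $\Delta\supseteq[\Delta_1,\Delta_1]\Delta_1^{\,r}$ and $\Delta$ corresponds to a line $L\subset\Delta_1^{\rm ab}\otimes\mathbb F_r\cong\mathbb F_r^2$ which must be stable under the induced $\bar H$-action (conjugation by $\Gamma$ factors through $\Gamma/\Delta_1=\bar H$ on $\Delta_1^{\rm ab}$). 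But the standard representation of $S_3$ over $\mathbb F_r$ is absolutely irreducible for $r>3$, so no such stable line exists --- a contradiction.

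Therefore, for every prime $r$ outside a finite set, there is no Mumford curve of genus $g=r+1$ with $|{\rm Aut}(X)|=12(g-1)$, and the Corollary follows. I expect the main obstacle to be the combinatorial bookkeeping in paragraph two --- verifying that $D_3\ast_{C_2}D_2$ is the unique amalgam of Proposition~\ref{6.14} possessing an order-$12$ quotient that is injective on the vertex groups and generated by their images --- together with the careful justification of the theta-graph model for $Y$ and of the faithfulness of the resulting $\bar H$-action on $\Delta_1^{\rm ab}$ (in particular excluding the figure-eight and dumbbell graphs).
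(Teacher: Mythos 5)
Your proof is correct in substance but takes a genuinely different route from the paper's for the decisive case. Both arguments begin the same way: $|{\rm Aut}(X)|=12(g-1)$ forces $\mu(\Gamma)=\tfrac1{12}$, so $\Gamma$ is one of the amalgams of Proposition \ref{6.14}, and for large primes $r$ a Sylow argument makes the $r$-Sylow of $H=\Gamma/\Delta$ normal, reducing everything to $\Gamma=D_3\ast_{C_2}D_2$ (the paper phrases this as ``the preimage of $C_{r}$ would be a normal Schottky subgroup of index $12$, which the other three amalgams do not possess''; your order-$12$ quotient bookkeeping is the same computation). The divergence is in how $D_3\ast_{C_2}D_2\cong{\rm PGL}_2(\mathbb Z)$ is handled: the paper imports May's theorems on bordered Klein surfaces, which only exclude indices $12p'$ with $p'\equiv 11\bmod 12$ prime, whereas you argue internally that $\Delta_1/\Delta\cong C_r$ corresponds to a $\bar H$-stable line in $\Delta_1^{\rm ab}\otimes\mathbb F_r$, and that the action is the standard representation of $S_3$ twisted by the sign of the central $C_2$, which is irreducible mod $r$ for $r\nmid 6$. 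This is self-contained, avoids the excursion into real algebraic geometry, and actually proves more: non-existence for \emph{every} prime $r\geq 13$, not just those $\equiv 11\bmod 12$. Your identification of the representation is consistent with the explicit matrices the paper records in Remark \ref{8.8} (conjugation by $a,b,c$ acting by ${-1\ -1\choose\ 1\ \ 0},{0\ 1\choose 1\ 0},-I$), and with the fact, noted there, that an invariant line does exist mod $3$ — exactly where irreducibility fails — so the mechanism is the right one.

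Two points deserve tightening. First, the opening claim that RHZ forces ``two or three branch points'' is false at the boundary $\mu=\tfrac1{12}$: the tuple $(2,2,2,3)$ with four branch points realizes $\mu=\tfrac1{12}$, and indeed $D_3\ast_{C_2}D_2$ has four branch points for $p>3$ (Proposition \ref{6.14}); this is harmless since all you use is membership in the list of Proposition \ref{6.14}. Second, the theta-graph derivation of the $\bar H$-action is the least rigorous step: you should either justify that ${\rm Aut}(Y)$ injects into the automorphism group of the stable reduction graph for each of the three genus-$2$ graphs (each component carries at least three nodes, so the kernel is trivial), or bypass the geometry entirely by computing $\Delta_1^{\rm ab}\otimes\mathbb Q$ as a virtual $\bar H$-module from the Bass--Serre data, $\mathbb Q[\bar H/\varphi(C_2)]\oplus\mathbf 1\ominus\mathbb Q[\bar H/\varphi(D_3)]\ominus\mathbb Q[\bar H/\varphi(D_2)]$, which yields the standard-times-sign character for every admissible $\varphi$ (there are two kernels $\Delta_1$ up to the choice of surjection, and both give the same representation). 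With either repair the argument is complete.
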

\begin{proof}
If the Mumford curve $X$ satisfies $|{\rm Aut}(X)|=12(g-1)$, then $X$ is uniformized by a normal Schottky subgroup
$\Delta$ of finite index in an amalgam $\Gamma$ with $\mu(\Gamma)=\frac{1}{12}$.
Consider $\Gamma \neq D_3\ast _{C_2}D_2$ with $\mu(\Gamma)=\frac{1}{12}$. Then $\Gamma$ does not have
a normal Schottky subgroup of index 12.  This follows directly from the fact that either the order of one of the finite groups involved in the amalgam $\Gamma$ has order $>12$ or in the case $\Gamma\cong A_4\ast_{C_3}D_3$ from the fact that $D_3\not\subset A_4$.

We continue with  $\Gamma\neq  D_3\ast_{C_2} D_2$ and a normal Schottky subgroup $\Delta \subset \Gamma$. 
 Assume that  $|H|=|\Gamma/\Delta|=12 p^\prime$ for some prime $p^\prime> 5$.
Then $H$ contains a $p^\prime$-Sylow subgroup $C_{p^\prime}$. Let $m_{p^\prime}$ the number of $p^\prime$-Sylow subgroups of $H$.
By Sylows theorems $m_{p^\prime}\equiv 1\mod p^\prime$ and  $m_{p^\prime}|\; |H|$. 
Since $p^\prime> 5$,  $m_{p^\prime}=1$ and the subgroup is normal in $H$.
The amalgam $\Gamma$ contains no elements of order $p^\prime$.
In particular, the preimage $\Delta_{p^\prime}\subset\Gamma$ of the group $C_{p^\prime}\subset H$ contains no elements of finite order.
Therefore $\Delta_{p^\prime}\subset\Gamma$ is a normal Schottky subgroup of index $12$. This cannot be.
In particular, the three amalgams $\Gamma\not= D_3\ast_{C_2}D_2$ do not give rise to Mumford curves of genus $g=p^\prime +1$ with an automorphism group of order $12(g-1)$ for any prime $p^\prime> 5$.\\

Consider $\Gamma = D_3\ast_{C_2}D_2$. This group has two normal Schottky subgroups $\Delta$ with 
index 12. 
The following claim will end the proof of the corollary:\\

\noindent  {\it  $\Gamma$ has no normal Schottky groups $\Delta$ of index $12p^\prime$
if  $p^\prime\equiv 11\mod 12$ is prime}.\\

$\Gamma=D_3\ast_{C_2}D_2$ is isomorphic to the extended modular group ${\rm PGL}_2(\mathbb{Z})$ and 
plays an important role in the uniformization of Klein surfaces (i.e., algebraic curves defined over the field $\mathbb{R}$).
The proof of the claim follows from this observation.

The maximal order of the automorphism group of a compact Klein surface with boundary is again $12(g-1)$.
In \cite{M-2} theorem 1 and \S4 it is shown that the automorphism group of order $12(g-1)$
is a quotient of the extended modular group by a normal Schottky subgroup.
Moreover, it is proved that any quotient of the extended modular group by a normal Schottky subgroup occurs as the automorphism group of a compact Klein surface with boundary. 
In \cite{M-1} theorem 2, it is shown that there do not exist compact Klein surfaces with boundary of genus $g$ with $g-1$ a prime such that $g-1\equiv 11\mod 12$ that have an automorphism group of order $12(g-1)$. This is precisely the claim for the
$\Gamma=D_3\ast_{C_2}D_2$. \end{proof}

Mumford curves are ordinary, i.e., the p-rank of their Jacobian equals the genus $g$.
By \cite{N} \S 3 Corollary (ii), the quotient map $X\longrightarrow X/{\rm Aut}(X)$ is tamely ramified
for ordinary curves $X$, if $2\leq g\leq p-2$. 

In particular, for a Mumford curve $X=\Omega/\Delta$ with $\Delta\subset\Gamma$ a normal Schottky group,
this implies that $\mu(\Gamma)\geq\frac{1}{12}$.
It follows that for any genus $g\geq 2$, one has ${\rm Max}(g)\leq 12(g-1)$ provided $p\geq g+2$.
By Corollary \ref{8.2bis} there exist in fact arbitrarily large $g>6$ such that ${\rm Max}(g)<12(g-1)$ provided $p\geq g+2$.
Of course, for a fixed genus $g>2$ only for  $p<g+2$ one can have ${\rm Max}(g)>12(g-1)$. \\

The `extreme  cases' of Theorem \ref{6.8} leads to the {\bf claim} Theorem \ref{6.18}: \\

{\it ${\rm Max}(g)\leq \max \{ 12 (g-1),F(g)\}$  with
  $F(g):=2g(\sqrt{(2g+\frac{1}{4})}+\frac{3}{2})
         =g(\sqrt{(8g+1)}+3)$ holds for all  $p$ and $g$ with the exception of  $p=3,\ g=6$}. \\ 

The function $F$  is the rational function in $q$  which has the
property that for $g=\frac{q(q-1)}{2}$ the value of $F$ is $|{\rm
  PGL}_2(\mathbb{F}_q)|=q^3-q$.

 We note that $F(g)<12(g-1)$ only holds for $g=4,5$.
We will show that for $g=4,5$ one has  ${\rm Max}(g)=12(g-1)$.\\

{\it The strategy for the proof of the claim ${\rm Max}(g)\leq \max (12
(g-1),F(g))$}, which is adapted from \cite{C-K-K} lemma 6.2,  is as follows. 
Since the bound is not linear we have for
each realizable amalgam $\Gamma$ with $\mu(\Gamma )<\frac{1}{12}$ to
compute normal Schottky subgroups $\Delta$ with minimal index $|\Gamma
/\Delta|$.  In general, this is too difficult.  Instead, one tries to find a 
positive integer $N_0(\Gamma)$ such that $|\Gamma /\Delta |\geq
N_0(\Gamma)$ for all normal Schottky subgroups $\Delta$ of finite
index. One defines $g_0:=1+\mu(\Gamma) N_0(\Gamma)$.

\begin{lemma}\label{6.15} \label{8.2} Suppose that $N_0(\Gamma)$ and $g_0$ satisfy
  $g_0\geq 5$ and $N_0(\Gamma)\leq F(g_0)$. Then for any Mumford curve
  $X$ of genus $g$ corresponding to a normal Schottky subgroup
  $\Delta\subset \Gamma$ of finite
  index one has $|{\rm Aut}(X)|\leq F(g)$.  
\end{lemma}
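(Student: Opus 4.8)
The plan is to reduce everything to one elementary monotonicity property of $F$. Recall that, writing $\Gamma'\supseteq\Gamma$ for the normalizer of $\Delta$ in ${\rm PGL}_2(K)$, one has ${\rm Aut}(X)=\Gamma'/\Delta$; moreover $\Gamma'$ is again a realizable amalgam, $\mu(\Gamma')\le\mu(\Gamma)<\tfrac1{12}$, and $g-1=\mu(\Gamma')\,[\Gamma':\Delta]$ since $\Delta$ is also a normal Schottky subgroup of $\Gamma'$ of finite index. If $\Gamma'\ne\Gamma$ one simply replaces $\Gamma$ by $\Gamma'$, which occurs among the Lists \ref{lists} and for which a constant $N_0$ satisfying the two hypotheses of the lemma is equally at our disposal. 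Hence we may and do assume that $\Gamma$ is itself the normalizer of $\Delta$, so that
\[ |{\rm Aut}(X)|=[\Gamma:\Delta]=\frac{g-1}{\mu(\Gamma)} . \]

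\noindent First I would note that $g\ge g_0$: by the defining property of $N_0(\Gamma)$ one has $[\Gamma:\Delta]\ge N_0(\Gamma)$, hence $g-1=\mu(\Gamma)[\Gamma:\Delta]\ge\mu(\Gamma)N_0(\Gamma)=g_0-1$, so $g\ge g_0\ge5$. Next I would show that $h(g):=\dfrac{g-1}{F(g)}$ is strictly decreasing for $g\ge5$. Putting $u=\sqrt{8g+1}$, so that $g=\frac{u^2-1}{8}$ and $F(g)=\frac{(u^2-1)(u+3)}{8}$, one computes
\[ h(g)=\frac{u^2-9}{(u^2-1)(u+3)}=\frac{u-3}{u^2-1},\qquad \frac{d}{du}\,\frac{u-3}{u^2-1}=\frac{-u^2+6u-1}{(u^2-1)^2}, \]
and the numerator $-u^2+6u-1$ is negative for $u>3+2\sqrt2$. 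Since $g\ge5$ forces $u\ge\sqrt{41}>3+2\sqrt2$, the function $h$ is strictly decreasing on $[5,\infty)$. (Its maximum is attained near $g\approx4.12$, which is precisely why the hypothesis demands $g_0\ge5$ and not merely $g_0\ge4$.)

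\noindent Finally I would combine the two steps: using $g\ge g_0\ge5$, the monotonicity of $h$, the identity $g_0-1=\mu(\Gamma)N_0(\Gamma)$ and the hypothesis $N_0(\Gamma)\le F(g_0)$,
\[ \frac{g-1}{F(g)}=h(g)\le h(g_0)=\frac{g_0-1}{F(g_0)}=\frac{\mu(\Gamma)\,N_0(\Gamma)}{F(g_0)}\le\mu(\Gamma), \]
so that $|{\rm Aut}(X)|=\dfrac{g-1}{\mu(\Gamma)}\le F(g)$, as asserted. The only genuine computation is the monotonicity of $h$; the rest is bookkeeping with $g-1=\mu(\Gamma)[\Gamma:\Delta]$. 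The real difficulty lies not in this lemma but in verifying its hypotheses: one must produce, for each amalgam in the Lists \ref{lists}, a constant $N_0(\Gamma)$ that is at once a valid lower bound for all $[\Gamma:\Delta]$ and small enough that $N_0(\Gamma)\le F(g_0)$ still holds — the task carried out in the remainder of \S\ref{section 8} — and one must ensure that such an $N_0$ is likewise available for the normalizer $\Gamma'$ used in the reduction above.
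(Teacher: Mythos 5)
Your proof is correct and follows essentially the same route as the paper: deduce $g\ge g_0$ from $[\Gamma:\Delta]\ge N_0(\Gamma)$, then use the monotonicity of $F(g)/(g-1)$ on $[5,\infty)$ together with $N_0(\Gamma)\le F(g_0)$ to conclude. The only differences are that you actually verify the monotonicity (which the paper merely asserts) and that you explicitly reduce to the case where $\Gamma$ is the normalizer of $\Delta$, a point the paper's proof glosses over by writing $g-1=\mu(\Gamma)\,|{\rm Aut}(X)|$; both additions are welcome but do not change the argument.
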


\begin{proof}
$g-1=\mu(\Gamma)\cdot |{\rm Aut}(X)|\geq \mu(\Gamma) N_0(\Gamma)=g_0-1$ and so
$g\geq g_0$.
Since the function $\frac{F(g)}{g-1}$ is increasing for $g\geq 5$ one has
 $\mu(\Gamma)^{-1}=\frac{N_0(\Gamma )}{g_0-1}\leq \frac{F(g_0)}{g_0-1}\leq \frac{F(g)}{g-1}$ and
thus  $|{\rm Aut}(X)|=\mu(\Gamma)^{-1}\cdot (g-1)\leq F(g)$.
\end{proof}
An equivalent  formulation of the Lemma \ref{6.15} is the following.\\
{\it If} (i) $g_0\geq 5$,  $\mu(\Gamma)^{-1}\leq \max (12,
\frac{F(g_0)}{g_0-1})$ and (ii) $\Delta \subset \Gamma$ is a normal
Schottky subgroup of finite index, free on $g\geq g_0$ generators,  
{\it then} $|\Gamma /\Delta |\leq F(g)$.\\

\noindent 
We will call $N_0(\Gamma)$ {\it suitable} if $|\Gamma /\Delta|\geq
N_0(\Gamma)$ for all normal Schottky groups $\Delta \subset \Gamma$ of
finite index and $N_0(\Gamma)\leq F(g_0)$ with
$g_0:=1+\mu(\Gamma)N_0(\Gamma)\geq 5$. \\

Lemma \ref{6.16} reduces the proof of the claim to finding a suitable
$N_0(\Gamma)$ for every $\Gamma$ appearing as a sub-amalgam in Lists
\ref{lists}. {\it We call an amalgam of the form $G_1\ast _{G_3}G_2$  simple. }

\begin{lemma}\label{6.16} \label{8.3} The claim holds if for every  simple sub-amalgam
  $\Gamma$, different from ${\rm
    PGL}_2(\mathbb{F}_3)\ast_{B(1,2)}B(2,2)$, in the Lists {\rm \ref{lists}} a suitable
  $N_0(\Gamma)$ is produced.  \end{lemma}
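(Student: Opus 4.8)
The plan is a purely combinatorial descent from an arbitrary realizable amalgam to one of its simple sub‑amalgams, using the local $\mu$‑inequality stated just before Lemma~\ref{6.1} together with restriction of Schottky subgroups. Let $X$ be a Mumford curve of genus $g\ge 2$ and write $\mathrm{Aut}(X)=\Gamma/\Delta$, where $\Gamma\subset\mathrm{PGL}_2(K)$ is the normaliser of $\Delta$; then $\Gamma$ is a realizable indecomposable amalgam (see \S\ref{section 6.1}) and $g-1=\mu(\Gamma)\,[\Gamma:\Delta]$. If $\mu(\Gamma)\ge\frac1{12}$ then $|\mathrm{Aut}(X)|\le 12(g-1)$ and there is nothing to prove, so we may assume $\mu(\Gamma)<\frac1{12}$; by Corollary~\ref{6.2} and Lists~\ref{6.3}, $\Gamma$ then occurs as a (sub-)amalgam of Lists~\ref{6.3}.

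First I would pick an edge $e=\{v_1,v_2\}$ of the tree $T^c$ of $\Gamma$ and set $\Gamma':=\Gamma_{v_1}\ast_{\Gamma_e}\Gamma_{v_2}$; the displayed formula preceding Lemma~\ref{6.1} gives $\mu(\Gamma)\ge\mu(\Gamma')$, whence $\mu(\Gamma')<\frac1{12}$ and $\Gamma'$ is one of the simple amalgams of Lists~\ref{6.3}. By inspecting Lists~\ref{6.3} one checks that, unless $\Gamma$ itself equals $\mathrm{PGL}_2(\mathbb{F}_3)\ast_{B(1,2)}B(2,2)$, the edge $e$ may be chosen so that $\Gamma'$ is neither $\mathrm{PGL}_2(\mathbb{F}_3)\ast_{B(1,2)}B(2,2)$ nor $D_\ell\ast_{C_\ell}D_\ell$: no $\mathrm{PGL}_2(\mathbb{F}_3)$- or $B(2,2)$-vertex of $T^c$ can carry two edges with edge group $B(1,2)$, and in Lists~\ref{6.3} a $C_\ell$-edge joining two $D_\ell$'s always occurs next to a $C_2$-edge, so a tree with at least two edges always has an edge of the required kind. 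By hypothesis a \emph{suitable} $N_0(\Gamma')$ is then available: $|\Gamma'/\Delta'|\ge N_0(\Gamma')$ for every normal Schottky subgroup $\Delta'\subset\Gamma'$ of finite index, and $g_0':=1+\mu(\Gamma')N_0(\Gamma')\ge 5$ with $N_0(\Gamma')\le F(g_0')$.

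Next I would transport this bound to $\Gamma$. For a normal Schottky subgroup $\Delta\subset\Gamma$ of finite index, $\Delta':=\Delta\cap\Gamma'$ is torsion-free, normal in $\Gamma'$ and of finite index, hence a normal Schottky subgroup of $\Gamma'$ (its genus exceeds $1$ since $\Gamma'\ne D_\ell\ast_{C_\ell}D_\ell$); moreover $\Gamma'/\Delta'\hookrightarrow\Gamma/\Delta$, so $|\Gamma/\Delta|\ge|\Gamma'/\Delta'|\ge N_0(\Gamma')$. Put $N_0(\Gamma):=N_0(\Gamma')$. Since $\mu(\Gamma)\ge\mu(\Gamma')$ one has $g_0(\Gamma):=1+\mu(\Gamma)N_0(\Gamma)\ge 1+\mu(\Gamma')N_0(\Gamma')=g_0'\ge 5$, and as $F$ is increasing, $N_0(\Gamma)=N_0(\Gamma')\le F(g_0')\le F(g_0(\Gamma))$; thus $N_0(\Gamma)$ is suitable, and Lemma~\ref{6.15} gives $|\mathrm{Aut}(X)|\le F(g)$. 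Together with the case $\mu(\Gamma)\ge\frac1{12}$ this shows $|\mathrm{Aut}(X)|\le\max\{12(g-1),F(g)\}$ whenever the normaliser amalgam of $\Delta$ is not $\mathrm{PGL}_2(\mathbb{F}_3)\ast_{B(1,2)}B(2,2)$.

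Finally, the amalgam $\mathrm{PGL}_2(\mathbb{F}_3)\ast_{B(1,2)}B(2,2)$ occurs only for $p=3$, has $\mu=\frac5{72}$, and its Mumford curves of smallest genus are exactly the three genus-$6$ curves with $|\mathrm{Aut}(X)|=72$ studied in Proposition~\ref{6.22} — precisely the exceptions to the claim for $p=3$, $g=6$. The descent itself is formal; the real content, and the main obstacle, is the hypothesis it rests on, namely producing a suitable $N_0(\Gamma')$ for each simple amalgam of Lists~\ref{6.3} — a lower bound valid for \emph{all} normal Schottky subgroups of finite index — which is the business of \S\ref{section 8} and, for the family $\mathrm{PGL}_2(\mathbb{F}_q)\ast_{B(n,q-1)}B(2n,q-1)$, ultimately invokes the classification of finite simple groups (\S\ref{section 8.3}). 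A secondary matter to verify carefully is the combinatorial claim used above, that Lists~\ref{6.3} always supply, for $\Gamma\ne\mathrm{PGL}_2(\mathbb{F}_3)\ast_{B(1,2)}B(2,2)$, an edge whose simple sub-amalgam admits a suitable $N_0$.
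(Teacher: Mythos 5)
Your proof is correct and follows essentially the same route as the paper: choose a simple sub-amalgam $\Gamma'$ avoiding ${\rm PGL}_2(\mathbb{F}_3)\ast_{B(1,2)}B(2,2)$, restrict a normal Schottky subgroup to $\Delta\cap\Gamma'$ to get $|\Gamma/\Delta|\geq N_0(\Gamma')$, and transport suitability via $\mu(\Gamma)\geq\mu(\Gamma')$ and Lemma \ref{6.15}. The extra care you take (ruling out $D_\ell\ast_{C_\ell}D_\ell$ as the chosen sub-amalgam and separating off the exceptional amalgam handled in Proposition \ref{6.22}) only makes explicit what the paper leaves implicit.
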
   
\begin{proof} Any amalgam $\Gamma$ with $\mu(\Gamma)\leq\frac{1}{12}$
that is not simple contains at least
one simple sub-amalgam $\Gamma^\prime$, different from  ${\rm PGL}_2(\mathbb{F}_3)\ast_{B(1,2)}B(2,2)$.
We claim that $N_0(\Gamma):=N_0(\Gamma ^\prime)$ is {\it suitable}.

Indeed, $\mu(\Gamma)\geq \mu(\Gamma ^\prime)$. Write
$g_0^\prime=1+N_0(\Gamma ^\prime)\mu(\Gamma ^\prime)$. Let
$\Delta\subset \Gamma$ be a normal Schottky subgroup of finite
index. Then $|\Gamma/\Delta |\geq |\Gamma ^\prime/(\Delta \cap \Gamma
^\prime)|\geq N_0(\Gamma ^\prime)$. Therefore $g:=1+|\Gamma /\Delta|\mu(\Gamma)\geq g_0^\prime\geq 5$.
Further $|\Gamma /\Delta|=\mu(\Gamma )^{-1}(g-1)\leq \mu(\Gamma
^\prime)^{-1}(g-1)\leq F(g)$ since $g\geq g_0^\prime$.  \end{proof}

Tables \ref{table-1} and \ref{table-2} provide {\it suitable} $N_0(\Gamma)$ for all simple
sub-amalgams that occur in Lists \ref{lists}, except for ${\rm
    PGL}_2(\mathbb{F}_3)\ast_{B(1,2)}B(2,2)$. 

Let $\varphi :\Gamma =G_1\ast_{G_3}G_2
  \rightarrow H$ be a surjective homomorphism to a finite group $H$
  such that its kernel is a Schottky group. Then $\varphi (G_i)\cong G_i,
  i=1,2$ are subgroups of $H$ and it follows that $|H|$ is a multiple
  of ${\rm l.c.m.} (|G_1|,|G_2|)$. Thus  we may suppose that $N_0(\Gamma)=n
  \cdot {\rm  l.c.m. }(|G_1|,|G_2|)$ for some integer $n$.

\begin{Table}\label{table-1} \label{8.4} {\rm 
In the table below we give the $\Gamma =G_1\underset{G_3}{\ast} G_2$ with
$\mu(\Gamma)<\frac{1}{12}$ such
that $N_0(\Gamma):={\rm l.c.m.} (|G_1|,|G_2|)$ is suitable.
     \begin{center}
  {\fontsize{9pt}{0.5pt}    
 \begin{tabular}{| l| r|r |r |}
  \hline
  $\Gamma$ & $N_0(\Gamma)$ & $\mu(\Gamma)$ & $g_0$ \\ \hline
  ${\rm PGL}_2(\mathbb{F}_{q})\underset{C_{q+1}}{\ast}D_{q+1}$ & $q^3-q$ & $\frac{q-2}{2q(q-1)}$
  & $\frac{q^2-q}{2}$ \\
  ${\rm PGL}_2(\mathbb{F}_{q})\underset{C_{q+1}}{\ast}{\rm PGL}_2(\mathbb{F}_{q})$ & $q^3-q$ 
  & $\frac{q^2-q-2}{q^3-q}$ & $q^2-q-1$ \\
  ${\rm PGL}_2(\mathbb{F}_{q})\underset{B(n,q-1)}{\ast} B(n\cdot m,q-1)$, $m>3$ & $q^m(q^2-1)$ 
  & $\frac{q^m-q-1}{q^m(q^2-1)}$ & $q^m-q$ \\
  ${\rm PGL}_2(\mathbb{F}_{3})\underset{B(1,2)}{\ast}B(3,2)$ 
  & $216$   & $\frac{23}{216}$ & $24$ \\
  ${\rm PGL}_2(\mathbb{F}_{q})\underset{C_{q+1}}{\ast} B(2n\cdot m,q+1)$, $m\geq 1$ 
  & $q^{2m}(q^2-1)$
  & $A$  
  & $1+q^{2m}(q^2-1)A$ \\
  ${\rm PSL}_2(\mathbb{F}_{q})\underset{C_{\frac{q+1}{2}}}{\ast} D_{\frac{q+1}{2}}$ & $\frac{q^3-q}{2}$ 
  & $\frac{q-2}{q(q-1)}$
  & $\frac{q^2-q}{2}$ \\
  ${\rm PSL}_2(\mathbb{F}_{q})\underset{C_{\frac{q+1}{2}}}{\ast}{\rm PSL}_2(\mathbb{F}_{q})$ & $\frac{q^3-q}{2}$ 
  & $\frac{2q^2-2q-4}{q^3-q}$ & $q^2-q-1$ \\
  ${\rm PSL}_2(\mathbb{F}_{q})\underset{B(n,\frac{q-1}{2})}{\ast}B(n\cdot m,\frac{q-1}{2})$, $m>3$ 
  & $\frac{q^m(q^2-1)}{2}$ 
  & $\frac{2q^m-2q-2}{q^m(q^2-1)}$ & $q^m-q$ \\
${\rm PSL}_2(\mathbb{F}_{3})\underset{B(1,1)}{\ast}B(3,1)$ 
  & $108$   & $\frac{23}{108}$ & $24$ \\
 ${\rm PSL}_2(\mathbb{F}_{5})\underset{B(1,2)}{\ast}B(2,2)$  & $300$ 
  & $\frac{19}{300}$ & $20$ \\
${\rm PSL}_2(\mathbb{F}_{5})\underset{B(1,2)}{\ast}B(3,2)$ 
  & $1500$ 
  & $\frac{119}{1500}$ & $120$ \\
${\rm PSL}_2(\mathbb{F}_{q})\underset{C_{\frac{q+1}{2}}}{\ast}B(2n\cdot m,\frac{q+1}{2})$, $m\geq 1$ 
  & $\frac{q^{2m}(q^2-1)}{2}$
  & $2A$
  & $1+q^{2m}(q^2-1)A$  \\
  \hline
 \end{tabular}
 }
  \end{center}
}
\end{Table}

where $A=\frac{q^{2m+1}-q^{2m}-q^{2m-1}-q+1}{q^{2m}(q+1)(q-1)}$.

\begin{Table}\label{table-2} \label{8.5} {\rm
In the table below $\mu(\Gamma)<\frac{1}{12}$ and $N_0(\Gamma)$ is strictly larger than ${\rm l.c.m.}(|G_1|, |G_2|)$.
{\it These values of $N_0(\Gamma)$ with $N_0(\Gamma)\leq F(g_0)$
are established in \S \ref{section 8.2} and \S \ref{section 8.3}.    } 
 
\begin{center}
 {\fontsize{6pt}{0.5pt} 
 \begin{tabular}{| l| r|r |r |}
  \hline
  $\Gamma$ & $N_0(\Gamma)$ & $\mu(\Gamma)$ & $g_0$ \\ \hline
  ${\rm PGL}_2(\mathbb{F}_{q})\ast_{B(n,q-1)}B(3\cdot n,q-1)$, $q>3$ & $q^6$ 
  & $\frac{q^3-q-1}{q^3(q^2-1)}$ & $\frac{q^6-q^4-q^3+q^2-1}{q^2-1}$ \\
  ${\rm PSL}_2(\mathbb{F}_{q})\ast_{B(n,\frac{q-1}{2})}B(3\cdot n,\frac{q-1}{2})$, $p>2$, $q>5$ 
  & $q^6$
  & $\frac{2q^3-2q-2}{q^3(q^2-1)}$ & $\frac{2q^6-2q^4-2q^3+q^2-1}{q^2-1}$ \\
${\rm PGL}_2(\mathbb{F}_{q})\ast_{B(n,q-1)}B(2\cdot n,q-1)$, $p>2$, $q>3$ & $\frac{(q^3-q)^2}{2}$ 
  & $\frac{q^2-q-1}{q^2(q^2-1)}$ & $\frac{(q^2-1)(q^2-q-1)+2}{2}$ \\
${\rm PGL}_2(\mathbb{F}_{q})\ast_{B(n,q-1)}B(2\cdot n,q-1)$, $p=2$, $q\geq 4$ & $(q^3-q)^2$ 
  & $\frac{q^2-q-1}{q^2(q^2-1)}$ & $(q^2-1)(q^2-q-1)+1$ \\
${\rm PSL}_2(\mathbb{F}_{q})\ast_{B(n,\frac{q-1}{2})}B(2\cdot n,\frac{q-1}{2})$, $p>2$, $q>5$ 
  & $\frac{(q^3-q)^2}{4}$
  & $\frac{2q^2-2q-2}{q^2(q^2-1)}$ & $\frac{(q^2-1)(q^2-q-1)+2}{2}$ \\
$B(n_1,\ell)\ast_{C_\ell}B(n_2,\ell)$, $n_1\geq n_2$ & $p^{n_1+n_2}\ell$ 
  & $\frac{p^{n_1}-p^{n_1-n_2}-1}{p^{n_1}\ell}$
  & $p^{n_1+n_2}-p^{n_1}-p^{n_2}+1$ \\
$B(n,\ell)\ast_{C_\ell}D_\ell$, $\ell|(q-1)$ & $(q^2+q)\ell$
  & $\frac{q-2}{2q\ell}$ & $\frac{q^2-q}{2}$ \\
  
  \hline
 \end{tabular}
 }
\end{center}
}
\end{Table}

\begin{theorem}\label{6.18} \label{8.6}
Let $X=\Omega/\Delta$ be a Mumford curve with $g>1$ and
$\Gamma$ the normalizer of $\Delta$.
Then $|{\rm Aut}(X)|\leq\max\{12(g-1),\: F(g)\}$, except for the case
$p=3$, $\Gamma= {\rm  PGL}_2(\mathbb{F}_3)\ast_{B(1,2)}B(2,2)$, $g=6$
and $|{\rm Aut}(X)|=72$.\\
\end{theorem}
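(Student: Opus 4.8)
The strategy, following Lemma~\ref{8.3}, is to reduce the bound
$|{\rm Aut}(X)|\leq \max\{12(g-1),F(g)\}$ to the production of a
\emph{suitable} $N_0(\Gamma)$ for every simple sub-amalgam
$\Gamma=G_1\ast_{G_3}G_2$ appearing in the Lists~\ref{lists}, with the
single exception $\Gamma={\rm PGL}_2(\mathbb{F}_3)\ast_{B(1,2)}B(2,2)$.
By Corollary~\ref{6.2}, every realizable $\Gamma$ with
$\mu(\Gamma)<\frac{1}{12}$ has its vertex groups among
$\{D_\ell,{\rm PGL}_2(\mathbb{F}_q),{\rm PSL}_2(\mathbb{F}_q),B(n,\ell)\}$,
and by inspecting Proposition~\ref{3.2} and Proposition~\ref{4.8} the
complete list of simple sub-amalgams is finite in shape (parametrised by
$p,q,\ell,n_i$). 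So the proof has three ingredients: (i) quote
Lemma~\ref{8.3} to pass from $\Gamma$ to its simple sub-amalgams; (ii)
for the amalgams in Table~\ref{table-1}, observe that
$N_0(\Gamma)={\rm l.c.m.}(|G_1|,|G_2|)$ works --- this is essentially
forced, since any surjection $\varphi:\Gamma\to H$ with Schottky kernel
embeds both $G_i$ in $H$, so $|H|$ is a multiple of that l.c.m., and one
then checks the inequalities $g_0\geq 5$ and $N_0(\Gamma)\leq F(g_0)$
numerically from the displayed formulas for $\mu(\Gamma)$ and $g_0$;
(iii) for the amalgams in Table~\ref{table-2}, carry out the harder
analysis of \S\ref{section 8.2}--\S\ref{section 8.3} to show the
strictly larger $N_0(\Gamma)$ is a valid lower bound for $[\Gamma:\Delta]$
and still satisfies $N_0(\Gamma)\leq F(g_0)$.

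First I would dispose of the small-genus cases $g\in\{2,3,4,5\}$ and
$g=6$, $p\neq 3$, where the claim reads $|{\rm Aut}(X)|\leq 12(g-1)$
(resp.\ $\leq 60$): here one uses the enumeration of the amalgams with
$\mu(\Gamma)=\frac{1}{12}$ in Proposition~\ref{8.1} together with the
observation that $F(g)<12(g-1)$ only for $g=4,5$, plus the tameness
argument (via \cite{N}, quoted in the excerpt) which gives
$\mu(\Gamma)\geq\frac{1}{12}$ whenever $p\geq g+2$. For $g\geq 6$ with
strict inequality $\mu(\Gamma)<\frac{1}{12}$, the reduction of
Lemma~\ref{8.3} applies and the bound becomes $|{\rm Aut}(X)|\leq F(g)$
once the tables are verified. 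The exceptional case
$\Gamma={\rm PGL}_2(\mathbb{F}_3)\ast_{B(1,2)}B(2,2)$, $g=6$,
$|{\rm Aut}(X)|=72$ must simply be excised and recorded as in
Proposition~\ref{6.22}: one checks directly that for this $\Gamma$ the
minimal index is $72$, giving $g=1+6\cdot\frac{1}{?}$ with the stated
values, and $72>F(6)=6(\sqrt{49}+3)=60$ while $72=12(6-1)+12$, so it
genuinely violates the bound.

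The main obstacle is Table~\ref{table-2}, and within it the rows for
$B(n_1,\ell)\ast_{C_\ell}B(n_2,\ell)$ and for the index-three amalgams
${\rm (P)GL}_2(\mathbb{F}_q)\ast_{B(n,q-1)}B(3n,q-1)$, and above all the
case $\Gamma={\rm PGL}_2(\mathbb{F}_q)\ast_{B(n,q-1)}B(2n,q-1)$ where the
naive l.c.m.\ bound is far too weak. For these the point is that a
Schottky quotient $H$ must contain two copies of a Borel group
$B(kn,q-1)$ sharing a common sub-Borel $B(n,q-1)$, and the two copies
generate a large subgroup of $H$ --- controlling how large requires
analysing which finite groups $H$ of order divisible by $p$ can be
generated by two such Borel subgroups with prescribed intersection.
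This is precisely where \S\ref{section 8.3} invokes the classification
of finite simple groups: one lists the possible composition factors,
rules out the small ones by order or by the structure of their
$p$-local subgroups, and is left with $H$ itself being
${\rm PSL}_2$ or ${\rm PGL}_2$ of a field extension, forcing
$[\Gamma:\Delta]\geq(q^3-q)^2$ (or its halved/doubled variants). Once
$N_0(\Gamma)$ is pinned down, the inequality $N_0(\Gamma)\leq F(g_0)$
with $g_0=1+\mu(\Gamma)N_0(\Gamma)\geq 5$ is a routine --- if lengthy ---
estimate using the explicit $\mu(\Gamma)$ of the tables, and
Lemma~\ref{8.2} then finishes the argument.
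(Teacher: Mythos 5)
Your proposal follows essentially the same route as the paper's own proof: dispose of the small-genus cases where $\mu(\Gamma)\geq\frac{1}{12}$, isolate the exceptional amalgam ${\rm PGL}_2(\mathbb{F}_3)\ast_{B(1,2)}B(2,2)$ (Proposition \ref{6.22}), and otherwise invoke Lemmata \ref{8.2} and \ref{8.3} together with the suitable bounds $N_0(\Gamma)$ of Tables \ref{table-1} and \ref{table-2}, the latter established in \S\ref{section 8.2}--\S\ref{section 8.3} via the classification of finite simple groups. The plan is correct and matches the paper's decomposition of the argument.
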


\begin{proof}
One first easily verifies that for amalgams $\Gamma$ such that there exist $\Delta\subset\Gamma$
with $X=\Omega/\Delta$ of genus $g<5$ the value of $\mu(\Gamma)$ is
$\mu(\Gamma)\geq \frac{1}{12}$.
Hence Mumford curves corresponding to such amalgams (see \ref{6.19}) satisfy the theorem.

Now we assume that $g\geq g_0\geq 5$ and exclude $\Gamma={\rm
  PGL}_2(\mathbb{F}_3)\ast_{B(1,2)}B(2,2)$. This amalgam will be studied in Proposition \ref{6.22}. 
 Lemmata  \ref{6.15}, \ref{6.16} and the two tables \ref{table-1}, \ref{table-2} complete the
 proof. \end{proof}
Table \ref{table-1} is trivial to verify. The rest of \S 8 provides
examples treating special cases and finally the far from evident,
delicate  verification of Table \ref{table-2}.

\begin{lemma}\label{6.19} \label{8.7} The amalgam $\Gamma:=D_3\ast_{C_2}D_2$ is realizable for
 every $p$ and $\mu(\Gamma)=\frac{1}{12}$. For $g=2,\dots ,6$ there
 is a normal Schottky subgroup $\Delta _g$ which is free on $g$
 generators. Hence ${\rm Max}(g)\geq 12(g-1)$ for
$g=2,\dots ,6$. 
By the theorem above equality holds for $g=2,\ldots ,5$ and if $p\not=3$ also for $g=6$.
\end{lemma}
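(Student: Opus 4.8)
The plan is to realize $\Gamma = D_3 \ast_{C_2} D_2$ explicitly and then, for each $g \in \{2,\dots,6\}$, exhibit a finite group $H_g$ of order $12(g-1)$ together with a surjection $\varphi_g : \Gamma \to H_g$ whose restriction to each vertex group is injective; by the criterion of \S\ref{section 6.4} the kernel $\Delta_g$ is then a normal Schottky subgroup, and since $\mu(\Gamma) = \frac{1}{12}$ the formula $g-1 = \mu(\Gamma)\cdot[\Gamma:\Delta_g]$ forces $\Delta_g$ to be free on exactly $g$ generators. Realizability of $\Gamma$ for every $p$ is already covered: $D_3\ast_{C_2}D_2$ appears in Proposition \ref{6.14}, and one notes the isomorphisms $D_3\ast_{C_2}D_2 \cong {\rm PGL}_2(\mathbb{F}_2)\ast_{C_2}B(2,1) \cong B(1,2)\ast_{C_2}D_2$ recorded there, so one can use whichever model of the amalgam is most convenient for the prime $p$ in question.

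First I would fix generators: write $D_3 = \langle r,s \mid r^3 = s^2 = 1,\ srs = r^{-1}\rangle$ and $D_2 = \langle s, t \mid s^2 = t^2 = 1,\ st = ts\rangle$, amalgamating the common $C_2 = \langle s\rangle$. Then $\Gamma = \langle r,s,t\rangle$ with the relations $r^3 = s^2 = t^2 = (st)^2 = 1$, $srs = r^{-1}$. This is, as remarked in the proof of Corollary \ref{8.2bis}, the extended modular group ${\rm PGL}_2(\mathbb{Z})$. For the five target quotients I would proceed explicitly, choosing groups $H_g$ of order $12,24,36,48,60$ generated by images $\bar r,\bar s,\bar t$ of order $3,2,2$ respectively, subject to $(\bar s\bar t)^2 = 1$ and $\bar s\bar r\bar s = \bar r^{-1}$, and with the additional property that $\langle \bar r,\bar s\rangle \cong D_3$ and $\langle \bar s,\bar t\rangle \cong D_2$ (i.e. $\bar r \neq 1$, $\bar s \neq 1$, $\bar t \neq 1$, $\bar t \notin \langle\bar s\rangle$). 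Natural candidates: $g=2$, $H_2 = D_3\times C_2 \cong D_6$ of order $12$; $g=3$, $H_3 = S_4$ of order $24$ (generated by a $3$-cycle and two suitable transpositions whose product has order $2$); $g=4$, $H_4$ of order $36$, e.g. $(C_3\times C_3)\rtimes C_4$ or $C_3 \times D_6$, checking that the required dihedral subgroups embed; $g=5$, $H_5 = S_4 \times C_2$ of order $48$; $g=6$, $H_6 = A_5$ of order $60$, generated by elements realizing the triangle relations $(2,3,5)$ — here one uses that $A_5 \cong {\rm PSL}_2(\mathbb{F}_5)$ contains $D_3$ and the Klein four-group $D_2$, and that $A_5$ is a $(2,2,3)$-generated... actually a $(2,3,5)$-quotient of ${\rm PGL}_2(\mathbb{Z})$ in the standard way. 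In each case one must only verify the finite-group fact that $\bar r,\bar s,\bar t$ satisfy exactly the defining relations of $\Gamma$ (automatic, since they are relations) and that the restrictions to $D_3$ and $D_2$ are faithful (a quick order check). Once $\varphi_g$ exists and is surjective onto a group of order $12(g-1)$, the kernel is a Schottky group of the right rank by the $\mu$-formula, so ${\rm Max}(g) \geq 12(g-1)$.

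The remaining assertions are immediate: combining the displayed bound with Theorem \ref{8.6}, which gives ${\rm Max}(g) \leq \max\{12(g-1),F(g)\}$, and recalling from \S\ref{section 8.1} that $F(g) < 12(g-1)$ precisely for $g = 4,5$ while $F(6) = 60 < 12\cdot 5 = 60$... here I would note $F(6) = 6(\sqrt{49}+3) = 6\cdot 10 = 60 = 12(6-1)$, so for $g=6$ the two bounds coincide and equality ${\rm Max}(6) = 60$ holds whenever $p\neq 3$ (the $p=3$ case being the genuine exception $72 = |{\rm PGL}_2(\mathbb{F}_3)|$ from Proposition \ref{6.22}); for $g = 2,3,4,5$ one has $\max\{12(g-1),F(g)\} = 12(g-1)$, so equality ${\rm Max}(g) = 12(g-1)$ follows. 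The only real work is the explicit construction of $H_4$ and the faithfulness checks, which I expect to be the main (though entirely routine) obstacle: one must make sure that in the order-$36$ and order-$48$ cases the chosen generators do not accidentally collapse one of the dihedral vertex groups, which is handled by picking the quotients so that $\langle\bar s,\bar t\rangle$ is a genuine Klein four-group and $\bar r$ has order $3$.
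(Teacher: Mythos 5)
Your overall strategy coincides with the paper's: present $\Gamma=\langle r,s,t\mid r^3=s^2=t^2=(st)^2=1,\ srs=r^{-1}\rangle$ and, for each $g$, exhibit a surjection onto a group of order $12(g-1)$ that is injective on $D_3=\langle r,s\rangle$ and on $D_2=\langle s,t\rangle$; the paper's choices are $D_3\times C_2$, $S_4$, $D_3\times D_3$, $S_4\times C_2$ and $A_5\cong{\rm PGL}_2(\mathbb{F}_4)$, and you match it for $g=2,3,5,6$.

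The genuine gap is at $g=4$: both order-$36$ groups you propose fail. The group $(C_3\times C_3)\rtimes C_4$ has cyclic Sylow $2$-subgroups, so it contains no Klein four-group and the restriction of $\varphi_4$ to $D_2$ cannot be injective. For $C_3\times D_6$, any involution has trivial $C_3$-component, so $\bar s,\bar t\in 1\times D_6$; and the relation $\bar s\bar r\bar s=\bar r^{-1}$ then forces the $C_3$-component of $\bar r$ to be trivial as well, so $\langle\bar r,\bar s,\bar t\rangle$ lies in $1\times D_6$ and the map is not surjective. You correctly identify $H_4$ as the delicate case, but neither candidate can be repaired. The paper's choice is $H_4=D_3\times D_3$ with $r\mapsto(r,r)$, $s\mapsto(s,s)$, $t\mapsto(s,1)$: one checks that $(s,1)$ and $(1,s)=(s,1)(s,s)$ together with $(r,r)$ and its conjugate $(r^{-1},r)$ generate all of $D_3\times D_3$, while the restrictions to the two vertex groups are visibly faithful. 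With that substitution your argument goes through and agrees with the paper's; the concluding sentence (equality via Theorem \ref{8.6}) is treated at the same level of detail in the paper as in your proposal.
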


\begin{proof}
The first statement follows from the observations:
for $p=2$ one has $\Gamma\cong {\rm PGL}_2(\mathbb{F}_{2})\ast_{B(1,1)}B(2,1)$
and for $p=3$ one has $\Gamma\cong B(1,2)\ast_{C_2}D_2$.\\

Now  $\Delta_g=\ker \varphi_g$ where $\varphi_g$ is an explicit
surjective homomorphism of $\Gamma$ to a group of $12(g-1)$ elements such that
$\varphi_g$ is injective on $D_3$ and on $D_2$.\\
Write $\Gamma =<a,b,c\ |\  a^3=b^2=c^2=1,\ bab=a^2,\ bc=cb>$ with
$D_3=<a,b\ |\  a^3=b^2=1, \ bab=a^2>$ and $D_2=<b,c\ |\  b^2=c^2=1,\ bc=cb>$.\\

\noindent $\varphi_2:\;\Gamma\longrightarrow D_3\times C_2$ is defined by
$a\mapsto (a,1),\ b\mapsto (b,1),\ c\mapsto (1,-1)$,\\  where we have
written $C_2=\{\pm 1\}$.\\
\noindent
$\varphi_3:\;\Gamma\longrightarrow S_4\cong {\rm
  PGL}_2(\mathbb{F}_{3})$ is defined by $a\mapsto {1\ 1\choose 0\ 1 },\ b\mapsto
{-1\ 0\choose 0\ 1 },\ c\mapsto {0\ 1\choose 1\ 0 }$.\\
\noindent $\varphi_4:\Gamma\longrightarrow D_3\times D_3$, is defined by 
$a\mapsto (a,a),\ b\mapsto (b,b),\ c\mapsto (b,1)$. \\
\noindent  
$\varphi_5:\Gamma \longrightarrow  S_4\times C_2$ is defined by $\varphi_5(g)=(\varphi_3(g),\psi
(g))$, where $\psi (g)=1$ for $g=a,b$ and $\psi (c)=-1$ and
$C_2=\{\pm 1\}$.\\
\noindent $\varphi_6:\Gamma \longrightarrow A_5\cong{\rm PGL}_2(\mathbb{F}_4)$ is
defined by identifying  $D_3$ with the subgroup ${\rm PGL}_2(\mathbb{F}_2)$
of ${\rm PGL}_2(\mathbb{F}_4)$ and $D_2$ with the subgroup
${1\ \mathbb{F}_4\choose 0\ 1 }$ of   ${\rm PGL}_2(\mathbb{F}_4)$.
\end{proof}

\begin{remarks} \label{8.8} {\rm
$\Delta_2$ is the free group with generators $\delta
_1=aca^2c,\ \delta _2=a^2cac$. The group $\Gamma$ acts by conjugation
on $\Delta_2$ and on $\Delta_{2,ab}\cong \mathbb{Z}^2$. The matrices
of the conjugations by $a,b,c$ are ${-1\ -1\choose 1\ 0 },\ {0\
  1\choose 1\ 0 }, \ {-1\ 0\choose 0\ -1 }$. This implies that
$\Delta_{2,ab}/3\Delta_{2,ab}$ has a 1-dimensional subspace invariant
under conjugation. Hence there exists $\Delta \subset \Delta_2$ of
index 3 which is normal in $\Gamma$. One can take $\Delta$
as the required $\Delta_4$. Since there is a surjective homomorphism
$D_3\times D_3\longrightarrow D_3\times C_2$ one concludes that
$\Delta \subset \Delta_2$ is the kernel of $\varphi_4$.\\

  We note that for $p\not=2$ there exist other Mumford curves of genus $g=3,5$
with automorphism group $S_4$ and $S_4\times C_2$, respectively.
These curves corresponds to the amalgam $S_4\ast_{C_4}D_4\cong {\rm PGL}_2(\mathbb{F}_{3})\ast_{C_4}D_4$.
Moreover, the amalgams $A_5\ast_{C_5}D_5$ ($p\not=5$) and $A_4\ast_{C_3}D_3$ ($p\not=3$)
give rise to Mumford curves of genus $g=6$ with automorphism group
$A_5\cong {\rm PGL}_2(\mathbb{F}_4)$.     } 
\end{remarks}

\subsection{\rm The exceptional curves for $p=3$ with  $g=6$} \label{section 8.1} 

\begin{examples}\label{6.21} \label{8.9}  $\Gamma=\Gamma_{v_1}\ast_{\Gamma_e}\Gamma_{v_2}
={\rm PGL}_2(\mathbb{F}_{3})\ast_{B(1,2)}B(m,2)$ with $m\geq 2$. 
{\rm Write $B(1,2)={\pm 1 \ \mathbb{F}_3\choose 0\ \ \ 1}$, $B(m,2)={ \pm 1\
  W\choose 0\ \ \ \ 1 }$ where $\{\pm 1\}=\mathbb{F}_3^*$ and $W$ is  a vector space over $\mathbb{F}_3$ of dimension $m$.
The given homomorphism $\Gamma_e\rightarrow \Gamma_{v_2}$ provides $W$ with a 1-dimensional subspace over $\mathbb{F}_3$,
namely the image of $\{{1\ \mathbb{F}_3\choose 0\ \ 1 }\}$ into $\{{1\ W \choose 0\ 1 }\}\subset \Gamma_{v_2}$. This subspace is
denoted by $\mathbb{F}_3\subset W$. 

{\it Now we choose a $\mathbb{F}_3$-vector space $V\subset W$ with $W=\mathbb{F}_3\oplus V$}. The number of possibilities for $V$ is $\frac{3^m-3^{m-1}}{2}$.

Put $H_m:= \{(g_1,g_2)\in {\rm PGL}_2(\mathbb{F}_{3})\times {\pm 1\
  V\choose 0\ 1 }\mid\:  \det(g_1)\cdot \det(g_2)=1\}$. This finite group does not depend on the choice of $V$.\\ 

 Define a homomorphism   
$\varphi_m:\Gamma \longrightarrow H_m$, which does depend on the choice of $V$,  by:
 if $g\in {\rm PGL}_2(\mathbb{F}_{3})$, then $\varphi_m(g)=(g, {a\ 0\choose
   0\ 1 })$ with $a=\det g$; \\
\indent  if ${a\ b\choose 0\ 1 }{1\ v\choose 0\ 1 }\in B(m,2)$ with $a\in
 \{\pm 1\}, b\in \mathbb{F}_3, v\in V$, then\\ 
$\varphi_m({a\ b\choose 0\ 1 }{1\ v\choose 0\ 1 })=({a\ b \choose 0\ 1 },
{a\ 0 \choose 0\ 1 }{1\ v\choose 0\ 1 })$.
It is easily seen that  $\varphi_m$ is surjective and clearly 
$\ker \varphi_m$ is a Schottky group, depending on the choice of $V$.\\ 

The formula $(g-1)=\mu(\Gamma)\cdot |H_m|$ implies that $g=3^m-3$  and
that $\Delta_m:=\ker \varphi _m$ is a free group on $3^m-3$
generators. This can be made explicit as follows.

 One considers $D_2$ as subgroup of ${\rm PSL}_2(\mathbb{F}_3)
 \subset {\rm PGL}_2(\mathbb{F}_3)$ and we write ${1\ V\choose 0\ 1 }=C_3^{m-1}$.  
The group $H_m$ contains a normal subgroup $D_2\times C_3^{m-1}$.
Then $\Gamma$ contains the normal subgroup  $\varphi_m^{-1}(D_2\times C_3^{m-1})=D_2\ast C_3^{m-1}$.
The commutator subgroup $[D_2,\; C_3^{m-1}]\subset D_2\ast C_3^{m-1}$
is seen to be a free group on $(4-1)(3^{m-1}-1)=3^m-3$ generators,
namely the elements $aba^{-1}b^{-1}$ with $a\in D_2,\ a\neq 1$ and
$b\in C_3^{m-1},\ b\neq 1$. This group is contained in the kernel $\Delta_m:=ker(\varphi_m)$.
Moreover, the quotient $D_2\ast C_3^{m-1}/[D_2,\; C_3^{m-1}]$ equals
the group $D_2\times C_3^{m-1}$. Therefore the kernel $\Delta_m=ker(\varphi_m)$ is the group
$[D_2,\; C_3^{m-1}]$. }   \end{examples}

\begin{proposition}\label{6.22} \label{8.10}
Consider  $p=3$ and a realization $\Gamma$ of the amalgam ${\rm
  PGL}_2(\mathbb{F}_{3})\ast_{B(1,2)}B(2,2)$. Let $\Delta\subset \Gamma$ be a normal
Schottky group with finite index defining a Mumford curve $X=\Omega
/\Delta$.  

Then the three normal Schottky groups $\Delta=\Delta_2$ are the only cases with $|{\rm Aut}(X)|>\max
(12(g-1),F(g))$. Moreover, for $\Delta=\Delta_2$ one has $g=6$ and $|{\rm Aut}(X)|=72$. 
\end{proposition}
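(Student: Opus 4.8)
The plan is to analyze the amalgam $\Gamma = {\rm PGL}_2(\mathbb{F}_3)\ast_{B(1,2)}B(2,2)$ directly, exploiting the very explicit description of its normal Schottky subgroups obtained in Examples \ref{6.21} (the case $m=2$). First I would recall that $\mu(\Gamma)=\frac{1}{12}$ is \emph{not} the case here: from Table \ref{table-1} (or a direct computation using $c_B=1+\frac{p^s-2}{\ell p^s}$ from the proof of Theorem \ref{6.8}) one has $\mu(\Gamma)=\frac{23}{216}$, which exceeds $\frac{1}{12}$. Since $\mu(\Gamma)=\frac{23}{216}$, the relation $g-1=\mu(\Gamma)[\Gamma:\Delta]$ forces $[\Gamma:\Delta]$ to be divisible by $\frac{216}{\gcd(216,23)}=216$, so $216\mid [\Gamma:\Delta]$ and hence $g-1$ is a multiple of $23$; the smallest possibility is $[\Gamma:\Delta]=216$, $g=24$. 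This already shows that for every normal Schottky $\Delta$ with $g<24$ we must have... wait — no: the point is the \emph{other} direction. I would instead argue: any normal Schottky subgroup $\Delta$ gives $|{\rm Aut}(X)|=[\Gamma':\Delta]$ where $\Gamma'\supseteq\Gamma$ is the normalizer; but $\Gamma'$ is again a realizable amalgam with $\mu(\Gamma')\le\mu(\Gamma)=\frac{23}{216}<\frac1{12}$, so by the Lists \ref{lists} and the determinant discussion in \S\ref{section 6.3}, $\Gamma'$ is either $\Gamma$ itself or one of finitely many amalgams containing $\Gamma$ with small index. The task reduces to: (i) identify these overgroups $\Gamma'$, and (ii) for each, bound the genus from below so that $|{\rm Aut}(X)|\le\max(12(g-1),F(g))$ unless we are in the exceptional configuration.

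Next I would do the genus bookkeeping. The overgroups of $\Gamma={\rm PGL}_2(\mathbb{F}_3)\ast_{B(1,2)}B(2,2)$ of finite index with $\mu<\frac1{12}$: since $B(2,2)$ has $B(1,2)\cong S_3$ as a branch group and $B(2,2)$ itself as the other branch group, and ${\rm PGL}_2(\mathbb{F}_3)=S_4$ has branch groups $C_2$ (which is $B(1,2)$ here, using $p=3$) and $B(1,2)$, the only way to enlarge $\Gamma$ within the lists is to note that $B(2,2)$ embeds into ${\rm PGL}_2(\mathbb{F}_9)$ (its Borel is $B(2,q-1)$ with $q=9$) — but that changes the vertex group, not the amalgam $\Gamma$ itself as an abstract group; rather, one should check whether the \emph{normalizer} of a particular $\Delta$ is strictly larger. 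I expect the upshot is that $\Gamma'=\Gamma$ in all relevant cases, so $|{\rm Aut}(X)|=[\Gamma:\Delta]=\frac{216}{23}(g-1)$. Then $|{\rm Aut}(X)|>\max(12(g-1),F(g))$ holds iff $\frac{216}{23}(g-1)>F(g)=g(\sqrt{8g+1}+3)$, since $\frac{216}{23}\approx 9.39<12$ the term $12(g-1)$ dominates whenever it is the max — so actually one needs $\frac{216}{23}(g-1)>12(g-1)$, which is \emph{false}! So the inequality $|{\rm Aut}(X)|>\max(12(g-1),F(g))$ can only occur if $12(g-1)<F(g)$, i.e. only for $g=4,5$ (per the remark that $F(g)<12(g-1)$ only for $g=4,5$), or — and this is the real point — if ${\rm Aut}(X)$ is \emph{strictly larger} than $\Gamma/\Delta$, i.e. the normalizer $\Gamma'$ is strictly bigger than $\Gamma$. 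So the crux is: for which $\Delta$ is $\Gamma$ not its own normalizer, and what is $\Gamma'$ then?

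The key case is $\Delta=\Delta_2$ from Examples \ref{6.21}: here $[\Gamma:\Delta_2]=|H_2|$. Computing, $|H_2|=\frac12\cdot|{\rm PGL}_2(\mathbb{F}_3)|\cdot|B(1,2)|=\frac12\cdot 24\cdot 6=72$ (the subgroup of the product cut out by $\det g_1\cdot\det g_2=1$ has index $2$), and $(g-1)=\mu(\Gamma)\cdot 72=\frac{23}{216}\cdot 72=\frac{23}{3}$ — that is not an integer, so I must recheck: in Examples \ref{6.21} it is stated $g=3^m-3$, so for $m=2$, $g=6$, consistent with $g-1=5$ and $[\Gamma:\Delta_2]$ having to satisfy $5=\frac{23}{216}[\Gamma:\Delta_2]$, impossible — which means $\mu(\Gamma)$ for \emph{this} realization with \emph{this} $\Delta_2$ is not $\frac{23}{216}$ but rather $\Gamma$ here, with the \emph{smaller} vertex groups actually present, has a different $\mu$; indeed $D_2\ast C_3^{m-1}$ appears and the effective $\Gamma$ contributing is smaller. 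I would resolve this by directly using the explicit structure: $\Delta_2=[D_2,C_3^{m-1}]$ with $m=2$ is free of rank $3^2-3=6$, it sits in $\varphi_2^{-1}(D_2\times C_3)=D_2\ast C_3 \subset \Gamma$, and one checks $[\Gamma:\Delta_2]$ directly from the explicit homomorphism $\varphi_2:\Gamma\to H_2$, getting $g=6$ and $|{\rm Aut}(X)|=|H_2|=72$. Then $\max(12\cdot5,F(6))=\max(60,6(\sqrt{49}+3))=\max(60,60)=60<72$, confirming the exception. For all \emph{other} normal Schottky $\Delta\subset\Gamma$: either $g\ge 7$ (then one shows $72$-type behavior cannot recur because the structural reason — the three $V$-choices giving index exactly $72$ — is special to $m=2$, and for larger index $|{\rm Aut}(X)|\le\frac{216}{23}(g-1)<12(g-1)$), or $g\le 6$ with $g\ne 6$ forced small and handled by Lemma \ref{6.19}/the $g<5$ remark. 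The main obstacle I anticipate is precisely this last step: ruling out \emph{all} other normal Schottky subgroups of small index — one must enumerate the possible finite quotients $H=\Gamma/\Delta$ with $|H|$ small, injective on both vertex groups, and check none yields $|H|>\max(12(g-1),F(g))$ except the three $\Delta_2$'s; this requires a careful but finite case analysis of homomorphisms from the amalgam $S_4\ast_{S_3}B(2,2)$ to groups of order dividing, say, $F(g_0)$, using that $|H|$ must be a multiple of ${\rm lcm}(|S_4|,|B(2,2)|)={\rm lcm}(24,18)=72$, so the candidates are $|H|\in\{72,144,216,\ldots\}$ and for $|H|=72$ one shows $H\cong H_2$ with exactly three $\Delta$'s, while $|H|\ge 144$ gives $g\ge 1+\frac{23}{216}\cdot 144$, large enough that $12(g-1)$ dominates. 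I would assemble these pieces to conclude.
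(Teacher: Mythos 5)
There is a genuine gap, and it starts with an arithmetic error that derails the whole argument. For $\Gamma={\rm PGL}_2(\mathbb{F}_3)\ast_{B(1,2)}B(2,2)$ one has $\mu(\Gamma)=\frac{1}{6}-\frac{1}{24}-\frac{1}{18}=\frac{5}{72}$, not $\frac{23}{216}$; the value you quote is the Table \ref{table-1} entry for $B(3,2)$ (order $54$), not $B(2,2)$ (order $18$). You do notice the resulting inconsistency when $\frac{23}{216}\cdot 72$ fails to be an integer, but your resolution (that ``the effective $\Gamma$ is smaller'' for this $\Delta_2$) is not tenable: $\mu(\Gamma)$ is an invariant of the amalgam and does not depend on the choice of $\Delta$. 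With the correct value, $\frac{72}{5}=14.4>12$, so $|\Gamma/\Delta|=\frac{72}{5}(g-1)$ always exceeds $12(g-1)$; the entire detour through normalizers $\Gamma'\supsetneq\Gamma$ is a red herring, and the real question is for which $g$ (necessarily $g\equiv 1\bmod 5$) one has $\frac{72}{5}(g-1)>F(g)$. Since $F(g)>\frac{72}{5}(g-1)$ for $g\geq 16$, only $g=6$ (order $72$) and $g=11$ (order $144$) survive, because $F(6)=60<72$ and $F(11)<144$.

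This exposes the second, more serious omission: your final step asserts that $|H|\geq 144$ forces $g$ large enough for $12(g-1)$ to dominate, which is false --- for $|H|=144$ one gets $g=11$, $12(g-1)=120<144$ and $F(11)\approx 136.8<144$, so a quotient of order $144$ with Schottky kernel would be a second exceptional family. Ruling it out is the bulk of the paper's proof: one writes $\Gamma=C_2\ltimes\Gamma'$ with $\Gamma'\cong A_4\ast_{C_3}C_3^2$, shows that any normal subgroup of $\Gamma$ containing torsion and not contained in $\Gamma'$ must be all of $\Gamma$, and then derives a contradiction from the existence of a proper normal subgroup of $H$ (no simple group of order $144$) in both cases $\Delta\not\subset\Gamma'$ and $\Delta\subset\Gamma'$ (the latter using that $H'=\Gamma'/\Delta$ is generated by squares, hence has no index-two subgroup, whereas every order-$72$ group containing $A_4$ does). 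None of this group theory appears in your proposal. The $g=6$ part of your plan (only $H_2$ among the $50$ groups of order $72$ contains both $S_4$ and $B(2,2)$, and the three choices of $V$ give exactly three kernels $\Delta_2$) is the right idea and matches the paper, but it is asserted rather than carried out.
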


\begin{proof} One has $\mu(\Gamma)=\frac{5}{72}$ and therefore
$|{\rm Aut}(X)|$ is divisible by $72$ and $5|g-1$. Further $F(g)>\frac{72}{5}\cdot(g-1)$ holds for $g\geq 16$,
$F(6)<72$ and $F(11)< 144$. Therefore the genus of $X$ must be $g=6$ or $g=11$. \\

\noindent 
{\it Consider the case $g=6$ and thus $|{\rm Aut}(X)|=72$}. There exist
$50$ distinct groups of order $72$ (see e.g. \cite{P-W}). Only four of these groups contain a subgroup $A_4$.
These are the groups $S_4\times C_3$,  $A_4\times S_3$, $A_4\times C_6$
and $H_2=C_2\ltimes (A_4\times C_3)$. The group $H_2$ is the only group of order $72$ that
contains both a subgroup $S_4\cong {\rm PGL}_2(\mathbb{F}_{3})$ and a subgroup $B(2,2)$.
Therefore only the group $H_2$ can occur as the automorphism group of a Mumford curve
$X=\Omega/\Delta$ of genus $g=6$ such that $\Gamma$ is the normalizer of $\Delta$ in 
${\rm PGL}_2(K)$.\\

 The group homomorphism $\varphi_2:\;\Gamma \longrightarrow H_2$
 defined in \ref{6.21} has kernel $\Delta_2$ with the required
 properties. The map $\varphi_2$ is uniquely determined by the choice of $V\subset W$ with $\mathbb{F}_3\oplus V=W$.
   In particular, the three groups $\Delta_2\subset\Gamma$ are the  only normal Schottky subgroup of index $72$.
Therefore the curves $X=\Omega/\Delta_2$ are uniquely determined by the
embedding of $\Gamma$ into  ${\rm PGL}_2(K)$ and the choice of $V$.\\

\noindent 
{\it We will show that the assumption that there is a Mumford curve $X=\Omega/\Delta$ of genus $g=11$
with $|{\rm Aut}(X)|=144$ leads to a contradiction}. 

Assume the existence of  $\varphi:\Gamma \rightarrow H:=\Gamma/\Delta$ with $|H|=144$.\\
 Write $\Gamma^\prime={\rm  PSL}_2(\mathbb{F}_{3})\ast_{C_3}B(2,1)\cong A_4\ast_{C_3}C_3^2$.  
Then $\Gamma=C_2\ltimes \Gamma^\prime$. The following three steps
provide the contradiction.\\

\noindent (a). {\it Suppose that  a normal subgroup $\Gamma^\circ\subset\Gamma$ contains non-trivial
elements of finite order and that $\Gamma ^\circ$ is not contained in
$\Gamma^\prime$.  Then $\Gamma^\circ=\Gamma$}.\\
\noindent {\it Proof}.
Indeed, since $\Gamma^\circ\subset\Gamma$ is normal and contains
non-trivial elements of finite order,
the intersection $\Gamma^\circ\cap\Gamma_{v_1}$ or the intersection $\Gamma^\circ\cap\Gamma_{v_2}$
has to contain non-trivial elements.
Furthermore,  $\Gamma^\circ\cap\Gamma_{v_1}$ and $\Gamma^\circ\cap\Gamma_{v_2}$ are normal subgroups
of $\Gamma_{v_1}$ and $\Gamma_{v_2}$, respectively. 
Moreover, at least one of these intersections is not contained in $\Gamma^\prime$.

A normal subgroup of $\Gamma_{v_1}$ or of $\Gamma_{v_2}$ that is not contained in $\Gamma^\prime$
contains a subgroup $C_2$ that is not contained in
$\Gamma^\prime$. The subgroup $C_2$ stabilizes an edge
in $\mathcal{T}^c$. In particular, the normal group $\Gamma^\circ$ contains all the subgroups 
$C_2\subset\Gamma$ that stabilize an edge.

One verifies that for any vertex $v\in\mathcal{T}^c$ 
the subgroups $C_2\subset\Gamma_v$ that stabilize an edge $e\ni v$
generate the group $\Gamma_v$. {\it Therefore $\Gamma^\circ=\Gamma$
  holds}.\hfill $\square$\\

\noindent (b). {\it Let $\Delta\subset \Gamma$ be a maximal normal Schottky subgroup
  of finite index. The assumption that $\Delta$ is not contained in
  $\Gamma^\prime$ leads to a contradiction.}\\
{\it Proof}.
The order of  $\Gamma/\Delta$ is 144.  Since there is no simple group
of this order, it has a proper normal subgroup.  Its preimage
$\Gamma^\circ\subset \Gamma$ contains non trivial elements of finite
order (because $\Delta$ is maximal) and is not contained in
$\Gamma^\prime$. This implies the {\it contradiction}
$\Gamma^\circ=\Gamma$.\hfill $\square$\\

\noindent (c). {\it Let $\Delta\subset \Gamma$ be a maximal normal Schottky subgroup
  of finite index. The assumption $\Delta\subset \Gamma^\prime$ leads
  to a contradiction.}\\
Then $H'=\Gamma^\prime/\Delta$ has order 72 and contains a
subgroup isomorphic to $A_4$. As above there are four groups of order 72
having that property. Each one of them contains a subgroup of index
two (isomorphic to $A_4\times C_3$). We finish the proof by showing that $H'$ does not
have a subgroup of index two. Indeed, $H'$ is generated by the subgroups
$\varphi(\Gamma^\prime_{v_1})\cong A_4$ and
$\varphi(\Gamma^\prime_{v_2})\cong C_3^2$. Since $<g^2 \mid g\in
A_4>=A_4$ and $<g^2\mid g\in C_3^2>=C_3^2$, the group $H'$ has no
subgroup of index two.\end{proof}

\subsection{\rm Counting the number of elements of
  $\Gamma/\Delta$} \label{section 8.2}

In this section suitable values $N_0(\Gamma)$ are obtained for most of the groups
$\Gamma$ in table \ref{table-2}.
The groups $\Gamma$  treated are those where it is possible to identify enough distinct elements
in a quotient $\Gamma/\Delta$ without having any precise knowledge of the structure
of the quotient group.

\begin{proposition}\label{6.31}  \label{8.11} Let $\ell >5$, then  $N_0(\Gamma)=\ell \cdot p^{n_1+n_2}$ is suitable
  for  the amalgam  $\Gamma=B(n_1,\ell)\ast_{C_\ell}B(n_2,\ell)$.
\end{proposition}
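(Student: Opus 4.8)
The claim is that $N_0(\Gamma) = \ell \cdot p^{n_1+n_2}$ is a suitable bound for $\Gamma = B(n_1,\ell)\ast_{C_\ell}B(n_2,\ell)$, meaning: (a) every normal Schottky subgroup $\Delta\subset\Gamma$ of finite index satisfies $[\Gamma:\Delta]\geq \ell\cdot p^{n_1+n_2}$, and (b) with $g_0 := 1+\mu(\Gamma)N_0(\Gamma)$ one has $g_0\geq 5$ and $N_0(\Gamma)\leq F(g_0)$. For (b), using $\mu(\Gamma) = \frac{p^{n_1}-p^{n_1-n_2}-1}{p^{n_1}\ell}$ from Table~\ref{table-2} (writing $n_1\geq n_2$), a direct computation gives $g_0 = p^{n_1+n_2}-p^{n_1}-p^{n_2}+1 = (p^{n_1}-1)(p^{n_2}-1)$, which is $\geq 5$ as soon as $\ell\geq 6 > 1$ forces $p^{n_1}, p^{n_2}\geq 7$ (indeed $\ell\mid p^{n_i}-1$ and $\ell>5$); and since $F$ is the rational function with $F\big(\tfrac{q(q-1)}{2}\big) = q^3-q$ and is increasing, one checks $N_0(\Gamma) = \ell p^{n_1+n_2} < g_0(\sqrt{8g_0+1}+3)$ by a routine estimate (here $g_0\approx p^{n_1+n_2}$ is much larger than $\ell p^{n_1+n_2}/g_0 \approx \ell$ would suggest is needed — in fact $F(g_0)\gg g_0 \geq N_0(\Gamma)/\ell$, and one compares $\ell$ against $\sqrt{8g_0}$). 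These are elementary inequalities I would verify but not belabor.

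The real content is (a). Let $\varphi:\Gamma\to H:=\Gamma/\Delta$ be the quotient map; by \S\ref{section 6.4} its restriction to each vertex group $B(n_i,\ell)$ is injective. I would argue as in the heading of \S\ref{section 8.2}: exhibit $\ell\cdot p^{n_1+n_2}$ pairwise distinct elements of $H$. Write $V_1 = B(n_1,1)$, $V_2 = B(n_2,1)$ for the $p$-parts (elementary abelian $p$-groups of orders $p^{n_1}, p^{n_2}$), let $C_\ell = \langle t\rangle$ be the common cyclic branch group, and recall the edge identification makes $t$ act on $V_1$ and on $V_2$ with the given $\mathbb{F}_p[t]$-module structures; the intersection $V_1\cap V_2$ inside $\Gamma$ is the edge $p$-part, which by the structure in Proposition~\ref{2.1}(b)(1) is $C_p$ (or trivial), and crucially $V_1$ and $V_2$ generate their \emph{free-ish} amalgamated product over this small common subgroup. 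The plan is: consider the set $S = \{\,\varphi(v_1)\,\varphi(t^k)\,\varphi(v_2) : v_1\in V_1,\ k\in\mathbb{Z}/\ell,\ v_2 \in V_2/(V_1\cap V_2)\,\}$ — or a similar normal-form product — and show these are distinct in $H$ using the uniqueness of reduced words in the amalgam $\Gamma = B(n_1,\ell)\ast_{C_\ell}B(n_2,\ell)$ together with injectivity of $\varphi$ on the vertex groups. Counting: $|S| = p^{n_1}\cdot \ell \cdot p^{n_2}/p = \ell p^{n_1+n_2-1}$, which is off by a factor of $p$; so I would instead take $v_2$ to range over all of $V_2$ and handle the overlap, or more cleanly take products $\varphi(g_1)\varphi(g_2)$ with $g_1$ ranging over a transversal of $C_\ell$ in $B(n_1,\ell)$ — i.e. over $V_1$ — times $\ell$ choices of power of $t$, times $g_2$ over $B(n_2,\ell)$, dividing by the $\ell$-fold overcount from the common $C_\ell$: this yields exactly $p^{n_1}\cdot\ell\cdot\ell p^{n_2}/\ell = \ell p^{n_1+n_2}$. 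The distinctness reduces to: if $\varphi(w) = \varphi(w')$ for two such normal-form words, then $w(w')^{-1}\in\Delta$ has finite order (being conjugate into a vertex group, by the structure of elements of an amalgam), hence $w(w')^{-1} = 1$.

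The main obstacle I anticipate is pinning down the correct normal form so that the counted set genuinely has cardinality $\ell p^{n_1+n_2}$ and the elements are provably distinct in $H$ — in particular dealing carefully with the identification of the edge group $C_\ell$ inside both vertices (the element $t$ versus $t^{-1}$ action noted in the proof of Proposition~\ref{6.6}, which affects which products collapse) and with whether the edge $p$-part is $C_p$ or trivial. A secondary subtlety: I must confirm that a product of the chosen form, when it does \emph{not} reduce to a vertex element, is an element of infinite order in $\Gamma$ (so it can only land in $\Delta$ if it is trivial), which again follows from the reduced-word theory for amalgams but should be stated. Once the set of $\ell p^{n_1+n_2}$ distinct images is in hand, $[\Gamma:\Delta] = |H|\geq \ell p^{n_1+n_2} = N_0(\Gamma)$, and combined with part (b) above this establishes suitability.
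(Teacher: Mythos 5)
Your set of test elements and your overall plan coincide with the paper's: both consider the $\ell\, p^{n_1+n_2}$ products $g_1g_2$ with $g_1\in B(n_1,\ell)$ and $g_2\in B(n_2,1)$, and both reduce everything to showing these have pairwise distinct images in $H=\Gamma/\Delta$ (the numerical check $N_0(\Gamma)\leq F(g_0)$ with $g_0=(p^{n_1}-1)(p^{n_2}-1)$ is deferred in both). But your distinctness argument has a genuine gap, and in fact inverts the one fact that is actually available. If $\varphi(g_1g_2)=\varphi(g_3g_4)$, the element $w=g_1g_2g_4^{-1}g_3^{-1}\in\Delta$ is, after cyclic reduction, the word $(g_3^{-1}g_1)(g_2g_4^{-1})$; whenever $g_3^{-1}g_1\notin C_\ell$ and $g_2\neq g_4$ this is a cyclically reduced word of length two, hence an element of \emph{infinite} order and not conjugate into a vertex group. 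So your parenthetical ``being conjugate into a vertex group, by the structure of elements of an amalgam'' fails for exactly the elements you must control, and your fallback remark that an infinite-order element ``can only land in $\Delta$ if it is trivial'' is backwards: $\Delta$ is free, so its nontrivial elements are precisely the infinite-order ones, and torsion-freeness gives you nothing against them. As written, your argument would ``prove'' from soft principles that $\varphi(u_1)\neq\varphi(u_2)$ for all nontrivial $u_1\in B(n_1,1)$, $u_2\in B(n_2,1)$; that statement is genuinely arithmetic and cannot follow from word combinatorics alone.

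The paper's proof supplies the missing mechanism. From $\varphi(g_1g_2)=\varphi(g_3g_4)$ one gets $\varphi(g_3^{-1}g_1)=\varphi(g_4g_2^{-1})$; injectivity of $\varphi$ on each vertex group forces $u_1:=g_3^{-1}g_1$ and $u_2:=g_4g_2^{-1}$ to be either both trivial or both of order $p$, hence $u_1\in B(n_1,1)$; and in the nontrivial case one exploits that the generator $t$ of $C_\ell$ acts on $B(n_1,1)$ by multiplication by $\zeta$ and on $B(n_2,1)$ by $\zeta^{-1}$. Conjugating by $\varphi(t)^k$ gives $\varphi(\zeta^k u_1)=\varphi(\zeta^{-k}u_2)$ for all $k$, hence $\varphi(P(\zeta)u_1)=\varphi(P(\zeta^{-1})u_2)$ for every $P\in\mathbb{F}_p[X]$; taking $P$ to be the minimal polynomial of $\zeta$ over $\mathbb{F}_p$ annihilates the left side while the right side is the image of a nonzero element of $B(n_2,1)$ --- a contradiction. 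You mention the $\zeta$ versus $\zeta^{-1}$ identification only as bookkeeping to ``deal carefully with'', but it is the entire content of the step. (Note that even this argument requires $\zeta^{-1}$ not to be a Frobenius conjugate of $\zeta$, i.e.\ $-1\notin\langle p\rangle\bmod \ell$: if $p^j\equiv -1\pmod{\ell}$ the two $C_\ell$-module structures are abstractly isomorphic via $\mathrm{Frob}^j$, and one can write down a surjection of $\Gamma$ onto $C_\ell\ltimes C_p^{\max(n_1,n_2)}$ with torsion-free kernel of index only $\ell\, p^{\max(n_1,n_2)}$, so the stated $N_0(\Gamma)$ needs separate treatment there. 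That is an issue with the paper's own proof, not one your route would have avoided.)
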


\begin{proof} Consider a normal Schottky subgroup $\Delta\subset
  \Gamma$ of finite index and let $\varphi:\Gamma \longrightarrow
  \Gamma/\Delta$ denote the canonical map. We consider the set of
  elements $\varphi(g_1g_2)$ with $g_1\in B(n_1,\ell)$ and $g_2$
  in the unipotent subgroup $B(n_2,1)$ of $B(n_2,\ell)$.

  Suppose $\varphi (g_1g_2)=\varphi(g_3g_4)$. Then $\varphi(g_3^{-1}g_1)=\varphi
  (g_4g_2^{-1})$. If $g_4g_2^{-1}\in B(n_2,1)$ is not $1$, then it has order $p$.
  Now $g_3^{-1}g_1\in B(n_1,\ell)$ and since the restriction of $\varphi$
  to $B(n_1,\ell)$ is injective, $g_3^{-1}g_1$ has also order $p$ and
  therefore belongs to the unipotent subgroup $B(n_1,1)$ of
  $B(n_1,\ell)$. By definition of the amalgam $\Gamma$ and the
  assumption $\ell >5$ , the action by conjugation of
  the group $C_\ell$ on $B(n_1,1)$ and $B(n_2,1)$ is different. This
  yields a contradiction. Hence $g_2=g_4$ and $g_1=g_3$.

 We conclude that $|\Gamma /\Delta |\geq \ell \cdot p^{n_1+n_2}$. A somewhat
 long computation shows that $N_0(\Gamma):=\ell \cdot  p^{n_1+n_2}$
 and $g_0=(p^{n_1}-1)(p^{n_2}-1)$ satisfies  $N_0(\Gamma)\leq
 F(g_0)$.
\end{proof}

\begin{remark}\label{remark-8.12}{\rm  
Let $\Gamma$ be as above.
Let $\{v_1,v_2\}$ be the edge in the contracted tree
 $\mathcal{T}^c$ with vertex groups $\Gamma _{v_1}=B(n_1,\ell)$ and
 $\Gamma _{v_2}=B(n_2,\ell)$. We have shown that:  if  $g_1g_2 \in
 \Delta$ for $g_1\in B(n_1,\ell)$ and $g_2\in B(n_2,1)$, then $g_1=1$
 and $g_2=1$. This implies that for $\delta \in \Delta, \delta \neq
 1$, the vertex $\delta (v_1)$ is ``far away'' from $v_1$, which means
 that $\{\delta (v_1),v_2\}$ and $\{\delta(v_2),v_1\}$  are not edges. One concludes from this
 that the number of vertices of the graph $\mathcal{T}^c/\Delta$ is
 at least $1+\max (p^{n_1},p^{n_2})$.
}
\end{remark} 

\begin{proposition}\label{6.32} \label{8.12}
$N_0(\Gamma)=\ell (p^{2n}+p^n)$ is suitable for the amalgam\\
$\Gamma=B(n,\ell)\ast_{C_\ell}D_\ell$ and $\ell>5$. \end{proposition}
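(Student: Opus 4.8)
The plan is to adapt the counting argument of Proposition~\ref{6.31}. Write $q=p^{n}$, fix a normal Schottky subgroup $\Delta\subset\Gamma$ of finite index, and put $\bar\Gamma:=\Gamma/\Delta$ with quotient map $\varphi$; by \S\ref{section 6.4} the restriction of $\varphi$ to each of the two vertex groups $B:=B(n,\ell)$ and $D:=D_{\ell}$ is injective, and $\Delta$ is torsion free. Recall the structure $B=U\rtimes C_{\ell}$ with $U=B(n,1)\cong C_{p}^{\,n}$ of order $q$ and the amalgamated subgroup $C_{\ell}=\langle t\rangle$ acting on $U$ by multiplication by a scalar $\lambda\in\mathbb F_{q}^{\ast}$ of order $\ell$ (a faithful action, since $p\nmid\ell$), while $D=C_{\ell}\rtimes\langle s\rangle$ with $s^{2}=1$, $sts^{-1}=t^{-1}$, whence $ts=st^{-1}$ and $tst^{-1}=st^{-2}$. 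The goal is to prove $[\,\bar\Gamma:\varphi(B)\,]\ge q+1$; as $|\varphi(B)|=|B|=\ell q$ this yields $|\bar\Gamma|\ge\ell q(q+1)=\ell(p^{2n}+p^{n})=:N_{0}(\Gamma)$, and then the remaining arithmetic closes the argument: $\mu(\Gamma)=\tfrac1\ell-\tfrac1{\ell q}-\tfrac1{2\ell}=\tfrac{q-2}{2q\ell}$, so $g_{0}:=1+\mu(\Gamma)N_{0}(\Gamma)=\tfrac{q^{2}-q}{2}$, which is $\ge5$ since $\ell>5$ forces $q\ge7$; moreover $8g_{0}+1=(2q-1)^{2}$, so $F(g_{0})=\tfrac{q^{2}-q}{2}\bigl((2q-1)+3\bigr)=q^{3}-q\ge\ell q(q+1)=N_{0}(\Gamma)$ because $\ell\mid q-1$. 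Hence $N_{0}(\Gamma)$ is \emph{suitable}.

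To produce the $q+1$ cosets I would, as in Proposition~\ref{6.31}, show that $\varphi$ is injective on the subset
\[
  S:=B\;\cup\;B\,s\,U\ \subset\ \Gamma ,
\]
whose cardinality, by an easy normal-form count, is $|B|+|B|\,|U|=\ell q+\ell q^{2}=N_{0}(\Gamma)$ (equivalently, the $q+1$ distinct cosets are $\varphi(B)$ and the $\varphi(su)\varphi(B)$, $u\in U$, which correspond to the base type-$B$ vertex of the Bass--Serre tree and the $q$ type-$B$ vertices at distance $2$ from it). Suppose $\varphi(b_{1}g_{1})=\varphi(b_{2}g_{2})$ with $b_{i}\in B$ and $g_{i}\in\{1\}\cup sU$, i.e.\ $\varphi(b_{2}^{-1}b_{1})=\varphi(g_{2}g_{1}^{-1})$. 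If $g_{1}=g_{2}$ then $b_{1}=b_{2}$ by injectivity on $B$. If $\{g_{1},g_{2}\}=\{1,su\}$ then, using $s^{-1}=s$ and the normality of $\Delta$ to sweep the $U$-factor to the left, the relation reads $b's\in\Delta$ for some $b'\in B$; and if $g_{1}=su_{1}$, $g_{2}=su_{2}$ with $v:=u_{2}u_{1}^{-1}$, it reads $\varphi(b)=\varphi(svs^{-1})$ for some $b\in B$. Thus everything reduces to: \emph{(A)} $b's\notin\Delta$ for every $b'\in B$; and \emph{(B)} $\varphi(b)=\varphi(svs^{-1})$ with $v\in U$ forces $v=1$.

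Claim~\emph{(B)} is the mechanism of Proposition~\ref{6.31}: if $v\neq1$ then $\varphi(svs^{-1})$, being conjugate in $\bar\Gamma$ to $\varphi(v)$, has order $p$, so $b$ is a $p$-element, hence $b\in U$; conjugating the relation by $\varphi(t)$ and using $t\,(svs^{-1})\,t^{-1}=s\,(\lambda^{-1}\!\cdot v)\,s^{-1}$ and $tbt^{-1}=\lambda\!\cdot b$ one extracts a relation of the form $\varphi\bigl(s\,((\lambda-\lambda^{-1})\!\cdot v)\,s^{-1}\bigr)=1$; as that element is conjugate in $\Gamma$ (by $s$) to the torsion element $(\lambda-\lambda^{-1})\!\cdot v\in U$ it is trivial, so $(\lambda-\lambda^{-1})\!\cdot v=0$ and faithfulness gives $\lambda^{2}=1$, i.e.\ $\ell\le2$, a contradiction. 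For Claim~\emph{(A)}: from $b's\in\Delta$ and injectivity on $D$ one gets $\varphi(b')=\varphi(s)$ of order $2$, so $b'$ is an involution of $B$; if $b'\in C_{\ell}$ then $b's$ is an order-$2$ reflection of $D\subset\Gamma$ lying in $\Delta$, hence $b's=1$, absurd. If $b'\notin C_{\ell}$ I would use the normality of $\Delta$: then $t(b's)t^{-1}=(tb't^{-1})(st^{-2})\in\Delta$ as well, with $tb't^{-1}\notin C_{\ell}$ since $t$ normalises $C_{\ell}$, and $(b's)^{-1}\cdot t(b's)t^{-1}$ is conjugate in $\Gamma$ (by $s$) to an element of $B$, hence elliptic on the Bass--Serre tree, hence $=1$ in the torsion-free group $\Delta$; this gives $b's=t(b's)t^{-1}$, so $b's$ centralises $t$, whence $b'^{-1}tb'=sts^{-1}=t^{-1}$. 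But for $b'=(w,c)\in U\rtimes C_{\ell}$ the $C_{\ell}$-component of $b'^{-1}tb'$ equals $t$, so $t=t^{-1}$, i.e.\ $\ell\le2$: a contradiction.

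I expect the main obstacle to be precisely this control of \emph{short} elements of $\Delta$ --- Claim~\emph{(A)} and the corresponding subcases --- since a priori nothing prevents a word of amalgam-length one or two from lying in a torsion-free normal subgroup of finite index, so injectivity of $\varphi$ on the vertex groups alone does not suffice. The device that resolves this is to exploit the \emph{normality} of $\Delta$ to manufacture, from a hypothetical short $\delta\in\Delta$, the further element $t\delta t^{-1}\in\Delta$, and then to note that the ``difference'' $\delta^{-1}\,t\delta t^{-1}$ becomes elliptic after conjugation by $s$ (it lands in the vertex group $B$); being elliptic in a torsion-free group it is trivial, and unwinding the resulting identity pins down the conjugation action of $b'$ (resp.\ the scalar $\lambda$) as forcing $t=t^{-1}$ (resp.\ $\lambda=\lambda^{-1}$), which is incompatible with $\ell>5$. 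Once Claims~\emph{(A)} and~\emph{(B)} are in place, the case analysis above gives $\varphi|_{S}$ injective, so $|\bar\Gamma|\ge N_{0}(\Gamma)$, and together with the arithmetic in the first paragraph this shows $N_{0}(\Gamma)$ is suitable.
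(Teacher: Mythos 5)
Your reduction of the problem to the injectivity of $\varphi$ on $S=B\cup BsU$, the count $|S|=\ell q(q+1)$, the closing arithmetic ($g_0=\tfrac{q^2-q}{2}$, $F(g_0)=q^3-q\geq \ell q(q+1)$ since $\ell\mid q-1$), and your proof of Claim~(A) are all fine, and your route is genuinely different in form from the paper's: the paper instead bounds the number of vertices of the quotient graph $\mathcal{T}^c/\Delta$ (via Remark~\ref{remark-8.12}) and computes the first Betti number of that graph. Both arguments, however, hinge on the same assertion --- your Claim~(B), which is exactly the content of Proposition~\ref{6.31} --- and this is where your proof has a genuine gap.

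From $\varphi(b)=\varphi(svs)$ and its $\varphi(t)$-conjugate $\varphi(\lambda\cdot b)=\varphi(s(\lambda^{-1}\cdot v)s)$ you cannot extract $\varphi\bigl(s((\lambda-\lambda^{-1})\cdot v)s\bigr)=1$: that step needs $\varphi(\lambda\cdot b)=\varphi(s(\lambda\cdot v)s)$, which follows from the first relation only when $\lambda\cdot b$ is an integer multiple of $b$, i.e.\ only when $\lambda\in\mathbb{F}_p$ (so $\ell\mid p-1$). In general $\lambda$ is an $\mathbb{F}_p$-linear operator on $U\cong\mathbb{F}_p^{\,n}$, and the two relations merely say that the graph of the partial isomorphism $(\varphi|_{sUs})^{-1}\circ\varphi|_{U}$ is invariant under $(b,v)\mapsto(\lambda b,\lambda^{-1}v)$; working over $\mathbb{F}_p[x]$ one obtains a contradiction precisely when the minimal polynomials of $\lambda$ and $\lambda^{-1}$ over $\mathbb{F}_p$ differ, i.e.\ when $-1\notin\langle p\rangle\subset(\mathbb{Z}/\ell\mathbb{Z})^{\ast}$ --- a condition not implied by $\ell>5$. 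Indeed, if $-1\in\langle p\rangle$ then $d:=\mathrm{ord}_\ell(p)$ is even and $p^{d/2}\equiv-1\pmod\ell$; taking $U=\mathbb{F}_{p^{d}}$, the additive involution $u\mapsto u^{p^{d/2}}$ intertwines multiplication by $\lambda$ with multiplication by $\lambda^{-1}$, so sending $U$ identically to $U$, $t$ to multiplication by $\lambda$ and $s$ to $u\mapsto u^{p^{d/2}}$ defines a surjection $\Gamma\to U\rtimes D_\ell$ of order $2\ell q$ that is injective on both vertex groups; its kernel is then a normal Schottky subgroup of index $2\ell q\ll \ell q(q+1)$. (E.g.\ $p=2$, $\ell=9$, $q=64$ satisfies $\ell>5$, $\ell\mid q-1$ and $\mu(\Gamma)<\tfrac{1}{12}$.) So Claim~(B), and with it the injectivity of $\varphi$ on $S$, fails for such $(p,\ell)$ unless one adds the hypothesis that $-1$ is not a power of $p$ modulo $\ell$. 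You have isolated the crux correctly, but your resolution does not close it --- and, for what it is worth, the paper's own appeal in Proposition~\ref{6.31} to the actions of $C_\ell$ on the two unipotent parts being ``different'' runs into the same obstruction.
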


\begin{proof} $\Gamma =<B(n,\ell),\tau>$ where $\tau$ is an element of
  order two in $D_\ell$. The relations are $\tau ^2=1$ and $\tau
  {\zeta \ 0\choose 0\ 1 }\tau ={\zeta ^{-1}\ 0\choose 0\ 1 }$. Then
$\Gamma ^\prime :=<B(n,\ell ),\tau B(n,\ell)\tau>\cong B(n,\ell)\ast
_{C_\ell}B(n,\ell)$ is a normal subgroup of index 2 of $\Gamma$.\\

The contracted tree $\mathcal{T}^c$ of $\Gamma$ has an edge
$\{v_1,v_2\}$ such that $\Gamma_{v_1}=B(n,\ell)$ and $\Gamma
_{v_2}=D_\ell$. The vertex $v_2$ has only two edges. One sees that
$\mathcal{T}^c$ has two types of vertices. The vertices with stabiliser
isomorphic to $B(n,\ell)$, like $v_1$ and having $p^n$ edges.
The other ones are vertices with  stabiliser isomorphic to $D_\ell$ and have two
edges. After `contracting' the second type of vertices one obtains
the contracted tree for the subgroup $\Gamma ^\prime$.

One concludes from remark \ref{remark-8.12} that any
quotient $\mathcal{T}^c/\Delta$, where $\Delta$ is a Schottky
subgroup  of $\Gamma$ of finite index, has at least $p^n+1$ vertices
with stabiliser isomorphic to $B(n,\ell)$. Using that each vertex of
`type' $B(n,\ell)$ has $p^n$ edges and that each vertex of `type'
$D_\ell$ has two edges one finds that the genus of the graph
$\mathcal{T}^c/\Delta$ is $\geq g_0=1+ \frac{(p^n+1)(p^n-2)}{2}$. Then
$N_0(\Gamma)=\mu(\Gamma)^{-1}(g_0-1)=\ell (p^{2n}+p^n)$ is suitable
since it is $\leq F(g_0)$. We note that, by Theorem \ref{6.8}, one obtains an
equality $N_0(\Gamma)=F(g_0)$ for $\ell =p^n-1$. \end{proof}


\begin{proposition}\label{6.23} \label{8.13} Consider $\Gamma=G_1\ast_{G_3}G_2 $. \\
{\rm (1)} For $\Gamma={\rm PSL}_2(\mathbb{F}_{q})\ast_{B(n,\frac{q-1}{2})}B(3\cdot n,\frac{q-1}{2})$
 suitable bounds are\\ $N_0(\Gamma)=l.c.m.(|G_1|,|G_2|)$ for $q=3,5$ and $N_0(\Gamma)=q^6$ for $q>5$.\\
{\rm (2)} For $\Gamma={\rm
  PGL}_2(\mathbb{F}_{q})\ast_{B(n,q-1)}B(3\cdot n,q-1)$ suitable
bounds are\\  $N_0(\Gamma)=l.c.m.(|G_1|,|G_2|)$ for $q=3$ and $N_0(\Gamma)=q^6$ for $q>3$.
\end{proposition}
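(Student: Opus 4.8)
The plan is to adapt the counting argument from the proof of Proposition~\ref{8.12} (\ref{6.32}) and Remark~\ref{remark-8.12}. Write $\Gamma=G_1\ast_{G_3}G_2$ with $G_1={\rm PGL}_2(\mathbb F_q)$ (resp.\ ${\rm PSL}_2(\mathbb F_q)$), $G_2=B(3\cdot n,q-1)$ (resp.\ $B(3\cdot n,\frac{q-1}{2})$) and $G_3=B(\mathbb F_q)=B(n,q-1)$ (resp.\ $B(n,\frac{q-1}{2})$). The small cases $q=3$ (and $q=5$ in part (1)) are handled by Table~\ref{table-1}: for these, $\mu(\Gamma)\geq\frac{1}{12}$ fails but ${\rm l.c.m.}(|G_1|,|G_2|)$ is already shown to be suitable there, so nothing new is needed. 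For $q>3$ (resp.\ $q>5$) the point is to show $|\Gamma/\Delta|\geq q^6=|G_2|$ for every normal Schottky subgroup $\Delta\subset\Gamma$ of finite index, and then to verify the numerical inequality $N_0(\Gamma)\leq F(g_0)$ with $g_0=1+\mu(\Gamma)q^6$.

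First I would look at the $p$-part. The unipotent radical $U_2\cong C_p^{3n}$ of $G_2=B(3n,q-1)$ has order $q^3$, and it contains the unipotent radical $U_3\cong C_p^n$ of $G_3$. Let $\varphi:\Gamma\to\Gamma/\Delta$ be the quotient map, which is injective on each vertex group (by \S\ref{section 6.4}). The idea is to count the images $\varphi(g_1 u)$ with $g_1\in G_1$ and $u\in U_2$: if $\varphi(g_1 u)=\varphi(g_1' u')$ then $\varphi(g_1'^{-1}g_1)=\varphi(u'u^{-1})$, and $u'u^{-1}\in U_2$ has order dividing $p$, so $g_1'^{-1}g_1\in G_1$ is a $p$-element, hence lies in some conjugate of a unipotent subgroup of $G_1$. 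The key subtlety — and the main obstacle — is that $U_2$ is \emph{not} contained in $G_1$ (the group $B(3n,q-1)$ with more than one $\mathbb F_q$-"coordinate'' does not embed in ${\rm PGL}_2(\mathbb F_q)$, as recalled in \S\ref{section 6.3}), so one cannot immediately conclude $g_1'^{-1}g_1=1$ the way Proposition~\ref{8.11} does for $B(n_1,\ell)\ast_{C_\ell}B(n_2,\ell)$. Instead I would argue in the tree: by the reasoning of Remark~\ref{remark-8.12}, if $g_1 u\in\Delta$ with $g_1\in G_1$, $u\in U_2$, then either $g_1 u=1$ or the element $\delta=g_1u$ moves $v_2$ "far'' from $v_2$, i.e.\ $\{\delta v_2,v_1\}$ is not an edge of $\mathcal T^c$. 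One then estimates the number of vertices of $\mathcal T^c/\Delta$ of "type $B(3n,q-1)$'': each such vertex has $q^3$ edges, and distinct cosets $\varphi(g_1 u)$ with $u$ ranging over $U_2/U_3$ give distinct translates — so I expect a lower bound like $q^2+1$ (at least) for the number of such vertices, and correspondingly a lower bound for the number of type-$G_1$ vertices. Counting edges both ways (Euler characteristic of $\mathcal T^c/\Delta$) then gives $g_0\geq 1+\mu(\Gamma)q^6$, i.e.\ $N_0(\Gamma)=q^6$.

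More precisely, the clean way is: since $\Delta$ acts freely on $\mathcal T^c$, the graph $\bar T=\mathcal T^c/\Delta$ has first Betti number $g$ (= rank of $\Delta$), with $g-1=\mu(\Gamma)|\Gamma/\Delta|$. Let $a$ = number of vertices of $\bar T$ with stabiliser $\cong G_2$ and $b$ = number with stabiliser $\cong G_1$. Each $G_2$-vertex contributes $q^3$ half-edges, each $G_1$-vertex contributes $q+1$ half-edges, and each edge has two half-edges, all of whose edge-groups are conjugates of $G_3$; since $[\Gamma:\Delta]$ counts cosets, one gets $|\Gamma/\Delta|=a\cdot q^3\cdot|G_3|^{-1}\cdot|G_3|=\dots$ — I would just run the standard orbit-counting identity $|\Gamma/\Delta|=a|G_2|/|\text{stab}|\cdots$ carefully. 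The crucial input is the lower bound $a\geq q^2+1$ (or whatever the correct bound is) coming from the $p$-part count above; once that is in hand, $b$ and the edge count follow, and $g=1+\#\text{edges}-\#\text{vertices}\geq 1+\mu(\Gamma)q^6$ is a finite computation. Finally, the inequality $q^6=N_0(\Gamma)\leq F(g_0)$ with $g_0=1+\mu(\Gamma)q^6$ and $\mu(\Gamma)=\frac{q^3-q-1}{q^3(q^2-1)}$ (resp.\ $\frac{2q^3-2q-2}{q^3(q^2-1)}$) is a routine estimate that I would carry out at the end, using that $F$ grows faster than linearly; I expect it to hold comfortably for $q>3$ (resp.\ $q>5$). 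The main obstacle, to repeat, is getting the sharp-enough lower bound on the number of $B(3n,q-1)$-vertices in $\mathcal T^c/\Delta$ from the non-embedding of $U_2$ into $G_1$ — everything else is bookkeeping with the tree and an elementary inequality.
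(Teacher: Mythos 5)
There is a genuine gap at exactly the point you flag as ``the main obstacle'': you never actually produce the lower bound on the number of distinct elements (equivalently, of $G_2$-type vertices in $\mathcal{T}^c/\Delta$), and the mechanism you sketch for it does not work as stated. The equation $\varphi(g_1'^{-1}g_1)=\varphi(u'u^{-1})$ only tells you that $\varphi(U_2)\cap\varphi(G_1)$ contains this element; since $U_2=B(3n,1)$ is not a subgroup of $G_1$, there is no a priori reason why this intersection should reduce to $\varphi(U_3)$, and settling that is precisely the whole difficulty. Moreover, even granting your hoped-for bound of $q^2+1$ vertices of type $G_2$, the resulting estimate is $|\Gamma/\Delta|\geq (q^2+1)\,|G_2|\approx \tfrac{1}{2}q^6$, which falls short of the claimed $N_0(\Gamma)=q^6$; you would then still have to verify separately that this smaller $N_0$ is suitable, which you do not do. So the proposal is a plan with the decisive step left open, not a proof.

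The paper closes the gap with a short direct argument in the quotient group $H=\Gamma/\Delta$ that avoids the tree entirely. Let $U_+=\varphi\bigl(\begin{smallmatrix}1&\mathbb{F}_q^3\\0&1\end{smallmatrix}\bigr)$ be the image of the full unipotent part of $G_2$ (of order $q^3$, since $\varphi$ is injective on $G_2$), let $w=\bigl(\begin{smallmatrix}0&-1\\1&0\end{smallmatrix}\bigr)\in G_1$, and set $U_-=\varphi(w)U_+\varphi(w)^{-1}$. The torus generator $t$ of $G_3$ acts by conjugation on $U_+$ as multiplication by $a^2$ on $\mathbb{F}_q^3$ and on $U_-$ as multiplication by $a^{-2}$ (by $a$ and $a^{-1}$ in the ${\rm PGL}_2$ case); since these characters differ for $q>5$ (resp.\ $q>3$ --- this is exactly where the excluded small $q$ enter), $U_+\cap U_-=\{1\}$, so $U_+U_-$ has $q^3\cdot q^3=q^6$ elements and $|H|\geq q^6$. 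If you want to salvage your tree-counting route you would need an input of comparable strength, and this torus-character separation is the natural candidate; as written, your argument does not contain it.
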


\begin{proof} For $q=3,5$, see Table \ref{table-1}.  Suppose  $q>5$ is odd and consider the amalgam $\Gamma:={\rm
    PSL}_2(\mathbb{F}_q)\ast_{B(n,\frac{q-1}{2})} B(nm,\frac{q-1}{2})$
  and a surjective homomorphism $\varphi :\Gamma \longrightarrow H$ for
  some finite group $H$ such that the kernel is a Schottky group.
The element $t={a\ \ \ 0\choose 0\ a^{-1} }$, where $a$ a
  primitive root of unity of order $q-1$, generates
  $C_{\frac{q-1}{2}}$. Put $w={0\ -1\choose 1\ \ \ 0 }$. Then
  $wtw^{-1}=t^{-1}$. Consider ${1\ \mathbb{F}_q^m\choose 0\ 1 }\subset
  {*\ \mathbb{F}_q^m\choose 0\ *^{-1} }=B(nm,\frac{q-1}{2})$ and let 
$U_+=\varphi ({1\ \mathbb{F}_q^m\choose 0\ 1 }), \ U_-=\varphi
(w)U_+\varphi(w)^{-1} $. 

Then conjugation by $\varphi (t)$ acts on
$U_+$ as multiplication by $a^2$ on $\mathbb{F}_q^m$. Conjugation by
$\varphi (t)$ an $U_-$ acts on $\mathbb{F}_q^m$ as multiplication by
$a^{-2}$. It follows that $U_+\cap U_-=\{1\}$ and that $U_+U_-$
consists of $q^m\times q^m$ elements and so $|H|\geq q^{2m}$.
 
For $m=3$ the value $N_0(\Gamma)=q^6$ is suitable.
For the group $\Gamma={\rm
  PGL}_2(\mathbb{F}_{q})\ast_{B(n,q-1)}B(3\cdot n,q-1)$ and any $q>3$
the same value of $N_0(\Gamma)$ is suitable.
\end{proof}

\subsection{\rm The group $\Gamma:={\rm  PGL}_2(\mathbb{F}_q)\ast_{B(n,q-1)}B(2n,q-1)$ with $q>3$} \label{section 8.3}

{\rm For the case $\Gamma:={\rm  PSL}_2(\mathbb{F}_q)\ast_{B(n,\frac{q-1}{2})}
B(nm,\frac{q-1}{2})$, studied in the proof of \ref{6.23}, we found a
suitable $N_0(\Gamma)$ for $m\geq 3$. For $m=2$ this fails and we have to develop a
rather different method. {\it We now present the long proof of the
existence of a suitable bound $N_0(\Gamma)$ for the cases}:\\
\[ \Gamma={\rm PGL}_2(\mathbb{F}_{q})\ast_{B(n,q-1)}B(2\cdot
n,q-1)\mbox{ with } q>3 \mbox{ and }  \]
\[ \Gamma ={\rm PSL}_2(\mathbb{F}_{q})\ast_{B(n,\frac{q-1}{2})}B(2\cdot
n,\frac{q-1}{2}) \mbox{ with } p\not=2,\  q>5.\]

{\it The strategy is as follows}. Let $\Delta \subset \Gamma$ be a
{\it `maximal' normal Schottky } subgroup of finite index. This means that
 any normal $\Gamma'\subset \Gamma $ strictly containing $\Delta$
has non-trivial elements of finite order. As remarked before, it suffices to consider
these maximal $\Delta$.  The proper normal subgroups of $\Gamma /\Delta$
corresponds to the normal subgroups $\Gamma'$ of $\Gamma$ with
$\Delta\subsetneq \Gamma '\subsetneq \Gamma$.

One determines the finitely many normal subgroups
$\Gamma^\prime\subset\Gamma$ of finite index that contain non-trivial elements of finite order.
These subgroups are used to obtain a factorization of
the quotient group $\Gamma/\Delta$ into finite simple groups.
An estimate of the minimal order of the simple groups occurring in the factorization 
of the quotient group $\Gamma/\Delta$ is then used to obtain suitable values $N_0(\Gamma)$.
The final result is Theorem \ref{6.30}.} \hfill  $\square$\\

\noindent {\it  Notation and definition}.\\
 $\Gamma=\Gamma_{v_1}\ast_{\Gamma_e}\Gamma_{v_2}={\rm
  PGL}_2(\mathbb{F}_{q})\ast_{B(n,q-1)}B(2\cdot n,q-1)$ where $e$ is
the edge $(v_1,v_2)$ in $\mathcal{T}^c$. The group $B(2\cdot n,1)$ is written as 
$\left(\begin{array}{cc} 1& \mathbb{F}_q\oplus \mathbb{F}_qx\\
         0&1 \end{array}\right)$, where $x\in K-\mathbb{F}_q$.
         The subgroup $B(n,1)\subset B(2\cdot n,1)$ that $B(2\cdot n,1)$ has in
     common with ${\rm PGL}_2(\mathbb{F}_q)$ is then the group $\left(\begin{array}{cc} 1& \mathbb{F}_q\\
         0&1 \end{array}\right)$. The other $q$ subgroups of $B(2\cdot
     n,1)$, isomorphic to $\mathbb{F}_q$, are  $B_a:=\left(\begin{array}{cc} 1&  \mathbb{F}_q(x-a)\\
         0&1 \end{array}\right)$ with $a\in \mathbb{F}_q$. The edges
     of $v_1$ are the cosets ${\rm PGL}_2(\mathbb{F}_q)/B(n,q-1)$ and
     the edges      of $v_2$ are the cosets $B(2\cdot
     n,q-1)/B(n,q-1)$. The group $B(n,q-1)$ fixes only one edge of  $ v_2$ (namely $e$)
     and the group $B_a$ leaves no edge of $v_2$ invariant.  \\

 {\it Let $\Gamma(B_a)\subset \Gamma$ denote the normal subgroup generated
 by $B_a$}.  
                                                                                        
  \begin{lemma}\label{6.25} \label{8.14} Let $\Gamma:= {\rm
    PGL}_2(\mathbb{F}_{q})\ast_{B(n,q-1)}B(2\cdot n,q-1)$. \\
  Then $\Gamma\cong \Gamma_{v_1}\ltimes \Gamma(B_a) ={\rm
    PGL}_2(\mathbb{F}_{q})\ltimes \Gamma(B_a)$
and $\Gamma(B_a)\cap \Gamma_{v_2}=B_a$.\\
 If $p>2$, then ${\rm PSL}_2(\mathbb{F}_{q})\ast_{B(n,\frac{q-1}{2})}B(2\cdot n,\frac{q-1}{2})=
           {\rm PSL}_2(\mathbb{F}_{q})\ltimes \Gamma(B_a)$.
\end{lemma}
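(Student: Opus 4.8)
The plan is to exploit the description of $\Gamma$ as an amalgam $\Gamma_{v_1}\ast_{\Gamma_e}\Gamma_{v_2}$ and the explicit choice of $B_a$ as one of the "new" unipotent lines in $B(2n,1)$. First I would record the structural fact underlying everything: $\Gamma_{v_2}=B(2n,q-1)$ is itself a semidirect product $C_{q-1}\ltimes B(2n,1)$, where $C_{q-1}$ acts on the $\mathbb{F}_q$-vector space $\mathbb{F}_q\oplus\mathbb{F}_qx$ by scalar multiplication; in particular $C_{q-1}$ permutes the lines $B(n,1)$ and the $B_a$ among themselves (it fixes each of them, actually, since the action is by scalars), and $\Gamma_e=B(n,q-1)=C_{q-1}\ltimes B(n,1)$ is the subgroup fixing the edge $e$. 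Since $\Gamma(B_a)$ is normal in $\Gamma$ and contains $B_a$, conjugating $B_a$ by $C_{q-1}\subset\Gamma_{v_1}\cap\Gamma_{v_2}$ shows $\Gamma(B_a)$ contains the full subgroup generated by $B_a$ together with these conjugates; more importantly, conjugating by elements of $\Gamma_{v_1}={\rm PGL}_2(\mathbb{F}_q)$ we generate a normal subgroup whose intersection with $\Gamma_{v_1}$ I claim is trivial.

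Next I would prove $\Gamma=\Gamma_{v_1}\ltimes\Gamma(B_a)$. For this I would show (a) $\Gamma_{v_1}\cap\Gamma(B_a)=\{1\}$ and (b) $\Gamma_{v_1}\cdot\Gamma(B_a)=\Gamma$. For (b), since $\Gamma(B_a)$ is normal it suffices to see that the quotient $\Gamma/\Gamma(B_a)$ is generated by the image of $\Gamma_{v_1}$; but $\Gamma$ is generated by $\Gamma_{v_1}$ and $\Gamma_{v_2}$, and $\Gamma_{v_2}=\langle\,\Gamma_e,\ B_a\,(a\in\mathbb{F}_q)\,\rangle$ while $\Gamma_e\subset\Gamma_{v_1}$ and each $B_a\subset\Gamma(B_a)$ (the $B_a$ for varying $a$ all lie in one $\Gamma(B_a)$ because, as the amalgam $\Gamma$ does not depend on the choice of $a$, one $B_a$ is a $\Gamma$-conjugate of any other — indeed conjugation by a suitable translation in $B(n,1)\subset\Gamma_{v_1}$ carries $B_0$ to $B_a$). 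For (a), the key is that $\Gamma(B_a)$, being generated by conjugates of the unipotent group $B_a$ which fixes \emph{no} edge of $v_2$, contains no nontrivial element of the vertex group $\Gamma_{v_1}$: an element of $\Gamma_{v_1}\cap\Gamma(B_a)$ would have to be expressible, via the normal form in the amalgam, as a word in conjugates of $B_a$-elements that reduces into $\Gamma_{v_1}$, and a length/type analysis of reduced words (using that $B_a\cap\Gamma_e=\{1\}$ and $B_a$ stabilizes no edge of $v_2$, so no cancellation can collapse a nontrivial such word into $\Gamma_{v_1}$) forces it to be trivial. I expect this word-combinatorics in the amalgam — making precise that $\Gamma(B_a)$ meets $\Gamma_{v_1}$ trivially — to be the main obstacle, though it follows the pattern of Bass--Serre theory: the action of $\Gamma$ on its Bass--Serre tree $\mathcal{T}^c$ restricts to an action of $\Gamma(B_a)$ which fixes no vertex in the $\Gamma_{v_1}$-orbit, because $B_a$ fixes no edge at $v_2$ and hence the $\Gamma$-orbit of $v_1$ has trivial $\Gamma(B_a)$-stabilizers.

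Having (a) and (b), $\Gamma=\Gamma_{v_1}\ltimes\Gamma(B_a)$ is immediate, and then $\Gamma(B_a)\cap\Gamma_{v_2}$: clearly it contains $B_a$; conversely an element $\gamma\in\Gamma_{v_2}\cap\Gamma(B_a)$, written in $\Gamma_{v_2}=C_{q-1}\ltimes(B(n,1)\oplus\bigoplus_a B_a\text{-directions})$, projects under $\Gamma\to\Gamma/\Gamma(B_a)\cong\Gamma_{v_1}$ to the identity, and I would check that the induced map $\Gamma_{v_2}\to\Gamma_{v_1}$ kills exactly $B_a$ (it is the identity on $\Gamma_e=C_{q-1}\ltimes B(n,1)$ and sends the complementary line to $0$, because in $\Gamma/\Gamma(B_a)$ the whole $B_a$-direction dies while $\Gamma_e$ embeds), whence $\Gamma(B_a)\cap\Gamma_{v_2}=B_a$. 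Finally, for the ${\rm PSL}_2$ statement when $p>2$: the subamalgam ${\rm PSL}_2(\mathbb{F}_q)\ast_{B(n,(q-1)/2)}B(2n,(q-1)/2)$ is the kernel of the determinant map $\det:\Gamma\to\mathbb{F}_q^\ast/(\mathbb{F}_q^\ast)^2$ of \S\ref{section 6.3}; since $B_a$ is unipotent it has trivial determinant, so $\Gamma(B_a)\subset\ker\det$, and intersecting the decomposition $\Gamma=\Gamma_{v_1}\ltimes\Gamma(B_a)$ with $\ker\det$ gives $\ker\det=({\rm PGL}_2(\mathbb{F}_q)\cap\ker\det)\ltimes\Gamma(B_a)={\rm PSL}_2(\mathbb{F}_q)\ltimes\Gamma(B_a)$, as claimed.
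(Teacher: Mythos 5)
Your overall plan (split $\Gamma=\Gamma_{v_1}\ltimes\Gamma(B_a)$ by proving $\Gamma_{v_1}\cap\Gamma(B_a)=\{1\}$ and $\Gamma_{v_1}\cdot\Gamma(B_a)=\Gamma$, then deduce $\Gamma(B_a)\cap\Gamma_{v_2}=B_a$ and get the ${\rm PSL}_2$ case from the determinant) is sound, but the crucial step is exactly the one you leave as a sketch. The paper avoids the reduced-word combinatorics entirely: since $B(2n,1)=B(n,1)\oplus B_a$ and $B_a$ is normal in $\Gamma_{v_2}$ with $\Gamma_{v_2}/B_a\cong\Gamma_e$, the universal property of the amalgam gives a homomorphism $\varphi_a:\Gamma\to{\rm PGL}_2(\mathbb{F}_q)$ that is the identity on $\Gamma_{v_1}$ and kills $B_a$; its kernel is $\Gamma(B_a)$ (it contains $\Gamma(B_a)$ and cannot be larger because $\Gamma/\Gamma(B_a)\cong\Gamma_{v_1}\ast_{\Gamma_e}(\Gamma_{v_2}/B_a)\cong\Gamma_{v_1}$), and the retraction immediately yields the semidirect product, the triviality of $\Gamma_{v_1}\cap\Gamma(B_a)$, and $\Gamma(B_a)\cap\Gamma_{v_2}=\ker(\varphi_a|_{\Gamma_{v_2}})=B_a$. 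Your proposed justification for step (a) -- that $\Gamma(B_a)$ fixes no vertex in the $\Gamma$-orbit of $v_1$ ``because $B_a$ fixes no edge at $v_2$'' -- is not yet an argument: a product of several conjugates of elements of $B_a$ could a priori fix $v_1$ even though no single factor does, and ruling this out is precisely the normal-form lemma you would have to prove (or replace, as the paper does, by exhibiting the retraction). So either carry out that Bass--Serre computation in detail or, better, construct $\varphi_a$.

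Separately, one assertion in your step (b) is false: the $B_{a'}$ for different $a'$ are \emph{not} $\Gamma$-conjugate, and in particular conjugation by a translation in $B(n,1)$ does not carry $B_0$ to $B_a$, since $B(2n,1)$ is abelian; indeed $\varphi_0(B_a)\neq\{1\}$ for $a\neq 0$, so $B_a\not\subset\Gamma(B_0)$. Fortunately you do not need this: $\Gamma_{v_2}=\langle\Gamma_e,B_a\rangle$ already for the single fixed $a$ (because $\mathbb{F}_q\oplus\mathbb{F}_q(x-a)=\mathbb{F}_q\oplus\mathbb{F}_qx$), which is all that step (b) requires. Your determinant argument for the ${\rm PSL}_2$ statement is fine and is arguably cleaner than the paper's ``similar proof''; note only that one should check that the normal closure of $B_a$ in the index-two subamalgam coincides with $\Gamma(B_a)$, which holds because $\Gamma$ is generated by $\ker(\det)$ together with $C_{q-1}\subset\Gamma_e$, and $C_{q-1}$ normalizes $B_a$.
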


\begin{proof} $\Gamma$ is generated by ${\rm PGL}_2(\mathbb{F}_q)$ and
  $B_a$. There is a unique homomorphism $\varphi_a:\Gamma \rightarrow
  {\rm PGL}_2(\mathbb{F}_q)$ which is the identity on ${\rm PGL}_2(\mathbb{F}_q)$
  and maps $B_a$ to $1$. The kernel of $\varphi_a$ contains $\Gamma(B_a)$
  and cannot be larger because the image of $\varphi_a$ is
  ${\rm PGL}_2(\mathbb{F}_q)$. If $\ker \varphi_a\cap \Gamma_{v_2}$ is greater
  than $B_a$, then one finds the contradiction $\ker \varphi_a\cap
  \Gamma_{v_1}\neq \{1\}$. From this the first statements follow. The final statement has a similar proof.
\end{proof}

\begin{remark} {\it Description of $\Gamma(B_a)$ as an amalgam. } {\rm \\
The group $B_a$ is normal in $B(2\cdot n,q-1)$ and acts transitively on the edges $e\ni v_2$.
Therefore $\Gamma (B_a)$ is generated by the groups $hB_ah^{-1}$ where $h\in
  \Gamma_{v_1}$ runs in a set of representatives of
  $\Gamma_{v_1}/\Gamma_{e}$. 
  The intersection $B_a\cap hB_ah^{-1}\subset \Gamma_{v_1}$ is trivial if $h(v_2)\not= v_2$.
  In particular, $\Gamma(B_a)$ is the free product of the $q+1$ groups $hB_ah^{-1}$. 
  }
\end{remark}

\begin{lemma}\label{6.26} \label{8.15}
Let $\Gamma^\prime\subset\Gamma$ be a proper normal subgroup containing non-trivial elements of finite order.
Then the following statements hold:
\begin{enumerate}
\item[i)] If $\Gamma={\rm PSL}_2(\mathbb{F}_{q})\ast_{B(n,\frac{q-1}{2})}B(2\cdot n,\frac{q-1}{2})$,
          $p\not=2$, $q>3$, then $\Gamma '=\Gamma(B_a)$
 for some $a\in  \mathbb{F}_q$.
\item[ii)] If $\Gamma={\rm PGL}_2(\mathbb{F}_{q})\ast_{B(n,q-1)}B(2\cdot n,q-1)$, $p=2$, 
         $q\geq 4$, then $\Gamma^\prime=\Gamma(B_a)$ for some $a\in\mathbb{F}_q$.
\item[iii)] If $\Gamma={\rm PGL}_2(\mathbb{F}_{q})\ast_{B(n,q-1)}B(2\cdot n,q-1)$, $p\not=2$, 
           $q>3$, then $\Gamma^\prime={\rm PSL}_2(\mathbb{F}_{q})\ast_{B(n,\frac{q-1}{2})}B(2\cdot n,\frac{q-1}{2})$
           or $\Gamma '=\Gamma(B_a)$ for some $a\in \mathbb{F}_q$.
\end{enumerate}
\end{lemma}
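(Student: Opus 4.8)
## Proof plan for Lemma \ref{6.26}

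\textbf{Setup and strategy.} The plan is to analyze a proper normal subgroup $\Gamma'\subsetneq\Gamma$ containing an element of finite order by looking at how $\Gamma'$ meets the vertex groups of the contracted tree $\mathcal{T}^c$. Since $\Gamma'$ is normal and has a nontrivial torsion element, that element is conjugate into some $\Gamma_{v}$, so at least one of $\Gamma'\cap\Gamma_{v_1}$ and $\Gamma'\cap\Gamma_{v_2}$ is nontrivial; being normal in $\Gamma$, each intersection $\Gamma'\cap\Gamma_{v_i}$ is a normal subgroup of $\Gamma_{v_i}$. The first step is therefore to enumerate the normal subgroups of $\Gamma_{v_1}={\rm PGL}_2(\mathbb{F}_q)$ (resp. ${\rm PSL}_2(\mathbb{F}_q)$) and of $\Gamma_{v_2}=B(2n,q-1)$ (resp. $B(2n,\tfrac{q-1}{2})$). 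For $q>3$ the group ${\rm PGL}_2(\mathbb{F}_q)$ has only $\{1\}$, ${\rm PSL}_2(\mathbb{F}_q)$ and the whole group as normal subgroups, and ${\rm PSL}_2(\mathbb{F}_q)$ with $q>3$ is simple (barring the small exclusions already built into the hypotheses, e.g. $q>5$ on the ${\rm PSL}$ side); the normal subgroups of $B(2n,m)$ are spanned by $m$-invariant $\mathbb{F}_p$-subspaces of the translation part together with, if one goes up to index dividing $m$, subgroups containing the unipotent radical.

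\textbf{The $p$-group case (i) and (ii).} When $p\mid\#\Gamma_{v_i}$ is forced to act only through the unipotent part — i.e. when $\Gamma'\cap\Gamma_{v_1}$ cannot contain ${\rm PSL}_2(\mathbb{F}_q)$ because that would force (by normality and the fact that ${\rm PSL}_2(\mathbb{F}_q)$ generates too much) $\Gamma'=\Gamma$ — I would argue as follows. Either $\Gamma'\supseteq{\rm PSL}_2(\mathbb{F}_q)$, and then using Lemma \ref{6.25} ($\Gamma={\rm PGL}_2(\mathbb{F}_q)\ltimes\Gamma(B_a)$) plus the fact that $B(2n,q-1)/B(n,q-1)$ is permuted and the whole of $\Gamma_{v_2}$ gets generated, one concludes $\Gamma'=\Gamma$ (excluded) or $\Gamma'$ is one of the two named subgroups; or $\Gamma'$ meets $\Gamma_{v_1}$ and $\Gamma_{v_2}$ only in $p$-subgroups. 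In the latter situation $\Gamma'$ contains a nontrivial element of $B_a$-type for some $a$ (here one uses that the $B_a$, $a\in\mathbb{F}_q$, are exactly the unipotent $\mathbb{F}_q$-lines in $B(2n,1)$ other than the one shared with ${\rm PGL}_2(\mathbb{F}_q)$, and that conjugation by $\Gamma_{v_1}$ moves these lines around transitively on the $q+1$ edges at $v_2$). Normality then forces $\Gamma'\supseteq\Gamma(B_a)$; and $\Gamma(B_a)$ is already a \emph{maximal} proper normal subgroup (its quotient is ${\rm PGL}_2(\mathbb{F}_q)$ or ${\rm PSL}_2(\mathbb{F}_q)$, which has no proper normal subgroup of the relevant kind except the ${\rm PSL}$ inside the ${\rm PGL}$), so $\Gamma'=\Gamma(B_a)$ in cases (i) and (ii). One must also rule out that $\Gamma'$ contains the unipotent line shared with ${\rm PGL}_2$ without containing all of ${\rm PSL}_2$ — but a single root subgroup generates ${\rm PSL}_2(\mathbb{F}_q)$ as a normal subgroup, so this collapses into the ${\rm PSL}$ case.

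\textbf{Case (iii) and the main obstacle.} For $\Gamma={\rm PGL}_2(\mathbb{F}_q)\ast_{B(n,q-1)}B(2n,q-1)$ with $p\neq2$, $q>3$, the same dichotomy appears but now the intermediate possibility ${\rm PSL}_2(\mathbb{F}_q)\ast_{B(n,\frac{q-1}{2})}B(2n,\frac{q-1}{2})$ genuinely occurs: it is the kernel of the determinant map and is a proper normal subgroup of index $2$ containing torsion. So one shows: if $\Gamma'\cap\Gamma_{v_1}\supseteq{\rm PSL}_2(\mathbb{F}_q)$, then by tracking the translation part (the index-$2$ condition on the scalars forces the $B(2n,q-1)$-part of $\Gamma'$ down to $B(2n,\tfrac{q-1}{2})$ unless $\Gamma'=\Gamma$) one lands on the named index-$2$ amalgam or on $\Gamma$; and if $\Gamma'\cap\Gamma_{v_1}$ is a proper normal subgroup of ${\rm PGL}_2(\mathbb{F}_q)$ not containing ${\rm PSL}_2$, it is trivial, so $\Gamma'$ meets both vertex groups only in $p$-subgroups and the argument of (i)/(ii) gives $\Gamma'=\Gamma(B_a)$. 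I expect the delicate point to be the bookkeeping that separates "$\Gamma'$ contains a $B_a$ and nothing else torsion-wise" from "$\Gamma'$ contains the shared unipotent line", and the verification that $\Gamma(B_a)$ admits no proper normal subgroup of $\Gamma$ strictly between it and $\Gamma$ — i.e. that $\Gamma/\Gamma(B_a)\cong{\rm PGL}_2(\mathbb{F}_q)$ has only the obvious normal subgroup lattice for $q>3$. The simplicity of ${\rm PSL}_2(\mathbb{F}_q)$ for $q>3$ (and the exclusions $q>5$ where ${\rm PSL}_2(\mathbb{F}_4)\cong{\rm PSL}_2(\mathbb{F}_5)$ would cause trouble) is exactly what makes the lattice of normal subgroups short enough for this case analysis to terminate.
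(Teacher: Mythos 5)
Your proposal is correct and follows essentially the same route as the paper: a case analysis on the normal subgroup $\Gamma'\cap\Gamma_{v_1}$ of ${\rm PGL}_2(\mathbb{F}_q)$ (resp.\ ${\rm PSL}_2(\mathbb{F}_q)$), using simplicity of ${\rm PSL}_2(\mathbb{F}_q)$ for $q>3$, the identification of the normal subgroups of $\Gamma_{v_2}$ meeting $\Gamma_e$ trivially with the lines $B_a$, and the decomposition $\Gamma=\Gamma_{v_1}\ltimes\Gamma(B_a)$ of Lemma \ref{6.25}. Only your parenthetical about excluding $q=4,5$ because ${\rm PSL}_2(\mathbb{F}_4)\cong{\rm PSL}_2(\mathbb{F}_5)$ is off the mark (these groups are simple and cause no trouble; the relevant exclusions are $q\le 3$), but this does not affect the argument.
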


\begin{proof}
We only prove statement (iii). The proofs for the other cases are
similar. The intersection $\Gamma '\cap \Gamma_{v_1}$ is normal in ${\rm
  PGL}_2(\mathbb{F}_q)$ and ${\rm PSL}_2(\mathbb{F}_q)$ is simple for
$q>3$. Therefore we only have to consider the following cases.\\ 

\noindent 
(1). {\it Suppose that }$\Gamma'\cap \Gamma_{v_1}=\{1\}$, then $\Gamma '\cap
\Gamma_{v_2}$ is a non trivial normal subgroup of $\Gamma_{v_2}$ and
has intersection $\{1\}$ with $\Gamma_e$. It follows that $\Gamma'\cap
\Gamma_{v_2}=B_a$ for some $a\in \mathbb{F}_q$ and therefore
$\Gamma'=\Gamma (B_a)$.\\
(2). {\it Suppose that} $\Gamma'\cap \Gamma_{v_1}={\rm PSL}_2(\mathbb{F}_q)$, then
$\Gamma'\cap \Gamma_e=B(n,\frac{q-1}{2}) \subset \Gamma'\cap
\Gamma_{v_2}$. Take $h={1\ b\choose 0\ 1 }\in \Gamma_{v_2}$ and
$t={\zeta \ 0\choose 0\ 1 }\in B(n,\frac{q-1}{2})$. Since $\Gamma' \cap
\Gamma_{v_2}$ is normal in $\Gamma_{v_2}$, it  contains the element $hth^{-1}t^{-1}={1\
  b(1-\zeta)\choose 0\ \ \ \ 1 }$. It follows that $B(2\cdot
n,\frac{q-1}{2})\subset \Gamma'\cap \Gamma_{v_2}$. This is an
equality, since otherwise $\Gamma '\cap \Gamma_{v_1}$ contains an
element outside ${\rm PSL}_2(\mathbb{F}_q)$. Thus
$\Gamma'= {\rm PSL}_2(\mathbb{F}_{q})\ast_{B(n,\frac{q-1}{2})}B(2\cdot n,\frac{q-1}{2})$.\\
(3). {\it Suppose that} $\Gamma'\cap \Gamma_{v_1}=\Gamma_{v_1}$.  Then
$\Gamma'\cap \Gamma_{v_2}$ is a normal subgroup of $\Gamma_{v_2}$ containing
$\Gamma_e$. As in (2) one concludes that $\Gamma '\cap
\Gamma_{v_2}=\Gamma_{v_2}$ and $\Gamma'=\Gamma$.
\end{proof}

\begin{lemma}\label{6.27} \label{8.16}
Let $q>3$.
Let $\Gamma$ be ${\rm PGL}_2(\mathbb{F}_{q})\ast_{B(n,q-1)}B(2\cdot
n,q-1)$ if $p=2$ and
${\rm PSL}_2(\mathbb{F}_{q})\ast_{B(n,\frac{q-1}{2})}B(2\cdot n,\frac{q-1}{2})$ if $p\not=2$.\\
Let $\Delta\subset\Gamma$ be a maximal normal Schottky subgroup of
finite index.\\
Then either $H=\Gamma/\Delta$ is simple or
for some $a\in \mathbb{F}_q$ one has  $\Delta \subset \Gamma(B_a)\subset\Gamma$
with  $H\cong {\rm PSL}_2(\mathbb{F}_{q})\ltimes (\Gamma(B_a)/\Delta)$
and for all proper normal subgroups $N\subset\Gamma(B_a)$ that contain $\Delta$, the equality
$\Delta=\cap_{g\in {\rm PSL}_2(\mathbb{F}_q)} gNg^{-1}$ holds.
\end{lemma}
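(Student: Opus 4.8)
\textbf{Proof plan for Lemma \ref{6.27}.}

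The plan is to argue from the list of proper normal subgroups of $\Gamma$ containing non-trivial elements of finite order, which is exactly what Lemma \ref{6.26} provides. First I would fix a maximal normal Schottky subgroup $\Delta\subset\Gamma$ of finite index and set $H=\Gamma/\Delta$. Maximality of $\Delta$ means precisely that every proper normal subgroup $\Gamma^\circ$ of $\Gamma$ that strictly contains $\Delta$ must contain non-trivial elements of finite order; equivalently, every proper non-trivial normal subgroup of $H$ is the image of such a $\Gamma^\circ$. So the normal subgroups of $H$ other than $\{1\}$ and $H$ correspond bijectively to the normal subgroups $\Gamma^\circ$ of $\Gamma$ with $\Delta\subsetneq\Gamma^\circ\subsetneq\Gamma$, and by Lemma \ref{6.26} each such $\Gamma^\circ$ is either $\Gamma(B_a)$ for some $a\in\mathbb{F}_q$ or (in the ${\rm PGL}_2$ case with $p\neq 2$) the subgroup ${\rm PSL}_2(\mathbb{F}_q)\ast_{B(n,\frac{q-1}{2})}B(2n,\frac{q-1}{2})$. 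Since in the statement of this lemma $\Gamma$ is taken to be either the $p=2$ ${\rm PGL}_2$ case or the $p\neq 2$ ${\rm PSL}_2$ case, only the option $\Gamma^\circ=\Gamma(B_a)$ can occur (Lemma \ref{6.26} parts (i) and (ii)). Thus every proper non-trivial normal subgroup of $H$ is of the form $\Gamma(B_a)/\Delta$ for some $a\in\mathbb{F}_q$ with $\Delta\subset\Gamma(B_a)$.

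Next I would split into the two cases. If no such $a$ exists, i.e. $\Delta$ is contained in no $\Gamma(B_a)$, then $H$ has no proper non-trivial normal subgroup, so $H$ is simple, which is the first alternative. Otherwise pick $a$ with $\Delta\subset\Gamma(B_a)$. By Lemma \ref{6.25} we have $\Gamma={\rm PSL}_2(\mathbb{F}_q)\ltimes\Gamma(B_a)$ (in the $p\neq 2$ case; the $p=2$, ${\rm PGL}_2$ case is the same group since ${\rm PGL}_2=\rm{PSL}_2$ there as noted in \S1, or one uses the analogous semidirect decomposition), and since $\Delta\subset\Gamma(B_a)$ is normal in $\Gamma$, quotienting by $\Delta$ gives $H\cong {\rm PSL}_2(\mathbb{F}_q)\ltimes(\Gamma(B_a)/\Delta)$, where ${\rm PSL}_2(\mathbb{F}_q)$ maps isomorphically onto a complement because $\Delta$ meets $\Gamma_{v_1}$ trivially (the restriction of $\varphi$ to each vertex group is injective, \S \ref{section 6.4}).

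Finally, for the last assertion, let $N\subset\Gamma(B_a)$ be a proper normal subgroup with $\Delta\subset N$. I would argue that $N$ need not be normal in $\Gamma$, but its ``normal core'' $N_0:=\bigcap_{g\in{\rm PSL}_2(\mathbb{F}_q)}gNg^{-1}$ is: indeed $\Gamma(B_a)$ is normalized by ${\rm PSL}_2(\mathbb{F}_q)$, so each conjugate $gNg^{-1}$ still lies in $\Gamma(B_a)$ and is normalized by $\Gamma(B_a)$ (as $\Gamma(B_a)$ normalizes $N$ hence normalizes all its $\Gamma(B_a)$-conjugates — here one uses that $N$ being normal in $\Gamma(B_a)$ makes $gNg^{-1}$ normal in $g\Gamma(B_a)g^{-1}=\Gamma(B_a)$), so $N_0$ is normalized by both ${\rm PSL}_2(\mathbb{F}_q)$ and $\Gamma(B_a)$, hence by all of $\Gamma$. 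Moreover $\Delta\subset N_0$ since $\Delta$ is already normal in $\Gamma$ and $\Delta\subset N$. Now $N_0$ is a normal subgroup of $\Gamma$ contained in $\Gamma(B_a)\subsetneq\Gamma$, so $N_0\neq\Gamma$; hence by maximality of $\Delta$, either $N_0=\Delta$ or $N_0$ is a proper non-trivial normal subgroup of $\Gamma$, which by the classification above forces $N_0=\Gamma(B_b)$ for some $b$. The latter is impossible: $\Gamma(B_b)$ is not contained in $\Gamma(B_a)$ when $b\neq a$ (the free-product description of $\Gamma(B_a)$ shows it does not contain $B_b$), and if $b=a$ then $N_0=\Gamma(B_a)$ would force $N=\Gamma(B_a)$, contradicting that $N$ is proper. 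Therefore $N_0=\Delta$, which is the claimed equality. The main obstacle I anticipate is making the normal-core argument airtight — in particular verifying carefully that the intersection of the ${\rm PSL}_2(\mathbb{F}_q)$-conjugates of $N$ is genuinely normal in all of $\Gamma$ and that it cannot be one of the $\Gamma(B_b)$ — and handling uniformly the slightly different statements of Lemmas \ref{6.25} and \ref{6.26} in the $p=2$ and $p\neq 2$ cases.
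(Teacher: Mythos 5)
Your proof is correct and follows essentially the same route as the paper: use Lemma \ref{6.26} together with maximality of $\Delta$ to identify any proper normal subgroup of $H$ with some $\Gamma(B_a)/\Delta$, invoke Lemma \ref{6.25} for the semidirect decomposition, and show the normal core $\bigcap_{g} gNg^{-1}$ is normal in $\Gamma$ and cannot be any $\Gamma(B_b)$, hence equals $\Delta$. Your write-up is in fact more explicit than the paper's terse final step (which only asserts that $N$ is either $\Delta$ or not ${\rm PSL}_2(\mathbb{F}_q)$-stable and that maximality then forces equality), but the underlying argument is the same.
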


\begin{proof} We may suppose that $H=\Gamma /\Delta$ is not simple and
  has a minimal proper normal subgroup $N^\prime$. The preimage $\Gamma '\subset \Gamma$ of $N^\prime$
  contains a non-trivial element of finite order since $\Delta$ is
  maximal. Then $\Gamma'= \Gamma(B_a)$ follows from \ref{6.26} and 
   $H\cong {\rm PSL}_2(\mathbb{F}_{q})\ltimes
   (\Gamma(B_a)/\Delta)$.
   
   If the group $\Gamma(B_a)$ contains a proper normal subgroup $N\supset\Delta$,
   then the group $N$ is either $N=\Delta$ or is not stabilized
   by the group ${\rm PSL}_2(\mathbb{F}_{q})$. 
   In both cases $\Delta\subset\cap_{g\in{\rm PSL}_2(\mathbb{F}_{q})} gN g^{-1}$.
   Since $\Delta$ is maximal, equality must hold. This proves the lemma.
\end{proof}

\begin{example}\label{6.28} \label{8.17} Automorphism groups of Mumford curves $X$
  satisfying \\
  ${\rm Aut}(X)\cong \{ (g_1,g_2)\in  {\rm PGL}_2(\mathbb{F}_{q})^2 \mid \det(g_1)=\det(g_2)\}$.\\
 {\rm \indent Let $\Gamma$ be ${\rm PSL}_2(\mathbb{F}_{q})\ast_{B(n,\frac{q-1}{2})}B(2\cdot n,\frac{q-1}{2})$.
 For $a\in \mathbb{F}_q$ we define as before $\varphi_a:\Gamma\rightarrow
 {\rm PSL}_2(\mathbb{F}_q)$ by $\varphi_a$ is the `identity' on ${\rm
   PSL}_2(\mathbb{F}_q)$ and ${1\ \lambda x\choose 0\ \ \  1 }$ (with
 $\lambda \in \mathbb{F}_q$) is mapped to ${1\ \lambda a\choose 0\ 
   \ \ 1}$. The kernel of $\varphi_a$ is $\Gamma(B_a)$ (the normal
 subgroup generated by ${1\ \mathbb{F}_q(x-a)\choose 0\ \ \ \ \ \ \
   1}$) and $\Gamma\cong {\rm PSL}_2(\mathbb{F}_q)\ltimes \Gamma(B_a)$.  \\ 

 For $a,a'\in \mathbb{F}_q$ with $a\neq a'$, one considers
 $\varphi_{a,a'}=(\varphi_a,\varphi_{a'}):\Gamma \rightarrow {\rm
   PSL}_2(\mathbb{F}_q)\times {\rm PSL}_2(\mathbb{F}_q)$. The kernel $\Delta_{a,a'}$
 of $\varphi_{a,a'}$ is $\Gamma (B_a)\cap \Gamma (B_{a'})$ and clearly
 has no elements of finite order $\neq 1$. The image of ${\rm
   PSL}_2(\mathbb{F}_q)$ under $\varphi_{a,a'}$ is the diagonal embedding
 of this group. The image of ${1\ \lambda x\choose 0\ \  1}$, for
 $\lambda \in \mathbb{F}_q$,  is $({1\ \lambda a\choose 0\ \ 
   1},{1\ \lambda a'\choose 0\ \  1})$. From this one easily
 deduces that $\varphi_{a,a'}$ is surjective.\\
 
We conclude that $\Delta_{a,a'}$ is a maximal normal Schottky
subgroup. It is contained in $\Gamma (B_a)$ and $\Gamma/\Delta_{a,a'}$
is isomorphic to ${\rm PSL}_2(\mathbb{F}_{q})\ltimes (\Gamma(B_a)/\Delta_{a,a'})$ and
$\Gamma(B_a)/\Delta_{a,a'}$ is simple because it is isomorphic to ${\rm PSL}_2(\mathbb{F}_q)$.\\

The group $\Delta_{a,a'}$ is also normal in 
$\Gamma={\rm PGL}_2(\mathbb{F}_{q})\ast_{B(n,q-1)}B(2\cdot n,q-1)$
(for $p\neq 2$ and $q>3$).
The quotient $\Gamma/\Delta_{a,a'}$ is isomorphic to the group $\{ (g_1,g_2)\in  {\rm PGL}_2(\mathbb{F}_{q})^2 \mid
   \det(g_1)=\det(g_2)\}$.      
We note that $|\Gamma/\Delta_{a,a'}|=\frac{(q^3-q)^2}{2}$ and this is
the value for $N_0(\Gamma)$, see Table \ref{table-2},  that we will prove in \ref{6.30}. 
} \end{example}

\begin{lemma}\label{example-8.2} \label{8.18} 
 Let $\Gamma$ be ${\rm PGL}_2(\mathbb{F}_{q})\ast_{B(n,q-1)}B(2\cdot n,q-1)$ with $p=2$ or
 ${\rm PSL}_2(\mathbb{F}_{q})\ast_{B(n,\frac{q-1}{2})}B(2\cdot n,\frac{q-1}{2})$ with $p\not=2$.
Let $\Delta_c=[\Gamma(B_a),\Gamma(B_a)]$ denote the commutator
subgroup of $\Gamma(B_a)$. Then $\Delta_c$ is a normal Schottky
subgroup of $\Gamma$ and  $\Gamma /\Delta _c \cong {\rm PSL}_2(\mathbb{F}_q)\ltimes B_a^{q+1}$.
 For  $q>3$ the group $\Delta_c$ is maximal. $\Delta_c$ is not maximal for $q=2,3$. 
 \end{lemma}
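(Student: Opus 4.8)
The plan is to analyze the quotient $\Gamma/\Delta_c$ directly, using the amalgam description of $\Gamma(B_a)$ obtained in the preceding remark. First I would recall that, by Lemma \ref{6.25}, $\Gamma \cong \Gamma_{v_1}\ltimes \Gamma(B_a)$ where $\Gamma_{v_1}$ is ${\rm PGL}_2(\mathbb{F}_q)$ (for $p=2$) or ${\rm PSL}_2(\mathbb{F}_q)$ (for $p\not=2$); in either case write $L$ for this vertex group. By the remark just before the lemma, $\Gamma(B_a)$ is a free product of the $q+1$ conjugates $hB_ah^{-1}$, $h$ running over representatives of $L/\Gamma_e$. Each factor is $\cong \mathbb{F}_q \cong B_a$, an elementary abelian group, so $\Gamma(B_a)$ is a free product of $q+1$ copies of $B_a$. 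Its abelianization — which is exactly $\Gamma(B_a)/[\Gamma(B_a),\Gamma(B_a)] = \Gamma(B_a)/\Delta_c$ — is therefore the direct sum $B_a^{q+1}$. Since $\Delta_c$ is the commutator subgroup of a normal subgroup, it is characteristic in $\Gamma(B_a)$ and hence normal in $\Gamma$. The conjugation action of $L$ on $\Gamma(B_a)$ permutes the $q+1$ free factors (it permutes the edges $e\ni v_2$ transitively via the cosets $L/\Gamma_e$) and therefore descends to $B_a^{q+1}$, giving $\Gamma/\Delta_c \cong L \ltimes B_a^{q+1}$, as claimed.

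Next I would check that $\Delta_c$ is a Schottky group, i.e.\ contains no non-trivial element of finite order. Any torsion element of $\Gamma$ lies in a conjugate of some vertex group, and a torsion element of $\Gamma(B_a)$ (itself a free product of finite groups) lies in a conjugate of a single free factor $hB_ah^{-1}$; its image in the abelianization $B_a^{q+1}$ is then the corresponding non-zero coordinate vector, which is non-trivial. Hence no non-trivial torsion element of $\Gamma(B_a)$ lies in $\Delta_c$, and since $\Delta_c\subset\Gamma(B_a)$ this shows $\Delta_c$ is torsion-free, hence a normal Schottky subgroup of finite index in $\Gamma$ (finiteness of the index is clear from $\Gamma/\Delta_c \cong L\ltimes B_a^{q+1}$).

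For maximality when $q>3$, I would invoke Lemma \ref{6.27}. Since $\Gamma/\Delta_c$ is not simple (it has the abelian normal subgroup $B_a^{q+1}\neq 1$), the second alternative of \ref{6.27} must hold for any maximal $\Delta^\circ\supseteq \Delta_c$: namely $\Delta^\circ\subset\Gamma(B_a)$, $\Gamma/\Delta^\circ \cong L\ltimes(\Gamma(B_a)/\Delta^\circ)$, and for every proper normal $N\subset\Gamma(B_a)$ containing $\Delta^\circ$ one has $\Delta^\circ = \bigcap_{g\in {\rm PSL}_2(\mathbb{F}_q)} gNg^{-1}$. Apply this with $N=\Delta_c$: then $\Delta_c$, being already $L$-invariant (it is characteristic in $\Gamma(B_a)$, which is normal in $\Gamma$), satisfies $\bigcap_g g\Delta_c g^{-1} = \Delta_c$, forcing $\Delta^\circ = \Delta_c$. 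Thus $\Delta_c$ is maximal for $q>3$. Finally, for $q=2,3$ I would exhibit the failure explicitly: $\Gamma/\Delta_c = L\ltimes B_a^{q+1}$ has a proper normal subgroup of infinite-index-type behaviour whose preimage is torsion-free — concretely, for $q=2,3$ the group $L$ is small ($D_3$, resp.\ $A_4$ or $S_4$) and $B_a^{q+1}$ admits an $L$-submodule whose complement pulls back to a normal subgroup strictly between $\Delta_c$ and $\Gamma$ that is still torsion-free, so $\Delta_c$ is not maximal; checking this is a short finite computation.

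The main obstacle is the torsion-freeness argument for $\Delta_c$ being genuinely airtight — one must be careful that an element of $\Gamma(B_a)$ of finite order really is conjugate \emph{within} $\Gamma(B_a)$ into one free factor (this is the standard structure theory of free products, but worth stating) and that its abelianized image is then non-zero — together with the bookkeeping, in the $q=2,3$ case, of which $L$-submodule of $B_a^{q+1}$ produces the intermediate torsion-free normal subgroup.
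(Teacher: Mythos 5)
Your construction of $\Delta_c$ and the identification $\Gamma/\Delta_c\cong L\ltimes B_a^{q+1}$ (via the free-product decomposition of $\Gamma(B_a)$, the Kurosh argument for torsion-freeness, and normality of the characteristic subgroup $\Delta_c$) is correct and agrees with the first part of the paper's proof. The problem is the maximality argument for $q>3$: your application of Lemma \ref{6.27} reverses a containment. That lemma, for a maximal normal Schottky subgroup $\Delta^\circ\subset\Gamma(B_a)$, gives the identity $\Delta^\circ=\bigcap_g gNg^{-1}$ only for proper normal subgroups $N\subset\Gamma(B_a)$ that \emph{contain} $\Delta^\circ$. You apply it with $N=\Delta_c$, but you only know $\Delta_c\subseteq\Delta^\circ$, not $\Delta_c\supseteq\Delta^\circ$ --- the latter is exactly what you are trying to prove. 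So the hypothesis of \ref{6.27} is not met and the conclusion $\Delta^\circ=\Delta_c$ does not follow. A decisive sanity check: your argument nowhere uses $q>3$, so if it were valid it would also prove maximality for $q=2,3$, contradicting the last assertion of the very lemma you are proving (and the explicit intermediate Schottky subgroup $\ker\bigl(\Gamma\to{\rm PSL}_2(\mathbb{F}_q)\times C_q\bigr)$ that exists there). There is also a smaller slip at the start of that paragraph: non-simplicity of $\Gamma/\Delta^\circ$ does not follow from non-simplicity of $\Gamma/\Delta_c$ (a quotient of a non-simple group can be simple), though it can be repaired since a simple quotient would force $\Delta^\circ\supseteq\Gamma(B_a)$ or an abelian simple quotient, both impossible.

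What is actually needed --- and what occupies most of the paper's proof --- is to show that for a maximal normal Schottky $\Delta\supseteq\Delta_c$ with quotient map $\varphi$, the images $\varphi(G_i)$ of the $q+1$ free factors remain independent, i.e.\ $\varphi(\Gamma(B_a))=\prod_{i=1}^{q+1}\varphi(G_i)$, which forces $\Delta=\Delta_c$. This is where $q>3$ enters: the torus $T=B_i\cap B_j$ acts with \emph{different} characters on $G_i$ and $G_j$, giving $\varphi(G_i)\cap\varphi(G_j)=\{1\}$, and a further argument with the averaging maps $\psi_i(g)=\prod_{u\in U_i}ugu^{-1}$ rules out any nontrivial intersection $\varphi(G_i)\cap\langle\varphi(G_j)\mid j\neq i\rangle$; the case $q=5$ (where $T=C_2$ acts with the same character) needs a separate argument. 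None of this is present in your proposal, so the maximality claim for $q>3$ is unproved as written. Your treatment of $q=2,3$ is also only a sketch; the paper exhibits the intermediate torsion-free normal subgroup concretely as the kernel of $\Gamma\to{\rm PSL}_2(\mathbb{F}_q)\times C_q$, i.e.\ the preimage of the sum-zero submodule of $B_a^{q+1}$, which meets every coordinate factor trivially precisely because the torus action is trivial for these $q$.
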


\begin{proof}
 The group $B_a$ is normal in $B(2\cdot n,\frac{q-1}{2})$ (resp. $B(2\cdot n,q-1)$ if $p=2$).
 Therefore the group $\Gamma (B_a)$ is generated by the subgroups
 $G_i:=h_iB_ah_i^{-1}$ where $\{h_1,\dots ,h_{q+1}\}$ is a set of representatives of
  $\Gamma_{v_1}/\Gamma_{e}$. The group $\Gamma(B_a)$ is in  fact the
  free product $G_1\ast G_2\ast \cdots \ast G_{q+1}$ of these $q+1$
  groups (compare \ref{8.14}). Since the groups $G_i$ are commutative, it
follows that $\Delta_c=[\Gamma(B_a),\Gamma(B_a)]$ is generated by the commutators
$aba^{-1}b^{-1}$ with $a\in G_i,b\in G_j$ for all pairs $(i,j)$ with
$i\neq j$. Further $\Gamma(B_a)/\Delta_c\cong G_1\times \cdots \times
G_{q+1}\cong B_a^{q+1}=C_p^{n(q+1)}$ and $\Delta_c$ is a Schottky group.\\

 {\it Observations}.
The group ${\rm PSL}_2(\mathbb{F}_q)$ acts, by conjugation, transitively both on the groups $G_i$ and
on the groups $C_p\subset G_i$, $i=1,\ldots,q+1$.
Let $N:=\varphi^{-1}(C_p)$ for some group $C_p$ contained in one of the groups $G_i$.
Then the intersection $\cap_{g\in {\rm PSL}_2(\mathbb{F}_q)}gNg^{-1}$ is the group $\Delta_c$.\\
\indent We note that $\Delta_c\subset\Gamma(B_a)$ is not a maximal normal
Schottky group in $\Gamma(B_a)$. Indeed, consider
$\varphi':\Gamma(B_a)=G_1\ast \cdots \ast G_{q+1}\rightarrow B_a$ such
that the restriction of $\varphi'$ to each $G_i$ is a bijection. Then
$\ker(\varphi ')$ is a maximal normal Schottky group of $\Gamma(B_a)$, containing  
$\Delta_c$. \\

{\it Continuation of the proof}. Let $\Delta \supset \Delta_c$ be a maximal normal Schottky group
in $\Gamma$ and write
$\varphi:\Gamma \rightarrow \Gamma/\Delta$. The group $\Gamma$ acts, by
conjugation, on $\Gamma(B_a), \Delta_c$ and $\Gamma(B_a)/\Delta_c=G_1\times
\cdots \times G_{q+1}$. The stabilizers of the groups $G_i$ are the distinct
Borel subgroups $B_i$ (i.e., $B(n,\frac{q-1}{2})$ for $p\neq 2$ or
$B(n,q-1)$ for $p=2$). The stabilizer for a pair $G_i,G_j,\ i\neq j$
is a torus $T=B_i\cap B_j$ ($\cong C_{\frac{q-1}{2}}$ for $p\neq 2$ and $\cong C_{q-1}$ for $p=2$).  
For $q>3, \ q\neq 5$, the cyclic group $T$ acts with different
character on $G_i$ and $G_j$. It follows that $\varphi(G_i)\cap \varphi
(G_j)=\{1\}$ for $i\neq j$. Therefore $\varphi (\Gamma(B_a))$ contains
$q+1$ different groups isomorphic to $B_a$. \\

{\it $\Delta=\Delta _c$ (and so $\Delta_c$ maximal) 
follows from $\varphi (\Gamma(B_a))=\prod_{i=1}^{q+1}\varphi(G_i)$}.\\
First we prove that $<\varphi(G_j)|\;
j\not=i>=\prod_{j\not=i}\varphi(G_j)$ for any $i$, $1\leq i\leq q+1$.
Let $U_i\subset B_i= B(n,q-1)$ be the normal subgroup $U_i=B(n,1)$.
The group $U_i$ is the stabilizer of every non-trivial element $g\in\varphi(G_i)$.
Moreover, $U_i$ permutes the groups $\varphi(G_j)$, $j\not=i$ simply transitively.
Therefore the only elements $g\in\langle\varphi(G_j)|\; j\not=i\rangle$ that are stabilized by a group
$U_{j_0}$, $j_0\not=i$ are the elements $g\in \varphi(G_{j_0})$.
From this one concludes that 
 $\varphi(G_{j_0})\ \cap\langle\varphi(G_j)|\; j\not=i, j_0\rangle=\{ 1\}$.
In particular, the equality $<\varphi(G_j)|\; j\not=i>=\prod_{j\not=i}\varphi(G_j)$ holds.\\

{\it Now we show that $\varphi(G_i)\ \cap\langle\varphi(G_j)|\; j\not=i\rangle=\{ 1\}$
and the equality  $\varphi(\Gamma(B_a))=\prod_{i=1}^{q+1}\varphi(G_i)$
follows from that.}\\
Let $\psi_i$ be the map defined by $\psi_i(g)=\prod_{u\in U_i}ugu^{-1}$ for $g\in \Gamma(B_a)/\Delta_c$.
Since the group $\Gamma(B_a)/\Delta_c$ is abelian and $\psi_i(1)=1$,
the map $\psi_i$ is actually a group homomorphism.
One verifies that $\psi_i(G_i)=\{ 1\}$
and that $\psi_i(\Gamma(B_a)/\Delta_c)\subset\langle G_j|\; j\not=i\rangle$.
Let $G_i^\vee$ denote the image $G_i^\vee:= im(\psi_i)$.
For any $j\not=i$ one has $G_i^\vee=\langle \psi_i(g)|\; g\in G_j\rangle \cong G_j$.\\

{\it We will derive a contradiction from  $\varphi(G_i)\ \cap\langle\varphi(G_j)|\; j\not=i\rangle \not=\{ 1\}$}.
The only elements of $\langle \varphi(G_j)|\; j\not=i\rangle $ that are stabilized by the group 
$U_i\subset {\rm PSL}_2(\mathbb{F}_q)$ are the elements $\varphi(\psi_i(g))$ 
with $g\in \Gamma(B_a)/\Delta_c$.
Therefore $\varphi(G_i)\ \cap\langle\varphi(G_j)|\; j\not=i\rangle \subset\varphi(G_i^\vee)$ holds.
The $B_i$-orbit of a non-trivial element of
both  $\varphi(G_i)$ and $\varphi(G_i^\vee)$ generates the entire group 
$\varphi(G_i)$ and $\varphi(G_i^\vee)$, respectively.
In particular, if the intersection $\varphi(G_i)\cap\varphi(G_i^\vee)$ is non-trivial,
then the equality $\varphi(G_i)=\varphi(G_i^\vee)$ must hold.\\

Therefore the group $\Delta/\Delta_c\cong B(n,1)$ consists of elements of the form
$g_i\psi_i(g_j)$ with $g_i\in G_i$ and $g_j\in G_j$, $j\not=i$.
Since the group $U_i$ acts trivially on both $G_i$ and $G_i^\vee$,
it acts trivially on the group $\Delta/\Delta_c$.
The groups $U_j\subset {\rm PSL}_2(\mathbb{F}_q)$, $j=1,\ldots,q+1$ are conjugated
and therefore also act trivially on $\Delta/\Delta_c$. 
Since the groups $U_j$, $1\leq j\leq q+1$ generate the group ${\rm PSL}_2(\mathbb{F}_q)$,
the entire group ${\rm PSL}_2(\mathbb{F}_q)$ must act trivially on $\Delta/\Delta_c$.\\

{\it This contradicts the fact that $T=B_i\cap B_j$, $j\not=i$ acts non-trivially on $\Delta/\Delta_c$}.
Indeed, for any $t\in T$, $t\not=1$, one has
$tg_i\psi_i(g_j)t^{-1}=tg_it^{-1}\cdot \psi_i(tg_jt^{-1})\not=g_i\psi_i(g_j)$.
Therefore $\varphi(G_i)\cap\langle\varphi(G_j)|\; j\not=i\rangle =\{ 1\}$ must hold.
{\it Hence $\Delta=\Delta_c\subset\Gamma(B_a)$ is maximal if $q\not=5$}.\\

{\it Consider the case  $q=5$}.
The group $T=C_2$ acts with the same character on both $G_i$ and $G_j$.
However, there does not exist a group ${\rm PSL}_2(\mathbb{F}_5)\ltimes C_5$
such that an element $g\in {\rm PSL}_2(\mathbb{F}_5)$ of order two
acts as $-1$ on the group $C_5$.
Indeed, all elements of order two in ${\rm PSL}_2(\mathbb{F}_5)$ are conjugated
and must therefore act as $-1$ on $C_5$.
On the other hand, the group ${\rm PSL}_2(\mathbb{F}_5)$ contains subgroups
$C_2\times C_2$.
It is not possible for all three non-trivial elements in a group $C_2\times C_2$
to act as $-1$ on a group $C_5$.
As a consequence also for $q=5$ one has that 
$\varphi(G_i)\cap\varphi(G_j)=\{ 1\}$.
Therefore $\Delta_c=[\Gamma(B_a),\Gamma(B_a)]\subset\Gamma$
is also a maximal normal subgroup without elements of finite order if $q=5$. \\

{\it For the cases $q=2,3$}  the Borel subgroups are isomorphic to $B(1,1)$. 
In particular, the cyclic group $T$ stabilizing two distinct groups $G_i$ and $G_j$
is the trivial group. In this case the group $\Delta_c\subset\Gamma$ is not maximal.
In both cases there exists a map 
$\varphi_q:\;\Gamma\longrightarrow  {\rm PSL}_2(\mathbb{F}_q)\times C_q$
such that the kernel $ker(\varphi_q)\subset\Gamma$ is a maximal normal
Schottky group. Moreover, $\Delta_c\subset ker(\varphi_q)$.
If $q=3$, the kernel $ker(\varphi_3)$ is the group
$\Delta_2\subset\Gamma$ studied in \S \ref{section 8.1}. 
 \end{proof}

\begin{remark}{\rm
We note that the group $\Delta_c=[\Gamma(B_a),\Gamma(B_a)]$ is also a normal Schottky
group of the amalgam ${\rm PGL}_2(\mathbb{F}_{q})\ast_{B(n,q-1)}B(2\cdot n,q-1)$ if $p>2$.
It is maximal for $q\not= 2,3$. The quotient is a group ${\rm PGL}_2(\mathbb{F}_q)\ltimes B_a^{q+1}$.
}
\end{remark}

\begin{lemma}\label{8.19}
Let $q>3$.
Let $\Gamma$ be ${\rm PGL}_2(\mathbb{F}_{q})\ast_{B(n,q-1)}B(2\cdot
n,q-1)$ if $p=2$ and
${\rm PSL}_2(\mathbb{F}_{q})\ast_{B(n,\frac{q-1}{2})}B(2\cdot n,\frac{q-1}{2})$ if $p\not=2$.\\
Let $\Delta\subset \Gamma$ be a maximal normal Schottky group such that
$\Delta \subset \Gamma(B_a)$ for some $a$. Assume that there exists a proper normal subgroup
$N\subset \Gamma(B_a)$, $N\not=\Delta$ that contains $\Delta$.
We may choose $N$ to be minimal.
Let $N_i$, $i\in I=\{ 1,\dots,s\}$ denote the ${\rm PSL}_2(\mathbb{F}_q)$-conjugates of $N=N_1$.
Then the following statements hold:
\begin{enumerate}
\item[i)] $\varphi(N_i)\cap \varphi(N_j)=\{ 1\}$ if $i\not=j$.
\item[ii)] $\langle N_i|\; i=1,\ldots. s\rangle =\Gamma(B_a)$.
\item[iii)] The groups $\varphi(N_i)$ and $\varphi(N_j)$ commute if $i\not=j$.
\item[iv)] If $\varphi(N)$ is non-abelian, 
           then $\varphi(\Gamma(B_a))=\Gamma(B_a)/\Delta=\prod_{i\in I} \varphi(N_i)$.
\item[v)] If $\varphi(N)$ is abelian, then $\Delta=[\Gamma(B_a),\Gamma(B_a)]$ and
        $\varphi(\Gamma(B_a))=B_a^{q+1}\cong C_p^{n(q+1)}$.
\end{enumerate}
\end{lemma}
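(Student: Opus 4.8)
The plan is to prove the five statements of Lemma \ref{8.19} by exploiting the structure of $\Gamma(B_a)$ as the free product $G_1 \ast \cdots \ast G_{q+1}$ of the $q+1$ conjugates $G_i = h_i B_a h_i^{-1}$, together with the action of ${\rm PSL}_2(\mathbb{F}_q)$ on the index set $\{1,\dots,q+1\}$ by conjugation. The key structural fact, already established in Lemma \ref{8.18}, is that the stabilizer in ${\rm PSL}_2(\mathbb{F}_q)$ of a group $G_i$ is a Borel subgroup $B_i$, whose unipotent radical $U_i \cong B(n,1)$ acts simply transitively on the remaining $q$ groups $G_j$, $j \neq i$, and fixes $G_i$ pointwise. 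This transitivity property of the $U_i$ is what forces the "near-independence" of the images $\varphi(N_i)$, and it is the engine behind all five claims. I would set up notation first: choose $N = N_1 \subset \Gamma(B_a)$ minimal proper normal containing $\Delta$ (possible since the poset of normal subgroups of the finitely generated $\Gamma(B_a)/\Delta$ between $\Delta$ and $\Gamma(B_a)$ is finite and nonempty by hypothesis), let $N_i$ be its ${\rm PSL}_2(\mathbb{F}_q)$-conjugates, and observe by Lemma \ref{8.16} that $\Delta = \bigcap_{i} N_i$.

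For (i), I would argue as in the "$\Delta = \Delta_c$" portion of the proof of Lemma \ref{8.18}: since $U_i$ fixes $G_i$ pointwise and permutes the $G_j$ ($j\neq i$) simply transitively, the only elements of $\langle G_j \mid j \neq i\rangle / \Delta$ fixed by $U_i$ lie in the "diagonal" image of the averaging homomorphism $\psi_i(g) = \prod_{u\in U_i} u g u^{-1}$; combined with minimality of $N$ and the fact that $N \not\subset G_i$ would make $N$ too big, one extracts that $\varphi(N_i) \cap \varphi(N_j) = \{1\}$. Statement (ii) follows because $\langle N_i \mid i \in I\rangle$ is a normal subgroup of $\Gamma(B_a)$ (being generated by a ${\rm PSL}_2(\mathbb{F}_q)$-stable family, hence normal in $\Gamma$) that strictly contains the minimal $N$; since $\Gamma(B_a)$ is the only proper normal subgroup of $\Gamma$ containing such a group strictly (by Lemma \ref{8.16}), and it cannot be a proper normal subgroup of $\Gamma(B_a)$ that is ${\rm PSL}_2(\mathbb{F}_q)$-invariant and $\neq \Delta$ by the intersection formula, it must equal $\Gamma(B_a)$. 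Alternatively, and more cleanly: $\langle N_i\mid i\rangle$ projects onto $\bigoplus_i G_i/(\text{stuff})$ and surjects because it contains at least one "coordinate" in each $G_i$. For (iii), commutativity of $\varphi(N_i)$ and $\varphi(N_j)$ for $i \neq j$: use that $N_i \subset \langle G_k : k \in S_i\rangle$ and $N_j \subset \langle G_k : k \in S_j\rangle$ for index subsets $S_i, S_j$, and if the $N_i$ "overlap" in coordinates one must show commutators land in $\Delta$; here again the $U$-averaging plus the fact that $[\Gamma(B_a),\Gamma(B_a)]$ maps the $G_i$ to commuting images will do the job, but the honest route is: $\varphi(N_i)$ is normalized by $\varphi(N_j)$ (both normal in $\varphi(\Gamma(B_a))$), and by (i) their intersection is trivial, so they commute.

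For (iv) and (v) I would split on whether $\varphi(N)$ is abelian. If $\varphi(N)$ is non-abelian: by (i) and (iii) the subgroup $\langle \varphi(N_i)\rangle = \prod_i \varphi(N_i)$ is an internal direct product, and by (ii) it is all of $\varphi(\Gamma(B_a))$, giving the direct product decomposition. If $\varphi(N)$ is abelian: then every $\varphi(N_i)$ is abelian; since the $\varphi(N_i)$ generate $\varphi(\Gamma(B_a))$ and pairwise commute by (iii), $\varphi(\Gamma(B_a))$ is abelian, hence the commutator subgroup $[\Gamma(B_a),\Gamma(B_a)] \subseteq \Delta$; but $[\Gamma(B_a),\Gamma(B_a)] = \Delta_c$ is itself a maximal normal Schottky subgroup of $\Gamma$ for $q > 3$ (Lemma \ref{8.18}), and $\Delta_c \subseteq \Delta \subsetneq \Gamma(B_a)$ with $\Delta$ Schottky forces $\Delta = \Delta_c$ by maximality of $\Delta_c$; then $\varphi(\Gamma(B_a)) = \Gamma(B_a)/\Delta_c \cong B_a^{q+1} \cong C_p^{n(q+1)}$ by the computation in Lemma \ref{8.18}. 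The main obstacle I anticipate is (i): making the "only $U_i$-fixed elements are diagonal" argument fully rigorous when $N$ is not assumed to be contained in a single $G_i$ — one must carefully track which coordinates $N$ can involve, and handle the borderline case $q = 5$ (where the torus $T = B_i \cap B_j$ acts with the same character on $G_i$ and $G_j$) separately, exactly as in Lemma \ref{8.18}, by invoking that no ${\rm PSL}_2(\mathbb{F}_5)$-representation on $C_5$ can have all involutions and all Klein-four elements act by $-1$. Everything else is bookkeeping on free products and normal closures.
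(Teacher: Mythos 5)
The one genuine gap is in your treatment of (iv). You assert that pairwise trivial intersections (statement (i)) together with pairwise commuting (statement (iii)) already make $\langle \varphi(N_i)\mid i\in I\rangle$ an internal direct product $\prod_{i\in I}\varphi(N_i)$. That implication is false in general: the three subgroups of order two in $C_2\times C_2$ pairwise intersect trivially and pairwise commute, yet they do not generate a direct product of three factors. What (iv) actually requires is the stronger statement $\varphi(N_{i+1})\cap\langle \varphi(N_j)\mid j\leq i\rangle=\{1\}$ for each $i$, and this is precisely where the hypothesis that $\varphi(N)$ is non-abelian must enter --- a hypothesis your argument for (iv) never uses, which is the tell-tale sign something is missing. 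The paper proves the stronger intersection statement by induction: an element $g$ of $\varphi(N_{i+1})\cap\varphi(N_{\leq i})$ is written as $g_1\cdots g_i$ with $g_j\in\varphi(N_j)$; since $\varphi(N_{i+1})$ centralizes each $\varphi(N_j)$ with $j\leq i$, each $g_j$ is forced into the centralizer of $\varphi(N_j)$ inside $\varphi(N_j)$, and that centre is trivial because $\varphi(N_j)$ is a \emph{non-abelian} minimal normal subgroup of $\varphi(\Gamma(B_a))$. Without some version of this step, (iv) does not go through, and the downstream estimate $|\varphi(\Gamma(B_a))|\geq (p+1)^s$ in Lemma \ref{lemma-8.x} would lose its justification.

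The remaining parts are essentially fine, though you misplace the difficulty: you flag (i) as the main obstacle and propose to attack it with the $U_i$-averaging machinery of Lemma \ref{example-8.2}, but none of that is needed. Since each conjugate $N_i$ is again a minimal normal subgroup of $\Gamma(B_a)$ containing $\Delta$, the intersection $N_i\cap N_j$ is a normal subgroup containing $\Delta$ and contained in both, hence equals $\Delta$ unless $N_i=N_j$; that one line is the paper's entire proof of (i), and $\varphi(N_i)\cap\varphi(N_j)=(N_i\cap N_j)/\Delta$ because both contain the kernel. Your (ii) (normality of $\langle N_i\rangle$ in $\Gamma$ plus maximality of $\Delta$ and the classification of Lemma \ref{6.26}), your (iii) (the standard commutator-lands-in-the-intersection argument for two normal subgroups with trivial intersection), and your (v) (reduction to $\Delta_c=[\Gamma(B_a),\Gamma(B_a)]$ and its maximality from Lemma \ref{example-8.2}) all match the paper's reasoning; indeed your (v) is written out more fully than the paper's, which merely cites Lemma \ref{example-8.2}. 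Fix (iv) by proving the cumulative intersection is trivial via the trivial-centre property of non-abelian minimal normal subgroups, and the proof is complete.
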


\begin{proof}
Since $N_i$ and $N_j$ are minimal, the intersection $N_i\cap N_j=\Delta$ if $i\not=j$.
This proves statement (i).
The group $\langle N_i|\; i=1,\ldots. s\rangle \supset\Delta$ is stabilized by $\Gamma_{v_1}$.
Since $\Delta$ is maximal, the $\Gamma_{v_1}$-invariant group
$<N_i|\; i=1,\ldots. s>$ must equal $\Gamma(B_a)$.

Let $a_i, b_i\in \varphi(N_i)$ and $a_j, b_j\in\varphi(N_j)$
be elements such that $a_ib_j=a_jb_i$ holds.
Then $a_j^{-1}(a_ib_ja_i^{-1})=b_ia_i^{-1}\in \varphi(N_i)\cap \varphi(N_j)=\{ 1\}$.
Therefore $b_i=a_i$ holds.
Similarly $(b_j^{-1}a_ib_j)b_i^{-1}=b_j^{-1}a_j\in \varphi(N_i)\cap \varphi(N_j)$ and $b_j=a_j$ holds.
In particular, the groups $\varphi(N_i)$ and $\varphi(N_j)$ commute.
This proves (iii).

Let us now consider statement (iv).
We prove the statement by induction on the number of groups $N_j$.
Let $N_{\leq i}$ be the group $N_{\leq i}=<N_j|\; j\leq i>$.
The statement holds trivially for $i=1$  and follows from statement (i) for $i=2$.
So let us assume the statement holds for all $j\leq i$ and show that the statement holds for $i+1\leq |I|$.
Since the group $\varphi(N_{i+1})$ commutes with the groups $\varphi(N_j)$, $j\leq i$, 
the group $\varphi(N_{\leq i})$ commutes with the group $\varphi(N_{i+1})$.

Let us now show that the intersection $\varphi(N_{\leq i})\cap \varphi(N_{i+1})$ is trivial. 
Let $g$ be an element in the intersection $\varphi(N_{\leq i})\cap \varphi(N_{i+1})$.
Then $g=g_1\cdots g_i$ with $g_j\in N_j$ for $j=1,\ldots,i$.
Since the groups $\varphi(N_{i+1})$  and $\varphi(N_{\leq i})$ commute,
the elements $g_j$ are contained in the centralizer $Z_{\varphi(N_j)}(\varphi(N_j))$ for $j=1,\ldots,i$.
The group $\varphi(N_j)\subset\varphi(\Gamma(B_a))$ is normal.
In particular, the centralizer $Z_{\varphi(N_j)}(\varphi(N_j))\subset\varphi(\Gamma(B_a))$ 
is also a normal subgroup.
Since the group $\varphi(N_j)$ is non-abelian and a minimal normal subgroup,
the centralizer $Z_{\varphi(N_j)}(\varphi(N_j))$ is trivial.
Therefore $g_j=1$ for $j=1,\ldots,i$ 
and the intersection $\varphi(N_{\leq i})\cap \varphi(N_{i+1})$ is trivial.
In particular, $\varphi(N_{\leq i+1})=\varphi(N_{\leq i})\times \varphi(N_{i+1})$.
By induction statement (iv) holds. 
Statement (v) has been treated in lemma \ref{example-8.2}. 
\end{proof}

\begin{lemma}\label{6.29} \label{8.20}
Let $G$ be a non-abelian finite simple group 
that contains a subgroup $B(s,1)$ of order $p^s$.
Then $|G|\geq |{\rm PSL}_2(\mathbb{F}_{p^s})|=(p^{3s}-p^s)/2$ if $p>2$
and $|G|\geq |{\rm PSL}_2(\mathbb{F}_{p^s})|=p^{3s}-p^s$ if $p=2$.
\end{lemma}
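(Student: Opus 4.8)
## Proof proposal for Lemma \ref{8.20}

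The plan is to invoke the classification of the finite simple groups and check the bound family by family; the key point is that having a $p$-subgroup of order $p^s$ forces the order of $G$ to be divisible by $p^s$, and then one wants to see that among all simple groups of order divisible by a given $p$-power, the minimal order is achieved by ${\rm PSL}_2(\mathbb{F}_{p^s})$.

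First I would record the trivial divisibility fact: if $B(s,1)\subset G$ then $p^s\mid |G|$, so $|G|\geq p^s$, and more usefully the Sylow $p$-subgroup $P$ of $G$ has order $\geq p^s$. I would also pin down the target: $|{\rm PSL}_2(\mathbb{F}_{p^s})|=\tfrac{1}{\gcd(2,p^s-1)}p^s(p^s-1)(p^s+1)$, which is $\tfrac{p^{3s}-p^s}{2}$ for $p$ odd and $p^{3s}-p^s$ for $p=2$. Next I would run through the CFSG list:

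\begin{itemize}
\item \emph{Alternating groups.} If $B(s,1)\subseteq A_n$, a $p$-group of order $p^s$ needs at least $\sum_i a_ip^i$ points where $\sum a_ip^i$ is the $p$-adic expansion realizing $p^s$ as an exponent of $p!$-divisibility; concretely $A_n$ with $n<p^{s}+ (\text{small})$ cannot contain such a subgroup unless $n$ is large enough that $|A_n|=n!/2$ already exceeds $\tfrac{p^{3s}-p^s}{2}$. A short estimate comparing $v_p(n!)\geq s$ with $n!/2$ does it.
\item \emph{Groups of Lie type.} Here the Sylow $p$-subgroup is the unipotent radical $U$ of a Borel, of order $p^{N}$ with $N$ the number of positive roots (times the field degree), and $|G|$ is a polynomial in $p^f$ of degree $\geq 3N$-ish via the order formula $|G(\mathbb{F}_{p^f})|=p^{fN}\prod(p^{fd_i}-1)$. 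Since $p^s\leq |U|=p^{fN}$ one gets $fN\geq s$, and then $|G|\geq p^{fN}\cdot(p^{f}-1)\cdots \geq p^{s}(p^{f}-1)$; the genuinely minimal case is the rank-one groups, and among those ${\rm PSL}_2$ gives the smallest order for a given unipotent order, which is exactly the bound claimed. One must check ${\rm PSL}_2$ against ${}^2B_2$, ${}^2G_2$, ${\rm PSL}_3$, ${\rm PSU}_3$, ${\rm PSp}_4$, etc.\ in low rank, and verify the polynomial-degree comparison handles everything of rank $\geq 2$.
\item \emph{The $26$ sporadic groups and ${}^2F_4(2)'$.} Finitely many checks: for each, read off the exponent of $p$ in $|G|$ and compare $|G|$ with $\tfrac{p^{3s}-p^s}{2}$ for the largest $s$ with $p^s\mid |G|$. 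All of these are easily dominated.
\end{itemize}

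The main obstacle will be the low-rank groups of Lie type, where the order is a polynomial of the \emph{same} degree regime as $|{\rm PSL}_2|$ and the comparison is not merely asymptotic: one has to verify, e.g., that ${\rm PSL}_2(\mathbb{F}_{p^s})$ really is no larger than ${\rm PSL}_2(\mathbb{F}_{p^{s'}})\ltimes(\cdots)$-type competitors or than $\mathrm{Sz}(2^s)$, $\mathrm{Ree}$ groups and ${\rm PSU}_3$ for the same power of $p$ dividing the order. I would organize this by: (1) reducing to the case where $P=B(s,1)$ is actually a full Sylow $p$-subgroup is \emph{not} assumed, so only $p^s\mid |G|$ is used; (2) noting $|G|=p^{a}m$ with $a\geq s$ and $p\nmid m$, and showing $m\geq p^{2a}-1\geq p^{2s}-1$ (resp.\ $\geq \tfrac{p^{2s}-1}{\,?\,}$ giving the factor-$2$ statement) for every nonabelian simple $G$ by going through the order formulas; the inequality $m\geq$ (something like) $p^{2a}-1$ is what forces $|G|\geq p^{s}(p^{2s}-1)=p^{3s}-p^s$, matching ${\rm PSL}_2$ and leaving the $\gcd(2,p-1)$ adjustment for the odd case. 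Once that single inequality $p\nmid m\Rightarrow m\geq p^{2a}-1$ is established uniformly (which is where CFSG enters), the lemma follows immediately.
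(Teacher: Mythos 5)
Your plan breaks down at step (1) of your own organization, where you propose to discard the hypothesis that $B(s,1)$ is an elementary abelian \emph{subgroup} and work only with the divisibility $p^s\mid |G|$. With that weakened hypothesis the statement is false: $|{\rm PSL}_2(\mathbb{F}_7)|=168$ is divisible by $2^3$, yet $168<504=|{\rm PSL}_2(\mathbb{F}_{8})|$ (the group contains $C_2^2$ but not $C_2^3$, its Sylow $2$-subgroup being dihedral of order $8$); likewise $|A_6|=360<504$. So any correct argument must use the abelianness of the subgroup, which bounds $s$ far more tightly than the $p$-part of $|G|$ does. The same examples kill the uniform inequality you want as your linchpin: writing $|G|=p^a m$ with $p\nmid m$, the claim $m\geq p^{2a}-1$ fails for ${\rm PSL}_2(\mathbb{F}_7)$ with $p=2$ ($m=21<63$) and for $A_6$ with $p=2$ ($m=45<63$) or $p=3$ ($m=40<80$). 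What might be true is the analogous bound with $a$ replaced by the maximal rank $s$ of an elementary abelian $p$-subgroup, but that is a different statement and nothing in your sketch establishes it.

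The paper does not redo the family-by-family CFSG check you outline; it quotes Theorem A of Vdovin [V]: if $G$ is a non-abelian finite simple group not isomorphic to any ${\rm PSL}_2(\mathbb{F}_a)$ and $A\leq G$ is abelian, then $|G|>|A|^3$. Applied to $A=B(s,1)$ this gives $|G|>p^{3s}>|{\rm PSL}_2(\mathbb{F}_{p^s})|$ in one stroke, and the proof reduces to the single family $G={\rm PSL}_2(\mathbb{F}_a)$, where for $s\geq 2$ an elementary abelian $p$-subgroup of rank $s$ essentially forces $a=p^n$ with $n\geq s$, and for $s=1$ the cross-characteristic possibilities $p\mid\frac{a\pm1}{2}$ are checked directly. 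If you insist on a from-scratch route, the deferred verifications (``a short estimate does it'', ``one must check the low-rank groups'') are exactly the content of Vdovin's theorem and are where all the work lies; in any case the estimate has to be driven by the order of an abelian subgroup, not by the order of a Sylow subgroup.
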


\begin{proof}
Theorem A of \cite{V} states the following: Let $G$ be a simple
non-abelian group and $G\not={\rm PSL}_2(\mathbb{F}_{a})$ for any
prime power $a$. Let $A\subset G$ be an abelian subgroup. Then $|G| >
|A|^3$. The proof of this statement uses the classification of the finite simple groups.\\

{\it Suppose now $s\geq 2$}. We can exclude $G\not={\rm
  PSL}_2(\mathbb{F}_q))$ for $q=p^n, s\leq n$, because then $|G|>p^{3s}>|{\rm PSL}_2(\mathbb{F}_{p^s})|$.
For $G={\rm PSL}_2(\mathbb{F}_{q})$ with $q=p^n, s\leq n$ one has
$|G|=|{\rm PSL}_2(\mathbb{F}_{q})|\geq |{\rm PSL}_2(\mathbb{F}_{p^s})|$.\\

{\it Suppose now} $s=1$. Then we have to exclude $G={\rm
  PSL}_2(\mathbb{F}_a)$ for two cases, namely $a$ is a power $p^n$, $n>s$ of $p$
and the case $a=(p^\prime)^t$, $p^\prime\neq p$ and $p|\frac{a\pm 1}{2}$. The first
case is handled is above. In the second case the to be excluded groups $G$
have orders $|G|=(a^3-a)/2\geq (p^3-p)/2$ if $p,p^\prime\not=2$.
If $p^\prime=2$ and $p\not=2$, then $|G|=a^3-a\geq (p^3-p)/2$.
If $p^\prime\not=2$ and $p=2$, then $a\geq 3$ and $|G|=(a^3-a)/2\geq p^3-p=6$.
This proves the lemma.
\end{proof}

\begin{lemma}\label{lemma-8.x} \label{8.21}
Let $q>3$.
Let $\Gamma$ be a group ${\rm PGL}_2(\mathbb{F}_{q})\ast_{B(n,q-1)}B(2\cdot n,q-1)$ if $p=2$
and a group 
${\rm PSL}_2(\mathbb{F}_{q})\ast_{B(n,\frac{q-1}{2})}B(2\cdot n,\frac{q-1}{2})$ if $p\not=2$.
Let $\Delta\subset\Gamma$ be a maximal normal subgroup that contains no elements of finite order
and is contained in a subgroup $\Gamma(B_a)$.
Then $|\Gamma(B_a)/\Delta|\geq |{\rm PSL}_2(\mathbb{F}_q)|$.
\end{lemma}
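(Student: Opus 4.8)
The plan is to combine Lemma \ref{8.18} (for the abelian case) with Lemma \ref{8.19} and Lemma \ref{8.20} (for the non-abelian case), using the structural analysis of $\Gamma(B_a)$ as the free product of the $q+1$ subgroups $G_i = h_iB_ah_i^{-1}$, each isomorphic to $B_a\cong C_p^n$. Let $\Delta\subset\Gamma(B_a)$ be a maximal normal Schottky subgroup of $\Gamma$ and write $\varphi:\Gamma\to\Gamma/\Delta$. If $\Delta=[\Gamma(B_a),\Gamma(B_a)]$, then by Lemma \ref{8.18} one has $\Gamma(B_a)/\Delta\cong B_a^{q+1}=C_p^{n(q+1)}$, which has order $p^{n(q+1)}$; since $q\geq 4$ this exceeds $|{\rm PSL}_2(\mathbb{F}_q)|\le q^3=p^{3n}$, so the bound holds in this case (and in fact with room to spare).

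Now suppose $\Delta\subsetneq[\Gamma(B_a),\Gamma(B_a)]$ is not possible for a maximal $\Delta$ unless there is a proper normal subgroup $N$ of $\Gamma(B_a)$ strictly between $\Delta$ and $\Gamma(B_a)$; choose such an $N$ minimal (over $\Delta$), so that $\varphi(N)$ is a minimal normal subgroup of $\varphi(\Gamma(B_a))$ and hence a direct product of isomorphic simple groups. By Lemma \ref{8.19}(iv), if $\varphi(N)$ is non-abelian then $\varphi(\Gamma(B_a))=\prod_{i\in I}\varphi(N_i)$ where the $N_i$ are the ${\rm PSL}_2(\mathbb{F}_q)$-conjugates of $N$ and $|I|=s\geq$ the number of distinct conjugates. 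The key observation is that each $\varphi(N_i)$, being a minimal normal subgroup with non-abelian image, is a product of non-abelian finite simple groups, and each of these simple factors contains a non-trivial $p$-subgroup: indeed $N$ contains some $C_p$ lying inside one of the $G_i$ (since $N\neq\Delta=[\Gamma(B_a),\Gamma(B_a)]$ forces $N$ to contain an element whose image under the abelianization map is non-trivial, hence a generator of some $C_p\subset G_i$ can be arranged to lie in $N$ after intersecting with the $\Gamma_{v_2}$-structure), so some simple factor $S$ of $\varphi(N)$ satisfies $|S|\geq|{\rm PSL}_2(\mathbb{F}_{p^t})|$ for some $t\geq 1$ by Lemma \ref{8.20}. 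Since the $q+1$ groups $\varphi(G_i)$ are pairwise intersecting trivially (Lemma \ref{8.18}), $\varphi(\Gamma(B_a))$ contains at least $q+1$ distinct conjugates of $S$ lying in $q+1$ distinct direct factors, hence $|\Gamma(B_a)/\Delta|\geq |S|^{\,q+1}\geq |{\rm PSL}_2(\mathbb{F}_q)|$ trivially; but in fact one only needs $|S|\ge 2$ here, so the bound is immediate.

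The remaining, genuinely delicate case is $\varphi(N)$ abelian: then Lemma \ref{8.19}(v) says $\Delta=[\Gamma(B_a),\Gamma(B_a)]$, which contradicts the assumption that $N\neq\Delta$ is strictly larger unless $\varphi(\Gamma(B_a))$ was abelian to begin with, returning us to the first case. So the only truly non-trivial content is the abelian-image bound $p^{n(q+1)}\ge |{\rm PSL}_2(\mathbb{F}_q)|$, which is elementary. I expect the main obstacle will be the bookkeeping needed to guarantee that in the non-abelian case one genuinely gets a $p$-subgroup inside each simple factor of $\varphi(N)$ — one must argue carefully, using that $\Gamma(B_a)=G_1\ast\cdots\ast G_{q+1}$ and that $N$ intersects $\Gamma_{v_2}$ (hence some $G_i$) in a non-trivial subgroup, that the image of this intersection in the simple quotient is non-trivial and consists of $p$-elements, so that Lemma \ref{8.20} applies with the appropriate exponent. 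Once that is in place, assembling the $q+1$ factors gives $|\Gamma(B_a)/\Delta|\ge|{\rm PSL}_2(\mathbb F_q)|$, completing the proof.
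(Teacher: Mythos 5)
Your overall architecture (split into abelian image via Lemma \ref{example-8.2}, non-abelian image via Lemmas \ref{8.19} and \ref{6.29}) matches the paper's, and the abelian case is handled correctly: $q^{q+1}>q^3$. But there are genuine gaps in the other branches. First, you never treat the case where $\Gamma(B_a)/\Delta$ is \emph{simple}. Lemma \ref{8.19} presupposes the existence of a proper normal subgroup $N$ with $\Delta\subsetneq N\subsetneq\Gamma(B_a)$; when the quotient is simple there is no such $N$, so neither of your two cases applies. The paper handles this separately: it shows the simple quotient is non-abelian (using that the torus normalizing $B_a$ acts by a character other than $\pm 1$, with an ad hoc argument for $q=5$), and then applies Lemma \ref{6.29} with the \emph{full} abelian subgroup $B_a\cong B(n,1)$, i.e.\ with $s=n$, to get $|{\rm PSL}_2(\mathbb{F}_{p^n})|=|{\rm PSL}_2(\mathbb{F}_q)|$. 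Your version of the appeal to Lemma \ref{6.29} only extracts ``some $t\geq 1$'', which in a single simple factor yields only $|{\rm PSL}_2(\mathbb{F}_p)|$ and is not enough.

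Second, in the non-abelian non-simple case your exponent is not justified and your final numerical shortcut is false. The direct factors of $\varphi(\Gamma(B_a))$ are the $\varphi(N_i)$, indexed by the ${\rm PSL}_2(\mathbb{F}_q)$-conjugates of $N$; their number is $s=[{\rm PSL}_2(\mathbb{F}_q):{\rm Stab}(N)]$, which has nothing a priori to do with the $q+1$ groups $\varphi(G_i)$ (a single factor $\varphi(N_i)$ could contain several $\varphi(G_j)$, and the pairwise triviality of the $\varphi(G_i)\cap\varphi(G_j)$ does not distribute conjugates of a simple factor $S$ into distinct $\varphi(N_i)$'s). The paper instead bounds $s$ below by the minimal index of a maximal subgroup of ${\rm PSL}_2(\mathbb{F}_q)$, namely $q+1$ except for $q=5$ (index $5$) and $q=9$ (index $6$), and then uses $|\varphi(N)|\geq p+1$ to get $(p+1)^s\geq q^3$. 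Your closing remark that ``one only needs $|S|\geq 2$'' fails already at $q=4$: $2^{5}=32<60=|{\rm PSL}_2(\mathbb{F}_4)|$. So the non-abelian case needs both the group-theoretic input on minimal subgroup indices of ${\rm PSL}_2(\mathbb{F}_q)$ and a sharper lower bound on $|\varphi(N)|$ than $2$.
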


\begin{proof}
Let us first consider the case where the quotient $\Gamma(B_a)/\Delta$ is simple. 
The group $\Gamma$ contains a subgroup $C_{\frac{q-1}{2}}$ if $p>2$
(and $C_{q-1}$ if $p=2$) that
normalizes the subgroup $B_a\subset\Gamma(B_a)$.
If $p\not=2$ and $q>5$, then $\frac{q-1}{2}>2$
and  if $p=2$ and $q>3$, then $q-1>2$.
In particular, the action is not restricted to $\pm 1$ and this
implies that the quotient $\Gamma(B_a)/\Delta$ contains at least two distinct subgroups 
isomorphic to $B_a$.
In particular, the simple quotient group $\Gamma(B_a)/\Delta$ is non-abelian if $q>3$, $q\not=5$.
If $q=5$, then it follows from the non-existence of a group 
${\rm PSL}_2(\mathbb{F}_5)\ltimes C_5$ such that the elements of order two in
${\rm PSL}_2(\mathbb{F}_5)$ acts as $-1$ on the group $C_5$ (see \ref{example-8.2})
that the quotient is non-abelian. 
Therefore $|\Gamma(B_a)/\Delta|\geq
|{\rm PSL}_2(\mathbb{F}_{q})|$ (see \ref{6.29}).
In particular, the lemma holds if the quotient is simple. \\

Let us now assume that there exists a proper normal subgroup $N\subset \Gamma(B_a)$,
$N\not=\Delta$ such that $\Delta=\cap_{g\in {\rm PSL}_2(\mathbb{F}_q)} gNg^{-1}$.
Let us first consider the case where $\varphi(\Gamma(B_a))$ is abelian.
Then $\Delta=[\Gamma(B_a),\Gamma(B_a)]$ and $\varphi(\Gamma(B_a))=B_a^{q+1}$.
Therefore the order of $\varphi(\Gamma(B_a)$ equals $q^{q+1}> q^3>|{\rm PSL}_2(\mathbb{F}_q)|$,
since $q>3$.

Let us now consider the case where $\varphi(\Gamma(B_a))$ is non-abelian.
We may assume that $N\subset \Gamma(B_a)$, $N\not=\Delta$ is a minimal normal subgroup
containing the group $\Delta$. 
Since $\Gamma(B_a)$ contains elements of order $p$ and $\Delta$ contains no elements of finite
order the group $\varphi(\Gamma(B_a))$ contains elements of order $p$.\\

By the lemma above $\varphi(\Gamma(B_a))\cong \varphi(N)^s$, 
where $s=|I|=|\{gNg^{-1}|\; g\in {\rm PSL}_2(\mathbb{F}_q)\}|$.
Therefore $p$ divides the order of $\varphi(N)$.
Since $\varphi(N)$ is non-abelian, we may assume  $|\varphi(N)|\geq p+1$.
Then  $|\varphi(\Gamma(B_a))|\geq (p+1)^s$.

To obtain a lower bound for the order of $\varphi(\Gamma(B_a))$ we need to determine
a lower bound for the value of $s$.
The integer $s=|\{gNg^{-1}|\; g\in {\rm PSL}_2(\mathbb{F}_q)\}|$ equals the index of the
stabilizer of N in the group ${\rm PSL}_2(\mathbb{F}_q)$.
Therefore we have to determine the minimal index of a proper subgroup.
We only have to consider maximal subgroups of ${\rm PSL}_2(\mathbb{F}_q)$.
The relevant subgroups are the groups
$A_4,\; A_5,\; S_4,\; D_{\frac{q\pm 1}{2}},\; B(n,\frac{q-1}{2})$
and groups ${\rm PGL}_2(\mathbb{F}_{p^s})$, ${\rm PSL}_2(\mathbb{F}_{p^s})$ with $s|n$ if $p>2$
and $D_{q\pm 1},\; B(n,q-1)$ and ${\rm PGL}_2(\mathbb{F}_{p^s})$ with $s|n$ if $p=2$.

Direct calculation shows that the group $B(n,\frac{q-1}{2})$ (resp., $B(n,q-1)$ if $p=2$) 
is of minimal index $q+1$ if $q\not=5,9$.
If $q=5$ then $A_4\subset {\rm PSL}_2(\mathbb{F}_5)\cong A_5$ has minimal index $5$
and if $q=9$, then $A_5\subset {\rm PSL}_2(\mathbb{F}_9)\cong A_6$ has minimal index $6$.
We leave it to the reader to verify that $(p+1)^{q+1}\geq q^3>|{\rm PSL}_2(\mathbb{F}_q)|$ for $q>3$
and that $|\varphi(\Gamma(B_a))|\geq |{\rm PSL}_2(\mathbb{F}_q)|$ also holds for $q=5,9$.
\end{proof}

\begin{theorem}\label{6.30} \label{8.22}
 Suitable  $N_0(\Gamma)$ for the following  amalgams. 
\begin{enumerate}
\item[{\rm (i)}] $\Gamma={\rm PSL}_2(\mathbb{F}_{q})\ast_{B(n,\frac{q-1}{2})}B(2\cdot n,\frac{q-1}{2})$,
 $p\not=2$.\\ For $q=5$, $N_0(\Gamma)=l.c.m.(60,50)=300$;
  $N_0(\Gamma)=\frac{(q^3-q)^2}{4}$ for $q>5$. 
\item[{\rm (ii)}]  $\Gamma={\rm PGL}_2(\mathbb{F}_{q})\ast_{B(n,q-1)}B(2\cdot n,q-1)$, $p=2$, 
         $q\geq 4$,  $N_0(\Gamma)=(q^3-q)^2$.
\item[{\rm (iii)}] $\Gamma={\rm PGL}_2(\mathbb{F}_{q})\ast_{B(n,q-1)}B(2\cdot n,q-1)$, $p\not=2$, 
           $q>3$, $ N_0(\Gamma)=\frac{(q^3-q)^2}{2}$.
\end{enumerate}
\end{theorem}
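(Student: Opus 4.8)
The goal is to establish suitable bounds $N_0(\Gamma)$ for the three amalgams, i.e.\ integers that satisfy both $|\Gamma/\Delta|\geq N_0(\Gamma)$ for every finite-index normal Schottky $\Delta\subset\Gamma$ and $N_0(\Gamma)\leq F(g_0)$ with $g_0 = 1 + \mu(\Gamma)N_0(\Gamma)\geq 5$. By Lemma \ref{6.16} this completes the verification of Table \ref{table-2} for the remaining entries. The strategy is exactly the one announced in \S\ref{section 8.3}: reduce to a \emph{maximal} normal Schottky subgroup $\Delta$ (allowed since a suitable lower bound for maximal $\Delta$ is automatically a lower bound for all $\Delta$), then use the structural Lemmata \ref{6.27}, \ref{example-8.2}, \ref{8.19}, \ref{lemma-8.x} to pin down $|\Gamma/\Delta|$ from below, and finish with a routine inequality check that $N_0(\Gamma)\leq F(g_0)$.

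\textbf{First step: the lower bound on $|\Gamma/\Delta|$.} Fix a maximal normal Schottky subgroup $\Delta\subset\Gamma$ and write $H = \Gamma/\Delta$, $\varphi\colon\Gamma\to H$. By Lemma \ref{6.27}, either $H$ is simple, or $\Delta\subset\Gamma(B_a)$ for some $a\in\mathbb F_q$, in which case $H\cong {\rm PSL}_2(\mathbb F_q)\ltimes(\Gamma(B_a)/\Delta)$ (resp.\ ${\rm PGL}_2(\mathbb F_q)\ltimes(\Gamma(B_a)/\Delta)$ for case (ii) with $p=2$; note in the $p=2$ case ${\rm PSL}_2={\rm PGL}_2$). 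In the simple case: $H$ contains a copy of ${\rm PGL}_2(\mathbb F_q)$ (or ${\rm PSL}_2(\mathbb F_q)$), hence contains $B(n,1)$ of order $q=p^n$, so by Lemma \ref{6.29} one has $|H|\geq|{\rm PSL}_2(\mathbb F_q)| = (q^3-q)/2$ (or $q^3-q$ if $p=2$). Actually since $H\supseteq{\rm PGL}_2(\mathbb F_q)$ in case (iii) we get $|H|\geq q^3-q$ there. In the non-simple case, Lemma \ref{lemma-8.x} gives $|\Gamma(B_a)/\Delta|\geq|{\rm PSL}_2(\mathbb F_q)|$, and multiplying by $[\Gamma:\Gamma(B_a)] = |{\rm PSL}_2(\mathbb F_q)|$ (resp.\ $|{\rm PGL}_2(\mathbb F_q)|$) yields $|H|\geq|{\rm PSL}_2(\mathbb F_q)|^2$ for (i), $\geq(q^3-q)^2$ for (ii), and $\geq|{\rm PGL}_2(\mathbb F_q)|\cdot|{\rm PSL}_2(\mathbb F_q)| = (q^3-q)^2/2$ for (iii). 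Comparing the two cases, the worst (smallest) lower bound that holds uniformly is the non-simple one: $\frac{(q^3-q)^2}{4}$, $(q^3-q)^2$, $\frac{(q^3-q)^2}{2}$ respectively; one must check the simple-case bound is $\geq$ these, i.e.\ that $(q^3-q)/2 \not< \frac{(q^3-q)^2}{4}$ is false --- wait, this needs care: for $q$ large, $(q^3-q)^2/4$ exceeds $(q^3-q)/2$, so the simple case would \emph{violate} the proposed $N_0$. Hence one must additionally argue the simple case does not occur, or more precisely that $H$ simple forces $|H|$ large enough. Here Example \ref{6.28} already exhibits $\Delta_{a,a'}$ with $\Gamma/\Delta_{a,a'}$ of the predicted order, so $N_0$ is attained; for the lower bound one genuinely needs to rule out, or bound below, every simple quotient. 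The resolution: if $H$ is simple it contains ${\rm PGL}_2(\mathbb F_q)$ \emph{and} a copy of $B(2n,1) = C_p^{2n}$ (image of the unipotent part of $\Gamma_{v_2}$), which is an abelian subgroup of order $q^2$; then Lemma \ref{6.29} applied with $p^s = q^2$ gives $|H|\geq|{\rm PSL}_2(\mathbb F_{q^2})| = (q^6-q^2)/2 \geq (q^3-q)^2/4$. That is the point that makes the uniform bound work.

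\textbf{Second step: the inequality $N_0(\Gamma)\leq F(g_0)$.} With $N_0(\Gamma)$ as in the three cases and $\mu(\Gamma)$ read off from Table \ref{table-2} ($\mu = \frac{q^2-q-1}{q^2(q^2-1)}$ for (ii),(iii), $\mu = \frac{2q^2-2q-2}{q^2(q^2-1)}$ for (i)), one computes $g_0 = 1+\mu(\Gamma)N_0(\Gamma)$; for instance for (iii), $g_0 = 1 + \frac{q^2-q-1}{q^2(q^2-1)}\cdot\frac{(q^3-q)^2}{2} = \frac{(q^2-1)(q^2-q-1)+2}{2}$, matching the table. Then one verifies $F(g_0) = g_0(\sqrt{8g_0+1}+3)\geq N_0(\Gamma)$; since $F$ is increasing and $g_0\sim q^4/2$ while $N_0\sim q^6$, one expects $8g_0+1$ to have a clean square root here --- indeed $8g_0+1$ should work out to a perfect square of the form $(2q^2-\ldots)^2$ so that $F(g_0)$ is a polynomial in $q$ that one compares termwise to $N_0(\Gamma)$. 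The cases $q=5$ (for (i), handled separately via $N_0 = {\rm l.c.m.}(60,50) = 300$ from Table \ref{table-1}) and small $q$ for (ii),(iii) must be checked by hand; also one confirms $g_0\geq 5$ throughout. The exceptional amalgam ${\rm PGL}_2(\mathbb F_3)\ast_{B(1,2)}B(2,2)$ ($q=3$, $p=3$) is excluded by hypothesis $q>3$ and treated in Proposition \ref{6.22}.

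\textbf{The main obstacle.} The genuinely hard part is already absorbed into the lemmata invoked above --- Lemma \ref{lemma-8.x} (which rests on the classification of finite simple groups via Lemma \ref{6.29}/\cite{V}) and the delicate commutator analysis of Lemma \ref{example-8.2}/\ref{8.19}. Given those, the remaining difficulty in Theorem \ref{6.30} is the bookkeeping to ensure the \emph{uniform} bound $N_0(\Gamma)$ simultaneously dominates the simple-quotient case (requiring the $B(2n,1)$-trick with Lemma \ref{6.29} applied to an abelian subgroup of order $q^2$) and stays below $F(g_0)$; one must be careful that the constants $\tfrac14,1,\tfrac12$ in front of $(q^3-q)^2$ are the largest values compatible with \emph{both} constraints, and that no borderline small $q$ (e.g.\ $q=4,5,7,9$) breaks either inequality. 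This is exactly the "far from evident, delicate verification of Table \ref{table-2}" referred to in the proof of Theorem \ref{6.18}.
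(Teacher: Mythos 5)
Your proposal is correct and follows essentially the same route as the paper: reduce to a maximal normal Schottky subgroup, split via Lemma \ref{6.27} into the simple-quotient case (where the decisive point, which you correctly isolate, is to apply Lemma \ref{6.29} to the abelian subgroup $B(2n,1)$ of order $q^2$ rather than to $B(n,1)$, yielding $|H|\geq |{\rm PSL}_2(\mathbb{F}_{q^2})|=\frac{q^6-q^2}{2}$) and the case $\Delta\subset\Gamma(B_a)$ (where Lemma \ref{lemma-8.x} gives $|\Gamma(B_a)/\Delta|\geq|{\rm PSL}_2(\mathbb{F}_q)|$). The only cosmetic difference is in case (iii), which the paper handles by passing to the index-two subgroup $\Gamma^\circ$ and invoking the bound of case (i) for $\Gamma^\circ/\Delta$, whereas you multiply $|\Gamma(B_a)/\Delta|$ directly by $[\Gamma:\Gamma(B_a)]=|{\rm PGL}_2(\mathbb{F}_q)|$; both give $\frac{(q^3-q)^2}{2}$.
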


\begin{proof} 
For (i) and $q=5$, we refer to Table \ref{table-1} 
(even though the proof that follows is also valid for $q=5$). 
Assume now $q>5$. It
suffices to consider $\Delta \subset \Gamma$ which is a {\it maximal} normal Schottky subgroup of
finite index.\\

If $\Delta$ is not contained in a subgroup $\Gamma(B_a)\subset\Gamma$
for some $a\in \mathbb{F}_q$, then the quotient group $H=\Gamma /\Delta$ is simple.
$H$ contains an abelian group $B(2n,1)$ and $H$ is
non-abelian.  By \ref{6.29} one has $|H|\geq |{\rm PSL}_2(\mathbb{F}_{q^2})|=\frac{q^6-q^2}{2}$.\\

Suppose that  $\Delta$ is contained in $\Gamma(B_a)\subset\Gamma$ for
some $a\in \mathbb{F}_q$.
Then $|\Gamma(B_a)/\Delta|\geq |{\rm PSL}_2(\mathbb{F}_{q})|$.
Thus $|H|=|{\rm PSL}_2(\mathbb{F}_{q})|\cdot |\Gamma(B_a)/\Delta|\geq
|{\rm PSL}_2(\mathbb{F}_{q})|^2=\frac{(q^3-q)^2}{4}$.
This proves statement (i) of the proposition.\\

The proof of statement (ii) of the proposition is entirely similar.\\
Consider statement (iii). If $\Delta$ is not contained in the normal subgroup
$\Gamma^\circ:={\rm PSL}_2(\mathbb{F}_{q})\ast_{B(n,\frac{q-1}{2})}B(2\cdot n,\frac{q-1}{2})\subset\Gamma$,
then the quotient group $H=\Gamma/\Delta$ is a non-abelian simple group.
Since $B(2n,1)\subset H$, we conclude that 
$|H|\geq |{\rm PSL}_2(\mathbb{F}_{q^2})|=\frac{q^6-q^2}{2}$.

Let us now consider the case where the group $\Delta\subset\Gamma$
is contained in the subgroup $\Gamma^\circ\subset\Gamma$ of index two.
Then $|H|=2\cdot |\Gamma^\circ/\Delta|$.\\

If $q>5$, (iii) follows directly from (i).
Since $|\Gamma^\circ/\Delta|\geq \frac{(q^3-q)^2}{4}$ also holds for $q=5$ by lemma \ref{lemma-8.x},
statement (iii) is also valid for $q=5$.
\end{proof}

\begin{remarks}\label{CKK-2} \label{8.23} {\it Comparison with the
    results of {\rm \cite{C-K-K, C-K 2}}.} 
{\rm The cases of Mumford curves with large group of automorphisms
considered in \cite{C-K-K} are $B(n,q-1)\ast_{C_{q-1}}D_{q-1}$
(Proposition 3 and \S 9) and \\ ${\rm PGL}_2(\mathbb{F}_q)\ast_{B(n,q-1)}B(n\cdot
d,q-1)$ (Proposition 4 and \S 10). 

The first amalgam is extreme, but the chosen normal Schottky subgroup does not have minimal index. 
The second amalgam is not extreme. \\

The extreme cases of Theorem \ref{6.8} are counter examples for the Theorem of \cite{C-K-K}.
Indeed, they satisfy  $|{\rm Aut}(X)|=F(g)$, where $F$ is the function $F(g)=2\cdot g \cdot
(\sqrt{2g+1/4}+3/2)\approx 2\sqrt{2}\cdot g\sqrt{g}$. This exceeds the
upper limit $2\sqrt{g}(\sqrt{g}+1)^2$ proposed in \cite{C-K-K} by a factor $\sqrt{2}$. \\

We note that the extremal amalgams of Theorem \ref{6.8} with $q=3,4$, satisfy
$\mu(\Gamma)=\frac{1}{12}$, $|{\rm Aut}(X)|=12(g-1)=F(g)$,
$g=\frac{q^2-q}{2}$. This has the following consequences.
Since $S_4\cong {\rm PGL}_2(\mathbb{F}_3)$, the
amalgams in \cite{C-K 2}  theorem part (a) also exist for $q=3$.
Since $A_5\cong {\rm PGL}_2(\mathbb{F}_4)$ the three amalgams in
\cite{C-K 2}  theorem part (b) also exist for $p=2$.
 
Moreover, $A_5\cong {\rm PSL}_2(\mathbb{F}_5)$ 
and hence the prime $p=5$ is unjustly excluded in \cite{C-K 2} theorem part (b).
(See also Proposition \ref{8.1}, Lemma \ref{8.7} and remark \ref{8.8}.)\\

The existence of three Mumford curves of genus $g=6$ with automorphism group of order $72$ for $p=3$
implies that the curves in \cite{C-K 2} theorem part (b) are not truly maximal.
Some, but not all of these omissions have been corrected in the errata \cite{C}.\\

 We briefly discuss some errors in the proof of the theorem in \cite{C-K-K}.\\
In the proof of \cite{C-K-K} proposition 6.5 it is stated that if an element $h\in\Gamma$ of finite order acts 
without fixed points on $\Omega$, then the image $\varphi(h)$ acts without fixed points on the Mumford curve
$X=\Omega/\Delta$ of genus $g$. In particular, the order $m$ of the element $h$ would then divide $g-1$.

The amalgam  $\Gamma={\rm PGL}_2(\mathbb{F}_{q})\ast_{C_{q+1}}D_{q+1}$
provides a counter example to this statement.
The group $\Gamma_e=C_{q+1}$ acts without fixed points on $\Omega$.
However there exists a Mumford curve $X=\Omega/\Delta$ of genus $g=\frac{q^2-q}{2}$.
In particular, $g-1=\frac{(q-2)(q+1)}{2}$ is not divisible by $q+1$ if $q$ is odd.\\

The three exceptional curves $X$ for $q=3$ of genus $g=6$ and automorphism group ${\rm Aut}(X)=H_2$ of order
$72$ provides  counter examples to \cite{C-K-K} proposition 6.4.
In particular, the order of the quotient $\Gamma/\Delta=H_2$ is $72$
and is not divisible by $q^4=81$.\\

The existence of the Mumford curves $X=\Omega/\Delta$ of genus $\frac{q^2-q}{2}$
and automorphism group ${\rm PGL}_2(\mathbb{F}_{q})$ for the amalgam $\Gamma=
D_{q-1}\ast_{C_{q-1}}B(n,q-1)$ contradicts proposition 6.9 of \cite{C-K-K}.
}   
\end{remarks}

\section{ \rm The orbifold induced by a Mumford group}\label{section 9}
Orbifolds, differential equations and discontinuous groups
are developed in \cite{A} for a $p$-adic ground field. In this section we adopt some of
these ideas and adapt them to positive characteristic.  

An {\it orbifold } on the projective line $\mathbb{P}^1$ over $K$ is given by
a finite set of (singular) points $\{a_1,\dots ,a_s\}$ and for each point $a_j$ a
(finite, non trivial) Galois extension $L_j$ of the field of fractions of the completion
of the local ring at  $a_j$ (i.e., $K((z-a_j))$ if $a_j\neq \infty$). 
In contrast to the characteristic zero case, this Galois extension is
not determined by its degree. A {\it global orbifold covering} 
$X\rightarrow \mathbb{P}^1$ is a Galois covering ramified at
$\{a_1,\dots ,a_s\}$ and inducing the Galois extension $L_j$ for
every $j$.

  The question when a given orbifold on $\mathbb{P}^1$ admits a global
  orbifold covering is wide open. Even in the case where $s>2$ and all
  $L_j$ are tame extensions there seems to be no answer known.\\

 We observe that a Mumford group $\Gamma$ induces an orbifold with a global orbifold
covering by a Mumford curve. Indeed, $\Gamma$ has a normal subgroup
$\Delta$ of finite index such that $\Delta$ has no torsion. Then
$\Delta$ is a Schottky group and $X:=\Omega / \Delta$ (where $\Omega$
is the set of ordinary points for $\Gamma$) is a Mumford curve. The
canonical morphism $X\rightarrow \Omega /\Gamma =\mathbb{P}^1$ defines
an orbifold together with a global orbifold covering by $X$. \\
  
An interesting issue is the type of local Galois extensions $L_j$
induced by $\Gamma$.  Let $K((t))$ denote the field of fractions of 
the completion of the local ring of a point in $\Omega$ having a non
trivial fixed group $G\subset \Gamma$. If $p\nmid \#G$, then the local
parameter $t$ can be chosen such that $G$ consists of the maps 
$\{t\mapsto \zeta t|\ \zeta ^e=1\}$ and $p\nmid e$. Then, as usual 
$K((t))^G=K((z))$ with $z=t^e$ and $K((t))\supset K((z))$ is tamely
ramified. If $p|\#G$, then $G$ is a group of Borel type $B(n,m)$ with
$n>0$. There is a local parameter $t$ (in fact a global parameter of
the projective line) such that $G$ acts as 
$\{t^{-1}\mapsto \zeta t^{-1}+a|\ \zeta ^m=1,\ \ a\in A\}$ where
$A\subset K$ is a finite dimensional vector space over $\mathbb{F}_p$ 
such that $\zeta A=A$ for all $\zeta $ with $\zeta ^m=1$.

 Define $z=(\prod _{\zeta ^m=1,\ a\in A} (\zeta t^{-1}+a))^{-1}$. Then
 one sees that $K((t))^G=K((z))$. Consider $K((z))\subset
 K((t))^{B(n,1)}\subset K((t))$. Then the field in the middle is
 $K((z^{1/m}))$ and the extension $K((z^{1/m}))\subset K((t))$ is
 given by a set of Artin--Schreier extensions.\\

 The lists of realizable amalgams with two or three branch points
 yield many orbifolds on $\mathbb{P}^1$ with singular points
 $0,\infty$ or $0,1,\infty$.  We note that the type of an amalgam
  in the lists determines the tame part of the extension
  $K((z))\subset K((t))$ and the degree of the Artin--Schreier part of
  the extension. According to Remarks 1.1, the Artin--Schreier part
  (of fixed degree and invariant under the cyclic group
  $C_m$) can be prescribed arbitrarily by varying the inbedding of the given amalgam
  as discontinuous subgroup of ${\rm PGL}_2(K)$.

\subsection{\rm Tame orbifolds induced by a Mumford group}
Tameness of the orbifold means that the corresponding Mumford group $\Gamma$
has no elements of order $p$. From our lists it follows that
${\rm br}(\Gamma)>3$. It is easy to produce a list of Mumford groups
$\Gamma$ which induce a tame orbifold and have ${\rm br}(\Gamma)=4$ (by using
Theorem 3.1). Namely:\\
(i). The decomposable $\Gamma$'s are $C_\ell\ast C_m$ with $p\nmid \ell \cdot
m$ and have  ramification indices $(\ell, \ell, m, m)$.\\
(ii). The indecomposable $\Gamma$'s are $G_1\ast_{G_3}G_2$ with
$G_1,G_2\in \{D_\ell, A_4,S_4,A_5 \}$, $p$ does not divide the orders of $G_1$ and $G_2$
and $G_3$ is a branch group for both $G_1$ and $G_2$.  \\

In case (ii), one computes the ramification indices for these Mumford groups as follows.
The cyclic group $G_3$ has two branch points both stabilized by $G_3$ itself.
Moreover, $G_3$ is a branch group of both $G_1$ and $G_2$.
Let $x_1\in \mathbb{P}^1$ be the branch point of $G_1$ that is stabilized by $G_3$
and let $x_2\in \mathbb{P}^1$ be the branch point of $G_2$ that is stabilized by $G_3$.
Then the branch points for the group $G_1\ast_{G_3}G_2$ are those of $G_1$ minus the point $x_1$
combined with the branch points of $G_2$ minus the point $x_2$.
(See also the proof of theorem 5.3 in \cite{P-V}.)

Let $\ell$ be the order of the cyclic group $G_3$.
Let the ramification indices for $G_1$ and $G_2$ be $(n_1,n_2,\ell)$
and $(m_1,m_2,\ell)$, respectively.
Then the ramification indices for the amalgam $G_1\ast_{G_3}G_2$
is the tuple $(n_1,n_2,m_1,m_2)$. Moreover the four
branch points in $\mathbb{P}^1$ determine a reduction of
$\mathbb{P}^1$ consisting of two intersecting projective lines over
the residue field. The two branch points for $G_1$ map to one line and
those of $G_2$ to the other line.

\noindent

      \noindent
      \setlength{\unitlength}{1.3mm}
      \begin{picture}(00,30)

      \put(15,25){\line(0,-1){20}}
      \put(25,25){\line(0,-1){20}}
      \put(10,00){$\bf {\mathbb P}^1/G_1$}
      \put(22,00){$\bf {\mathbb P}^1/G_2$}

      \put(15,20){\circle*{1}}
      \put(15,15){\circle*{1}}
      \put(15,10){\circle*{1}}

      \put(25,20){\circle*{1}}
      \put(25,15){\circle*{1}}
      \put(25,10){\circle*{1}}

      \put(11,20){\bf $n_1$}
      \put(11,15){\bf $\ell$}
      \put(11,10){\bf $n_2$}

      \put(26,20){\bf $m_1$}
      \put(26,15){\bf $\ell$}
      \put(26,10){\bf $m_2$}

       \multiput(00,15)(2,0){21}{\line(1,0){1}}
       \put(45,14){\bf $G_3=C_\ell$}     
      
      \put(90,05){\line(-1,1){20}}%
      \put(70,05){\line(1,1){20}}
      \put(80,15){\circle {3}}
      \multiput(60,15)(2,0){20}{\line(1,0){1}}
      
      \put(75,20){\circle*{1}} 
      \put(85,10){\circle*{1}} 
      \put(85,20){\circle*{1}} 
      \put(75,10){\circle*{1}} 
      \put(70,20){\bf $n_1$}
      \put(80,10){\bf $n_2$}
      \put(80,20){\bf $m_1$}
      \put(70,10){\bf $m_2$}
      \put(70,00){$\bf \Omega/(G_1\ast_{G_3}G_2)$}
      
      \end{picture}

\begin{example}{\rm The group $\Gamma:=D_\ell*_{C_\ell}D_\ell$ with
$p\nmid 2\ell$ can be represented by the generators $\sigma
_1(z)=\zeta z$, where $\zeta$ is a primitive $\ell$th root of
unity, $\sigma _2(z)=\frac{1}{z}$ and $\sigma
_3(z)=\frac{\lambda }{z}$ with $0<|\lambda |<1$.
The first $D_\ell$ is generated by $\sigma_1,\sigma_2$ and the second
by $\sigma_1,\sigma _3$. The first $D_\ell$ has fixed points
$0,\infty$ and $\pm \sqrt{\zeta}^i$ for $i=0,\dots ,\ell -1$. For the
second $D_\ell$ the fixed points are $0,\infty$ and   $\pm \sqrt{\lambda \zeta}^i$ for $i=0,\dots ,\ell -1$.
The group $\Gamma$ is also generated by $\sigma_1,\sigma _2,\sigma _4$
where $\sigma_4 (z)=\lambda z$.  It follows that
$\{0,\infty\}$ is the set of limit points. Thus the branch points of
the two groups $D_\ell$ corresponding to $C_\ell$ disappear since they
are limit points. Thus $\Gamma$ has four branch points and they are in
the position described above.
}
\end{example}

It follows at once that all possible ramification tuples are $(2,2,a,b)$, $(2,3,c,d)$ and $(3,3,c,e)$
with $a,b\geq 2$, $c,e=3,4,5$ and $d\geq 3$ (with the restriction that
$p$ does not divide any ramification index).

Let now $G_1,G_2\in \{ A_4,S_4,A_5 \} $.
The ramification tuples $(2,2,a,b)$
correspond to the amalgams $G_1\ast_{C_\ell}G_2$, $\ell\not=2$,
$G_1\ast_{C_2}D_2$, $D_a\ast_{C_2}D_b$,  $D_m\ast_{C_m}D_m$. \\
The amalgams $G_1\ast_{C_2}D_d$ correspond to ramification tuples $(2,3,c,d)$.\\
The amalgams $G_1\ast_{C_2}G_2$ correspond to ramification tuples $(3,3,c,e)$.\\

The tuple $(2,2,2,2)$ corresponds to amalgams $\Gamma$ of the form $D_\ell\ast_{C_\ell}D_\ell$.
Then $\Omega\cong K^\ast$ and $\Omega/\Delta$ with $\Delta$ a normal torsion-free subgroup
of $\Gamma$ of finite index is a Tate curve and has genus $g=1$. 
Therefore the tuple $(2,2,2,3)$ is the smallest
one that gives rise to Mumford curves of genus $g>1$.
The Riemann-Hurwitz formula shows that
 Mumford curves $\Omega/\Delta$ corresponding to this index have an automorphism group of order $12(g-1)$
(See prop. \ref{6.14}).
For all other indices in our list this order is strictly less than $12(g-1)$.

\subsection{\rm Stratified bundles associated to a Mumford group}
We quickly  introduce the subject of stratified bundles in the
category of rigid spaces and sketch the
way Mumford groups may produce these bundles.

 Let $X$ be a smooth  rigid space over $K$ of countable type. On $X$
 there is a (rigid, quasi-coherent)  sheaf $\mathcal{D}_X$ of
 differential operators defined analogous to  the algebraic geometry
case (see \cite{EGA4}, \S 16, in particular 16.10 and also \cite{G} for the sheaf of differential
operators).

 For a  smooth affinoid space $Y={\rm Spm}(A)$ the
 sheaf $\mathcal{D}_Y$ is the (rigid) sheaf associated to the $A$-algebra
of differential operators on $A/K$ (as defined in
\cite{EGA4,G}). Further $\mathcal{D}_X$ is the rigid (quasi-coherent) sheaf obtained by
gluing the sheaves $\mathcal{D}_{Y_i}$ for an admissible affinoid covering $\{Y_i\}$ of $X$.  \\   

 We are interested in the case where $X$ has dimension 1 and
 especially in the case $\mathbb{P}^1_K$ and its open admissible
 subspaces. The basic example is the unit disk $U:={\rm Spm}(K\langle z\rangle)$. Its algebra of
 differential operators is  $K\langle z\rangle [ \{\partial _z^{(n)}\}_{n\geq 0}]$, where 
$\partial_z ^{(n)}$ is the operator on $K\langle z\rangle $ given by the formula 
$\partial_z^{(n)}(\sum _ja_jz^j)=\sum _ja_j{j\choose n }z^{j-n}$ (for
all $n\geq 0$ and we note that $\partial_z^{(0)}$ is the identity and is
identified with 1).  We note that the $\partial_z^{(n)}$ imitate the expressions $\frac{1}{n!}(\frac{d}{dz})^n$ which have only  a meaning
in characteristic zero. This $K\langle z\rangle $-module produces a quasi-coherent
sheaf on $U$ and yields for any affinoid subspace of $U$ an explicit
algebra of differential operators.\\

A stratified bundle $V$ on $X$ is a left $\mathcal{D}_X$-module on $X$
which is a vector bundle for its induced structure as $O_X$-module.
There is an extensive and interesting  theory of stratified bundles
in an algebraic context, see for instance \cite{G,E-M,Ki}. However this theory lacks explicit
examples. Here we construct examples of stratified bundles on
$\mathbb{P}^1_K$ having certain singularities, by using Mumford groups.\\

Let $\Gamma \subset {\rm PGL}_2(K)$ denote a Mumford group and let  $\Omega \subset \mathbb{P}^1(K)$ denote its subspace of ordinary
 points. The sheaf of differential operators $\mathcal{D}_\Omega$ has an obvious action of $\Gamma$.  Let $\pi :\Omega \rightarrow
 \mathbb{P}^1_K$ denote the canonical morphism. The sheaf $S:=(\pi
 _*\mathcal{D}_\Omega)^\Gamma$ is associated to the presheaf which maps every admissible open
 $U\subset \mathbb{P}^1_K$ to $\mathcal{D}_\Omega(\pi^{-1}(U))^\Gamma$. It can be seen that
 $S$ is a subsheaf of $\mathcal{D}_{\mathbb{P}^1_K}$ and that the two
 sheaves coincide outside the branch points. \\

At a branch point, the
 situation is somewhat complicated. Let $z=0$ be a branch point and
 let $t$ be the local parameter of a (ramification) point lying above
 $z=0$. The stalk of $\mathcal{D}_\Omega$ at that point is $K\{t\}[\{\partial
 _t^{(n)}\}_{n\geq 0}]$, where $K\{t\}$ denotes the local ring of the convergent
 power series. The action of $\Gamma$ reduces to the action      
 of the stabilizer $G$ and the stalk of $S$ at $z=0$ is then  
the algebra $(K\{t\}[\{\partial_t^{(n)}\}_{n\geq 0}])^G$.

 The stabilizer $G$ is, as before, $\{t^{-1}\mapsto \zeta t^{-1}+a|\ \zeta
^m=1,\ a\in A\}$. In principle, one can compute the algebra of
invariants under $G$. 

In the tame case, i.e., $A=0$, one has $z=t^m$
and the algebra of invariants is $K\{z\}[\{ t^{j(n)}\partial
_t^{(n)}\}_{n\geq 1}]$, where $j(n)\in \{0,1,\dots ,m-1\}$ satisfies
$n\equiv j(n) \mod m$. This is a subalgebra of $K\{z\}[\{\partial^{(n)}_z\}_{n\geq 0}]$.
In \S \ref{section 9.3} we do some explicit computations, show that
$K\{z\}[\{z^n\partial_z^{(n)}\}_{n\geq 0}]\subset (K\{t\}[\{\partial_t^{(n)}\}_{n\geq 0}])^G$
and give one example for the non tame case.\\

Let $\rho :\Gamma \rightarrow {\rm GL}(V)$ be a representation of
$\Gamma$ on a $d$-dimensional vector space $V$ over $K$. One
considers the (trivial) vector bundle  $\mathcal{O}_\Omega \otimes V$ on
$\Omega$ with left $\mathcal{D}_\Omega$-action through $\mathcal{O}_\Omega$ and
with $\Gamma$-action by $\gamma (f\otimes v)=\gamma(f)\otimes
\rho(\gamma)(v)$. Then $\mathcal{V}:=\pi_*(\mathcal{O}_\Omega \otimes V)^\Gamma$ is a vector bundle
on $\mathbb{P}^1_K$ (with rank equal to the dimension of $V$) which
has a left action by the subsheaf $S$ of
$\mathcal{D}_{\mathbb{P}^1_K}$.  By allowing singularities at the
branch points, this action extends to an action of $\mathcal{D}_{\mathbb{P}^1_K}$ with singularities. More explicitly: \\

Let $z=0$ be a branch point. In general, the stalk of $S$ at $z=0$ is a subalgebra of $K\{z\}[\{\partial_z^{(n)}\}_{n\geq 0}]$ and
has the form $K\{z\}[\{ z^{f(n)}\partial _z^{(n)}\}_{n\geq 1}]$ where $f(n)$ is the smallest
integer such that $z^{f(n)}\partial_z^{(n)}$ leaves $K\{t\}$ invariant.

The action of this stalk on the stalk of
$\mathcal{V}_0=K\{z\}^d$ introduces maps $\partial_z^{(n)}:\mathcal{V}_0\rightarrow
z^{-f(n)}\mathcal{V}_0$. Thus we obtain on $\mathcal{V}_0$ a structure of stratified bundle with singularities (poles).
 The singular point is called ``regular singular'' if $\mathcal{V}_0$ is invariant under all
$z^n\partial_z^{(n)}$. This is precisely the case when the branch point is tamely ramified (compare also \cite{Ki}). \\

Thus, we conclude that Mumford groups do not produce stratified
bundles with regular singularities at three points. However, according
to \S 7.1,
these groups produce many stratified bundles, regular singular at four points. \\

There is a {\it canonical stratified bundle of rank two} on
$\mathbb{P}^1_K$ associated to a Mumford group $\Gamma$ (see \cite{A},
Chapter II, \S 5, for the complex and the p-adic case). One considers
a representation $\rho :\Gamma \rightarrow {\rm SL}_2(K)$ which
induces the given embedding $\Gamma \subset {\rm PGL}_2(K)$. The
canonical stratification is associated to this representation $\rho$.\\

\subsection{\rm The higher derivations for branch
  points} \label{section 9.3}

{\it The tame case}.\\
Consider the tamely ramified extension $K(\{z\})\subset K(\{t\})$ (these are the fields of fractions of
$K\{z\}$ and $K\{t\}$) with $z=t^m$ and $p\nmid m$. 
We want to compute the extension of the standard higher derivation
$\{\partial_z^{(n)}\}_{n\geq 0}$ to $K(\{t\})$. An easy way is to
write this standard higher derivation as a $K$-linear homomorphism $\phi
:K(\{z\})\rightarrow K(\{z\})[[X]]$ given by $\phi(z)=z+X$. This $\phi$ extends to a 
homomorphism $\psi:K(\{t\})\rightarrow K(\{t\})[[X]]$. Now $\psi (t^m)=t^m+X=
t^m(1+t^{-m}X)$ implies 
$\psi (t)=t(1+t^{-m}X)^{1/m}=t(\sum _{n=0}^\infty {\frac{1}{m}\choose
n} t^{-mn}X^n)$.
Hence $\partial _z^{(n)}(t)=t\cdot {\frac{1}{m}\choose n }t^{-mn}$ for all $n\geq 0$ and
 $z^n\partial _z^{(n)}(t^i)={\frac{i}{m}\choose n }t^i$ for  $i=0,1,\dots ,m-1$. 
 This shows that $z^n\partial^{(n)}$ lies in $(K\{t\}[\{\partial_t^{(n)}\}])^G$.\\
 
 We note that $M:=K(\{t\})$ can be seen as a stratified bundle over $K(\{z\})$. It is {\it regular singular}.
 The elements $\{t^i| i=0,1,\dots ,m-1\}$ form a basis and the above formula shows
 that the  local exponents are $\{ \frac{i}{m}|\ i=0,1,\dots ,m-1\}$.\\

\noindent {\it The Artin-Schreier case}. \\
We consider the basic example $K(\{z\})\subset  K(\{t\})$ with
$t^{-p}-t^{-1}=z^{-1}$. Now $\phi
:K(\{z\})\rightarrow K(\{z\})[[X]]$ with $\phi(z)=z+X$ extends to a $\psi$
with $\psi(t^{-1})=t^{-1}+R$ with $R\in XK(\{t\})[[X]]$ and $\psi
(t^{-p})=t^{-p}+R^p$. Further $\psi (t^{-p})-\psi
(t^{-1})=\frac{1}{z+X}$. Hence
$R^p-R=\frac{1}{z+X}-\frac{1}{z}=\frac{-X}{z(z+X)}$. Thus 
\[R=\sum _{n\geq 0}(\frac{X}{z(z+X)})^{p^n}=\sum _{n\geq  0}\frac{X^{p^n}}{z^{p^n}(z^{p^n}+X^{p^n})} .\]
Write $X=z^2Y$, then $R=\sum _{n\geq 0,\ k\geq
  0}(-1)^{k}z^{kp^n}Y^{kp^n+p^n}$.
  
   Now $\psi(t)=\frac{1}{t^{-1}+R}=\frac{t}{1+tR}=
t(\sum _{\ell \geq 0}(-t)^\ell R^\ell)$ and $\partial
_z^{(n)}(t)$ has the form:\\ 
$\pm z^{-2n}t^{1+n}(1+r)$ with $r\in
tK\{t\}$.  Then $z^n\partial^{(n)}_z(t)=\pm z^{-n}t^{1+n}(1+r)$. The smallest integer $f(n)$, such that
$z^{f(n)}\partial_z^{(n)}$ leaves $K\{t\}$ invariant, is $>\frac{3n}{2}$
 and the singularity is {\it irregular}. Further  $M=K(\{t\})$ as stratified bundle over 
 $K(\{z\})$ is {\it  irregular singular}.
 
\begin{small}

\end{small}

\end{document}